\newcommand\eps{\varepsilon}
\newcommand\R{{\mathbf{R}}}
\newcommand\C{{\mathbf{C}}}
\newcommand\Z{{\mathbf{Z}}}
\newcommand\bigO{{\mathcal{O}}}
\theoremstyle{plain}
  \newtheorem{theorem}[subsection]{Theorem}
  \newtheorem{conjecture}[subsection]{Conjecture}
  \newtheorem{proposition}[subsection]{Proposition}
  \newtheorem{lemma}[subsection]{Lemma}
  \newtheorem{corollary}[subsection]{Corollary}
\theoremstyle{remark}
  \newtheorem{remark}[subsection]{Remark}
\theoremstyle{definition}
  \newtheorem{definition}[subsection]{Definition}
\begin{document}

\title[Localisation and compactness for Navier-Stokes]{Localisation and compactness properties of the Navier-Stokes global regularity problem}
\author{Terence Tao}
\address{Department of Mathematics, UCLA, Los Angeles CA 90095-1555}
\email{tao@math.ucla.edu}
\subjclass{35Q30, 76D05, 76N10}

\vspace{-0.3in}
\begin{abstract}  In this paper we establish a number of implications between various qualitative and quantitative versions of the global regularity problem for the Navier-Stokes equations, in the periodic, smooth finite energy, smooth $H^1$, Schwartz, or mild $H^1$ categories, and with or without a forcing term.  In particular, we show that if one has global well-posedness in $H^1$ for the periodic Navier-Stokes problem with a forcing term, then one can obtain global regularity both for periodic and for Schwartz initial data (thus yielding a positive answer to both official formulations of the  problem for the Clay Millennium Prize), and can also obtain global almost smooth solutions from smooth $H^1$ data or smooth finite energy data, although we show in this category that fully smooth solutions are not always possible.  Our main new tools are localised energy and enstrophy estimates to the Navier-Stokes equation that are applicable for large data or long times, and which may be of independent interest.
\end{abstract}

\maketitle

\section{Introduction}

The purpose of this paper is to establish some implications between various formulations of the global regularity problem (either with or without a forcing term) for the Navier-Stokes system of equations, including the four formulations appearing in the Clay Millennium Prize formulation \cite{feff} of the problem, and in particular to isolate a single formulation that implies these four formulations, as well as several other natural versions of the problem.  In the course of doing so, we also establish some new local energy and local enstrophy estimates which seem to be of independent interest.

To describe these various formulations, we must first define properly the concept of a solution to the Navier-Stokes problem.  We will need to study a number of different types of solutions, including periodic solutions, finite energy solutions, $H^1$ solutions, and smooth solutions; we will also consider a forcing term $f$ in addition to the initial data $u_0$.  We begin in the classical regime of smooth solutions.  Note that even within the category of smooth solutions, there is some choice in what decay hypotheses to place on the initial data and solution; for instance, one can require that the initial velocity $u_0$ be Schwartz class, or merely smooth with finite energy.  Intermediate between these two will be data which is smooth and in $H^1$.

More precisely, we define:

\begin{definition}[Smooth solutions to the Navier-Stokes system]  A \emph{smooth set of data} for the Navier-Stokes system up to time $T$ is a triplet $(u_0,f,T)$, where $0 < T < \infty$ is a time, the initial velocity vector field $u_0: \R^3 \to \R^3$ and the forcing term $f: [0,T] \times \R^3 \to \R^3$ are assumed to be smooth on $\R^3$ and $[0,T] \times \R^3$ respectively (thus, $u_0$ is infinitely differentiable in space, and $f$ is infinitely differentiable in spacetime), and $u_0$ is furthermore required to be divergence-free:
\begin{equation}\label{div-free-data}
\nabla \cdot u_0 = 0.
\end{equation}
If $f=0$, we say that the data is \emph{homogeneous}.

The \emph{total energy} $E(u_0,f,T)$ of a smooth set of data $(u_0,f,T)$ is defined by the quantity\footnote{We will review our notation for spacetime norms such as $L^p_t L^q_x$, together with sundry other notation, in Section \ref{notation-sec}.}
\begin{equation}\label{energy-def}
E(u_0,f,T) := \frac{1}{2} (\|u_0 \|_{L^2_x(\R^3)} + \|f\|_{L^1_t L^2_x([0,T] \times \R^3)})^2
\end{equation}
and $(u_0,f,T)$ is said to have \emph{finite energy} if $E(u_0,f,T) < \infty$.  We define the \emph{$H^1$ norm} ${\mathcal H^1}(u_0,f,T)$ of the data to be the quantity
$$ {\mathcal H^1}(u_0,f,T) := \| u_0 \|_{H^1_x(\R^3)} + \| f \|_{L^\infty_t H^1_x(\R^3)} < \infty$$
and say that $(u_0,f,T)$ is $H^1$ if ${\mathcal H}^1(u_0,f,T) < \infty$; note that the $H^1$ regularity is essentially one derivative higher than the energy regularity, which is at the level of $L^2$, and instead matches the regularity of the \emph{initial enstrophy}
$$ \frac{1}{2} \int_{\R^3} |\omega_0(t,x)|^2\ dx,$$
where $\omega_0 := \nabla \times u_0$ is the initial vorticity. We say that a smooth set of data $(u_0,f,T)$ is \emph{Schwartz} if, for all integers $\alpha, m, k \geq 0$, one has
$$ \sup_{x \in \R^3} (1+|x|)^k |\nabla_x^\alpha u_0(x)| < \infty$$
and
$$ \sup_{(t,x) \in [0,T] \times \R^3} (1+|x|)^k |\nabla_x^\alpha \partial_t^m f(x)| < \infty.$$
Thus, for instance, the Schwartz property implies $H^1$, which in turn implies finite energy.  We also say that $(u_0,f,T)$ is \emph{periodic} with some period $L>0$ if one has $u_0(x+Lk) = u_0(x)$ and $f(t,x+Lk) = f(t,x)$ for all $t \in [0,T]$, $x \in \R^3$, and $k \in \Z^3$.  Of course, periodicity is incompatible with the Schwartz, $H^1$, or finite energy properties, unless the data is zero.  To emphasise the periodicity, we will sometimes write a periodic set of data $(u_0,f,T)$ as $(u_0,f,T,L)$.  

A \emph{smooth solution to the Navier-Stokes system}, or a \emph{smooth solution} for short, is a quintuplet $(u,p,u_0,f,T)$, where $(u_0,f,T)$ is a smooth set of data, and the velocity vector field $u: [0,T] \times \R^3 \to \R^3$ and pressure field $p: [0,T] \times \R^3 \to \R$ are smooth functions on $[0,T] \times \R^3$ that obey the Navier-Stokes equation
\begin{equation}\label{ns}
\partial_t u + (u \cdot \nabla) u = \Delta u - \nabla p + f
\end{equation}
and the incompressibility property
\begin{equation}\label{div-free}
\nabla \cdot u = 0
\end{equation}
on all of $[0,T] \times \R^3$, and also the initial condition
\begin{equation}\label{initial}
u(0,x) = u_0(x)
\end{equation}
for all $x \in \R^3$.  We say that a smooth solution $(u,p,u_0,f,T)$ has \emph{finite energy} if the associated data $(u_0,f,T)$ has finite energy, and in addition one has\footnote{Following \cite{feff}, we omit the finite energy dissipation condition $\nabla u \in L^2_t L^2_x([0,T] \times\R^3)$ that often appears in the literature, particularly when discussing Leray-Hopf weak solutions.  However, it turns out that this condition is actually automatic from \eqref{finite-energy} and smoothness; see Lemma \ref{energy-est}.  Similarly, from Corollary \ref{ens-bound} we shall see that the $L^2_t H^2_x$ condition in \eqref{finite-enstrophy} is in fact redundant.}
\begin{equation}\label{finite-energy}
 \| u \|_{L^\infty_t L^2_x( [0,T] \times \R^3) } < \infty.
\end{equation}
Similarly, we say that $(u,p,u_0,f,T)$ is $H^1$ if the associated data $(u_0,f,T)$ is $H^1$, and in addition one has
\begin{equation}\label{finite-enstrophy}
 \| u \|_{L^\infty_t H^1_x( [0,T] \times \R^3) } 
 + \| u \|_{L^2_t H^2_x([0,T] \times \R^3) } < \infty.
\end{equation}
We say instead that a smooth solution $(u,p,u_0,f,T)$ is \emph{periodic} with period $L>0$ if the associated data $(u_0,f,T) = (u_0,f,T,L)$ is periodic with period $L$, and if $u(t,x+Lk)=u(t,x)$ for all $t \in [0,T]$, $x \in \R^3$, and $k \in \Z^3$.  (Following \cite{feff}, however, we will not initially directly require any periodicity properties on the pressure.)  As before, we will sometimes write a periodic solution $(u,p,u_0,f,T)$ as $(u,p,u_0,f,T,L)$ to emphasise the periodicity.

We will sometimes abuse notation and refer to a solution $(u,p,u_0,f,T)$ simply as $(u,p)$ or even $u$.  Similarly, we will sometimes abbreviate a set of data $(u_0,f,T)$ as $(u_0,f)$ or even $u_0$ (in the homogeneous case $f=0$).
\end{definition}

\begin{remark} In \cite{feff}, one considered\footnote{The viscosity parameter $\nu$ was not normalised in \cite{feff} to equal $1$, as we are doing here, but one can easily reduce to the $\nu=1$ case by a simple rescaling.} smooth finite energy solutions associated to Schwartz data, as well as periodic smooth solutions associated to periodic smooth data.  In the latter case, one can of course normalise the period $L$ to equal $1$ by a simple scaling argument.  In this paper we will be focused on the case when the data $(u_0,f,T)$ is large, although we will not study the asymptotic regime when $T \to \infty$.  
\end{remark}

We recall the two standard \emph{global regularity} conjectures for the Navier-Stokes equation, using the formulation in \cite{feff}:

\begin{conjecture}[Global regularity for homogeneous Schwartz data]\label{global-schwartz-homog}  Let $(u_0,0,T)$ be a homogeneous Schwartz set of data.  Then there exists a smooth finite energy solution $(u,p,u_0,0,T)$ with the indicated data.
\end{conjecture}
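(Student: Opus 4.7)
The statement is Conjecture 1.4 of the paper (the Clay Millennium Prize formulation for homogeneous Schwartz data), so no unconditional proof is currently available. My plan is therefore to lay out the standard local-well-posedness-plus-continuation framework within which any proof attempt must operate, and to identify precisely where the argument breaks down.

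First, I would establish local-in-time existence of a unique smooth finite energy solution. Since $u_0$ is Schwartz, $u_0 \in H^s_x(\R^3)$ for every $s \geq 0$, so a Picard iteration in the mild formulation
$$ u(t) = e^{t\Delta} u_0 \;-\; \int_0^t e^{(t-s)\Delta}\, \mathbb{P}\, \nabla \cdot (u \otimes u)(s)\, ds, $$
where $\mathbb{P}$ is the Leray projection onto divergence-free fields, yields a unique local solution on an interval $[0, T_*)$ with $T_*$ controlled below in terms of $\|u_0\|_{H^s_x}$ for some $s > 5/2$. Bootstrapping the integral equation then delivers full smoothness in spacetime and persistence of Schwartz decay at each time slice (because $e^{t\Delta}$ preserves Schwartz class and the bilinear term inherits the decay of $u$), while the standard energy inequality supplies the finite energy bound \eqref{finite-energy}.

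Second, I would invoke the Beale--Kato--Majda continuation criterion: if $T_{\max} \leq T$ denotes the maximal existence time of the smooth solution, then necessarily $\int_0^{T_{\max}} \|\omega(t)\|_{L^\infty_x}\, dt = \infty$, where $\omega = \nabla \times u$. The task therefore reduces to producing, for each fixed Schwartz $u_0$ and each fixed $T$, a finite a priori bound on this vorticity integral, or equivalently on any critical quantity such as $\|u\|_{L^\infty_t L^3_x([0,T]\times\R^3)}$ or $\|u\|_{L^5_{t,x}([0,T]\times\R^3)}$.

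The main obstacle, and the reason the statement is only a conjecture, lies entirely in this last step. The only coercive global a priori control presently available is the energy identity
$$ \tfrac{1}{2} \|u(t)\|_{L^2_x}^2 + \int_0^t \|\nabla u(s)\|_{L^2_x}^2\, ds = \tfrac{1}{2} \|u_0\|_{L^2_x}^2, $$
valid here because $f=0$; supplemented by partial regularity results in the style of Caffarelli--Kohn--Nirenberg, these fall short of closing the continuation criterion because, under the natural scaling $u(t,x) \mapsto \lambda u(\lambda^2 t, \lambda x)$ of \eqref{ns} in three dimensions, the energy quantity is supercritical. Producing a global bound on a critical or subcritical norm, for arbitrarily large Schwartz initial data, is exactly the open problem. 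The present paper does not overcome this barrier directly; rather, its strategy is to show that a positive answer to a simpler-looking formulation (periodic $H^1$ global well-posedness with forcing) would transfer, via localisation and compactness tools developed later in the paper, to the Schwartz setting demanded here.
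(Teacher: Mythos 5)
You have correctly recognised that this statement is Conjecture \ref{global-schwartz-homog}, an open problem (one of the Clay Millennium formulations) for which the paper offers no proof, only conditional implications: it is deduced from Conjecture \ref{global-h1} via Theorem \ref{main}(iii) and (iv), exactly as you describe. Your summary of the local well-posedness plus continuation framework, the supercriticality obstruction, and the paper's reduction strategy is accurate, so there is nothing to correct.
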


\begin{conjecture}[Global regularity for homogeneous periodic data]\label{global-periodic-homog}  Let $(u_0,0,T)$ be a smooth homogeneous periodic set of data.  Then there exists a smooth periodic solution $(u,p,u_0,0,T)$ with the indicated data.
\end{conjecture}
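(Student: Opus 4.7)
The statement is Conjecture~\ref{global-periodic-homog}, one of the two official formulations of the Clay Millennium Prize problem for Navier--Stokes, and as such no unconditional proof is currently known; the paper under discussion does not claim to settle it but (as the abstract indicates) aims instead to reduce it, together with Conjecture~\ref{global-schwartz-homog}, to a single stronger hypothesis concerning periodic $H^1$ global well-posedness with a forcing term. With that framing in mind I describe the approach one would take.

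The natural strategy has three steps. First, local well-posedness in a high regularity class: classical Picard iteration for smooth periodic divergence-free data, using the mean-zero Leray projection and the heat semigroup on the torus, produces a smooth periodic solution on some maximal interval $[0, T_*)$. Second, a continuation criterion identifying which norm must diverge at $T_*$ if $T_* < T$ --- for instance the periodic analogue of Beale--Kato--Majda, which requires $\int_0^{T_*} \|\omega(t)\|_{L^\infty_x}\, dt = \infty$ at a genuine blow-up time, or equivalently the divergence of a subcritical Sobolev norm. Third, one establishes an a priori bound on $[0,T]$ strong enough to rule this out.

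The hard step is the third. The only robust global-in-time control one has is the energy identity $\tfrac{1}{2}\|u(t)\|_{L^2_x}^2 + \int_0^t \|\nabla u\|_{L^2_x}^2\, ds = \tfrac{1}{2}\|u_0\|_{L^2_x}^2$, which under the Navier--Stokes scaling $u(t,x) \mapsto \lambda u(\lambda^2 t, \lambda x)$ is supercritical in three dimensions and therefore cannot exclude concentration of enstrophy at small scales. Every currently known a priori bound in 3D is either supercritical or merely conditional (as in the Prodi--Serrin--Ladyzhenskaya class), and this supercriticality is precisely the obstruction that keeps the conjecture open.

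Within the program of the present paper, therefore, the natural move is not to attack the a priori estimate directly but to exhibit Conjecture~\ref{global-periodic-homog} as a corollary of a stronger single statement: a hypothetical periodic $H^1$ well-posedness theory with a forcing term would supply the missing control, and one would then upgrade $H^1$ regularity to full smoothness by applying the parabolic smoothing effect inductively on short time intervals to successively higher Sobolev norms, together with the localised energy and enstrophy estimates the paper develops. Two subtleties in that upgrade are handling the pressure through the nonlocal Leray projection in a way compatible with the periodic setting, and ensuring that the auxiliary forcing term introduced in the reduction does not destroy the regularity or the divergence-free condition --- I expect these are the only genuinely technical issues in the reduction, the main mathematical difficulty having been packaged into the hypothesis.
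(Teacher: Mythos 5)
You are right that this statement is an open conjecture (one of the two official Clay Millennium formulations) which the paper does not and cannot prove unconditionally; the paper only establishes the conditional implication that Conjecture \ref{global-h1} (periodic $H^1$ global well-posedness with forcing) implies it, via the periodic $H^1$ local well-posedness and regularity theory of Theorem \ref{lwp-h1}. Your description of that reduction and of the supercriticality obstruction matches the paper's own framing, so there is nothing to correct.
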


In view of these conjectures, one can naturally try to extend them to the inhomogeneous case as follows:

\begin{conjecture}[Global regularity for Schwartz data]\label{global-schwartz}  Let $(u_0,f,T)$ be a Schwartz set of data.  Then there exists a smooth finite energy solution $(u,p,u_0,f,T)$ with the indicated data.
\end{conjecture}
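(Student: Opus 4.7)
The plan is to derive Conjecture \ref{global-schwartz} from one of the homogeneous conjectures above, since the statement itself is the open Clay problem and admits no unconditional proof. A purely algebraic reduction --- writing $u = v + w$, where $w$ is a divergence-free Schwartz solution of the linear forced heat equation $\partial_t w = \Delta w + f$ with $w(0) = 0$, obtained via Duhamel and the Leray projection --- does not suffice, because $v = u - w$ satisfies a Navier-Stokes equation with residual drift terms $(v \cdot \nabla) w + (w \cdot \nabla) v + (w \cdot \nabla) w$ and so is not a clean homogeneous system. A more substantial reduction is therefore needed.

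Instead I would reduce to the \emph{periodic} setting, which the abstract identifies as the paper's preferred master formulation. Given Schwartz $(u_0, f, T)$, for each large period $L$ construct periodic data $(u_0^{(L)}, f^{(L)}, T, L)$ by smoothly truncating $u_0$ and $f$ to the ball $|x| \le L/4$, restoring divergence-freeness on the torus via a Leray-type projection (the correction being small because the data are Schwartz), and then periodizing. The periodic global regularity hypothesis then yields smooth periodic solutions $(u^{(L)}, p^{(L)})$ on $[0,T]$, and the core step is to pass to the limit $L \to \infty$ and recover a smooth Schwartz solution on $\R^3$.

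Extracting the limit requires local-in-space control on $u^{(L)}$ over compact spacetime regions $[0,T] \times B(0,R)$ that is uniform in $L$. Here I would invoke the paper's local energy and enstrophy estimates (advertised as the main new technical tools) to obtain, on each such region, $C^k$ bounds on $u^{(L)}$ depending only on $R$, $T$, and the Schwartz seminorms of $(u_0, f)$, but not on $L$. An Arzel\`a--Ascoli diagonal extraction would then produce a smooth limit $u$, and Schwartz spatial decay of $u(t, \cdot)$ would follow by rerunning the same local estimates centred at a far-away point $x_0$, where the truncated data are quantitatively small, and propagating that smallness in time perturbatively off the linear heat flow of the tail.

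The principal obstacle is producing these uniform local bounds. The periodic regularity hypothesis is purely qualitative, asserting smoothness of each $u^{(L)}$ without explicit bounds, and the classical subcritical theory only controls large-data solutions for a short time. The local energy and enstrophy estimates must therefore carry the full quantitative weight, converting qualitative global smoothness on each torus into quantitative control on fixed bounded sets independent of the period --- presumably by combining Leray-style localized energy inequalities on spatial cutoff balls with an enstrophy analogue strong enough to drive a bootstrap up to arbitrary derivatives.
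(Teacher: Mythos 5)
Your overall framing is right: the statement is one of the Clay conjectures, and the paper only establishes it conditionally, via the chain ``periodic $H^1$ global well-posedness (Conjecture \ref{global-h1}) $\Rightarrow$ Conjecture \ref{global-h1-spatial} $\Rightarrow$ Conjecture \ref{global-schwartz}'', where the last step is just the local regularity theory (Theorem \ref{lwp-h1-r3}(iv)).  But your proposed reduction to the periodic problem runs the information in the wrong direction and has a genuine gap at exactly the point you flag.  You truncate the \emph{data} to a torus of period $L$ and hope to extract a limit as $L \to \infty$ from local bounds on each fixed compact region $[0,T] \times B(0,R)$ that are uniform in $L$.  No such bounds are available.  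The localised enstrophy estimate (Theorem \ref{enstrophy-loc}) requires the \emph{local} enstrophy of the data to be small (hypotheses \eqref{eeta}, \eqref{delta-4}); it therefore controls the solution only far from the bulk of the data and says nothing on a ball where the initial enstrophy is large --- obtaining uniform control there is essentially the content of the conjecture itself.  Nor does the quantitative periodic conjecture rescue the scheme: the bound $F(A,L,T_0)$ of Conjecture \ref{global-h1-quant} depends on the period $L$, and nothing in the compactness argument behind Theorem \ref{main}(i) gives uniformity as $L \to \infty$.  Without uniform bounds, a weak limit of the $u^{(L)}$ is only a Leray--Hopf-type object, and smoothness does not pass to the limit.

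The paper's argument (Theorem \ref{enstrophy-thm}, combined with Theorem \ref{lwp-h1-r3}(iv)) inverts the logic.  One argues by contradiction with a maximal Cauchy development on $\R^3$ blowing up in $H^1$ at some $T_* \le T$; the localised enstrophy estimate, applied in the exterior region where the $H^1$ norm of the data \emph{is} small, shows the solution is uniformly smooth outside some ball up to time $T_*$, so any blowup is confined to a bounded region.  One then truncates the \emph{solution} (not the data) in an annulus where it is uniformly smooth, using Lemma \ref{divloc}, obtaining a compactly supported solution of a forced Navier--Stokes equation; this is embedded in a \emph{single} torus of period $L > 10R$, and the periodic hypothesis applied to that one torus shows the truncated solution --- hence the original solution on the inner ball --- stays bounded in $H^1$, contradicting blowup.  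Two features of this are essential and absent from your plan: no limit over $L$ is taken, and the forcing term produced by truncating the solution is only $L^\infty_t H^1_x$ in time (one cannot control time derivatives near a potential singularity), which is why the periodic input must be Conjecture \ref{global-h1} with rough forcing rather than the homogeneous or smooth-forcing periodic conjectures that your truncation-of-data scheme would target.
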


\begin{conjecture}[Global regularity for periodic data]\label{global-periodic}  Let $(u_0,f,T)$ be a smooth periodic set of data.  Then there exists a smooth periodic solution $(u,p,u_0,f,T)$ with the indicated data.
\end{conjecture}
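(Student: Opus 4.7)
The statement is essentially the Clay Millennium Problem in its periodic formulation (with forcing term), so an unconditional proof is out of reach. My plan is to set up the standard local-existence-plus-continuation framework and reduce the global regularity problem to a single a priori $H^1$ bound, consistent with the reduction strategy announced in the abstract.

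First I would establish local-in-time existence and uniqueness of smooth periodic solutions. On the torus $\R^3/L\Z^3$, a Galerkin truncation in Fourier modes combined with a contraction-mapping argument---equivalently, the mild formulation via the Duhamel integral for the heat semigroup composed with the Leray projection---yields a smooth periodic solution on some maximal interval $[0,T^*)$ whose length depends on $\|u_0\|_{H^s}$ and $\|f\|_{L^1_t H^s_x}$ for some $s > 5/2$. The pressure is then recovered (up to a time-dependent constant on each slice) by solving $-\Delta p = \partial_i \partial_j(u^i u^j) - \nabla \cdot f$, and is automatically smooth and periodic.

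Second, I would prove a continuation criterion: the smooth solution extends past $T^*$ whenever $\limsup_{t \uparrow T^*} \|u(t)\|_{H^1_x}$ is finite. The idea is that once an $H^1$ bound on $u$ is available, the standard energy-bootstrap method (differentiating the equation, using Sobolev embeddings in three dimensions, and the parabolic smoothing effect of $\Delta$) propagates the full Sobolev regularity of the data forward in time. Combining these two steps, global smooth existence on $[0,T]$ reduces to producing an a priori bound on $\|u(t)\|_{H^1_x}$ depending only on $\mathcal{H}^1(u_0,f,T)$ and $T$.

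The third and decisive step is precisely this a priori $H^1$ bound, and it is the main obstacle. The basic energy identity only delivers $u \in L^\infty_t L^2_x \cap L^2_t H^1_x$, which is supercritical in three dimensions, and no known argument closes the gap to a uniform $H^1$ bound, because of the vortex-stretching term $(\omega \cdot \nabla) u$ in the vorticity equation. Consistent with the paper's implication structure, my proposal would be to treat a global $H^1$ well-posedness result for periodic Navier-Stokes with forcing as a hypothesis and deduce Conjecture~\ref{global-periodic} from it; a direct unconditional argument would of course resolve the full Millennium Problem.
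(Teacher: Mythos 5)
Your proposal is sound as far as it goes: the chain (local well-posedness at subcritical regularity, an $H^1$ continuation criterion, reduction to an a priori $H^1$ bound treated as a hypothesis) is exactly the implication the paper records as Theorem \ref{main}(ii), namely that Conjecture \ref{global-h1} implies Conjecture \ref{global-periodic}; moreover your recovery of the pressure from the Poisson equation produces the normalised-pressure variant (Conjecture \ref{global-periodic-normalised}), which is a stronger conclusion. What you miss, however, is the paper's main observation about this particular formulation: because the pressure $p$ is \emph{not} required to be periodic in the definition of a smooth periodic solution, the conjecture admits a Galilean loophole that makes the forcing term irrelevant. Proposition \ref{force} shows that Conjecture \ref{global-periodic} is \emph{equivalent} to the homogeneous Conjecture \ref{global-periodic-homog}: one applies the Galilean symmetry \eqref{galilean} with a linearly growing velocity $v(t)=2wt$ (at the cost of a non-periodic, linear-in-$x$ pressure correction), which replaces $f(t,x)$ by $f_w(t,x)=f(t,x-wt^2)$; taking $w=\lambda\alpha$ with $\alpha$ irrational, the Riemann--Lebesgue lemma gives $f_{\lambda\alpha}\rightharpoonup 0$ weakly, and the compactness and stability of the periodic local $H^1$ theory (Proposition \ref{compactness}) then yields, for $\lambda$ large, a smooth periodic solution with the shifted data perturbatively off the homogeneous solution; undoing the Galilean transform recovers a (non-normalised-pressure) solution to the original data. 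So the conjecture as stated already follows from the homogeneous Clay problem, a strictly weaker hypothesis than the inhomogeneous global $H^1$ well-posedness you invoke; your reduction is correct but conditional on more than is needed, precisely because it does not exploit the non-periodic-pressure quirk that the paper identifies as the defect of this formulation.
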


As described in \cite{feff}, a positive answer to either Conjecture \ref{global-schwartz-homog} or Conjecture \ref{global-periodic-homog}, or a negative answer to Conjecture \ref{global-schwartz} or Conjecture \ref{global-periodic}, would qualify for the Clay Millennium Prize.  

However, Conjecture \ref{global-periodic} is not quite the ``right'' extension of Conjecture \ref{global-periodic-homog} to the inhomogeneous setting, and needs to be corrected slightly.  This is because there is a technical quirk in the inhomogeneous periodic problem as formulated in Conjecture \ref{global-periodic}, due to the fact that the pressure $p$ is not required to be periodic.  This opens up a Galilean invariance in the problem which allows one to homogenise away the role of the forcing term.  More precisely, we have

\begin{proposition}[Elimination of forcing term]\label{force}  Conjecture \ref{global-periodic} is equivalent to Conjecture \ref{global-periodic-homog}.
\end{proposition}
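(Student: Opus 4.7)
The direction Conjecture~\ref{global-periodic} $\Rightarrow$ Conjecture~\ref{global-periodic-homog} is immediate upon specialising to $f \equiv 0$, so my plan is devoted to the converse. The strategy is to absorb the forcing via a time-dependent Galilean change of variables, exploiting the fact (explicitly flagged in the definition of a periodic solution) that the pressure $p$ is not required to be periodic.

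Given inhomogeneous periodic data $(u_0, f, T, L)$, write $\bar f(t) := L^{-3}\int_{[0,L]^3} f(t, y)\, dy$ for the spatial mean of the forcing, and set $V(t) := \int_0^t \bar f(s)\, ds$, $X(t) := \int_0^t V(s)\, ds$. I would propose the substitution
\[
u(t, x) = \tilde u(t, x - X(t)) + V(t), \qquad p(t, x) = \tilde p(t, x - X(t)) - \bar f(t) \cdot x,
\]
and verify by direct chain-rule computation that $(u,p,u_0,f,T,L)$ is a smooth periodic solution of Navier--Stokes if and only if $(\tilde u, \tilde p, u_0, \tilde f, T, L)$ is, with reduced forcing
\[
\tilde f(t, y) := f(t, y + X(t)) - \bar f(t),
\]
which by construction has zero spatial mean. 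Periodicity of $u$ in $x$ is equivalent to periodicity of $\tilde u$ in $y$, so the reduced system stays in the periodic category. The linear-in-$x$ summand $-\bar f(t)\cdot x$ in $p$ is the non-periodic ingredient that balances the inertial term $V'(t) = \bar f(t)$ generated by the time-dependent boost; had we insisted on periodic pressure this transformation would not have been admissible, which is precisely the sense in which non-periodic pressure ``opens up'' the Galilean invariance.

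To close the reduction all the way to the homogeneous problem, one must further eliminate the zero-mean residual $\tilde f$. I would do this by iterating the gauge freedom: Leray-project $\tilde f$ and absorb its gradient part into $\tilde p$, then exploit the combination of time-dependent Galilean boosts with the remaining non-periodic pressure corrections to absorb the rest. Conjecture~\ref{global-periodic-homog} applied to the resulting homogeneous periodic problem supplies a smooth periodic solution, which on inverting the chain of substitutions becomes a smooth periodic solution of the original problem $(u_0, f, T, L)$. Compatibility of the transformed initial condition at $t = 0$ (where $X(0) = V(0) = 0$ forces $\tilde u_0 = u_0$), and smoothness/periodicity of $\tilde u$, are routine once the main transformation is set up.

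The step I expect to be the main obstacle is justifying the final stage of this reduction: a single Galilean boost together with a linear pressure correction absorbs only the spatially constant component $\bar f(t)$ of the forcing, and showing rigorously that the combined pool of admissible non-periodic pressure gauges and Galilean-type changes of variables really does suffice to homogenise away the whole of $f$ (rather than only its spatial mean) is the delicate part. Once this is established, the equivalence follows by transport of the solution through the change of variables.
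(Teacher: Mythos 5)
The easy direction and your first reduction are fine: the substitution you write down is exactly the symmetry \eqref{gal-2} of the paper, and it correctly reduces to data obeying the mean-zero conditions \eqref{meanzero-1}, \eqref{meanzero-2}. The problem is the step you yourself flag as the obstacle, and it is not merely delicate --- as stated, it cannot work. After removing the spatial mean and absorbing the gradient part of $\tilde f$ into the pressure via \eqref{forcing}, what remains is the divergence-free, mean-zero component $P\tilde f$, which is an infinite-dimensional object (a full vector field on $[0,T]\times\R^3/L\Z^3$), whereas the entire pool of gauge freedoms you invoke --- Galilean boosts \eqref{galilean}, pressure shifts \eqref{const}, and linear-in-$x$ pressure corrections --- contributes only a finite-dimensional family per time slice (a velocity $v(t)\in\R^3$ and a constant). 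No iteration of these symmetries can annihilate $P\tilde f$; it is genuinely invariant under all of them. So the reduction to the exactly homogeneous problem is impossible by gauge transformations alone.

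The paper's resolution is of a different character: it never homogenises $f$ exactly. Instead, one applies a Galilean boost with \emph{quadratically} growing trajectory, $v(t) = 2wt$, which replaces $f(t,x)$ by $f_w(t,x) = f(t, x - wt^2)$. Taking $w = \lambda\alpha$ with $\alpha$ irrational, an equidistribution/Riemann--Lebesgue argument (using the mean-zero normalisation to kill the $k=0$ Fourier mode) shows that $f_{\lambda\alpha}$ converges \emph{weakly} to zero as $\lambda\to\infty$. One then combines Conjecture \ref{global-periodic-homog} (which supplies a periodic $H^1$ mild solution for the limiting homogeneous data $(u_0,0,T,1)$, after normalising the pressure via Lemma \ref{reduction}) with the compactness/stability property of the periodic $H^1$ local well-posedness theory (Proposition \ref{compactness}): weak convergence of the data to data admitting a global solution forces existence of a global solution for all sufficiently large $\lambda$, and smoothness follows from the regularity theory. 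Undoing the boost yields the desired solution for the original data. The two missing ideas in your proposal are thus (i) that weak smallness of the forcing, rather than its exact vanishing, is the right target, and (ii) that the perturbation off the homogeneous solution is carried by the stability/compactness theory rather than by a change of variables.
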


We establish this fact in Section \ref{homog-sec}.  We remark that this is the only implication we know of that can deduce a global regularity result for the inhomogeneous Navier-Stokes problem from a global regularity result for the homogeneous Navier-Stokes problem.

Proposition \ref{force} exploits the technical loophole of non-periodic pressure.  The same loophole can also be used to easily demonstrate failure of uniqueness for the periodic Navier-Stokes problem (although this can also be done by the much simpler expedient of noting that one can adjust the pressure by an arbitrary constant without affecting \eqref{ns}).  This suggests that in the non-homogeneous case $f \neq 0$, one needs an additional normalisation to ``fix'' the periodic Navier-Stokes problem to avoid such loopholes.  This can be done in a standard way, as follows.
If one takes the divergence of \eqref{ns} and use the incompressibility \eqref{div-free}, one sees that that
\begin{equation}\label{deltap}
 \Delta p = -\partial_i \partial_j (u_i u_j) + \nabla \cdot f
\end{equation}
where we use the usual summation conventions.  If $(u,p,u_0,f,T)$ is a smooth periodic solution, then the right-hand side of \eqref{deltap} is smooth, periodic, and has mean zero.  From Fourier analysis, we see that given any smooth periodic mean zero function $F$, there is a unique smooth periodic mean zero function $\Delta^{-1} F$ with Laplacian equal to $F$.  We then say that the periodic smooth solution $(u,p,u_0,f,T)$ has \emph{normalised pressure} if one has\footnote{Up to the harmless freedom to add a constant to $p$, this normalisation is equivalent to requiring that the pressure be periodic with the same period as the solution $u$.}
\begin{equation}\label{pressure-point}
p = -\Delta^{-1} \partial_i \partial_j (u_i u_j) + \Delta^{-1} \nabla \cdot f.
\end{equation}
We remark that this normalised pressure condition can also be imposed for smooth finite energy solutions (because $\partial_i \partial_j(u_i u_j)$ is a second derivative of an $L^1_x(\R^3)$ function, and $\nabla \cdot f$ is the first derivative of an $L^2_x(\R^3)$ function), but it will turn out that normalised pressure is essentially automatic in that setting anyway; see Proposition \ref{reduction}.

It is well-known that once one imposes the normalised pressure condition, then the periodic Navier-Stokes problem becomes locally well-posed in the smooth category (in particular, smooth solutions are now unique, and exist for sufficiently short times from any given smooth data); see Theorem \ref{lwp-h1}.  Related to this, the Galilean invariance trick that allows one to artificially homogenise the forcing term $f$ is no longer available.  We can then pose a ``repaired'' version of Conjecture \ref{global-periodic}:

\begin{conjecture}[Global regularity for periodic data with normalised pressure]\label{global-periodic-normalised}  Let $(u_0,f,T)$ be a smooth periodic set of data.  Then there exists a smooth periodic solution $(u,p,u_0,f,T)$ with the indicated data and with normalised pressure.
\end{conjecture}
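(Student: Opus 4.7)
The plan is to attack Conjecture~\ref{global-periodic-normalised} via the classical three-step strategy for quasilinear parabolic problems: local existence in a strong category, a continuation criterion, and a global a priori estimate. First, I would invoke the local well-posedness result Theorem~\ref{lwp-h1} (promised in the introduction and standard in the literature) applied to the smooth periodic data $(u_0,f,T)$ with the normalisation~\eqref{pressure-point}, producing a unique smooth periodic solution $(u,p,u_0,f,T_*)$ with normalised pressure on some maximal half-open interval $[0,T_*)$, with $T_* \in (0,T]$. The normalisation~\eqref{pressure-point} is essential at this stage, both to obtain uniqueness and to close off the Galilean loophole exploited in the proof of Proposition~\ref{force}.

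Second, I would formulate a Beale--Kato--Majda-type continuation criterion: if $\limsup_{t \to T_*^-} \|u(t)\|_{H^1_x}$ is finite, then re-applying the local well-posedness theorem at a time near $T_*$ extends the solution past $T_*$, contradicting maximality. Thus the conjecture reduces to proving, for every smooth periodic $(u_0,f,T)$, an a priori bound on $\|u(t)\|_{H^1_x}$ uniform on $[0,T_*)$ in terms of the data.

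Third, I would try to establish this a priori bound. Testing \eqref{ns} against $u$ and using \eqref{div-free} to eliminate the pressure gives the standard energy identity
$$\tfrac{1}{2}\|u(t)\|_{L^2_x}^2 + \int_0^t \|\nabla u(s)\|_{L^2_x}^2\, ds = \tfrac{1}{2}\|u_0\|_{L^2_x}^2 + \int_0^t \int_{[0,L]^3} u \cdot f \, dx\, ds,$$
from which a Gronwall argument yields a priori control on $\|u\|_{L^\infty_t L^2_x} + \|\nabla u\|_{L^2_t L^2_x}$ in terms of the data. Testing instead against $-\Delta u$ yields the enstrophy identity, whose nonlinear term $\int_{[0,L]^3} (u \cdot \nabla) u \cdot \Delta u\, dx$ would have to be controlled, via Sobolev embedding and the energy bound, in order to close an a priori estimate on $\|\nabla u\|_{L^\infty_t L^2_x}$.

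This last step is the main (and essentially the only) obstacle, and it is the entire substance of the Clay Millennium Prize problem: in three spatial dimensions the nonlinear term in the enstrophy identity is supercritical under the natural scaling symmetry of~\eqref{ns}, and no presently available technique closes this estimate for arbitrarily large data. Accordingly I would not expect to prove Conjecture~\ref{global-periodic-normalised} within the present paper; rather, it is being stated so that it can serve as a benchmark within the network of equivalences and implications—between the periodic, Schwartz, finite-energy and $H^1$ formulations—that occupies the body of the work.
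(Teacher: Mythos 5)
You have correctly recognised that this statement is a \emph{conjecture} which the paper does not prove: no unconditional proof exists, and the supercritical nature of the enstrophy estimate that you identify is precisely the obstruction. Your three-step reduction (local well-posedness with normalised pressure, a blowup/continuation criterion, and a missing a priori $H^1$ bound) is exactly the route the paper takes \emph{conditionally}: Theorem~\ref{main}(ii) derives Conjecture~\ref{global-periodic-normalised} from Conjecture~\ref{global-h1} (equivalently Conjecture~\ref{global-h1-quant}) by combining the periodic $H^1$ local theory of Theorem~\ref{lwp-h1} with the maximal Cauchy development dichotomy of Corollary~\ref{max-cauchy-periodic}, the hypothesised global $H^1$ control playing the role of the a priori bound you could not close. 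So your assessment and framing match the paper's; the only thing to add is that the paper treats this statement purely as a node in its network of implications, never as something to be established outright.
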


It is easy to see that the homogeneous case $f=0$ of Conjecture \ref{global-periodic-normalised} is equivalent to Conjecture \ref{global-periodic-homog}; see e.g. Lemma \ref{reduction} below.

We now leave the category of classical (smooth) solutions for now, and turn instead to the category of \emph{periodic $H^1$ mild solutions} $(u,p,u_0,f,T,L)$.  By definition, these are functions $u,f: [0,T] \times \R^3/L\Z^3 \to \R^3$, $p: [0,T] \times \R^3/L\Z^3 \to \R$, $u_0: \R^3/L\Z^3 \to \R^3$ with $0 < T, L < \infty$, obeying the regularity hypotheses
\begin{align*}
u_0 &\in H^1_x(\R^3/L\Z^3) \\
f &\in L^\infty_t H^1_x([0,T] \times (\R^3/L\Z^3))\\
u &\in L^\infty_t H^1_x \cap L^2_t H^2_x([0,T] \times (\R^3/L\Z^3))
\end{align*}
with $p$ being given by \eqref{pressure-point}, which obey the divergence-free conditions \eqref{div-free}, \eqref{div-free-data}, and obey the integral form
\begin{equation}\label{duhamel-1}
u(t) = e^{t\Delta} u_0 + \int_0^t e^{(t-t')\Delta} (-(u \cdot \nabla) u - \nabla p + f)(t')\ dt'
\end{equation}
of the Navier-Stokes equation \eqref{ns} with initial condition \eqref{initial}; using the Leray projection $P$ onto divergence-free vector fields, we may also express \eqref{duhamel} equivalently as
\begin{equation}\label{duhamel-2}
u(t) = e^{t\Delta} u_0 + \int_0^t e^{(t-t')\Delta} (P B(u,u) + Pf)(t')\ dt'
\end{equation}
where $B(u,v)$ is the symmetric bilinear form
\begin{equation}\label{buj}
 B(u,v)_i := -\frac{1}{2} \partial_j( u_i v_j + u_j v_i ).
\end{equation}
Similarly, we define \emph{periodic $H^1$ data} to be a quadruplet $(u_0,f,T,L)$ whose $H^1$ norm
$$ {\mathcal H}^1(u_0,f,T,L) := \|u_0\|_{H^1_x((\R^3/L\Z^3))} + \|f\|_{L^\infty_t H^1_x((\R^3/L\Z^3))}$$
is finite, with $u_0$ divergence-free.

Note from Duhamel's formula \eqref{Duhamel-formula} that every smooth periodic solution with normalised pressure is automatically a periodic $H^1$ mild solution.

As we will recall in Theorem \ref{lwp-h1} below, the Navier-Stokes equation is locally well-posed in the periodic $H^1$ category. We can then formulate a global well-posedness conjecture in this category:

\begin{conjecture}[Global well-posedness in periodic $H^1$]\label{global-h1}  Let $(u_0,f,T,L)$ be a periodic $H^1$ set of data.  Then there exists a periodic $H^1$ mild solution $(u,p,u_0,f,T,L)$ with the indicated data.
\end{conjecture}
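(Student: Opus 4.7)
The plan is to extend the local well-posedness of Theorem \ref{lwp-h1} to a global statement by the standard continuation argument. Applied in the periodic $H^1$ category, local theory produces a maximal time $T^\ast \in (0,T]$ such that a periodic $H^1$ mild solution exists on $[0,T^\ast)$; the associated blow-up criterion asserts that either $T^\ast = T$, in which case we are done, or else $\limsup_{t \to T^{\ast -}} \|u(t)\|_{H^1_x} = \infty$. To rule out the latter it suffices to produce an a priori bound on $\|u(t)\|_{H^1_x}$ depending only on $\mathcal{H}^1(u_0,f,T,L)$ and $T$.

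The first input is the energy identity, obtained by pairing \eqref{ns} with $u$ and using \eqref{div-free}:
\begin{equation*}
\tfrac{1}{2} \tfrac{d}{dt} \|u\|_{L^2_x}^2 + \|\nabla u\|_{L^2_x}^2 = \int_{\R^3/L\Z^3} f \cdot u \, dx,
\end{equation*}
which yields an a priori bound in $L^\infty_t L^2_x \cap L^2_t \dot H^1_x$ by Gronwall. The second input is the enstrophy identity for $\omega := \nabla \times u$:
\begin{equation*}
\tfrac{1}{2} \tfrac{d}{dt} \|\omega\|_{L^2_x}^2 + \|\nabla \omega\|_{L^2_x}^2 = \int (\omega \cdot \nabla u) \cdot \omega \, dx + \int \omega \cdot (\nabla \times f)\, dx.
\end{equation*}
By H\"older, Biot-Savart in $L^3$, Gagliardo-Nirenberg and Young, the vortex-stretching term can only be controlled by $\|\omega\|_{L^3_x}^3 \lesssim \|\omega\|_{L^2_x}^{3/2} \|\nabla \omega\|_{L^2_x}^{3/2}$, which after absorbing the dissipation produces a differential inequality of the schematic form $\tfrac{d}{dt} \|\omega\|_{L^2_x}^2 \lesssim \|\omega\|_{L^2_x}^6$ (plus forcing terms), giving only local-in-time boundedness of the enstrophy.

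The main obstacle is precisely this supercritical scaling mismatch: the coercive energy sits one derivative below the regularity required to tame the nonlinearity, and no a priori bound is presently known for any scale-invariant or subcritical quantity in three dimensions. Closing this gap is equivalent to the Clay Millennium problem for Navier-Stokes, and is not resolved in the present paper. What the abstract advertises, and what I would realistically aim for, is to isolate this single periodic $H^1$ statement as a master conjecture implying the other four Clay formulations; the technical tools needed for that reduction, rather than for the resolution of the conjecture itself, are the localised energy and enstrophy estimates whose development is the paper's main contribution.
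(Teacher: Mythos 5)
The statement you were asked about is Conjecture \ref{global-h1}, which the paper explicitly does \emph{not} prove: it is the master hypothesis from which Theorem \ref{main} derives the other formulations, and the author states plainly that none of the conjectures are established unconditionally. Your assessment is therefore correct on all counts — the continuation argument via Theorem \ref{lwp-h1} and Corollary \ref{max-cauchy-periodic} reduces the conjecture to an a priori $H^1$ bound (this is exactly the content of the equivalence in Theorem \ref{main}(i)), the enstrophy identity with the vortex-stretching term $\int (\omega\cdot\nabla u)\cdot\omega$ yields only the supercritical differential inequality you describe, and closing that gap is the open problem, not something the paper's localisation machinery resolves. No gap in your reasoning; there is simply no proof to be given.
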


We may also phrase a quantitative variant of this conjecture:

\begin{conjecture}[\emph{A priori} periodic $H^1$ bound]\label{global-h1-quant}  There exists a function $F: \R^+ \times \R^+ \times \R^+ \to \R^+$ with the property that whenever $(u,p,u_0,f,T,L)$ is a smooth periodic, normalised-pressure solution with $0 < T < T_0 < \infty$ and
$$ {\mathcal H}^1(u_0,f,T,L) \leq A < \infty$$
then
$$ \| u \|_{L^\infty_t H^1_x([0,T] \times \R^3/L\Z^3)} \leq F( A, L, T_0 ).$$
\end{conjecture}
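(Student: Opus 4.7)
The plan is to deduce this quantitative a priori bound from the qualitative global well-posedness conjecture (Conjecture \ref{global-h1}) via a compactness-and-contradiction argument built on the local well-posedness theory (Theorem \ref{lwp-h1}). Direct proof from energy/enstrophy estimates alone would amount to solving the global regularity problem, so the realistic target is the implication ``qualitative $\Rightarrow$ quantitative''.

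First I would argue by contradiction: if no such $F$ exists, then for some fixed $A, L, T_0 > 0$ there is a sequence of smooth periodic normalised-pressure solutions $(u^{(n)}, p^{(n)}, u_0^{(n)}, f^{(n)}, T^{(n)}, L)$ with ${\mathcal H}^1(u_0^{(n)}, f^{(n)}, T^{(n)}, L) \leq A$, $T^{(n)} \leq T_0$, but $M_n := \|u^{(n)}\|_{L^\infty_t H^1_x([0,T^{(n)}] \times \R^3/L\Z^3)} \to \infty$. By passing to a subsequence, extract weak limits $u_0^{(n)} \rightharpoonup u_0$ in $H^1_x(\R^3/L\Z^3)$, $f^{(n)} \rightharpoonup^* f$ in $L^\infty_t H^1_x([0,T_0] \times \R^3/L\Z^3)$, and $T^{(n)} \to T \in [0,T_0]$. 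On the torus, Rellich--Kondrachov upgrades the weak $H^1$ convergence of the initial data to strong $L^2$ convergence (and by interpolation, to strong convergence in any $H^s$ with $s<1$), and similarly for $f^{(n)}$ at almost every time.

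Second, I would apply Conjecture \ref{global-h1} to the limiting data $(u_0,f,T,L)$ to obtain a periodic $H^1$ mild solution $u$ on $[0,T]$ with finite $M := \|u\|_{L^\infty_t H^1_x}$. The goal is then to show $u^{(n)} \to u$ strongly in $L^\infty_t H^1_x$ on compact subintervals of $[0,T)$, which would force $M_n = M + o(1)$ and contradict $M_n \to \infty$. Stability on a short initial interval follows from Theorem \ref{lwp-h1} applied in a topology weaker than $H^1$ (say $H^{1/2}$), where the data do converge strongly; the uniform bound on $\|u^{(n)}\|_{L^\infty_t H^1_x}$ then lets one bootstrap back to strong $H^1$ convergence. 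One then iterates this local stability estimate across $[0,T]$ using the continuation criterion from local well-posedness, the upper bound $M$ playing the role of the barrier that keeps the iterated existence times uniformly bounded below.

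The main obstacle will be the stability step: propagating convergence across the full time interval when the approximating sequence is only weakly compact in $H^1$. The danger is that $u^{(n)}$ could lose $H^1$ energy by concentration or oscillation even on the torus; this is exactly the mechanism by which $M_n$ might diverge despite the limit $u$ being regular. Handling this cleanly probably requires either a profile-decomposition argument adapted to the periodic $H^1$ setting, or a more direct argument showing that any such concentration would produce, after rescaling and using the localised energy/enstrophy estimates flagged in the abstract, a nontrivial finite-energy blowup solution, again contradicting the qualitative hypothesis. Once concentration is ruled out, the strong convergence $u^{(n)} \to u$ in $L^\infty_t H^1_x$ on $[0,T]$ follows and the contradiction closes.
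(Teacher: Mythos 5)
Your proposal correctly treats this statement as a conjecture provable only conditionally, and your compactness-and-contradiction argument deducing it from Conjecture \ref{global-h1} is essentially the paper's proof of that direction of Theorem \ref{main}(i) in Section \ref{compact-sec}: negate the quantifiers, extract weak limits of the data, invoke the qualitative conjecture for the limit, and propagate stability across the time interval via the local well-posedness theory. The one refinement worth noting is that on the torus no profile decomposition is needed to rule out loss of $H^1$ compactness: the paper's Proposition \ref{compactness} first runs the stability argument at the $L^2$ level (where Rellich gives strong convergence of the initial data), then uses the $L^2_t H^1_x$ component of the resulting bound together with a pigeonhole in time and parabolic smoothing to find a time where the difference is small in $H^1$, after which the $H^1$ stability theory takes over.
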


\begin{remark} By rescaling, one may set $L=1$ in this conjecture without any loss of generality; by partitioning the time interval $[0,T_0]$ into smaller sub-intervals we may also simultaneously set $T_0=1$ if desired.  Thus, the key point is that the size of the data $A$ is allowed to be large (for small $A$ the conjecture follows from the local well-posedness theory, see Theorem \ref{lwp-h1}).
\end{remark}

As we shall soon see, Conjecture \ref{global-h1} and Conjecture \ref{global-h1-quant} are actually equivalent.

We now turn to the non-periodic setting.  In Conjecture \ref{global-schwartz}, the hypothesis that the initial data be Schwartz may seem unnecessarily restrictive, given that the incompressible nature of the fluid implies that the Schwartz property need not be preserved over time; also, there are many interesting examples of initial data that are smooth and finite energy (or $H^1$) but not Schwartz.  In particular, one can consider generalising Conjecture \ref{global-schwartz} to data that is merely smooth and $H^1$, or even smooth and finite energy, rather than Schwartz\footnote{We are indebted to Andrea Bertozzi for suggesting these formulations of the Navier-Stokes global regularity problem.} of Conjecture \ref{global-schwartz}.  Unfortunately, the naive generalisation of Conjecture \ref{global-schwartz} (or even Conjecture \ref{global-schwartz-homog}) fails instantaneously in this case:

\begin{theorem}[No smooth solutions from smooth $H^1$ data]\label{counter}  There exists smooth $u_0 \in H^1_x(\R^3)$ such that there does not exist any smooth finite energy solution $(u,p,u_0,0,T)$ with the indicated data for any $T>0$.
\end{theorem}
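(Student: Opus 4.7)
My plan is to construct a smooth divergence-free $u_0 \in H^1(\mathbf{R}^3)$ whose associated canonical pressure $p_0 := -\Delta^{-1} \partial_i \partial_j (u_{0,i} u_{0,j})$, defined via the second-order Riesz transforms, fails to be smooth on $\mathbf{R}^3$. Since any smooth finite energy solution is forced to carry this canonical pressure (up to a spatial constant), non-smoothness of $p_0$ will obstruct joint smoothness of $(u,p)$ at $t=0$, yielding the claimed non-existence.

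The first ingredient is a pressure-rigidity lemma, paralleling the normalization in Proposition \ref{reduction}: for any smooth finite energy solution $(u,p,u_0,0,T)$, $\nabla p(t,\cdot) = -\nabla\Delta^{-1}\partial_i\partial_j(u_i u_j)(t,\cdot)$ for each $t$. Taking the divergence of the Navier--Stokes equation gives $\Delta p = -\partial_i\partial_j(u_i u_j)$, so $p - p_{\mathrm{norm}}$ is harmonic in space; the finite-energy condition $u \in L^\infty_t L^2_x$ together with the averaged decay of $\nabla p$ forced by the momentum equation then forces the harmonic difference to be spatially constant via a Liouville-type argument.

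The second and central ingredient is the engineering of the bad datum. Although the Riesz transforms $R_i R_j$ are bounded on $L^p_x$ for $1 < p < \infty$, they do not in general map $C^\infty \cap L^p$ into $C^\infty$: smoothness of the Riesz image requires integrability of the higher derivatives of the input. I would take $u_0 = \nabla \times (\psi\, \mathbf{e}_3)$ with $\psi$ smooth and in $H^2$, with far-field behaviour of the oscillatory form $\psi(x) \sim |x|^{-a}\sin(|x|^b)\,g(x/|x|)$, tuning the exponents $a, b$ and angular profile $g$ so that $u_0 \in H^1$, yet the high derivatives of $u_0 \otimes u_0$ fail to lie in any $L^p_x$ space, and the formal expansion $\partial^\alpha R_i R_j(u_{0,i}u_{0,j})(x_*) = R_i R_j \partial^\alpha (u_{0,i} u_{0,j})(x_*)$ diverges at some specific $x_* \in \mathbf{R}^3$ for some multi-index $\alpha$, thereby certifying non-smoothness of $\nabla p_0$ at $x_*$.

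The argument closes by contradiction: if a smooth finite energy solution existed, then the pressure-rigidity lemma gives $\nabla p(0,\cdot) = \nabla p_0$, while smoothness of $u$ and the Navier--Stokes equation at $t=0$ yield
\[
\nabla p(0,x) = \Delta u_0(x) - (u_0 \cdot \nabla) u_0(x) - \partial_t u(0,x),
\]
which is smooth near $x_*$; this contradicts the non-smoothness of $\nabla p_0$ at $x_*$ produced in the previous step. The principal obstacle is the second ingredient: smoothness of $u_0\otimes u_0$ controls only pointwise regularity, not integrability of its derivatives, and one must quantitatively exploit oscillatory far-field behaviour to force the convolution formally defining a specific derivative of $\nabla p_0$ at $x_*$ to diverge.
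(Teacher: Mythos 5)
Your first ingredient (pressure rigidity up to a spatial constant) is fine and matches Lemma \ref{reduction}(i) in spirit, but your second, central ingredient cannot work: for smooth divergence-free $u_0 \in H^1_x(\R^3)$ the canonical pressure $p_0 = -\Delta^{-1}\partial_i\partial_j(u_{0,i}u_{0,j})$ is \emph{always} spatially smooth, so no choice of far-field oscillation will produce the non-smooth $\nabla p_0(x_*)$ you need. Indeed $F := u_{0,i}u_{0,j}$ is smooth and lies in $L^1_x(\R^3)$ (since $u_0 \in L^2_x$), and one evaluates $\nabla^k \Delta^{-1}\partial_i\partial_j F$ near a point $x_*$ by splitting the Newton potential into a local piece and a global piece: the global piece has all derivatives bounded and is convolved against an $L^1$ function (the $\partial^\alpha$ falls on the \emph{kernel}, which only decays faster, so no integrability of $\partial^\alpha F$ at infinity is ever required), while the local piece only sees $F$ on a compact neighbourhood of $x_*$, where all derivatives of $F$ are bounded. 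Your formal identity $\partial^\alpha R_iR_j F(x_*) = R_iR_j\partial^\alpha F(x_*)$ is exactly the step that is not legitimate globally, and its ``divergence'' is an artifact of an invalid integration by parts rather than a genuine singularity. This is consistent with Proposition \ref{almost-smooth} of the paper, which shows that smooth $H^1$ data always yields an \emph{almost smooth} solution with $p \in C^0_t C^k_x$ down to $t=0$: spatial regularity of the pressure at the initial time is automatic.

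The true obstruction, and the one the paper exploits, is \emph{temporal}: differentiating $p$ twice in time and using the equation converts $\partial_t u$ into $\Delta u$ plus lower-order terms, so $\partial_t^2 p(0,\cdot)$ involves $\Delta^{-1}\nabla^2(\Delta u_0\,\Delta u_0)$, and for $u_0$ that is smooth and in $H^1$ but not in $H^\infty$, the quantity $\Delta u_0$ can grow without bound at spatial infinity, destroying the $L^1$ control that saved the time-zero pressure. Concretely, the paper takes $u_0$ to be a sum of wave packets $u_1^{(n)}$ of frequency $n$ placed at points marching to infinity, normalised so that $\|u_1^{(n)}\|_{H^1_x} \to 0$ while $\|u_1^{(n)}\|_{H^2_x} \to \infty$, shows quantitatively that $|\int \nabla\partial_t^2 p^{(n)}(0,x)\psi(x)\,dx|$ blows up (Proposition \ref{quant-fail}), and then runs a Baire-category-style diagonal argument to produce a single smooth $H^1$ datum for which $t \mapsto \int\nabla p(t,x)\psi(x)\,dx$ fails to be twice differentiable at $t=0$. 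If you want to salvage your outline, you must redirect the non-smoothness from the spatial variable at $t=0$ to the time variable at $t=0$.
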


We prove this proposition in Section \ref{counter-sec}.  At first glance, this proposition looks close to being a negative answer to either Conjecture \ref{global-schwartz} or Conjecture \ref{global-schwartz-homog}, but it relies on a technicality; for smooth $H^1$ data, the second derivatives of $u_0$ need not be square-integrable, and this can cause enough oscillation in the pressure to prevent the pressure from being $C^2_t$ (or the velocity field from being $C^3_t$) at the initial time\footnote{For most evolutionary PDEs, one can gain unlimited time differentiability at $t=0$ assuming smooth initial data by differentiating the PDE in time (cf. the proof of the Cauchy-Kowalesky theorem).  However, the problem here is that the pressure $p$ in the Navier-Stokes equation does not obey an evolutionary PDE, but is instead determined in a non-local fashion from the initial data $u$ (see \eqref{pressure-point}), which prevents one from obtaining much time regularity of the pressure initially.} $t=0$.  This theorem should be compared with the classical local existence theorem of Heywood \cite{heywood}, which obtains smooth solutions for small \emph{positive} times from smooth data with finite enstrophy, but merely obtains continuity at the initial time $t=0$.

The situation is even worse in the inhomogeneous setting; the argument in Theorem \ref{counter} can be used to construct inhomogeneous smooth $H^1$ data whose solutions will now be non-smooth in time at all times, not just at the initial time $t=0$.  Because of this, we will not attempt to formulate a global regularity problem in the inhomogeneous smooth $H^1$ or inhomogeneous smooth finite energy categories.

In the homogeneous setting, though, we can get around this technical obstruction by introducing the notion of an \emph{almost smooth finite energy solution} $(u,p,u_0,f,T)$, which is the same concept as a smooth finite energy solution, but instead of requiring $u, p$ to be smooth on $[0,T] \times \R^3$, we instead require that $u,p$ are smooth on $(0,T] \times \R^3$, and for each $k \geq 0$, the functions $\nabla_x^k u, \partial_t \nabla_x^k u, \nabla_x^k p$ exist and are continuous on $[0,T] \times \R^3$.  Thus, the only thing that almost smooth solutions lack when compared to smooth solutions is a limited amount of time differentiability at the starting time $t=0$; informally, $u$ is only $C^1_t C^\infty_x$ at $t=0$, and $p$ is only $C^0_t C^\infty_x$ at $t=0$.  This is still enough regularity to interpret the Navier-Stokes equation \eqref{ns} in the classical sense, but is not a completely smooth solution.

The ``corrected'' conjectures for global regularity in the homogeneous smooth $H^1$ and smooth finite energy categories are then

\begin{conjecture}[Global almost regularity for homogeneous $H^1$]\label{global-enstrophy-homog}  Let $(u_0,0,T)$ be a smooth homogeneous $H^1$ set of data.  Then there exists an almost smooth finite energy solution $(u,p,u_0,0,T)$ with the indicated data.
\end{conjecture}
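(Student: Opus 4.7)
The plan is to deduce Conjecture \ref{global-enstrophy-homog} from Conjecture \ref{global-h1} (equivalently, from the quantitative a priori bound Conjecture \ref{global-h1-quant}), using the localised energy and enstrophy estimates advertised in the abstract as the main compactness tool. The paper's thesis is that Conjecture \ref{global-h1} is the ``master'' formulation, so working under that hypothesis is the natural starting point.

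First, I would approximate the given datum $u_0$ by a sequence of smooth, divergence-free, compactly supported fields $u_0^{(n)} \to u_0$ in $H^1_x(\R^3)$ (standard cutoff plus Leray projection). For each $n$ I pick a period $L_n \to \infty$ so that the support of $u_0^{(n)}$ fits inside a fundamental domain of $\R^3/L_n\Z^3$, and view $u_0^{(n)}$ as periodic $H^1$ data on that torus. Conjecture \ref{global-h1} then supplies a periodic $H^1$ mild solution $u^{(n)}$ with pressure $p^{(n)}$ given by \eqref{pressure-point}; standard parabolic smoothing for mild solutions (the usual bootstrap accompanying Theorem \ref{lwp-h1}) promotes each $u^{(n)}$ to a smooth periodic solution on $(0,T] \times \R^3/L_n\Z^3$.

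Second, I invoke the localised energy and enstrophy estimates of the paper. These should control, for each fixed ball $B \subset \R^3$ and each integer $k \geq 0$, the quantity $\|u^{(n)}\|_{L^\infty_t H^k_x(B)} + \|u^{(n)}\|_{L^2_t H^{k+1}_x(B)}$ in terms of localised quantities of the data together with the global $H^1$ norm, uniformly in $n$ once $L_n$ exceeds a threshold depending on $B$. A diagonal Arzel\`a-Ascoli extraction, coupled with interior parabolic regularity, then produces a subsequential limit $u$ that is $C^\infty$ on $(0,T] \times \R^3$, has finite energy, and solves \eqref{ns} classically on $(0,T] \times \R^3$ with $p$ reconstructed via \eqref{pressure-point} (interpreted on $\R^3$ through the Leray projection, as in \eqref{duhamel-2}).

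Finally, I upgrade $u$ to an almost smooth solution by extending the relevant derivatives continuously to $t=0$. Continuity of $\nabla_x^k u$ at $t=0$ follows by combining the smoothness of $u_0$ with the mild formulation and the localised $H^k$ bounds uniformly down to $t=0$. The time derivative identity $\partial_t u = \Delta u - (u\cdot\nabla)u - \nabla p$ then transfers this to continuity of $\partial_t \nabla_x^k u$ at $t=0$, once $\nabla_x^k p$ is shown continuous via $p(0,\cdot) = -\Delta^{-1}\partial_i\partial_j(u_{0,i} u_{0,j})$. The main obstacle is precisely this last step: one must extract exactly one continuous time derivative at $t=0$ (no more, by Theorem \ref{counter}) while handling the nonlocal pressure, which is why the \emph{localised} enstrophy estimates, rather than any single global a priori bound, are indispensable.
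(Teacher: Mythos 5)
Your overall strategy---deduce Conjecture \ref{global-enstrophy-homog} from Conjecture \ref{global-h1}---matches the paper's (via Theorem \ref{main}(iii)+(iv)), but your implementation has a genuine gap at the centre of the second step. Theorem \ref{enstrophy-loc} is \emph{not} an a priori interior bound: it requires the smallness condition \eqref{delta-4}, i.e.\ the local enstrophy $\delta$ of the data on the ball must satisfy $\delta^4 T \lesssim 1$. On a ball $B$ containing the bulk of $u_0$, $\delta$ is comparable to $\|u_0\|_{H^1}$, which is large, so the localised enstrophy estimate controls nothing there beyond a short initial time; it only yields uniform control near spatial infinity, where $\|\omega_0\|_{L^2}$ is small. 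So the claimed uniform-in-$n$ bounds on $\|u^{(n)}\|_{L^\infty_t H^k_x(B)}$ over all of $[0,T]$ do not follow from the localisation estimates. The only other source of such bounds is the hypothesis itself, but the quantitative form (Conjecture \ref{global-h1-quant}) gives a bound $F(A,L,T_0)$ that depends on the period, and you send $L_n \to \infty$; rescaling to $L=1$ does not help, since it inflates the $H^1$ norm of the data by $L_n^{1/2}$. A telltale sign that something must be wrong: your argument only ever invokes the \emph{homogeneous} periodic conjecture (your torus data has $f=0$), whereas the paper explicitly needs the inhomogeneous Conjecture \ref{global-h1} and states that it knows no way to pass from the homogeneous periodic problem to the non-periodic ones.

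The paper avoids this by arguing by contradiction at a \emph{fixed} scale determined by the putative singularity. Assuming an incomplete mild solution blows up at $T_*$, the exterior form of Theorem \ref{enstrophy-loc} (Remark \ref{inversion-again}) confines the blowup to a ball $B(0,R)$; Lemma \ref{divloc} then truncates the solution to be compactly supported, at the cost of introducing a forcing term $\tilde f \in L^\infty_t H^1_x$ supported in an annulus where the solution is already known to be regular; the truncated solution is embedded in a single torus of period $L>10R$, and Conjecture \ref{global-h1} is applied once (with that forcing term) to contradict the blowup. This yields Conjecture \ref{global-h1-spatial}, and the passage to an almost smooth solution at $t=0$ is then Proposition \ref{almost-smooth}, which is a short-time Duhamel bootstrap under \eqref{D4} and does not use the enstrophy localisation at all. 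Your third step is morally that proposition, so it is the middle of your argument, not the end, that needs to be restructured.
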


\begin{conjecture}[Global almost regularity for homogeneous finite energy data]\label{global-energy-homog}  Let $(u_0,0,T)$ be a smooth homogeneous finite energy set of data.  Then there exists an almost smooth finite energy solution $(u,p,u_0,0,T)$ with the indicated data.
\end{conjecture}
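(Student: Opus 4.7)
The plan is to deduce this from Conjecture \ref{global-h1} (the $H^1$ global well-posedness conjecture) via an approximation argument, using the paper's announced localised energy/enstrophy estimates to control the large-data, low-regularity regime.

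First I would reduce the non-periodic, finite-energy problem to the periodic $H^1$ problem by a dual truncation/periodisation procedure. Given smooth homogeneous finite-energy data $u_0$, I choose a sequence of scales $L_n \to \infty$ and a sequence of smooth cutoffs, and form divergence-free periodic approximants $u_0^{(n)}$ on $\R^3/L_n\Z^3$ that agree with $u_0$ on large balls and are globally $H^1$ (divergence-freeness is preserved by Leray-projecting after cutting off; any resulting small $L^2$ corrections can be absorbed since $u_0$ is smooth). For each $n$, Conjecture \ref{global-h1} supplies a periodic $H^1$ mild solution on $[0,T]$, and by the local well-posedness/bootstrap theory for the periodic category (Theorem \ref{lwp-h1}) this solution is in fact smooth in space-time for $t>0$ (and classically smooth up to $t=0$ since $u_0^{(n)}$ is smooth and in $H^1$).

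Next, I would pass to a local limit using the new localised estimates advertised in the abstract. The point is that the $L^\infty_t L^2_x$ energy inequality gives uniform global $L^2$ control on the $u^{(n)}$ from the uniform bound $\|u_0^{(n)}\|_{L^2} \lesssim \|u_0\|_{L^2}$, while the localised energy/enstrophy estimates give uniform $H^1_x$ (and higher) bounds on the $u^{(n)}$ on any compact subset of $(0,T] \times \R^3$, because on such a subset $u_0^{(n)}$ is eventually identical to $u_0$ and hence smooth, and the parabolic smoothing built into the localised enstrophy estimates propagates this local smoothness forward. A standard diagonal/Arzelà-Ascoli extraction then produces a limit $(u,p)$ which is smooth on $(0,T] \times \R^3$, solves \eqref{ns}--\eqref{div-free} classically on this open slab, satisfies the normalised pressure formula \eqref{pressure-point}, and inherits the global $L^\infty_t L^2_x$ bound so that it has finite energy. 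The periodicity washes out in the limit because the period $L_n \to \infty$, so the limit lives on $\R^3$.

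The main obstacle is to establish the required continuity at $t=0$, namely that $\nabla_x^k u$, $\partial_t \nabla_x^k u$, and $\nabla_x^k p$ extend continuously to $[0,T] \times \R^3$ and that $u(0,\cdot) = u_0$, without demanding the higher time-regularity ruled out by Theorem \ref{counter}. For $u(0,\cdot) = u_0$ and spatial derivatives thereof I would use the Duhamel representation \eqref{duhamel-2}: on a compact spatial set, the heat part $e^{t\Delta}u_0$ is smooth down to $t=0$ (since $u_0$ is smooth), and the Duhamel integral is controlled by the local bilinear estimate on $PB(u,u)$, which is continuous in $t$ down to $0$ because $u \in L^\infty_t L^2_x$ and is locally smooth in $x$ for each fixed $t$ near $0$ via the heat regularisation. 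The pressure continuity then follows from \eqref{pressure-point} once $u$ is locally $C^0_t C^\infty_x$, and the single time-derivative continuity of $\nabla_x^k u$ follows from the Navier-Stokes equation \eqref{ns} itself. The obstruction to any higher time regularity is precisely the non-$H^2$ nature of $u_0$ entering \eqref{pressure-point}, which is why the conclusion is only almost-smoothness rather than full smoothness — in perfect agreement with the counterexample of Theorem \ref{counter}.
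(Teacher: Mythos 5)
The central step of your argument does not work. You assert that the localised energy/enstrophy estimates give uniform $H^1_x$ (and higher) bounds on the approximants $u^{(n)}$ on every compact subset of $(0,T]\times\R^3$, ``because on such a subset $u_0^{(n)}$ is eventually identical to $u_0$ and hence smooth, and the parabolic smoothing built into the localised enstrophy estimates propagates this local smoothness forward.'' Theorem \ref{enstrophy-loc} is not a statement of that kind: it requires the smallness condition \eqref{delta-4}, $\delta^4 T+\delta^5 E^{1/2}T\le c$, where $\delta$ bounds the enstrophy of the \emph{data} on the ball in question. For generic smooth finite-energy $u_0$ and a fixed compact set, $\delta$ is just some finite number, so the estimate controls the local enstrophy only for a short time $T\lesssim\delta^{-4}$; it cannot propagate local regularity of the data up to an arbitrary prescribed time (if it could, it would essentially settle the regularity problem unconditionally, whereas the paper explicitly states that these estimates do not exclude singularities but only localise them in space). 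Nor do your hypotheses supply the missing uniformity: Conjecture \ref{global-h1} applied to the periodised data yields $L^\infty_t H^1_x$ bounds depending on ${\mathcal H}^1(u_0^{(n)},0,T,L_n)$ and on $L_n$, and since $u_0$ is only finite energy (not $H^1$), $\|u_0^{(n)}\|_{H^1}$ is unbounded in $n$ while $L_n\to\infty$. Without uniform local bounds the diagonal extraction collapses, and the same deficiency undermines your $t=0$ analysis: with only $u\in L^\infty_t L^2_x$, the nonlinearity $PB(u,u)=\bigO(\nabla(uu))$ is nowhere near regular enough to make the Duhamel integral continuous in $C^k_x$ down to $t=0$. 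A secondary point: cutting off and Leray-projecting does not produce data agreeing with $u_0$ on large balls (the projection is nonlocal), and the $L^2$ error of a genuinely supported divergence-free truncation costs a derivative (Lemma \ref{divloc}, estimate \eqref{quant}), which is exactly what finite-energy data lacks.

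For comparison, the paper's proof of this implication (Theorem \ref{energy-thm}) assumes only Conjecture \ref{global-enstrophy-homog} --- a weaker hypothesis than your Conjecture \ref{global-h1}, so that Theorem \ref{main}(v) is a genuine equivalence --- and is organised so that the localised estimates are invoked only where their hypotheses hold. It regularises the data in frequency on $\R^3$ ($u_0^{(n)}=P_{\le N_n}u_0$) rather than periodising, applies the hypothesis to each regularised datum, pigeonholes on the energy inequality to find times $\tau^{(n)}\to\tau$ with uniformly bounded $H^1$ norm, upgrades to uniform $H^{10}$ bounds at a nearby time $\tau'$ via Lemma \ref{quant-reg}, applies the hypothesis once more to the weak limit $u'_0$ of $u^{(n)}(\tau')$ to obtain a single reference solution on $[\tau',T]$, and then uses Theorem \ref{enstrophy-loc} in the exterior region (where the relevant enstrophy \emph{is} small), together with Lemma \ref{divloc} and the Lipschitz stability theory, to show the approximants converge to that reference solution; the behaviour at $t=0$ is handled by a genuinely short-time application of Theorems \ref{enstrophy-loc} and \ref{local} near each point $(0,x_0)$. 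Salvaging your approach would require importing essentially all of this machinery.
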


We carefully note that these conjectures only concern \emph{existence} of smooth solutions, and not uniqueness; we will comment on some of the uniqueness issues later in this paper.

Another way to repair the global regularity conjectures in these settings is to abandon smoothness altogether, and work instead with the notion of mild solutions.  More precisely, define a \emph{$H^1$ mild solution} $(u,p,u_0,f,T)$ 
to be fields $u,f: [0,T] \times \R^3 \to \R^3$, $p: [0,T] \times \R^3 \to \R$, $u_0: \R^3 \to \R^3$ with $0 < T< \infty$, obeying the regularity hypotheses
\begin{align*}
u_0 &\in H^1_x(\R^3) \\
f &\in L^\infty_t H^1_x([0,T] \times \R^3)\\
u &\in L^\infty_t H^1_x \cap L^2_t H^2_x([0,T] \times \R^3)
\end{align*}
with $p$ being given by \eqref{pressure-point}, which obey \eqref{div-free}, \eqref{div-free-data}, and \eqref{duhamel-1} (and thus \eqref{duhamel-2}).  Similarly define the concept of $H^1$ data $(u_0,f,T)$.

We then have the following conjectures in the homogeneous setting:

\begin{conjecture}[Global well-posedness in homogeneous $H^1$]\label{global-h1-r3} Let $(u_0,0,T)$ be a homogeneous $H^1$ set of data.  Then there exists a $H^1$ mild solution $(u,p,u_0,0,T)$ with the indicated data.
\end{conjecture}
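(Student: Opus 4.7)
My plan is to deduce Conjecture \ref{global-h1-r3} from the periodic well-posedness statement, Conjecture \ref{global-h1} (or equivalently its quantitative cousin Conjecture \ref{global-h1-quant}), by a periodisation-and-compactness argument powered by the localised energy and enstrophy estimates advertised in the abstract. This is consistent with the paper's overall goal of reducing a variety of Navier--Stokes formulations to the single periodic $H^1$ conjecture.

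For the first step I would periodise the data. Given divergence-free $u_0 \in H^1(\R^3)$, choose parameters $R \ll L$, cut off $u_0$ by a smooth bump $\chi_R$ localised to $\{|x| \leq R\}$, restore the divergence-free condition by applying the Leray projection, and then extend by periodicity to obtain periodic $H^1$ data $u_0^{(L,R)}$ on $\R^3/L\Z^3$. These approximations satisfy $\|u_0^{(L,R)}\|_{H^1} \lesssim \|u_0\|_{H^1(\R^3)}$ uniformly, and converge to $u_0$ in $H^1_{\mathrm{loc}}(\R^3)$ as $R \to \infty$. Conjecture \ref{global-h1} with $f=0$ then produces periodic $H^1$ mild solutions $u^{(L,R)}$ on $[0,T]$. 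The trouble is that the quantitative bound in Conjecture \ref{global-h1-quant} depends on $L$ and so degenerates as $L \to \infty$; this is why a purely soft argument cannot work.

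The second and crucial step is to replace this global-in-space bound by a \emph{localised} one: using the local energy/enstrophy estimates of the paper, control $u^{(L,R)}$ on each ball $\{|x| \leq K\}$ in $L^\infty_t H^1_x \cap L^2_t H^2_x$ by a quantity depending only on $\|u_0\|_{H^1(\{|x| \leq K'\})}$ for some slightly larger $K'$, on $K$ and on $T$, uniformly in $L$. Summing in $K$ against the global $H^1$ hypothesis on $u_0$ yields uniform-in-$L$ bounds in $L^\infty_t H^1_x \cap L^2_t H^2_x$ on compact subsets of $[0,T] \times \R^3$. Combined with a control of $\partial_t u^{(L,R)}$ in a negative Sobolev norm coming from the Duhamel formula \eqref{duhamel-1}, Aubin--Lions then extracts a subsequence converging locally in $L^2_t H^1_x$ to some $u$ on $[0,T]\times\R^3$. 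Passing to the limit in \eqref{duhamel-1} and reconstructing the pressure via \eqref{pressure-point} finishes the construction of the desired $H^1$ mild solution.

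The main obstacle is unambiguously this second step: producing an enstrophy-level local-in-space estimate for Navier--Stokes that does not pay the global $H^1$ norm of $u_0$ and that is stable as $L \to \infty$. The local energy inequality is classical and localises cleanly, but the local enstrophy identity involves the nonlocal pressure through $\Delta^{-1}$ together with a quadratic vortex-stretching term, so a genuine localisation argument must somehow tame this nonlocality. I would expect this to be where the paper's new technical input lies, and where essentially all of the effort in carrying out this plan would go.
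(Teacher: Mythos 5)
Your overall framing --- deducing Conjecture \ref{global-h1-r3} from the periodic conjecture \ref{global-h1} --- matches the paper's implication chain (Theorem \ref{main}(iii), (iv), (vi)), but the mechanism you propose has a genuine gap at precisely the step you flag as crucial, and the gap is not merely that the step is hard: as stated it is essentially equivalent to the unconditional conjecture. You ask the localised enstrophy estimate to control $u^{(L,R)}$ in $L^\infty_t H^1_x \cap L^2_t H^2_x$ on each ball $\{|x|\le K\}$ by a quantity depending only on $\|u_0\|_{H^1(\{|x|\le K'\})}$, $K$ and $T$, uniformly in $L$. Summing over $K$, this is exactly the \emph{a priori} bound of Conjecture \ref{global-h1-quant-r3}, which by Corollary \ref{max-cauchy} already implies global well-posedness with no periodic input at all; the periodic hypothesis would be doing no work beyond supplying the approximants. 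The paper's Theorem \ref{enstrophy-loc} cannot supply such a bound: it requires the smallness condition \eqref{delta-4}, $\delta^4 T + \delta^5 E(u_0,f,T)^{1/2} T \le c$, so it only applies where the local initial enstrophy is small relative to $T$ (or for short times). It is a statement that \emph{smallness} of local enstrophy propagates, not that large local enstrophy stays bounded in terms of local data. So your second step cannot be carried out with the tools in the paper, and the large-data difficulty is left entirely unaddressed.

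The paper's actual route inverts the roles of the two ingredients. One runs the non-periodic $H^1$ local theory (Theorem \ref{lwp-h1-r3}, Corollary \ref{max-cauchy}) up to a putative blowup time $T_*$; the local enstrophy estimate is applied only near spatial infinity, where $\|\omega_0\|_{L^2_x(\R^3\setminus B(0,R))}$ is small by monotone convergence, to conclude that the solution remains regular outside a fixed ball, i.e.\ that any blowup is spatially confined. One then truncates the \emph{solution} (not the data) to a ball $B(0,5R)$ using the divergence-free cutoff of Lemma \ref{divloc}, absorbs the truncation errors into a forcing term lying in $L^\infty_t H^1_x$, embeds this single compactly supported object into one fixed torus of period $L>10R$, and invokes the \emph{inhomogeneous} periodic conjecture to rule out blowup (Theorem \ref{enstrophy-thm}); Proposition \ref{almost-smooth} together with a short continuation argument then yields Conjecture \ref{global-h1-r3} via Conjectures \ref{global-h1-spatial} and \ref{global-enstrophy-homog}. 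There is no periodisation of the data and no $L\to\infty$ limit, so the uniform-in-$L$ local estimates your plan hinges on are never needed. Note also that the forcing term produced by the truncation is the reason the periodic conjecture must be assumed with $f\ne 0$ at $L^\infty_t H^1_x$ regularity; a reduction that only invokes the homogeneous periodic problem, as your periodisation of the data would, is exactly what the paper states it is unable to achieve.
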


\begin{conjecture}[\emph{A priori} homogeneous $H^1$ bound]\label{global-h1-quant-r3}  There exists a function $F: \R^+ \times \R^+ \to \R^+$ with the property that whenever $(u,p,u_0,0,T)$ is a smooth $H^1$ solution with $0 < T < T_0 < \infty$ and
$$ \|u_0\|_{H^1_x(\R^3)} \leq A < \infty$$
then
$$ \| u \|_{L^\infty_t H^1_x([0,T] \times \R^3)} \leq F( A, T_0 ).$$
\end{conjecture}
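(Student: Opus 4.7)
Since the statement is itself a conjecture (equivalent, at this level of generality, to the Navier-Stokes global regularity problem), my plan is to prove it conditionally, showing it to be equivalent to the earlier periodic quantitative bound (Conjecture \ref{global-h1-quant}), and hence to the qualitative Conjecture \ref{global-h1-r3}. The first stage is the soft equivalence between Conjecture \ref{global-h1-r3} and the quantitative statement above: the bound implies global existence via the continuation criterion from the local $H^1$ well-posedness theory, while the converse follows by a compactness argument --- a sequence of smooth $H^1$ solutions with $\|u_0\|_{H^1_x} \leq A$ whose $L^\infty_t H^1_x$ norm diverges along some $T_n \in [0,T_0]$ would, after extracting a subsequential weak limit of the data, produce a limit datum whose $H^1$ mild solution cannot be global, contradicting the qualitative hypothesis.

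The substantive stage is to deduce the $\R^3$ bound from the periodic bound. Given a smooth $H^1$ solution $(u,p,u_0,0,T)$ on $\R^3$ and a compact spacetime window $[0,T_0] \times B_R$ on which one wishes to control $u$, I would spatially truncate $u_0$ with a smooth cutoff supported in a ball of radius $L \gg R$, apply the Leray projection to restore \eqref{div-free-data}, and periodize to obtain a smooth periodic $H^1$ datum on $\R^3/2L\Z^3$ with norm $\lesssim A$. Conjecture \ref{global-h1-quant} then supplies a global periodic solution $u^{(L)}$. To recover information about $u$ on $[0,T_0] \times B_R$, one compares $u$ with $u^{(L)}$ on this window and passes to the limit $L \to \infty$.

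The hard part is carrying out this local comparison: the periodic bound $F(A, 2L, T_0)$ may depend badly on $L$, so one cannot simply pass to the limit inside it. Instead, as advertised in the abstract, one must develop localised energy and enstrophy estimates bounding the $H^1$ norm of a Navier-Stokes solution on $B_R$ in terms of the $H^1$ norm of its initial data on a slightly larger ball, plus a locally measured energy contribution, independently of behaviour at spatial infinity. The central analytic obstacle is the nonlocality of the pressure formula \eqref{pressure-point}: cutoffs do not commute cleanly with $\Delta^{-1} \partial_i \partial_j$, and the resulting commutator terms have to be split into a harmless far-field piece (estimable by global energy through the $L^2$-boundedness of the underlying singular integral) and a near-field piece absorbable by the dissipation. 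Once these localised estimates are in place, they apply uniformly to $u$ and to each $u^{(L)}$; combining them with a covering of $\R^3$ by balls (whose number is controlled by the globally conserved $L^2$ energy) and with the stability theory for $H^1$ mild solutions yields the desired bound $F(A, T_0)$, simultaneously establishing Conjectures \ref{global-h1-r3} and \ref{global-h1-quant-r3} as consequences of their periodic counterparts.
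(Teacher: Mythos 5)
The decisive gap is in your ``soft equivalence'' step, which is exactly where the real difficulty of this statement lives. On $\R^3$, extracting a subsequential weak $H^1_x$ limit of the data $u_0^{(n)}$ and contradicting the qualitative Conjecture \ref{global-h1-r3} does not work: the translation group $\tau_{x_0}$ acts non-compactly, so a bounded sequence of data can converge weakly to something that loses all the mass. If $u_0^{(n)} = \tau_{x_n} w_0$ with $|x_n| \to \infty$, the weak limit is $0$, whose mild solution is trivially global and bounded, while the solutions from $u_0^{(n)}$ are translates of the solution from $w_0$ and carry whatever growth that solution has; more generally the data can split into several profiles receding from one another plus a weakly vanishing remainder, and no single weak limit detects them, nor does weak convergence of large data give any control on the corresponding solutions. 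This is precisely why the paper replaces plain compactness by concentration compactness in Proposition \ref{compact}: one performs a profile decomposition $u_0^{(n)} = \sum_{j \leq J} \tau_{x_j^{(n)}} w_{j,0} + r_0^{(n)}$ with diverging centres and a remainder small in $L^\infty_x$, solves globally from each profile using Conjecture \ref{global-h1-r3}, solves from the remainder perturbatively, checks that the superposition is an approximate solution because the cross terms between separating profiles tend to zero in $L^2_t L^2_x$, and then invokes the stability theory of Theorem \ref{lwp-h1-r3}. Without this ingredient your argument only establishes the equivalence on a compact domain, which is the periodic case already treated in Section \ref{compact-sec}.

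Your second stage (truncation, periodization, localised enstrophy estimates) is broadly the route the paper takes to connect the periodic conjectures to the $\R^3$ ones, but the paper organizes it differently: Theorem \ref{enstrophy-thm} shows that under Conjecture \ref{global-h1} an incomplete mild $H^1$ solution cannot blow up, by using Theorem \ref{enstrophy-loc} to confine any blowup to a ball, truncating the \emph{solution} on an annulus where it is uniformly smooth (Lemma \ref{divloc}), and embedding in a torus; this gives Conjecture \ref{global-h1-spatial}, hence \ref{global-enstrophy-homog}, and the equivalences of Section \ref{quant-sec} then yield the present statement. Note also that your closing covering argument cannot by itself produce a time-global quantitative bound: the localised enstrophy estimate requires the smallness condition $\delta^4 T \lesssim 1$, so with local enstrophy bounded only by $A$ it controls the solution on a time interval of length $\sim A^{-4}$, which is no better than local well-posedness. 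The uniform-in-$T_0$ quantitative conclusion has to come from the concentration-compactness step, not from localisation.
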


We also phrase a global-in-time variant:

\begin{conjecture}[\emph{A priori} global homogeneous $H^1$ bound]\label{global-h1-quant-global}  There exists a function $F: \R^+ \to \R^+$ with the property that whenever $(u,p,u_0,0,T)$ is a smooth $H^1$ solution with 
$$ \|u_0\|_{H^1_x(\R^3)} \leq A < \infty$$
then
$$ \| u \|_{L^\infty_t H^1_x([0,T] \times \R^3)} \leq F( A ).$$
\end{conjecture}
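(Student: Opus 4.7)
The plan is to deduce Conjecture \ref{global-h1-quant-global} from the local-in-time bound of Conjecture \ref{global-h1-quant-r3} by exploiting the $L^2_x$ energy dissipation identity together with the classical small-data global well-posedness theory at the scaling-critical regularity $\dot H^{1/2}_x$. The core observation is that, for any smooth $H^1$ solution, the energy identity
$$\|u(t)\|_{L^2_x}^2 + 2\int_0^t \|\nabla u(s)\|_{L^2_x}^2\,ds = \|u_0\|_{L^2_x}^2$$
forces $\|\nabla u(s)\|_{L^2_x}$ to become arbitrarily small at some time in any sufficiently long interval; interpolated against the $L^2_x$ energy bound, this smallness upgrades to smallness in the critical space $\dot H^{1/2}_x$, from which point one may propagate an $H^1$ bound indefinitely.

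Concretely, fix $A>0$ and let $(u,p,u_0,0,T)$ be a smooth $H^1$ solution with $\|u_0\|_{H^1_x}\leq A$. Pick $T_*=T_*(A)$ (to be chosen). If $T\leq T_*$, Conjecture \ref{global-h1-quant-r3} applied with $T_0=T_*$ already gives $\|u\|_{L^\infty_t H^1_x([0,T])}\leq F(A,T_*)$, which is a function of $A$ alone. Otherwise, the dissipation inequality yields some $t_*\in[0,T_*]$ with $\|\nabla u(t_*)\|_{L^2_x}^2\leq A^2/(2T_*)$, and then by Gagliardo--Nirenberg interpolation
$$\|u(t_*)\|_{\dot H^{1/2}_x} \leq \|u(t_*)\|_{L^2_x}^{1/2}\|\nabla u(t_*)\|_{L^2_x}^{1/2} \leq A\,(2T_*)^{-1/4}.$$
Choosing $T_* = c_0 A^4$ with $c_0$ large enough forces this quantity below the Fujita--Kato small-data threshold $\eps_0$. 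Combining the bound on $[0,T_*]$ from Conjecture \ref{global-h1-quant-r3} with the small-data theory from time $t_*$ onwards produces the desired $A$-only bound on all of $[0,T]$.

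The main obstacle is the rigorous execution of the small-data step: one needs that smooth $H^1$ data whose critical norm is below $\eps_0$ generates a smooth $H^1$ solution admitting a \emph{global} $H^1$ bound controlled by the $H^1$ norm of the data at time $t_*$, which in turn is bounded by $F(A, T_*(A))$ from the previous step. This is a standard Picard iteration in a Kato-type function space adapted to $\dot H^{1/2}_x$, with an additional subcritical fixed-point argument in $H^1_x$ to propagate the higher regularity; the relevant estimates are those already used to prove the local well-posedness Theorem \ref{lwp-h1} cited earlier, applied on a half-infinite interval using the criticality of $\dot H^{1/2}_x$.

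Finally, one must justify concatenating the locally-controlled solution on $[0,T_*]$ with the globally-controlled small-data solution from $[t_*,\infty)$ to get back the original smooth solution on $[0,T]$; this is a matter of uniqueness in the mild $H^1$ category, which follows from the local theory and a standard continuation argument, and ensures that the two pieces indeed describe the same function $u$. Granting this, the resulting bound
$$\|u\|_{L^\infty_t H^1_x([0,T])} \leq \max\bigl(F(A,c_0 A^4),\; C\,F(A,c_0 A^4)\bigr) =: G(A)$$
establishes Conjecture \ref{global-h1-quant-global}.
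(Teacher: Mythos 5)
Your proposal is correct in outline and shares its key mechanism with the paper's argument: both exploit the energy dissipation bound of Lemma \ref{energy-est} together with the pigeonhole principle to locate a time $T_1 \lesssim_A 1$ at which $\|\nabla u(T_1)\|_{L^2_x}$ is as small as desired (depending on $A$), and both then observe that, combined with the $O_A(1)$ bound on $\|u(T_1)\|_{L^2_x}$, this puts the solution below a critical-regularity smallness threshold from which an $H^1$ bound can be propagated to all later times. Where you diverge is in how that smallness is exploited. You interpolate explicitly to $\dot H^{1/2}_x$ and invoke the Fujita--Kato small-data global theory as a black box, together with persistence of $H^1$ regularity. The paper instead stays entirely within its subcritical $X^0/X^1$ framework: on $[T_1,T]$ it writes $u = u_1 + v$ with $u_1(t) = e^{(t-T_1)\Delta}u(T_1)$ and runs a continuity estimate for $v$ in $X^1$, where the smallness enters through bilinear bounds of the shape $\|\bigO(u_1\nabla v)\|_{L^2_tL^2_x} \lesssim \|\nabla v\|_{X^0}\|\nabla u_1\|_{X^0}^{1/2}\|u_1\|_{X^0}^{1/2} \lesssim_A \eps^{1/2}\|v\|_{X^1}$ --- i.e.\ the same $L^2$--$\dot H^1$ interpolation you perform, but applied to spacetime norms of the free evolution rather than to the data at a single time. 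The paper's route buys self-containedness (it deliberately avoids critical well-posedness theory); yours buys brevity at the cost of importing external machinery. One caveat: your claim that the needed global small-data estimates are ``those already used to prove Theorem \ref{lwp-h1}'' is not accurate --- the subcritical bilinear estimate \eqref{bilinear-2} carries a factor $(t_1-t_0)^{1/4}$ and cannot be iterated over a long or half-infinite interval, so one genuinely needs scaling-critical estimates (Kato-type spaces, or the $L^\infty_t \dot H^{1/2}_x \cap L^2_t \dot H^{3/2}_x$ framework), which are standard but not contained in the paper's toolbox. With that reference corrected, and with the gluing handled by the uniqueness statement of Theorem \ref{lwp-h1-r3}(iii) as you indicate, the argument is sound.
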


In the inhomogeneous setting, we will state two slightly technical conjectures:

\begin{conjecture}[Global well-posedness from spatially smooth Schwartz data]\label{global-schwartz-spatial} Let $(u_0,f,T)$ be data obeying the bounds
$$ \sup_{x \in \R^3} (1+|x|)^k |\nabla_x^\alpha u_0(x)| < \infty$$
and
$$ \sup_{(t,x) \in [0,T] \times \R^3} (1+|x|)^k |\nabla_x^\alpha f(x)| < \infty$$
for all $k, \alpha \ge 0$.
Then there exists a $H^1$ mild solution $(u,p,u_0,f,T)$ with the indicated data.
\end{conjecture}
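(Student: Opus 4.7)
The plan is to deduce this statement from the periodic $H^1$ global well-posedness conjecture, Conjecture \ref{global-h1}, through a large-torus approximation coupled to the localised energy and enstrophy estimates advertised in the abstract. Observe first that the hypothesised spatial Schwartz bounds on $u_0$ and $f$ automatically place the data inside the $H^1$ class on $\R^3$, with $u_0 \in H^1_x(\R^3)$ and $f \in L^\infty_t H^1_x([0,T] \times \R^3)$; thus the only genuine obstacle to a direct application of Conjecture \ref{global-h1} is the noncompactness of $\R^3$ relative to the torus $\R^3/L\Z^3$.

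To construct periodic approximants, fix a smooth bump $\eta \in C^\infty_c(\R^3)$ with $\eta = 1$ on $\{|x| \leq 1/4\}$ and supported in $\{|x| \leq 1/3\}$, and for each integer $L \geq 1$ set $\eta_L(x) := \eta(x/L)$. Define divergence-free truncations $u_0^{(L)}$ and $f^{(L)}(t)$ supported in $\{|x| \leq L/3\}$ that agree with $u_0, f(t)$ on $\{|x| \leq L/4\}$, for instance by cutting off a smooth Schwartz vector potential of $u_0$ (and of $f(t)$) and then taking the curl. Extend these $L\Z^3$-periodically to the torus $\R^3/L\Z^3$. The spatial Schwartz decay ensures that $u_0^{(L)} \to u_0$ in $H^1_x$ and $f^{(L)} \to f$ in $L^\infty_t H^1_x$ as $L \to \infty$, with uniformly bounded $H^1$ norms, so Conjecture \ref{global-h1} produces periodic $H^1$ mild solutions $(u^{(L)}, p^{(L)}, u_0^{(L)}, f^{(L)}, T, L)$.

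The next step is a compactness argument to pass to the limit. The standard energy identity provides uniform bounds $\|u^{(L)}\|_{L^\infty_t L^2_x} + \|u^{(L)}\|_{L^2_t H^1_x} \lesssim E(u_0, f, T)^{1/2}$, while the paper's localised enstrophy estimates should give uniform-in-$L$ control of $\|u^{(L)}\|_{L^\infty_t H^1_x(B)} + \|u^{(L)}\|_{L^2_t H^2_x(B)}$ on each ball $B \subset \R^3$, once $L$ is large enough that $B$ sits well inside a fundamental domain of $\R^3/L\Z^3$. The Schwartz-type decay of the data then permits a far-field argument, applying the local enstrophy propagation to a partition of unity, to upgrade these bounds to genuine $L^\infty_t H^1_x \cap L^2_t H^2_x([0,T] \times \R^3)$ bounds. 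Combined with a negative-order Sobolev bound on $\partial_t u^{(L)}$ extracted from \eqref{ns} and \eqref{pressure-point}, an Aubin--Lions argument extracts a subsequence converging strongly in $L^2_t L^2_x$ on compacts and weakly in the natural Sobolev spaces to a limit $u$; defining $p$ by \eqref{pressure-point} on $\R^3$ and passing to the limit in the Duhamel formula \eqref{duhamel-1} then yields the desired $H^1$ mild solution, the strong local convergence being sufficient to handle the nonlinear term $P B(u^{(L)}, u^{(L)})$.

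The main obstacle, in my view, is obtaining the localised enstrophy estimates with constants that are uniform in the approximation parameter $L$ and robust enough to persist through long times, large data, and an inhomogeneous forcing term; the abstract suggests this is precisely the novel technical contribution of the paper. A secondary difficulty is quantitatively propagating enough spatial decay of the approximate solutions $u^{(L)}$ so that the globalised $H^1$ bound on $\R^3$ closes rather than merely a local one; this should again follow from the same localised estimates, summed against a weighted partition of unity exploiting the Schwartz decay of $u_0$ and $f$.
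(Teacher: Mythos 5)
The statement you are proving is a conjecture; the paper only establishes it \emph{conditionally} on Conjecture \ref{global-h1}, and you have correctly identified that this is the intended implication. However, your route has a genuine gap at its central step. You claim that ``the paper's localised enstrophy estimates should give uniform-in-$L$ control of $\|u^{(L)}\|_{L^\infty_t H^1_x(B)}+\|u^{(L)}\|_{L^2_t H^2_x(B)}$ on each ball $B$.'' They do not. Theorem \ref{enstrophy-loc} carries the smallness condition \eqref{delta-4}, $\delta^4 T + \delta^5 E^{1/2} T \leq c$, where $\delta$ bounds the \emph{local enstrophy of the data} on the ball in question. On balls where the data is large this condition fails for all but very short times, so the local enstrophy estimate gives nothing there; it is only useful in the far field, where the decay of $u_0$ and $\nabla\times f$ makes $\delta$ small. (If local enstrophy estimates alone gave uniform interior $H^1$ control, one would essentially have unconditional global regularity, which the paper explicitly disclaims.) The only other source of interior bounds in your scheme is Conjecture \ref{global-h1} itself, but via Conjecture \ref{global-h1-quant} this yields a bound $F(A,L,T_0)$ that \emph{depends on the period $L$}, and in your construction $L\to\infty$. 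Without uniform-in-$L$ interior bounds, your Aubin--Lions limit is at best a Leray--Hopf-type weak solution that is regular outside a compact set (essentially Proposition \ref{partial}), not the required $H^1$ mild solution.

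The paper sidesteps this by truncating the \emph{solution} rather than the \emph{data}. One runs the $\R^3$ evolution up to a putative enstrophy blowup time $T_*$ (Corollary \ref{max-cauchy}); the exterior version of Theorem \ref{enstrophy-loc} (Remark \ref{inversion-again}), where the smallness condition genuinely holds because of the decay of the data, shows the solution stays uniformly smooth on an annulus $B(0,5R)\setminus B(0,2R)$; Lemma \ref{divloc} then cuts $u$ off to a compactly supported divergence-free field agreeing with $u$ on $B(0,2R)$, at the cost of a modified forcing term $\tilde f \in L^\infty_t H^1_x$ supported in the annulus. This compactly supported solution embeds into a \emph{single fixed} torus of period $L>10R$, where one application of Conjecture \ref{global-h1} (together with uniqueness) forbids the blowup. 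That single fixed torus is what makes the $L$-dependence of the periodic bounds harmless. If you want to rescue a data-truncation/limiting scheme, you would have to first prove the quantitative $\R^3$ bound of Conjecture \ref{global-h1-quant-r3} with constants independent of $L$, which is the content of the concentration-compactness argument in Section \ref{quant-sec} and requires Conjecture \ref{global-h1-r3} on $\R^3$ as input, not the periodic conjecture.
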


\begin{conjecture}[Global well-posedness from spatially smooth $H^1$ data]\label{global-h1-spatial} Let $(u_0,f,T)$ be an $H^1$ set of data, such that 
$$ \sup_{x \in K} |\nabla_x^\alpha u_0(x)| < \infty$$
and
$$ \sup_{(t,x) \in [0,T] \times K} |\nabla_x^\alpha f(x)| < \infty$$
for all $\alpha \geq 0$ and all compact $K$.  Then there exists a $H^1$ mild solution $(u,p,u_0,f,T)$ with the indicated data.
\end{conjecture}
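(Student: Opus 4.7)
The plan is to derive this conjecture from Conjecture \ref{global-h1} (the periodic $H^1$ global well-posedness hypothesis on which the paper is centred), via a periodisation-and-limit argument made workable by the localised energy and enstrophy estimates advertised in the abstract. The overall strategy has three stages: (i) periodise the $\R^3$ data, (ii) apply the periodic hypothesis to get solutions on tori of period $L$, and (iii) send $L \to \infty$ and extract a limit that is an $H^1$ mild solution on $\R^3$.

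For stage (i), given spatially smooth $H^1$ data $(u_0,f,T)$ on $\R^3$, I would construct for each large $L$ periodic $H^1$ data $(u_0^{(L)},f^{(L)},T,L)$ that matches $(u_0,f)$ on the ball $B(0,L/4)$, remains divergence-free, and has $H^1$ norm controlled by that of the original uniformly in $L$. The delicate point is divergence-freeness under cutoff; the cleanest fix is to produce a vector potential $A_0$ with $u_0=\nabla\times A_0$ via Biot--Savart, cut $A_0$ off smoothly and periodise it, then take the curl to obtain $u_0^{(L)}$, and apply the same procedure time-slice by time-slice to the (divergence-free part of the) forcing. Because the data is locally smooth in $B(0,L/4)$, so is $u_0^{(L)}$ on a large neighbourhood of the origin in the torus. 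Conjecture \ref{global-h1} then produces periodic $H^1$ mild solutions $(u^{(L)},p^{(L)})$ with these data.

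For stages (ii)--(iii), I would deploy the localised energy/enstrophy estimates from the body of the paper to bound $u^{(L)}$ on any compact spacetime set $[0,T]\times K$ uniformly in $L$, in $L^\infty_t H^1_x \cap L^2_t H^2_x$, by a quantity depending only on the data in a slight enlargement of $K$ together with the \emph{global} $H^1$ norm of the true data (needed to preclude enstrophy flowing in from far away). Local smoothness of the periodised data, combined with interior parabolic bootstrap for \eqref{duhamel-1}, then upgrades these bounds to uniform $C^\infty_x$ bounds on compact sets. Aubin--Lions and a diagonal subsequence argument extract a limit $(u,p)$ converging in $H^1_{\mathrm{loc}}$ and uniformly on compact sets, which passes \eqref{duhamel-1} to the limit against compactly supported test functions. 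Fatou applied to the uniform local bounds, together with the fact that the estimating constants depend only on the true $\R^3$ data, upgrades the local control to the required global $L^\infty_t H^1_x \cap L^2_t H^2_x$ bound on $u$, with $p$ recovered from \eqref{pressure-point}.

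The main obstacle is the localised enstrophy estimate at stage (ii). Enstrophy lacks an obvious monotonicity, and because the pressure is determined nonlocally from $u$ by \eqref{pressure-point}, one cannot na\"ively cut $u$ off in space to reduce to a compact problem; the pressure from the truncated field disagrees with the true one by a harmonic error that is not small at the boundary. Establishing localised enstrophy bounds whose dependence on the outer data is controlled only through a global $H^1$ norm, and which do not deteriorate over the finite time interval $[0,T]$ even for large data, is the novel technical core and must be in place before the periodisation-limit scheme above can be executed.
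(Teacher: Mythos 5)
There is a genuine gap, and it sits exactly where you flag ``the novel technical core'': stage (ii) of your scheme requires a bound for $u^{(L)}$ in $L^\infty_t H^1_x \cap L^2_t H^2_x([0,T]\times K)$ on an \emph{arbitrary} compact $K$, uniform in $L$, for \emph{large} data over the \emph{whole} interval $[0,T]$, depending only on the $H^1$ norm of the data. No such estimate is available, conditionally or otherwise: if it were, taking $K$ to exhaust $\R^3$ would yield the unconditional a priori bound of Conjecture \ref{global-h1-quant-r3}, i.e.\ it would resolve the regularity problem outright. The localised enstrophy estimate that the paper actually proves (Theorem \ref{enstrophy-loc}) is much weaker than what you need: it carries the smallness hypothesis $\delta^4 T + \delta^5 E^{1/2} T \le c$ on the \emph{local initial enstrophy} $\delta$, so for large data and fixed $T$ it only controls the solution in regions where the initial enstrophy is already small --- in practice, the exterior of a large ball (via Remark \ref{inversion-again}) --- and says nothing about interior compact sets where the data is large. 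Moreover, Conjecture \ref{global-h1} gives existence on each torus but no bound uniform in $L$ (even its quantitative form, Conjecture \ref{global-h1-quant}, allows the bound $F(A,L,T_0)$ to degenerate as $L\to\infty$), so your compactness extraction has no uniform input to work with. The preliminary periodisation step also has an unaddressed issue (the cutoff vector potential $A_0=\Delta^{-1}(\nabla\times u_0)$ is only controlled modulo low frequencies, so $\|u_0^{(L)}\|_{L^2}$ need not be uniformly bounded), but that is repairable; the interior uniform bound is not.

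The paper's route (Theorem \ref{enstrophy-thm}) avoids the $L\to\infty$ limit entirely and uses the localisation in the opposite direction. One assumes blowup of a maximal mild solution at $T_*$ and uses the \emph{exterior} enstrophy estimate to show the solution stays bounded in $X^1$ outside a fixed ball $B(0,R)$, hence (by Proposition \ref{higher}/\ref{uni-ball}) is uniformly smooth on an annulus $B(0,5R)\setminus B(0,2R)$ up to time $T_*$. One then truncates the \emph{solution} (not the data) on that annulus using the divergence-free cutoff of Lemma \ref{divloc}, producing a compactly supported incomplete mild solution whose forcing error $\tilde f$ lives on the annulus and lies in $L^\infty_t H^1_x$ precisely because the solution is smooth there. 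This fits inside a \emph{single} torus of period $L>10R$, so Conjecture \ref{global-h1} applies once and precludes the blowup --- no uniformity in $L$ and no interior a priori bound is ever needed. If you want to salvage your argument, this is the pivot: localise the putative singularity rather than the data.
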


Needless to say, we do not establish\footnote{Indeed, the arguments here do not begin to address the main issue in any of these conjectures, namely the analysis of fine-scale (and turbulent) behaviour.  The results in this paper do not prevent singularities from occuring in the Navier-Stokes flow; but they can largely localise the impact of such singularities to a bounded region of space.} any of these conjectures unconditionally in this paper.  However, as the main result of this paper, we are able to establish the following implications:

\begin{theorem}[Implications]\label{main}\ 
\begin{itemize}
\item[(i)] Conjecture \ref{global-h1} and Conjecture \ref{global-h1-quant} are equivalent.
\item[(ii)] Conjecture \ref{global-h1} implies Conjecture \ref{global-periodic-normalised} (and hence also Conjecture \ref{global-periodic} and Conjecture \ref{global-periodic-homog}).
\item[(iii)] Conjecture \ref{global-h1} implies Conjecture \ref{global-h1-spatial}, which is equivalent to Conjecture \ref{global-schwartz-spatial}.
\item[(iv)] Conjecture \ref{global-h1-spatial} implies Conjecture \ref{global-enstrophy-homog} and Conjecture \ref{global-schwartz} (and hence also Conjectures \ref{global-schwartz-homog}).
\item[(v)] Conjecture \ref{global-enstrophy-homog} is equivalent to Conjecture \ref{global-energy-homog}.
\item[(vi)] Conjecture \ref{global-enstrophy-homog}, Conjecture \ref{global-h1-r3}, Conjecture \ref{global-h1-quant-r3}, and Conjecture \ref{global-h1-quant-global} are all equivalent to each other.
\end{itemize}
\end{theorem}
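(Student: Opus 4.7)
Most of the six parts reduce to three recurring techniques in different combinations: a contradiction-plus-compactness argument that trades qualitative existence for quantitative bounds, parabolic bootstrapping that promotes a mild $H^1$ solution to a smooth or almost-smooth one when the data is smooth, and an approximation argument in which the target data is approximated by data in a stronger category (periodic, Schwartz, compactly supported, etc.) and the resulting solutions are patched together using the localised energy and enstrophy estimates announced in the abstract. The genuinely new and hard ingredient is the last of these, and it is needed only in (iii) and the hard direction of (v); the remaining parts are soft PDE bookkeeping.

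For (i), the direction Conjecture \ref{global-h1}$\Rightarrow$Conjecture \ref{global-h1-quant} proceeds by contradiction: a hypothetical sequence of smooth periodic normalised-pressure solutions with $\mathcal H^1$-data of size at most $A$ on tori of period at most $L$ up to time $T_0$, whose $L^\infty_t H^1_x$ norms diverge, yields, after rescaling and extracting a weakly convergent subsequence of the data, an $H^1$ datum on the torus whose global mild solution (guaranteed by hypothesis) contradicts the asserted blow-up via the $H^1$ stability of the local theory (Theorem \ref{lwp-h1}). The reverse direction continues any mild $H^1$ solution indefinitely using the a priori bound and Theorem \ref{lwp-h1}. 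For (ii) and (iv), I apply Conjecture \ref{global-h1} or Conjecture \ref{global-h1-spatial} to obtain an $H^1$ mild solution and bootstrap: the normalised-pressure formula \eqref{pressure-point}, the Duhamel representation \eqref{duhamel-2}, and iterated gains of spatial derivatives from the heat semigroup upgrade the solution to full smoothness away from $t=0$; the smoothness of the initial data then propagates down to $t=0$ fully in the periodic case of (ii), and up to the one-time-derivative obstruction of Theorem \ref{counter} on $\R^3$, yielding only an almost-smooth solution in (iv). Conjecture \ref{global-schwartz} follows since Schwartz data is a special case of the hypothesis of Conjecture \ref{global-schwartz-spatial}.

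Part (iii) is the crux and the main obstacle. Given spatially-smooth $H^1$ data $(u_0,f,T)$ on $\R^3$, I construct periodic approximants $(u_0^{(L)},f^{(L)},T,L)$ on tori of large period $L$ by multiplying against a smooth bump of scale $L/2$ and reprojecting onto divergence-free fields via the Leray projection $P$. Conjecture \ref{global-h1} produces a periodic $H^1$ mild solution $u^{(L)}$ for each $L$; the new localised energy and enstrophy estimates then bound the $H^1$ norm of $u^{(L)}$ on any fixed ball $B_R\subset\R^3$ uniformly in $L$, with an error that vanishes as $L\to\infty$. This is precisely where no standard argument suffices, since the global $\mathcal H^1$ norm of the periodised datum typically grows with $L$ and so the quantitative bound from Conjecture \ref{global-h1-quant} alone gives no uniform control. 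A diagonal extraction yields a limiting $H^1$ mild solution on $\R^3$ with the desired data. The equivalence of Conjectures \ref{global-h1-spatial} and \ref{global-schwartz-spatial} is then by mutual approximation of the data, using the quantitative stability of the local $H^1$ theory.

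For (v), the direction Conjecture \ref{global-energy-homog}$\Rightarrow$Conjecture \ref{global-enstrophy-homog} is trivial since smooth $H^1$ data is smooth finite-energy data. For the converse, I truncate a smooth finite-energy datum $u_0$ to $u_0^R := P(\chi_R u_0)$, which is smooth and $H^1$, apply Conjecture \ref{global-enstrophy-homog} to obtain almost-smooth solutions $u^R$, and use the localised energy estimates (as in (iii)) to control $u^R$ on compact regions uniformly in $R$ before passing to the limit. For (vi), the equivalence Conjecture \ref{global-h1-r3}$\Leftrightarrow$Conjecture \ref{global-h1-quant-r3} is the analogue of (i); Conjecture \ref{global-h1-r3}$\Leftrightarrow$Conjecture \ref{global-enstrophy-homog} follows from the bootstrap of (iv) together with approximation of $H^1$ data by smooth $H^1$ data and the just-established quantitative bound; and Conjecture \ref{global-h1-quant-r3}$\Rightarrow$Conjecture \ref{global-h1-quant-global} uses the energy dissipation identity to locate a time $t_*$ at which $\|\nabla u(t_*)\|_{L^2_x}$ falls below the small-data global regularity threshold, concatenating the finite-time quantitative bound on $[0,t_*]$ with small-data global theory on $[t_*,\infty)$.
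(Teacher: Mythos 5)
Your outline of (i), (ii), (iv), and the trivial direction of (v) matches the paper, but there are three genuine gaps. The most serious is (iii). You propose to periodise the \emph{data} on tori of period $L\to\infty$ and claim that the localised energy and enstrophy estimates bound $u^{(L)}$ in $H^1$ on a fixed ball uniformly in $L$. They do not: the enstrophy localisation (Theorem \ref{enstrophy-loc}) only applies under the smallness condition \eqref{delta-4} relating the \emph{local} enstrophy $\delta$ of the data on the ball to $T$, so it controls the solution only in regions where the initial local enstrophy is small --- in practice, near spatial infinity. On a fixed ball where the data is large it gives nothing; a uniform-in-$L$ subcritical bound there would essentially be the regularity problem itself. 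The paper's argument runs in the opposite direction: assume the $\R^3$ mild solution blows up at $T_*$; use the enstrophy localisation in the exterior region (where the local enstrophy of the data \emph{is} small, by monotone convergence) together with Proposition \ref{higher} to show the solution is uniformly smooth on an annulus $B(0,5R)\setminus B(0,2R)$ up to time $T_*$; truncate the \emph{solution} there using Lemma \ref{divloc} (localisation of divergence-free fields, which requires the Stokes mean-zero condition \eqref{stokes}); absorb the truncation error into a forcing term $\tilde f\in L^\infty_t H^1_x$ supported in the annulus; embed the now compactly supported solution into a single torus of period $L>10R$; and invoke Conjecture \ref{global-h1} to contradict blowup inside $B(0,R)$. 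This is precisely why the periodic conjecture must admit forcing terms of regularity only $L^\infty_t H^1_x$ --- a feature your homogeneous periodisation scheme would never need, which is a sign it cannot be the right reduction.

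Two further gaps. In the hard direction of (v), extracting a weak limit of the truncated solutions with only uniform local energy control produces a Leray--Hopf weak solution, not an almost smooth one; the paper must pigeonhole in time to find $\tau'$ where the approximants are uniformly bounded in $H^{10}_x$, re-apply Conjecture \ref{global-enstrophy-homog} to the limiting data at time $\tau'$, and then run a \emph{localised} stability argument (again truncating via Lemma \ref{divloc} and introducing a small forcing term) to upgrade weak convergence to strong local $X^1$ convergence, since the standard stability theory of Theorem \ref{lwp-h1-r3} requires global $H^1_x$ convergence of the data, which fails here. A separate argument is then needed for continuity of $\nabla^k u$, $\nabla^k p$, $\partial_t\nabla^k u$ down to $t=0$. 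Finally, in (vi) the equivalence of Conjectures \ref{global-h1-r3} and \ref{global-h1-quant-r3} is \emph{not} simply "the analogue of (i)": on $\R^3$ a bounded sequence of data can escape to spatial infinity under translations, so weak compactness alone loses the solution. The paper uses concentration compactness --- a profile decomposition $u_0^{(n)}=\sum_j \tau_{x_j^{(n)}}w_{j,0}+r_0^{(n)}$ with diverging centres and an $L^\infty_x$-small remainder --- solves each profile globally by hypothesis, shows the cross terms vanish in $L^2_tL^2_x$ because the centres separate, and concludes by perturbation. Your proposal omits this ingredient entirely.
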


The logical relationship between these conjectures given by the above implications (as well as some trivial implications, and the equivalences in \cite{tao-quantitative}) are displayed in Figure \ref{imp}.

\begin{figure}[tb]
\includegraphics[scale=0.75]{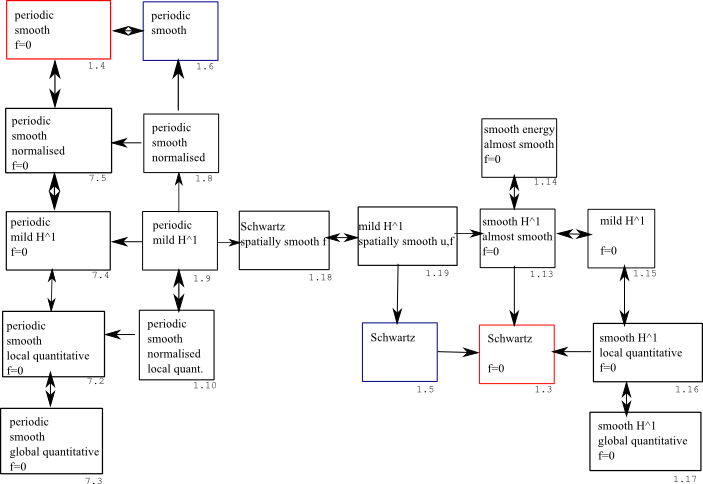}
\caption{Known implications between the various conjectures described here (i.e. existence of smooth or mild solutions, or local or global quantitative bounds in the periodic, Schwartz, $H^1$, or finite energy categories, with or without normalised pressure, and with or without the $f=0$ condition) and also in \cite{tao-quantitative} (the latter conjectures and implications occupy the far left column).  A positive solution to the red problems, or a negative solution to the blue problems, qualify for the Clay Millennium prize as stated in \cite{feff}.}
\label{imp}
\end{figure}

Among other things, these results essentially show that in order to solve the Navier-Stokes global regularity problem, it suffices to study the periodic setting (but with the caveat that one now has to consider forcing terms with the regularity of $L^\infty_t H^1_x$).

Theorem \ref{main}(i) is a variant of the compactness arguments used in \cite{tao-quantitative} (see also \cite{gallagher}, \cite{rusin}), and is proven in Section \ref{compact-sec}.  
Part (ii) of this theorem is a standard consequence of the periodic $H^1$ local well-posedness theory, which we review in Section \ref{local-h1}. In the homogeneous $f=0$ case it is possible to reverse this implication by the compactness arguments mentioned previously; see \cite{tao-quantitative}.  However, we were unable to obtain this converse implication in the inhomogeneous case.  Part (iv) is similarly a consequence of the non-periodic $H^1$ local well-posedness theory, and is also proven in Section \ref{local-h1}.

Part (vi) is also a variant of the results in \cite{tao-quantitative}, with the main new ingredient being a use of concentration compactness instead of compactness in order to deal with the unboundedness of the spatial domain $\R^3$, using the methods from \cite{bahouri}, \cite{gerard}, \cite{gallagher}.  We establish these results in Section \ref{quant-sec}.

The more novel aspects of this theorem are parts (iii) and (v), which we establish in Section \ref{enstrophy-sec} and Section \ref{energy-sol-sec} respectively.  These results rely primarily on a new localised enstrophy inequality (Theorem \ref{enstrophy-loc}) which can be viewed as a weak version of finite speed of propagation\footnote{Actually, in our setting, ``finite distance of propagation'' would be more accurate; we obtain an $L^1_t$ bound for the propagation velocity (see Proposition \ref{bounded-speed}) rather than an $L^\infty_t$ bound.} for the enstrophy $\frac{1}{2} \int_{\R^3} |\omega(t,x)|^2\ dx$, where $\omega := \nabla \times u$ is the vorticity.  We will also obtain a similar localised energy inequality for the energy $\frac{1}{2} \int_{\R^3} |u(t,x)|^2\ dx$, but it will be the enstrophy inequality that is of primary importance to us, as the enstrophy is a subcritical quantity and can be used to obtain regularity (and local control on enstrophy can similarly be used to obtain local regularity).  Remarkably, one is able to obtain local enstrophy inequalities even though the only \emph{a priori} controlled quantity, namely the energy, is supercritical; the main difficulty is a harmonic analysis one, namely to control nonlinear effects primarily in terms of the local enstrophy and only secondarily in terms of the energy.  

\begin{remark} As one can see from Figure \ref{imp}, the precise relationship between all the conjectures discussed here is rather complicated.  However, if one is willing to ignore the distinction between homogeneous and inhomogeneous data, as well as the (rather technical) distinction between smooth and almost smooth solutions, then the main implications can then be informally summarised as follows:
\begin{itemize}
\item (Homogenisation)  Without pressure normalisation, the inhomogeneity in the periodic global regularity conjecture is irrelevant: the inhomogeneous regularity conjecture is equivalent to the homogeneous one.
\item (Localisation) The global regularity problem in the Schwartz, $H^1$, and finite energy categories are ``essentially'' equivalent to each other;
\item (More localisation) The global regularity problem in any of the above three categories is ``essentially'' a consequence of the global regularity problem in the periodic category; and
\item (Concentration compactness) Quantitative and qualitative versions of the global regularity problem (in a variety of categories) are ``essentially'' equivalent to each other.
\end{itemize}
The qualifier ``essentially'' here though needs to be taken with a grain of salt; again, one should consult Figure \ref{imp} for an accurate depiction of the implications.
\end{remark}

The local enstrophy inequality has a number of other consequences, for instance allowing one to construct Leray-Hopf weak solutions whose (spatial) singularities are compactly supported in space; see Proposition \ref{partial}.

\begin{remark} Since the submission of this manuscript, the referee pointed out that the partial regularity theory of Cafarelli, Kohn, and Nirenberg \cite{ckn} also allows one to partially reverse the implication in Theorem \ref{main}(iii), and more specifically to deduce Conjecture \ref{global-periodic-normalised} from Conjecture \ref{global-h1-spatial}.  We sketch the referee's argument in Remark \ref{ref}.
\end{remark}

\subsection{Acknowledgements}

The author is supported by NSF Research Award CCF-0649473, the NSF Waterman Award and a grant from the MacArthur Foundation.  The author is also indebted to Andrea Bertozzi for valuable discussions, and in particular for raising the question of whether the answer to the global regularity problem of Navier-Stokes equation is sensitive to the decay hypotheses on the initial velocity field.  We thank Xiaoyi Zhang and the anonymous referee for corrections and suggestions.

\section{Notation and basic estimates}\label{notation-sec}

We use $X \lesssim Y$, $Y \gtrsim X$, or $X=O(Y)$ to denote the estimate $X \leq CY$ for an absolute constant $C$.  If we need $C$ to depend on a parameter, we shall indicate this by subscripts, thus for instance $X \lesssim_s Y$ denotes the estimate $X \leq C_s Y$ for some $C_s$ depending on $s$.  We use $X \sim Y$ as shorthand for $X \lesssim Y \lesssim X$.

We will occasionally use the Einstein summation conventions, using Roman indices $i,j$ to range over the three spatial dimensions $1,2,3$, though we will not bother to raise and lower these indices; for instance, the components of a vector field $u$ will be $u_i$.  We use $\partial_i$ to denote the derivative with respect to the $i^{\operatorname{th}}$ spatial coordinate $x_i$.  Unless otherwise specified, the Laplacian $\Delta = \partial_i \partial_i$ will denote the spatial Laplacian.  (In Lemma \ref{divloc}, though, we will briefly need to deal with the Laplace-Beltrami operator $\Delta_{S^2}$ on the sphere $S^2$.)  Similarly, $\nabla$ will refer to the spatial gradient $\nabla = \nabla_x$ unless otherwise stated.  We use the usual notations $\nabla f$, $\nabla \cdot u$, $\nabla \times u$, for the gradient, divergence, or curl of a scalar field $f$ or a vector field $u$.

It will be convenient (particularly when dealing with nonlinear error terms) to use \emph{schematic notation}, in which an expression such as $\bigO( u v w )$ involving some vector or tensor-valued quantities $u,v,w$ denotes some constant-coefficient combination of products of the components of $u, v, w$ respectively, and similarly for other expressions of this type.  Thus, for instance, $\nabla \times \nabla \times u$ could be written schematically as $\bigO( \nabla^2 u )$, $|u \times v|^2$ could be written schematically as $\bigO( u u v v )$, and so forth. 

For any centre $x_0 \in \R^3$ and radius $R>0$, we use $B(x_0,R) := \{ x \in \R^3: |x-x_0| \leq R \}$ to denote the (closed) Euclidean ball.  Much of our analysis will be localised to a ball $B(x_0,R)$, an annulus $B(x_0,R) \backslash B(x_0,r)$, or an exterior region $\R^3 \backslash B(x_0,R)$ (and often $x_0$ will be normalised to the origin $0$).

We define the absolute value of a tensor in the usual Euclidean sense.  Thus, for instance, if $u=u_i$ is a vector field, then $|u|^2 = u_i u_i$, $|\nabla u|^2 = (\partial_i u_j) (\partial_i u_j)$, $|\nabla^2 u|^2 = (\partial_i \partial_j u_k) (\partial_i \partial_j u_k)$, and so forth.

If $E$ is a set, we use $1_E$ to denote the associated indicator function, thus $1_E(x)=1$ when $x \in E$ and $1_E(x)=0$ otherwise.  We sometimes also use a statement in place of $E$; thus for instance $1_{k \neq 0}$ would equal $1$ if $k \neq 0$ and $0$ when $k=0$.

We use the usual Lebesgue spaces $L^p(\Omega)$ for various domains $\Omega$ (usually subsets of Euclidean space $\R^3$ or a torus $\R^3/L\Z^3$) and various exponents $1 \leq p \leq \infty$, which will always be equipped with an obvious Lebesgue measure.  We often write $L^p(\Omega)$ as $L^p_x(\Omega)$ to emphasise the spatial nature of the domain $\Omega$.
Given an absolutely integrable function $f \in L^1_x(\R^3)$, we define the Fourier transform $\hat f: \R^3 \to \C$ by the formula
$$ \hat f(\xi) := \int_{\R^3} e^{-2\pi i x \cdot \xi} f(x)\ dx;$$
we then extend this Fourier transform to tempered distributions in the usual manner.  For a function $f$ which is periodic with period $1$, and thus representable as a function on the torus $\R^3/\Z^3$, we define the discrete Fourier transform $\hat f: \Z^3 \to \C$ by the formula
$$ \hat f(k) := \int_{\R^3/\Z^3} e^{-2\pi i k \cdot x} f(x)\ dx$$
when $f$ is absolutely integrable on $\R^3/\Z^3$, and extend this to more general distributions on $\R^3/\Z^3$ in the usual fashion.
Strictly speaking, these two notations are not compatible with each other, but it will always be clear in context whether we are using the non-periodic or the periodic Fourier transform.

For any spatial domain $\Omega$ (contained in either $\R^3$ or $\R^3/L\Z^3$) and any natural number $k \geq 0$, we define the classical Sobolev norms $\|u\|_{H^k_x(\Omega)}$ of a smooth function $u: \Omega \to \R$ by the formula
$$ \|u\|_{H^k_x(\Omega)} := \left(\sum_{j=0}^k \| \nabla^j u \|_{L^2_x(\Omega)}^2\right)^{1/2},$$
and say that $u \in H^k_x(\Omega)$ when $\|u\|_{H^k_x(\Omega)}$ is finite.  Note that we do not impose any vanishing conditions at the boundary of $\Omega$, and to avoid technical issues we will not attempt to define these norms for non-smooth functions $u$ in the event that $\Omega$ has a non-trivial boundary.  In the domain $\R^3$ and for $s \in \R$, we define the Sobolev norm $\|u\|_{H^s_x(\R^3)}$ of a tempered distribution $u: \R^3 \to \R$ by the formula
$$ \|u\|_{H^s_x(\R^3)} := \left(\int_{\R^3} (1+|\xi|^2)^s |\hat u(\xi)|^2\ d\xi\right)^{1/2}.$$
Strictly speaking, this conflicts slightly with the previous notation when $k$ is a non-negative integer, but the two norms are equivalent up to constants (and both norms define a Hilbert space structure), so the distinction will not be relevant for our purposes.  For $s>-3/2$, we also define the homogeneous Sobolev norm
$$ \|u\|_{\dot H^s_x(\R^3)} := \left(\int_{\R^3} |\xi|^{2s} |\hat u(\xi)|^2\ d\xi\right)^{1/2},$$
and let $H^s_x(\R^3), \dot H^s_x(\R^3)$ be the space of tempered distributions with finite $H^s_x(\R^3)$ or $\dot H^s_x(\R^3)$ norm respectively.
Similarly, on the torus $\R^3/\Z^3$ and $s \in \R$, we define the Sobolev norm $\|u\|_{H^s_x(\R^3/\Z^3)}$ of a distribution $u: \R^3/\Z^3 \to \R$ by the formula
$$ \|u\|_{H^s_x(\R^3/\Z^3)} := \left(\sum_{k \in \Z^3} (1+|k|^2)^s |\hat u(k)|^2\right)^{1/2};$$
again, this conflicts slightly with the classical Sobolev norms $H^k_x(\R^3/\Z^3)$, but this will not be a serious issue in this paper.  We define $H^s_x(\R^3/\Z^3)$ to be the space of all distributions $u$ with finite $H^s_x(\R^3/\Z^3)$ norm, and $H^s_x(\R^3/\Z^3)_0$ to be the codimension one subspace
of functions or distributions $u$ which are mean zero in the sense that $\hat u(0)=0$.

In a similar vein, given a spatial domain $\Omega$ and a natural number $k \geq 0$, we define $C^k_x(\Omega)$ to be the space of all $k$ times continuously differentiable functions $u: \Omega \to \R$ whose norm
$$ \|u\|_{C^k_x(\Omega)} := \sum_{j=0}^k \|\nabla^j u\|_{L^\infty_x(\Omega)}$$
is finite\footnote{Note that if $\Omega$ is non-compact, then it is possible for a smooth function to fail to lie in $C^k(\Omega)$ if it becomes unbounded or excessively oscillatory at infinity.  One could use a notation such as $C^k_{x,\operatorname{loc}}(\Omega)$ to describe the space of functions that are $k$ times continuously differentiable with no bounds on derivatives, but we will not need such notation here.}.

Given any spatial norm $\| \|_{X_x(\Omega)}$ associated to a function space $X_x$ defined on a spatial domain $\Omega$, and a time interval $I$, we can define mixed-norms $\|u\|_{L^p_t X_x(I \times \Omega)}$ on functions $u: I \times \Omega \to \R$ by the formula
$$ \|u\|_{L^p_t X_x(I \times \Omega)} := (\int_I \|u(t)\|_{X_x(\Omega)}^p \ dt)^{1/p}$$
when $1 \leq p < \infty$, and
$$ \|u\|_{L^\infty_t X_x(I \times \Omega)} := \operatorname{ess} \sup_{t \in I} \|u(t)\|_{X_x(\Omega)},$$
assuming in both cases that $u(t)$ lies in $X(\Omega)$ for almost every $\Omega$, and then let $L^p_t X_x(I \times \Omega)$ be the space of functions (or, in some cases, distributions) whose $L^p_t X_x(I \times \Omega)$ is finite.  Thus, for instance, $L^\infty_t C^2_x(I \times \Omega)$ would be the space of functions $u: I \times \Omega \to \R$ such that for almost every $x \in I$, $u(t): \Omega \to \R$ is in $C^2_x(\Omega)$, and the norm
$$ \|u\|_{L^\infty_t C^2_x(I \times \Omega)} := \operatorname{ess} \sup_{t \in I} \|u(t)\|_{C^2_x(\Omega)}$$
is finite.

Similarly, for any natural number $k \geq 0$, we define $C^k_t X_x(I \times \Omega)$ to be the space of all functions $u: I \times \Omega \to \R$ such that the curve $t \mapsto u(t)$ from $I$ to $X_x(\Omega)$ is $k$ times continuously differentiable, and that the norm
$$ \| u \|_{C^k_t X_x(I \times \Omega)} := \sum_{j=0}^k \| \nabla^j u \|_{L^\infty_t X_x(I \times \Omega)}$$
is finite.

Given two normed function spaces $X, Y$ on the same domain (in either space or spacetime), we can endow their intersection $X \cap Y$ with the norm
$$ \| u \|_{X \cap Y} := \|u\|_X + \|u\|_Y.$$
For us, the most common example of such hybrid norms will be the spaces
\begin{equation}\label{x-def}
 X^s(I \times \Omega) := L^\infty_t H^s_x(I \times \Omega) \cap L^2_x H^{s+1}_x(I \times \Omega),
\end{equation}
defined whenever $I$ is a time interval, $s$ is a natural number, and $\Omega$ is a spatial domain, or whenever $I$ is a time interval, $s$ is real, and $\Omega$ is either $\R^3$ or $\R^3/\Z^3$.  The $X^s$ spaces (particularly $X^1$) will play a prominent role in the (subcritical) local well-posedness theory for the Navier-Stokes equations; see Section \ref{local-h1}.  The space $X^0$ will also be naturally associated with energy estimates, and the space $X^1$ with enstrophy estimates.

All of these above function spaces can of course be extended to functions that are vector or tensor-valued without difficulty (there are multiple ways to define the norms in these cases, but all such definitions will be equivalent up to constants).

We use the Fourier transform to define a number of useful multipliers on $\R^3$ or $\R^3/\Z^3$.  On $\R^3$, we formally define the inverse Laplacian operator $\Delta^{-1}$ by the formula
\begin{equation}\label{fax}
 \widehat{\Delta^{-1} f}(\xi) := \frac{-1}{4\pi^2 |\xi|^2} \hat f(\xi),
\end{equation}
which is well-defined for any tempered distribution $f: \R^3 \to \R$ for which the right-hand side of \eqref{fax} is locally integrable.  This is for instance the case if $f$ lies in $k^{\operatorname{th}}$ derivative of a function in $L^1_x(\R^3)$ for some $k \geq 0$, or the $k^{\operatorname{th}}$ derivative of a function in $L^2_x(\R^3)$ for some $k \geq 1$.  If $f \in L^1_x(\R^3)$, then as is well known one has the Newton potential representation
\begin{equation}\label{newton}
 \Delta^{-1} f(x) = \frac{-1}{4\pi} \int_{\R^3} \frac{f(y)}{|x-y|}\ dy.
\end{equation}
Note in particular that \eqref{newton} implies that if $f \in L^1_x(\R^3)$ is supported on some closed set $K$, then $\Delta^{-1} f$ will be smooth away from $K$.  Also observe from Fourier analysis (and decomposition into local and global components) that if $f$ is smooth and is either the $k^{\operatorname{th}}$ derivative of a function in $L^1_x(\R^3)$ for some $k \geq 0$, or the $k^{\operatorname{th}}$ derivative of a function in $L^2_x(\R^3)$ for some $k \geq 1$, then $\Delta^{-1} f$ will be smooth also.

We also note that the Newton potential $-\frac{1}{4\pi|x-y|}$ is smooth away from the diagonal $x=y$.  Because of this, we will often be able to obtain large amounts of regularity in space in the ``far field'' region when $|x|$ is large, for fields such as the velocity field $u$.  However, it will often be significantly more challenging to gain significant amounts of regularity in \emph{time}, because the inverse Laplacian $\Delta^{-1}$ has no smoothing properties in the time variable. 

On $\R^3/\Z^3$, we similarly define the inverse Laplacian operator $\Delta^{-1}$ for distributions $f: \R^3/\Z^3 \to \R$ with $\hat f(0)=0$ by the formula
\begin{equation}\label{fax-k}
 \widehat{\Delta^{-1} f}(k) := \frac{-1_{k \neq 0}}{4\pi^2 |k|^2} \hat f(k).
\end{equation}

We define the \emph{Leray projection} $Pu$ of a (tempered distributional) vector field $u: \R^3 \to \R^3$ by the formula
$$ Pu := \Delta^{-1}( \nabla \times \nabla \times u).$$
If $u$ is square-integrable, then $Pu$ is the orthogonal projection of $u$ onto the space of square-integrable divergence-free vector fields; from Calder\'on-Zygmund theory we know that the projection $P$ is bounded on $L^p_x(\R^3)$ for every $1 < p < \infty$, and from Fourier analysis we see that $P$ is also $H^s_x(\R^3)$ for every $s \in \R$.  Note that if $u$ is square-integrable and divergence-free, then $Pu=u$, and we thus have the \emph{Biot-Savart law}
\begin{equation}\label{biot}
u = \Delta^{-1}( \nabla \times \omega )
\end{equation}
where $\omega := \nabla \times u$.

In either $\R^3$ or $\R^3/L\Z^3$, we let $e^{t\Delta}$ for $t>0$ be the usual heat semigroup associated to the heat equation $u_t = \Delta u$.  On $\R^3$, this takes the explicit form
$$ e^{t\Delta} f(x) = \frac{1}{(4\pi t)^{3/2}} \int_{\R^3} e^{-|x-y|^2/4t} f(y)\ dy$$
for $f \in L^p_x(\R^3)$ for some $1 \leq p \leq \infty$. From Young's inequality we thus record the dispersive inequality
\begin{equation}\label{dispersive}
\| e^{t\Delta} f\|_{L^q(\R^3)} \lesssim t^{\frac{3}{2q} - \frac{3}{2p}} \|f\|_{L^p(\R^3)}
\end{equation}
whenever $1 \leq p \leq q \leq \infty$ and $t>0$.

We recall \emph{Duhamel's formula}
\begin{equation}\label{duhamel}
u(t) = e^{(t-t_0)\Delta} u(t_0) + \int_{t_0}^t e^{(t-t')\Delta} (\partial_t u - \Delta u)(t')\ dt'
\end{equation}
whenever $u: [t_0,t] \times \Omega \to \R$ is a smooth tempered distribution, with $\Omega$ equal to either $\R^3$ or $\R^3/\Z^3$.

We record some linear and bilinear estimates involving Duhamel-type integrals and the spaces $X^s$ defined in \eqref{x-def}, which are useful in the local $H^1$ theory for the Navier-Stokes equation:

\begin{lemma}[Linear and bilinear estimates]  Let $[t_0,t_1]$ be a time interval, let $\Omega$ be either $\R^3$ or $\R^3/\Z^3$, and suppose that $u: [t_0,t_1] \times \Omega \to \R$ and $F: [t_0,t_1] \times \Omega \to \R$ are tempered distributions such that
\begin{equation}\label{Duhamel-formula}
u(t) = e^{(t-t_0)\Delta} u(t_0) + \int_{t_0}^t e^{(t-t')\Delta} F(t')\ dt'.
\end{equation}
Then we have the standard \emph{energy estimate}\footnote{We adopt the convention that an estimate is vacuously true if the right-hand side is infinite or undefined.}
\begin{equation}\label{energy-duh}
\| u \|_{X^s([t_0,t_1] \times \Omega)} \lesssim_s \|u(t_0)\|_{H^s_x(\Omega)} + \| F \|_{L^1_t H^s_x([t_0,t_1] \times \Omega)}
\end{equation}
for any $s \geq 0$, as well as the variant
\begin{equation}\label{energy-duh2}
\| u \|_{X^{s}([t_0,t_1] \times \Omega)} \lesssim_s \|u(t_0)\|_{H^{s}_x(\Omega)} + \| F \|_{L^2_t H^{s-1}_x([t_0,t_1] \times \Omega)}
\end{equation}
for any $s \geq 1$.  We also note the further variant
\begin{equation}\label{energy-duh3}
\| u \|_{X^{s}([t_0,t_1] \times \Omega)} \lesssim_s \|u(t_0)\|_{H^{s}_x(\Omega)} + \| F \|_{L^4_t L^2_x([t_0,t_1] \times \Omega)}
\end{equation}
for any $s<3/2$.

We also have the bilinear estimate
\begin{equation}\label{bilinear}
 \| \nabla(u v) \|_{L^4_t L^2_x([t_0,t_1] \times \Omega)} \lesssim \|u\|_{X^1([t_0,t_1] \times \Omega)} \|v\|_{X^1([t_0,t_1] \times \Omega)}
\end{equation}
for any $u,v: [t_0,t_1] \times \R^3 \to \R$,
which in particular implies (by a H\"older in time) that
\begin{equation}\label{bilinear-2}
 \| \nabla(u v) \|_{L^2_t L^2_x([t_0,t_1] \times \R^3)} \lesssim (t_1-t_0)^{1/4} \|u\|_{X^1([t_0,t_1] \times \R^3)} \|v\|_{X^1([t_0,t_1] \times \R^3)}.
\end{equation}
\end{lemma}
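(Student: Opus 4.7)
The plan is to pass to frequency space---Fourier transform on $\R^3$ and Fourier series on $\R^3/\Z^3$---and reduce the three linear bounds \eqref{energy-duh}--\eqref{energy-duh3} to pointwise-in-$\xi$ convolution estimates in $t$ for the heat multiplier $e^{-\tau|\xi|^2}$. The bilinear bound \eqref{bilinear} will instead be deduced by Leibniz, H\"older, and Sobolev embedding, and \eqref{bilinear-2} is an immediate corollary via H\"older in $t$.

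For each of \eqref{energy-duh}, \eqref{energy-duh2}, \eqref{energy-duh3}, I split $u$ into its homogeneous part $e^{(t-t_0)\Delta} u(t_0)$ and its Duhamel part $\int_{t_0}^t e^{(t-t')\Delta} F(t')\,dt'$. For \eqref{energy-duh}, the homogeneous $L^\infty_t H^s_x$ bound is immediate from $|e^{-\tau|\xi|^2}| \le 1$; the $L^2_t \dot H^{s+1}_x$ bound uses $\int_0^\infty |\xi|^2 e^{-2\tau|\xi|^2}\,d\tau = \tfrac{1}{2}$; and the Duhamel part is handled by Minkowski's integral inequality in time. For \eqref{energy-duh2}, the key observation is that $\|\,|\xi|^2 e^{-\tau|\xi|^2}\mathbf{1}_{\tau>0}\,\|_{L^1_\tau} = 1$ uniformly in $\xi$, so Young's convolution inequality gives the Duhamel $L^2_t \to L^2_t$ bound with no weight; pulling out the remaining $|\xi|^{s-1}$ factor produces the $L^2_t H^{s-1}_x$ control of $F$, and the $L^\infty_t H^s_x$ component follows from Cauchy--Schwarz together with $\|e^{-\tau|\xi|^2}\|_{L^2_\tau(\R^+)} \sim |\xi|^{-1}$ (the hypothesis $s \ge 1$ makes the resulting weight manageable after accounting for the finite interval). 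For \eqref{energy-duh3} I apply Young with exponents $(4/3,4,\infty)$ and $\|e^{-\tau|\xi|^2}\|_{L^{4/3}_\tau(\R^+)} \sim |\xi|^{-3/2}$ to obtain $|\hat u(t,\xi)| \lesssim |\xi|^{-3/2}\|\hat F(\cdot,\xi)\|_{L^4_t}$; the restriction $s<3/2$ is exactly what keeps $(1+|\xi|^2)^{s/2}|\xi|^{-3/2}$ controllable at high frequencies, while the low-frequency singularity at $\xi=0$ (absent on the torus) is absorbed by truncating the convolution kernel to the finite interval $[0,t_1-t_0]$.

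For the bilinear estimate \eqref{bilinear}, Leibniz gives $\nabla(uv) = (\nabla u) v + u(\nabla v)$, and by symmetry it suffices to bound $(\nabla u) v$. By H\"older in space,
$$ \|(\nabla u) v\|_{L^2_x} \le \|\nabla u\|_{L^3_x}\|v\|_{L^6_x}, $$
and $v \in L^\infty_t L^6_x$ follows from Sobolev embedding $H^1_x\hookrightarrow L^6_x$ applied to $v \in L^\infty_t H^1_x \subset X^1$. For $\nabla u$, pointwise-in-$t$ interpolation of $L^p_x$ spaces gives $\|\nabla u\|_{L^3_x} \le \|\nabla u\|_{L^2_x}^{1/2}\|\nabla u\|_{L^6_x}^{1/2}$ (since $\tfrac13 = \tfrac14+\tfrac{1}{12}$), and combining with Cauchy--Schwarz in $t$ places $\nabla u \in L^4_t L^3_x$ with norm $\lesssim \|u\|_{X^1}$, using both the $L^\infty_t L^2_x$ and $L^2_t L^6_x$ controls on $\nabla u$ coming from $u \in X^1$. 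Assembling these pieces yields \eqref{bilinear}, and \eqref{bilinear-2} is then immediate from H\"older in time on $[t_0,t_1]$.

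The only genuinely delicate step is the low-frequency handling in \eqref{energy-duh3} on $\R^3$, which is exactly why that bound requires $s<3/2$ and why one must exploit the finiteness of $[t_0,t_1]$; all the other estimates are essentially routine parabolic regularity calculations.
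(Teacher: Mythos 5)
Your argument is essentially correct and follows the standard route; the paper itself does not write out a proof but defers \eqref{energy-duh2}, \eqref{energy-duh3}, \eqref{bilinear} to \cite[Lemma 2.1, Proposition 2.2]{tao-quantitative} and obtains \eqref{energy-duh} exactly as you do, from the homogeneous case plus Minkowski. Your Fourier-side treatment of \eqref{energy-duh} and \eqref{energy-duh2}, and your Leibniz--H\"older--interpolation proof of \eqref{bilinear} (using $\|\nabla u\|_{L^4_t L^3_x} \le \|\nabla u\|_{L^\infty_t L^2_x}^{1/2} \|\nabla u\|_{L^2_t L^6_x}^{1/2}$ and $\|v\|_{L^\infty_t L^6_x} \lesssim \|v\|_{X^1}$), are sound, and \eqref{bilinear-2} is indeed immediate.

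There is one incomplete step: for \eqref{energy-duh3} your Young application with exponents $(4/3,4,\infty)$ only controls the $L^\infty_t H^s_x$ half of the $X^s$ norm; the definition of $X^s$ also requires $u \in L^2_t H^{s+1}_x$, which gains a full extra derivative and is not a consequence of the $L^\infty_t$ bound on a finite interval. To close this half, apply Young in the form $\|k_\xi * \hat F(\cdot,\xi)\|_{L^2_t} \le \|k_\xi\|_{L^{4/3}_\tau}\|\hat F(\cdot,\xi)\|_{L^{4/3}_t}$ and then H\"older in time ($\|\hat F(\cdot,\xi)\|_{L^{4/3}_t} \le (t_1-t_0)^{1/2}\|\hat F(\cdot,\xi)\|_{L^4_t}$); the resulting weight $(1+|\xi|^2)^{(s+1)/2}|\xi|^{-3/2}$ is bounded at high frequency whenever $s \le 2$, so the binding constraint $s<3/2$ still comes from the $L^\infty_t H^s_x$ piece and the estimate closes. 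Finally, be aware that several of your bounds (the inhomogeneous part of the $L^2_t H^{s+1}_x$ control of the free evolution, and all the low-frequency truncations on $\R^3$) produce constants depending on $t_1-t_0$; this is harmless for the way the lemma is used in the paper (where the time intervals are always bounded), but it should be stated, since the lemma as written asserts $\lesssim_s$ only.
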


\begin{proof}  The estimates\footnote{Strictly speaking, the result in \cite{tao-quantitative} was stated for the torus rather than $\R^3$, but the argument works without modification in either domain, after first truncating $u(t_0), F$ to be Schwartz to avoid technicalities at infinity, and usig a standard density argument.} \eqref{energy-duh2}, \eqref{energy-duh3}, \eqref{bilinear} are established in \cite[Lemma 2.1, Proposition 2.2]{tao-quantitative}.  The estimate \eqref{energy-duh} follows from the $F=0$ case of \eqref{energy-duh} and Minkowski's inequality.
\end{proof}

Finally, we define the Littlewood-Paley projection operators on $\R^3$.  Let $\varphi(\xi)$ be a fixed bump function supported in the ball $\{ \xi \in \R^3: |\xi| \leq 2 \}$ and equal to $1$ on the ball $\{ \xi \in \R^3: |\xi| \leq 1 \}$.  Define a \emph{dyadic number} to be a number $N$ of the form $N=2^k$ for some integer $k$.  For each dyadic number $N$, we define the Fourier multipliers
\begin{align*}
\widehat{P_{\leq N} f}(\xi) &:= \varphi(\xi/N) \hat f(\xi)\\
\widehat{P_{> N} f}(\xi) &:= (1 - \varphi(\xi/N)) \hat f(\xi)\\
\widehat{P_N f}(\xi) &:= \psi(\xi/N)\hat f(\xi) := (\varphi(\xi/N) - \varphi(2\xi/N)) \hat f(\xi),
\end{align*}
We similarly define $P_{<N}$ and $P_{\geq N}$.  Thus for any tempered distribution we have $f = \sum_N P_N f$ in a weakly convergent sense at least, where the sum ranges over dyadic numbers.  We recall the usual \emph{Bernstein estimates}
\begin{equation}\label{bernstein}
\begin{split}
\| D^s P_N f\|_{L^p_x(\R^3)} &\lesssim_{p,s,D^s} N^s \| P_N f \|_{L^p_x(\R^3)},\\
\| \nabla^k P_N f\|_{L^p_x(\R^3)} &\sim_{k,s} N^k \| P_N f \|_{L^p_x(\R^3)},\\
\|P_{\leq N} f\|_{L^q_x(\R^3)} &\lesssim_{p,q} N^{\frac{3}{p}-\frac{3}{q}} \|P_{\leq N} f\|_{L^p_x(\R^3)},\\
\|P_N f\|_{L^q_x(\R^3)} &\lesssim_{p,q} N^{\frac{3}{p}-\frac{3}{q}} \| P_N f\|_{L^p_x(\R^3)}
\end{split}
\end{equation}
for all $1 \leq p \leq q \leq \infty$, $s \in \R$, $k \geq 0$, and pseudo-differential operators $D^s$ of order $s$; see e.g. \cite[Appendix A]{tao-dispersive}.

We recall the \emph{Littlewood-Paley trichotomy}: an expression of the form $P_N( (P_{N_1} f_1) (P_{N_2} f_2))$ vanishes unless one of the following three scenarios holds:
\begin{itemize}
\item (Low-high interaction) $N_2 \lesssim N_1 \sim N$.
\item (High-low interaction) $N_1 \lesssim N_2 \sim N$.
\item (High-high interaction) $N \lesssim N_1 \sim N_2$.
\end{itemize}
This trichotomy is useful for obtaining estimates on bilinear expressions, as we shall see in Section \ref{speed-sec}.

We have the following frequency-localised variant of \eqref{dispersive}:

\begin{lemma} If $N$ is a dyadic number and $f: \R^3 \to \R$ has Fourier transform supported on an annulus $\{ \xi: |\xi| \sim N \}$, then we have
\begin{equation}\label{energy-decay}
\| e^{t\Delta} f\|_{L^q(\R^3)} \lesssim t^{\frac{3}{2q} - \frac{3}{2p}} \exp(-ct N^2) \|f\|_{L^p(\R^3)}
\end{equation}
for some absolute constant $c>0$ and all $1 \leq p \leq q \leq \infty$.
\end{lemma}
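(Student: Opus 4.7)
The strategy is to upgrade the standard dispersive estimate \eqref{dispersive} by adding the Gaussian-in-$N$ decay factor. Split the semigroup as $e^{t\Delta} = e^{(t/2)\Delta} \circ e^{(t/2)\Delta}$. Since $e^{(t/2)\Delta} f$ still has Fourier support in $\{|\xi| \sim N\}$, the inequality \eqref{dispersive} applied to the outer factor gives
$$\|e^{t\Delta} f\|_{L^q(\R^3)} \lesssim t^{\frac{3}{2q}-\frac{3}{2p}} \,\|e^{(t/2)\Delta} f\|_{L^p(\R^3)},$$
so the whole estimate reduces to the frequency-localised $L^p \to L^p$ bound
$$\|e^{s\Delta} f\|_{L^p(\R^3)} \lesssim e^{-csN^2}\|f\|_{L^p(\R^3)} \qquad (1 \leq p \leq \infty)$$
for $\hat f$ supported in $\{|\xi|\sim N\}$.

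To prove this $L^p \to L^p$ inequality, pick once and for all a Schwartz bump $\tilde\psi$ supported on $\{|\eta| \sim 1\}$ and equal to $1$ on the annulus where $\hat f(N\cdot)$ lives. Then $e^{s\Delta} f = k_{s,N} * f$ with symbol
$$\widehat{k_{s,N}}(\xi) = \tilde\psi(\xi/N)\, e^{-4\pi^2 s |\xi|^2}.$$
By Young's inequality it suffices to prove $\|k_{s,N}\|_{L^1_x(\R^3)} \lesssim e^{-csN^2}$. Rescaling via $\xi = N\eta$, $\tau := sN^2$, this is equivalent to $\|K_\tau\|_{L^1_x(\R^3)} \lesssim e^{-c\tau}$ where $\hat K_\tau(\eta) = \tilde\psi(\eta) e^{-4\pi^2\tau |\eta|^2}$.

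The rescaled estimate is elementary. For $\tau \leq 1$ the symbol is a fixed Schwartz function uniformly in $\tau$, so $\|K_\tau\|_{L^1} = O(1) \leq C e^{-c\tau}$. For $\tau \geq 1$, let $c_1 > 0$ be a lower bound for $4\pi^2|\eta|^2$ on $\mathrm{supp}\,\tilde\psi$, and factor
$$\hat K_\tau(\eta) = e^{-c_1 \tau}\,\bigl[\tilde\psi(\eta)\, e^{-(4\pi^2|\eta|^2 - c_1)\tau}\bigr].$$
The bracketed symbol is smooth, compactly supported in a fixed annulus, and on that annulus the exponent is non-positive; its derivatives of any fixed order grow only polynomially in $\tau$, so a standard integration-by-parts bound for inverse Fourier transforms of compactly supported symbols yields $\|\mathcal F^{-1}[\cdots]\|_{L^1} \lesssim \tau^{O(1)}$, which is absorbed into $e^{-c\tau}$ for any $c < c_1$. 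Combining the two cases gives the kernel bound, and chaining back through the previous reductions produces the claimed estimate (with a modified constant $c$).

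\textbf{Main obstacle.} There is no genuine obstacle here; the only subtlety is being careful that the constants $c$ one picks at each step are compatible (one must choose $c < c_1$ strictly so that the polynomial loss $\tau^{O(1)}$ is dominated), and that the semigroup splitting preserves the frequency support. Both are routine once the rescaling $\tau = sN^2$ is performed, which cleanly separates the Gaussian decay from the heat-kernel dispersive decay.
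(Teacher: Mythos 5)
Your proof is correct. The paper's own argument is organised slightly differently: it reduces to $P_N$ via Littlewood--Paley projection, rescales so that $t=1$ (disposing of the case $N \lesssim 1$ by the plain dispersive estimate \eqref{dispersive}), and then verifies directly that the kernel of $e^{\Delta}P_N$ has both $L^1_x$ and $L^\infty_x$ norms $O(e^{-cN^2})$, so that a single application of Young's inequality yields the full $L^p \to L^q$ bound. You instead split $e^{t\Delta}=e^{(t/2)\Delta}\circ e^{(t/2)\Delta}$, extract the dispersive factor $t^{\frac{3}{2q}-\frac{3}{2p}}$ from the outer half using the already-established \eqref{dispersive}, and then only need an $L^1$ kernel bound for the frequency-localised inner half; your rescaling $\tau=sN^2$ normalises the frequency to $1$ rather than the time to $1$, but these are the same scaling symmetry used in dual fashion. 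The underlying computation --- an integration-by-parts/Schwartz-tail estimate for the kernel of a frequency-localised heat propagator, with the Gaussian decay extracted by factoring out $e^{-c_1\tau}$ on the annulus --- is the same in both proofs. Your version has the minor advantage of needing only the $L^1$ kernel norm (the $L^p\to L^q$ mapping being delegated to the known dispersive estimate), at the cost of the extra semigroup-splitting step; the constant-bookkeeping you flag (choosing $c<c_1$ strictly to absorb the polynomial loss $\tau^{O(1)}$) is handled correctly.
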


\begin{proof}  By Littlewood-Paley projection it suffices to show that
$$ \| e^{t\Delta} P_N f\|_{L^q(\R^3)} \lesssim t^{\frac{3}{2q} - \frac{3}{2p}} \exp(-ct N^2) \|f\|_{L^p(\R^3)}$$
for all test functions $f$.  By rescaling we may set $t=1$; in view of \eqref{dispersive} we may then set $N \geq 1$.  One then verifies from Fourier analysis that $e^{t\Delta} P_N$ is a convolution operator whose kernel has an $L^\infty_x(\R^3)$ and $L^1_x(\R^3)$ norm that are both $O( \exp(-cN^2) )$ for some absolute constant $c>0$, and the claim follows from Young's inequality.
\end{proof}

From the uniform smoothness of the heat kernel we also observe the estimate
\begin{equation}\label{kop}
\| e^{t\Delta} f \|_{C^k_x(K)} \lesssim_{k,K,T,p} \exp(-c_T r^2) \|f\|_{L^p_x(\R^3)}
\end{equation}
whenever $0 \leq t \leq T$, $1 \leq p \leq \infty$, $k \geq 0$, $K$ is a compact subset of $\R^3$, $r \geq 1$, and $f$ is supported on the set $\{ x \in \R^3: \operatorname{dist}(x,K) \geq r \}$, and some quantity $c_T>0$ depending only on $T$.  In practice, this estimate will be an effective substitute for finite speed of propagation for the heat equation.

\section{Symmetries of the equation}

In this section we review some well-known symmetries of the Navier-Stokes flow, which transform a given smooth solution $(u,p,u_0,f,T)$ to another smooth solution $(\tilde u,\tilde p,\tilde u_0,\tilde f, \tilde T)$, as these symmetries will be useful at various points in the paper.

The simplest symmetry is the \emph{spatial translation symmetry}
\begin{equation}\label{space-translate}
\begin{split}
\tilde u(t,x) &:= u(t, x-x_0) \\
\tilde p(t,x) &:= p(t, x-x_0) \\
\tilde u_0(x) &:= u_0(x-x_0)\\
\tilde f(t,x) &:= f(t, x-x_0)\\
\tilde T &:= T,
\end{split}
\end{equation}
valid for any $x_0 \in \R^3$; this transformation clearly maps mild, smooth, or almost smooth solutions to solutions of the same type, and also preserves conditions such as finite energy, $H^1$, periodicity, pressure normalisation, or the Schwartz property.  In a similar vein, we have the \emph{time translation symmetry}
\begin{equation}\label{time-translate}
\begin{split}
\tilde u(t,x) &:= u(t+t_0, x) \\
\tilde p(t,x) &:= p(t+t_0, x) \\
\tilde u_0(x) &:= u(t_0,x)\\
\tilde f(t,x) &:= f(t+t_0, x)\\
\tilde T &:= T-t_0,
\end{split}
\end{equation}
valid for any $t_0 \in [0,T]$.  Again, this maps mild, smooth, or almost smooth solutions to solutions of the same type (and if $t_0>0$, then almost smooth solutions are even upgraded to smooth solutions). If the original solution is finite energy or $H^1$, then the transformed solution will be finite energy or $H^1$ also.  Note however that if it is only the original \emph{data} that is assumed to be finite energy or $H^1$, as opposed to the \emph{solution}, it is not immediately obvious that the time-translated solution remains finite energy or $H^1$, especially in view of the fact that the $H^1$ norm (or the enstrophy) is not a conserved quantity of the Navier-Stokes flow.  (See however Lemma \ref{energy-est} and Corollary \ref{ens-bound} below.)  The situation is particularly dramatic in the case of Schwartz data; as remarked earlier, time translation can instantly convert\footnote{This can be seen for instance by noting that moments such as $\int_{\R^3} \omega_1(t,x) (x_2^2-x_3^2)\ dx$ are not conserved in time, but must equal zero whenever $u(t)$ is Schwartz.}  Schwartz data to non-Schwartz data, due to the slow decay of the Newton potential appearing in \eqref{pressure-point} (or of its derivatives, such as the Biot-Savart kernel in \eqref{biot}).

Next, we record the \emph{scaling symmetry}
\begin{equation}\label{scaling}
\begin{split}
\tilde u(t,x) &:= \frac{1}{\lambda} u(\frac{t}{\lambda^2}, \frac{x}{\lambda}) \\
\tilde p(t,x) &:= \frac{1}{\lambda^2} p(\frac{t}{\lambda^2}, \frac{x}{\lambda}) \\
\tilde u_0(x) &:= \frac{1}{\lambda} u(\frac{x}{\lambda})\\
\tilde f(t,x) &:= \frac{1}{\lambda^3} f(\frac{t}{\lambda^2}, \frac{x}{\lambda})\\
\tilde T &:= T \lambda^2,
\end{split}
\end{equation}
valid for any $\lambda>0$; it also maps mild, smooth, or almost smooth solutions to solutions of the same type, and preserves properties such as finite energy, finite enstrophy, pressure normalisation, periodicity, or the Schwartz property, though note in the case of periodicity that a solution of period $L$ will map to a solution of period $\lambda L$.   We will only use scaling symmetry occasionally in this paper, mainly because most of the quantities we will be manipulating will be supercritical with respect to this symmetry.  Nevertheless, this scaling symmetry serves a fundamentally important conceptual purpose, by making the key distinction between subcritical, critical (or dimensionless), and supercritical quantities, which can help illuminate many of the results in this paper (and was also crucial in allowing the author to discover\footnote{The author also found dimensional analysis to be invaluable in checking the calculations for errors.  One \emph{could}, if one wished, exploit the scaling symmetry to normalise a key parameter (e.g. the energy $E$, or a radius parameter $r$) to equal one, which would simplify the numerology slightly, but then one would lose the use of dimensional analysis to check for errors, and so we have elected to largely avoid the use of scaling normalisations in this paper.} these results in the first place).

We record three further symmetries that impact upon the issue of pressure normalisation.  The first is the \emph{pressure shifting symmetry}
\begin{equation}\label{const}
\begin{split}
\tilde u(t,x) &:= u(t, x) \\
\tilde p(t,x) &:= p(t, x) + C(t) \\
\tilde u_0(x) &:= u_0(x)\\
\tilde f(t,x) &:= f(t, x)\\
\tilde T &:= T,
\end{split}
\end{equation}
valid for any smooth function $C: \R \to \R$.  This clearly maps smooth, or almost smooth solutions to solutions of the same type, and preserves properties ssuch as finite energy, $H^1$, periodicity, and the Schwartz property; however, it destroys pressure normalisation (and thus the notion of a mild solution).  A slightly more sophisticated symmetry in the same spirit is the \emph{Galilean symmetry}
\begin{equation}\label{galilean}
\begin{split}
\tilde u(t,x) &:= u(t, x - \int_0^t v(s)\ ds) + v(t)\\
\tilde p(t,x) &:= p(t, x - \int_0^t v(s)\ ds) - x \cdot v'(t) \\
\tilde u_0(x) &:= u_0(x) + v(0)\\
\tilde f(t,x) &:= f(t, x - \int_0^t v(s)\ ds)\\
\tilde T &:= T,
\end{split}
\end{equation}
valid for any smooth function $v: \R \to \R^3$.  One can carefully check that this symmetry indeed maps mild, smooth solutions to smooth solutions, and preserves periodicity (recall here that in our definition of a periodic solution, the pressure was \emph{not} required to be periodic).  On the other hand, this symmetry does not preserve finite energy, $H^1$, or the Schwartz property.  It also clearly destroys the pressure normalisation property.

Finally, we observe that one can absorb divergences into the forcing term via the forcing symmetry
\begin{equation}\label{forcing}
\begin{split}
\tilde u(t,x) &:= u(t, x)\\
\tilde p(t,x) &:= p(t, x) + q(t,x) \\
\tilde u_0(x) &:= u_0(x)\\
\tilde f(t,x) &:= f(t, x) + \nabla \cdot q(t,x),\\
\tilde T &:= T,
\end{split}
\end{equation}
valid for any smooth function $P: [0,T] \times \R^3 \to \R^3$.  If the new forcing term $\tilde f$ still has finite energy or is still periodic, then the normalisation of pressure is preserved.  In the periodic setting, we will apply \eqref{forcing} with a linear term $q(t,x) := x \cdot a(t)$, allowing one to alter $f$ by an arbitrary constant $a(t)$.  In the finite energy or $H^1$ setting, one can use \eqref{forcing} and the Leray projection $P$ to reduce to the divergence-free case $\nabla \cdot f = 0$; note though that this projection can destroy the Schwartz nature of $f$.  This divergence-free reduction is particularly useful in the case of normalised pressure, since \eqref{pressure-point} then simplifies to
\begin{equation}\label{pressure-point-simplified}
p = -\Delta^{-1} \partial_i \partial_j (u_i u_j).
\end{equation}

One can of course compose these symmetries together to obtain a larger (semi)group of symmetries.  For instance, by combining \eqref{galilean} and \eqref{forcing} we observe the symmetry
\begin{equation}\label{gal-2}
\begin{split}
\tilde u(t,x) &:= u(t, x - \int_0^t v(s)\ ds) + v(t)\\
\tilde p(t,x) &:= p(t, x - \int_0^t v(s)\ ds) \\
\tilde u_0(x) &:= u_0(x) + v(0)\\
\tilde f(t,x) &:= f(t, x - \int_0^t v(s)\ ds) + v'(t)\\
\tilde T &:= T,
\end{split}
\end{equation}
any smooth function $v: \R \to \R^3$.  This symmetry is particularly useful for periodic solutions; note that it preserves both the periodicity property and the normalised pressure property.  By choosing $v(t)$ appropriately, we see that we can use this symmetry to normalise periodic data $(u_0,f,T,L)$ to be \emph{mean zero} in the sense that
\begin{equation}\label{meanzero-1}
\int_{\R^3/L\Z^3} u_0(x)\ dx = 0
\end{equation}
and
\begin{equation}\label{meanzero-2}
\int_{\R^3/L\Z^3} f(t,x)\ dx = 0
\end{equation}
for all $0 \leq t \leq T$.  By integrating \eqref{ns} over the torus $\R^3/L\Z^3$, we then conclude with this normalisation that $u$ remains mean zero for all times $0 \leq t \leq T$:
\begin{equation}\label{meanzero-3}
\int_{\R^3/L\Z^3} u(t,x)\ dx = 0
\end{equation}
The same conclusion also holds for periodic $H^1$ mild solutions.

\section{Pressure normalisation}

The symmetries in \eqref{const}, \eqref{forcing} can alter the velocity field $u$ and pressure $p$ without affecting the data $(u_0,f,T)$, thus leading to a breakdown of uniqueness for the Navier-Stokes equation.  In this section we investigate this loss of uniqueness, and show that (in the smooth category, at least) one can ``quotient out'' these symmetries by reducing to the situation \eqref{pressure-point} of normalised pressure, at which point uniqueness can be recovered (at least in the $H^1$ category).

More precisely, we show

\begin{lemma}[Reduction to normalised pressure]\label{reduction}\ 
\begin{itemize}
\item[(i)] If $(u,p,u_0,f,T)$ is an almost smooth finite energy solution, then for almost every time $t \in [0,T]$ one has
\begin{equation}\label{pressure-point-modified}
p(t,x) = -\Delta^{-1} \partial_i \partial_j (u_i u_j)(t,x) + \Delta^{-1} \nabla \cdot f(t,x) + C(t).
\end{equation}
for some bounded measurable function $C: [0,T] \to \R$.
\item[(ii)] If $(u,p,u_0,f,T)$ is a periodic smooth solution, then there exists smooth functions $C: [0,T] \to \R$ and $a: [0,T] \to \R^3$ such that
\begin{equation}\label{pressure-point-period}
p(t,x) = -\Delta^{-1} \partial_i \partial_j (u_i u_j)(t,x) + \Delta^{-1} \nabla \cdot f(t,x) + x \cdot a(t) + C(t).
\end{equation}
In particular, after applying a Galilean transform \eqref{galilean} followed by a pressure-shifting transformation \eqref{const}, one can transform $(u,p,u_0,f,T)$ into a periodic smooth solution with normalised pressure.
\end{itemize}
\end{lemma}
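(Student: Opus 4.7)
The starting point for both parts is the Poisson equation \eqref{deltap}, obtained by taking the divergence of \eqref{ns} and using \eqref{div-free}. The candidate right-hand sides of \eqref{pressure-point-modified} and \eqref{pressure-point-period} are designed precisely so that their Laplacians in $x$ equal $\Delta p$, so each part reduces to identifying the harmonic (in $x$) remainder $h := p - \tilde p$, where $\tilde p$ denotes the candidate expression in each setting.

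For part (ii), since $u$ and $f$ are smooth and periodic, the quantity $-\partial_i \partial_j(u_i u_j) + \nabla \cdot f$ is smooth, periodic, and mean-zero on the torus, so by \eqref{fax-k} there is a unique smooth mean-zero periodic $\tilde p$ with this Laplacian. Then $h$ is smooth and harmonic on $\R^3$. Reading $\nabla p = \Delta u - \partial_t u - (u \cdot \nabla) u + f$ off from \eqref{ns}, every term on the right is periodic, so $\nabla h$ is periodic; each of its components is then a periodic harmonic function, which by expansion in Fourier series must be constant. Hence $\nabla h = a(t)$ for some smooth $a(t)$, and integrating in $x$ gives $h = x \cdot a(t) + C(t)$. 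Smoothness of $a, C$ in $t$ is inherited from $p$ and $\tilde p$. The final normalisation statement then follows by applying \eqref{galilean} with $v$ chosen so that $v'(t) = -a(t)$ (to kill the linear term, which notably preserves periodicity and is the point of the Galilean symmetry), followed by \eqref{const} to kill $C(t)$.

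For part (i), fix a time $t$ at which $f(t) \in L^2_x$; this excludes only a null set since $f \in L^1_t L^2_x$. Then $u_i u_j(t) \in L^1_x$ (using the uniform finite-energy bound $u \in L^\infty_t L^2_x$) and is smooth, while $f(t) \in L^2_x$ and is smooth. Using \eqref{fax}, define $\tilde p(t) := -\Delta^{-1}\partial_i \partial_j(u_i u_j)(t) + \Delta^{-1} \nabla \cdot f(t)$: the first summand is a Calder\'on--Zygmund singular integral of an $L^1_x$ function (hence a tempered distribution lying in weak-$L^1_x$), and the second is a Riesz-type singular integral of an $L^2_x$ function. Since $\Delta \tilde p = \Delta p$ distributionally, the difference $h = p - \tilde p$ is a harmonic tempered distribution, and by the classical Liouville theorem for such distributions it must be a polynomial in $x$ whose coefficients are functions of $t$.

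The remaining step, which I expect to be the main obstacle, is excluding terms of positive degree. The plan is to show that $h(t, \cdot)$ grows sublinearly at spatial infinity, forcing the harmonic polynomial to be a constant in $x$. Since $\tilde p(t)$ contributes no polynomial part (being in a sum of weak-$L^1_x$ and $L^p_x$-type spaces, none of which contain a nonzero polynomial), the issue reduces to controlling the growth of $p$ itself, which must be extracted from the Navier--Stokes identity $\nabla p = f - \partial_t u + \Delta u - (u \cdot \nabla) u$ together with the automatic dissipation bound $\nabla u \in L^2_t L^2_x$ promised by Lemma \ref{energy-est}; for almost every $t$ these upgrade to $u(t) \in H^1_x$ and thence (via Sobolev) $u \otimes u \in L^{3/2}_x \cap L^1_x$, making $\nabla \tilde p$ integrable enough that $\nabla h$ must be of sub-polynomial growth, which is incompatible with a nonconstant harmonic polynomial. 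Measurability of the resulting $C(t) = h(t, \cdot)$ is then automatic from joint measurability of $p$ and $\tilde p$, and boundedness of $C$ over $[0,T]$ follows from the uniform bounds $\|u\|_{L^\infty_t L^2_x}$ and $\|f\|_{L^1_t L^2_x}$ controlling $\tilde p$ together with the local boundedness of $p$ inherited from continuity on $[0,T] \times \R^3$.
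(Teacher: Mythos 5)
Your part (ii) is correct and essentially identical to the paper's argument (harmonicity of $\nabla h$ plus periodicity forces it to be constant in space; note only that with the sign convention of \eqref{galilean} one should take $v'(t)=a(t)$, not $-a(t)$, to cancel the linear term). The problem is part (i), where your argument has a genuine gap at exactly the point you flag as "the main obstacle," and the intended repair does not work as described.

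First, the assertion that $h=p-\tilde p$ is a \emph{harmonic tempered distribution}, hence a polynomial, presupposes that $p$ is tempered. The definition of a finite energy (almost) smooth solution imposes only smoothness on $p$ and an $L^\infty_t L^2_x$ bound on $u$; there is no a priori growth restriction on $p$ at spatial infinity, so Liouville for tempered distributions is not available. Second, and more seriously, your plan to exclude positive-degree terms by reading sub-polynomial growth of $\nabla p$ off the identity $\nabla p = f-\partial_t u+\Delta u-(u\cdot\nabla)u$ at a fixed time fails because the hypotheses give \emph{no} global spatial integrability for $\partial_t u(t)$ or $\Delta u(t)$: smoothness controls these only on compact sets, and the energy bounds control only $u$ and $\nabla u$. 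This is precisely the difficulty the paper highlights in the remark following the lemma. The paper's proof circumvents it by first integrating \eqref{ns} in time over $[t_1,t_2]$ (so that $\partial_t u$ becomes the difference $u(t_2)-u(t_1)$ of $L^2_x$ functions) and then pairing against the rescaled bump $R^{-3}\chi(x/R)$ and integrating by parts, which moves all derivatives off of $u$ and onto $\chi$; every term is then controlled by the finite energy hypothesis alone and tends to zero as $R\to\infty$. Combining this with the mean-value property of the harmonic function $x\mapsto\int_{t_1}^{t_2}\nabla h(t,x)\,dt$ yields $\int_{t_1}^{t_2}\nabla h(t,0)\,dt=0$ for all $t_1,t_2$, whence $\nabla h(t)\equiv 0$ for a.e.\ $t$ by Lebesgue differentiation and translation invariance. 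Some such time-integrated, locally averaged argument (or an equivalent substitute) is needed; the fixed-time, global-growth argument you sketch cannot be carried out with the stated hypotheses. (You also do not address the reduction from almost smooth to smooth solutions, which the paper handles by a small time translation, but that is a minor omission by comparison.)
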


\begin{remark} Morally, in (i) the function $C$ should be smooth (at least for times $t>0$), which would then imply that one can apply a pressure-shifting transformation \eqref{const} to convert $(u,p,u_0,f,T)$ into a smooth solution with normalised pressure.  However, there is the technical difficulty that in our definition of a finite energy smooth solution, we do not \emph{a priori} have any control time derivatives of $u$ in any $L^p_x(\R^3)$ norms, and as such we do not have time regularity on the component $\Delta^{-1} \partial_i \partial_j(u_i u_j)$ of \eqref{pressure-point-modified}.  In practice, though, this possible irregularity of $C(t)$ will not bother us, as we only need to understand the gradient $\nabla p$ of the pressure, rather than the pressure itself, in order to solve the Navier-Stokes equations \eqref{ns}.
\end{remark}

\begin{proof}
We begin with the periodic case, which is particularly easy due to Liouville's theorem (which, among other things, implies that the only harmonic periodic functions are the constants).  We may normalise the period $L$ to equal $1$.
Fix an almost smooth periodic solution $(u,p,u_0,f,T)$. Define the normalised pressure $p_0: [0,T] \times \R^3 \to \R$ by the formula
\begin{equation}\label{p0-def}
 p_0 := -\Delta^{-1} \partial_i \partial_j (u_i u_j) + \Delta^{-1} \nabla \cdot f.
\end{equation}
As $u$, $f$ are smooth and periodic, $p_0$ is smooth also, and from \eqref{deltap} one has $\Delta p = \Delta p_0$.  Thus one has
$$ p = p_0 + h$$
where $h: [0,T] \times \R^3 \to \R$ is a smooth function with $h(t)$ harmonic in space for each time $t$.  The function $h$ need not be periodic; however, from \eqref{ns} we have
$$ \partial_t u + (u \cdot \nabla) u = \Delta u - \nabla p_0 - \nabla h + f.$$
Every term aside from $\nabla h$ is periodic, and so $\nabla h$ is periodic also.  Since $\nabla h$ is also harmonic, it must therefore be constant in space by Liouville's theorem.  We therefore may write
$$ h(t,x) = x \cdot a(t) + C(t)$$
for some $a(t) \in \R^3$ and $C(t) \in \R$; since $h$ is smooth, $a, C$ are smooth also, and the claim follows.

Now we turn to the finite energy case, thus $(u,p,u_0,f,T)$ is now an almost smooth finite energy solution. By the time translation symmetry \eqref{time-translate} with an arbitrarily small time shift parameter $t_0$, we may assume without loss of generality that $(u,p,u_0,f,T)$ is smooth (and not just almost smooth).  We define the normalised pressure $p_0$ by \eqref{p0-def} as before, then for each time $t \in [0,T]$ one sees from \eqref{deltap} that
$$ p(t) = p_0(t) + h(t)$$
for some harmonic function $h(t): \R^3 \to \R$.  As $u, f$ are smooth and finite energy, one sees from \eqref{p0-def} that $p_0$ is bounded on compact subsets of spacetime; since $p$ is smooth, we conclude that $h$ is bounded on compact subsets of spacetime also.  From harmonicity, this implies that all spatial derivatives $\nabla^k h$ are also bounded on compact subsets of space time.  However, as noted previously, we cannot impose any time regularity on $p_0$ or $h$ because we do not have decay estimates on time derivatives of $u$.

It is easy to see that $h$ is measurable.  To obtain the lemma, it suffices to show that $h(t)$ is a constant function of $x$ for almost every time $t$.

Let $[t_1,t_2]$ be any interval in $[0,T]$.  Integrating \eqref{ns} in time on this interval, we see that
$$ u(t_2,x)-u(t_1,x) + \int_{t_1}^{t_2} (u \cdot \nabla) u(t,x)\ dt = \int_{t_1}^{t_2} \Delta u(t,x) - \nabla p(t,x) + f(t,x)\ dt.$$
Next, let $\chi: \R^3 \to \R$ be a smooth compactly supported spherically symmetric function of total mass $1$.  We integrate the above formula against $\frac{1}{R^3} \chi(\frac{x}{R})$ for some large parameter $R$, and conclude after some integration by parts (which is justified by the compact support of $\chi$ and the smooth (and hence $C^1$) nature of all functions involved) that
\begin{align*}
R^{-3} \int_{\R^3} u(t_2,x) \chi(\frac{x}{R})\ dx - R^{-3} \int_{\R^3} u(t_1,x) \chi(\frac{x}{R})\ dx \quad & \\
- R^{-4} \int_{t_1}^{t_2} \int_{\R^3} u(t,x) (u(t,x) \cdot \nabla \chi)(\frac{x}{R})\ dx dt &
= R^{-5} \int_{t_1}^{t_2} \int_{\R^3} u(t,x) (\Delta \chi)(\frac{x}{R})\ dx dt \\
&\quad + R^{-3} \int_{t_1}^{t_2} \int_{\R^3} \nabla p(t,x) \chi(\frac{x}{R})\ dx dt \\
&\quad + R^{-3} \int_{t_1}^{t_2} \int_{\R^3} f(t,x) \chi(\frac{x}{R})\ dx dt.
\end{align*}
From the finite energy hypothesis and the Cauchy-Schwarz inequality, one easily verifies that
\begin{align*}
\lim_{R \to \infty} R^{-3} \int_{\R^3} u(t_i,x) \chi(\frac{x}{R})\ dx &= 0 \\
\lim_{R \to \infty} R^{-4} \int_{t_1}^{t_2} \int_{\R^3} u(t,x) (u(t,x) \cdot \nabla \chi)(\frac{x}{R})\ dx dt &= 0 \\
\lim_{R \to \infty} R^{-5} \int_{t_1}^{t_2} \int_{\R^3} u(t,x) (\Delta \chi)(\frac{x}{R})\ dx dt &= 0 \\
\lim_{R \to \infty} R^{-3} \int_{t_1}^{t_2} \int_{\R^3} f(t,x) \chi(\frac{x}{R})\ dx dt &= 0
\end{align*}
and thus
\begin{equation}\label{nap}
 \lim_{R \to \infty} R^{-3} \int_{t_1}^{t_2} \int_{\R^3} \nabla p(t,x) \chi(\frac{x}{R})\ dx dt = 0.
\end{equation}
Next, by an integration by parts and \eqref{p0-def}, we can express
$$ R^{-3} \int_{t_1}^{t_2} \int_{\R^3} \nabla p_0(t,x) \chi(\frac{x}{R})\ dx dt $$
as
\begin{align*}
&R^{-4} \int_{t_1}^{t_2} \int_{\R^3} u_i u_j(t,x) (\nabla \Delta^{-1} \partial_i \partial_j \chi)(\frac{x}{R})\ dx dt \\
&\quad + R^{-3} \int_{t_1}^{t_2} \int_{\R^3} f_i(t,x) (\nabla \Delta^{-1} \partial_i \chi)(\frac{x}{R})\ dx dt.
\end{align*}
From the finite energy nature of $(u,p,u_0,f,T)$ we see that this expression goes to zero as $R \to \infty$.  Subtracting this from \eqref{nap}, we conclude that
\begin{equation}\label{rmean}
 \lim_{R \to \infty} R^{-3} \int_{t_1}^{t_2} \int_{\R^3} \nabla h(t,x) \chi(\frac{x}{R})\ dx dt = 0.
\end{equation}
The function $x \mapsto \int_{t_1}^{t_2} \nabla h(t,x)$ is weakly harmonic, hence harmonic.  By the mean-value property of harmonic functions (and our choice of $\chi$) we thus have
$$ R^{-3} \int_{t_1}^{t_2} \int_{\R^3} \nabla h(t,x) \chi(\frac{x}{R})\ dx dt = \int_{t_1}^{t_2} \nabla h(t,0)\ dt$$
and thus
$$ \int_{t_1}^{t_2} \nabla h(t,0)\ dt = 0.$$
Since $t_1,t_2$ were arbitrary, we conclude from the Lebesgue differentiation theorem
that $\nabla h(t,0)=0$ for almost every $t \in [0,T]$.
Using spatial translation invariance \eqref{space-translate} to replace the spatial origin by an element of a countable dense subset of $\R^3$, and using the fact that harmonic functions are continuous, we conclude that $\nabla h(t)$ is identically zero for almost every $t \in [0,T]$, and so $h(t)$ is constant for almost every $t$ as desired.
\end{proof}

We note a useful corollary of Lemma \ref{reduction}(i):

\begin{corollary}[Almost smooth $H^1$ solutions are essentially mild]\label{as-mild}  Let $(u,p,u_0,f,T)$ be an almost smooth $H^1$ solution.  Then $(u,\tilde p,u_0,f,T)$ is a mild $H^1$ solution, where 
$$ \tilde p(t,x) := -\Delta^{-1} \partial_i \partial_j (u_i u_j)(t,x) + \Delta^{-1} \nabla \cdot f(t,x).$$
Furthermore, for almost every $t \in [0,T]$, $p(t)$ and $\tilde p(t)$ differ by a constant (and thus $\nabla p = \nabla \tilde p$).
\end{corollary}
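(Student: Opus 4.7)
The plan is to reduce this to a direct application of Lemma \ref{reduction}(i) followed by a Duhamel argument. Since $H^1$ data has finite energy and an almost smooth $H^1$ solution is automatically an almost smooth finite energy solution, Lemma \ref{reduction}(i) applies and yields
$$p(t,x) = \tilde p(t,x) + C(t)$$
for almost every $t \in [0,T]$, where $C: [0,T] \to \R$ is bounded measurable. This is exactly the ``differ by a constant'' assertion, and in particular $\nabla p = \nabla \tilde p$ as distributions in $x$ for a.e.\ $t$ (and pointwise on $(0,T] \times \R^3$ where both sides are smooth).

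Next I would check that $(u,\tilde p, u_0,f,T)$ satisfies the defining conditions of a mild $H^1$ solution. The regularity hypotheses $u_0 \in H^1_x$, $f \in L^\infty_t H^1_x$, and $u \in L^\infty_t H^1_x \cap L^2_t H^2_x$ are built into the hypothesis of almost smooth $H^1$ (invoking Corollary \ref{ens-bound} cited in the paper for the $L^2_t H^2_x$ component if necessary); the divergence-free conditions \eqref{div-free} and \eqref{div-free-data} are unchanged; and $\tilde p$ is given by \eqref{pressure-point} by definition. The remaining point is to verify the Duhamel identity \eqref{duhamel-1} with $\tilde p$ in place of $p$.

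On $(0,T] \times \R^3$ all fields are smooth, so from \eqref{ns} and $\nabla p = \nabla \tilde p$ we have the classical identity
$$\partial_t u - \Delta u = -(u \cdot \nabla) u - \nabla \tilde p + f.$$
Applying the smooth Duhamel formula \eqref{duhamel} on any interval $[t_0,t] \subset (0,T]$ gives
$$u(t) = e^{(t-t_0)\Delta} u(t_0) + \int_{t_0}^{t} e^{(t-t')\Delta}\bigl(-(u \cdot \nabla) u - \nabla \tilde p + f\bigr)(t')\, dt'.$$
I then let $t_0 \to 0^+$. For the linear term, almost smoothness gives $u(t_0) \to u_0$ continuously (and with uniform $H^1$ bounds), and strong continuity of the heat semigroup at $t=0$ in $L^2$ yields $e^{(t-t_0)\Delta}u(t_0) \to e^{t\Delta}u_0$. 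For the nonlinear integral, the $L^\infty_t H^1_x$ control on $u$ combined with Sobolev embedding bounds $(u \cdot \nabla)u$ uniformly in suitable $L^p_x$, the Calder\'on--Zygmund bound on the Leray-type projector handles $\nabla \tilde p$ by the same $L^p$ norms of $u \otimes u$, and $f \in L^\infty_t H^1_x$ is already controlled; thus the integrand is in $L^1_t L^2_x$ near $t'=0$ and dominated convergence passes the limit inside. The resulting identity is exactly \eqref{duhamel-1}.

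The main technical obstacle I anticipate is the passage $t_0 \to 0^+$: one must argue that the pointwise/continuity information supplied by ``almost smoothness'' at $t=0$ is strong enough (together with the uniform $H^1$ bound) to justify the semigroup limit, and that the Calder\'on--Zygmund estimate for $\nabla \tilde p$ together with Sobolev gives an integrable majorant near $t'=0$ despite only having an $L^\infty_t H^1_x$ (rather than an $L^1_t$ time-regularity) bound on the nonlinearity. Both are standard but must be written down carefully.
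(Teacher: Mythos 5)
Your proposal is correct and follows essentially the same route as the paper: apply Lemma \ref{reduction}(i) to identify $\nabla p$ with $\nabla \tilde p$ almost everywhere, then obtain \eqref{duhamel-1} from \eqref{ns} via the Duhamel formula \eqref{duhamel} (the regularity hypotheses and \eqref{pressure-point} being part of the definition of an almost smooth $H^1$ solution, so Corollary \ref{ens-bound} is not needed). The extra care you take with the limit $t_0 \to 0^+$ is harmless but unnecessary, since almost smoothness already gives $u \in C^1_t C^\infty_x$ down to $t=0$, so Duhamel applies directly on $[0,t]$.
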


\begin{proof}  By Lemma \ref{reduction}(i), $\nabla p$ is equal to $\nabla \tilde p$ almost everywhere; in particular, $\nabla p = \nabla \tilde p$ is a smooth tempered distribution.  The claim then follows from \eqref{ns} and the Duhamel formula \eqref{duhamel}.
\end{proof}

\section{Local well-posedness theory in $H^1$}\label{local-h1}

In this section we review the (subcritical) local well-posedness theory for both periodic and non-periodic $H^1$ mild solutions.  The material here is largely standard (and in most cases has been superceded by the more powerful critical well-posedness theory); for instance the uniqueness theory already follows from the work of Prodi \cite{prodi} and Serrin \cite{serrin}, the blowup criterion already is present in Leray \cite{leray}, the local existence theory follows from the work of Kato \cite{kato}, regularity of mild solutions follows from the work of Ladyzhenskaya \cite{lady}, the stability results given here follow from the stronger stability results of Chemin and Gallagher \cite{chemin}, and the compactness results were already essentially present in the author's previous paper \cite{tao-quantitative}.  However, for the convenience of the reader (and because we want to use the $X^s$ function spaces defined in \eqref{x-def} as the basis for the theory) we shall present all this theory in a self-contained manner.  There are now a number of advanced local well-posedness results at critical regularity, most notably that of Koch and Tataru \cite{koch}, but we will not need such powerful results here.

We begin with the periodic theory.  By taking advantage of the scaling symmetry \eqref{scaling} we may set the period $L$ equal to $1$.  Using the symmetry \eqref{gal-2} we may also restrict attention to data obeying the mean zero conditions \eqref{meanzero-1}, \eqref{meanzero-2}, thus $u_0 \in H^1_x(\R^3/\Z^3)_0$ and $f \in L^\infty_t H^1_x([0,T] \times \R^3/\Z^3)_0$.

\begin{theorem}[Local well-posedness in periodic $H^1$]\label{lwp-h1}  Let $(u_0,f,T,1)$ be periodic $H^1$ data obeying the mean zero conditions \eqref{meanzero-1}, \eqref{meanzero-2}.
\begin{itemize}
\item[(i)] (Strong solution) If $(u,p,u_0,f,T,1)$ is a periodic $H^1$ mild solution, then
$$ u \in C^0_t H^1_x([0,T] \times \R^3/\Z^3).$$
In particular, one can unambiguously define $u(t)$ in $H^1_x(\R^3/\Z^3)$ for each $t \in [0,T]$.
\item[(ii)] (Local existence)  If
\begin{equation}\label{d4}
 (\|u_0\|_{H^1_x(\R^3/\Z^3)} + \| f \|_{L^1_t H^1_x(\R^3/\Z^3)})^4 T \leq c
\end{equation}
for a sufficiently small absolute constant $c>0$, then there exists a periodic $H^1$ mild solution $(u,p,u_0,f,T,1)$ with the indicated data with
$$ \| u \|_{X^1([0,T] \times \R^3/\Z^3)} \lesssim \|u_0\|_{H^1_x(\R^3/\Z^3)} + \| f \|_{L^1_t H^1_x(\R^3/\Z^3)}$$
and more generally
$$ \| u \|_{X^k([0,T] \times \R^3/\Z^3)} \lesssim_{k,T,\|u_0\|_{H^k_x(\R^3/\Z^3)},\| f \|_{L^1_t H^k_x(\R^3/\Z^3)}} 1$$
for each $k \geq 1$.  In particular, one has local existence whenever $T$ is sufficiently small depending on ${\mathcal H}^1(u_0,f,T,1)$.
\item[(iii)] (Uniqueness)  There is at most one periodic $H^1$ mild solution $(u,p,u_0,f,T,1)$ with the indicated data.
\item[(iv)] (Regularity) If $(u,p,u_0,f,T,1)$ is a periodic $H^1$ mild solution, and $(u_0,f,T,1)$ is smooth, then $(u,p,u_0,f,T,1)$ is smooth.
\item[(v)] (Lipschitz stability)  Let $(u,p,u_0,f,T,1)$ be a periodic $H^1$ mild solutions with the bounds $0 < T \leq T_0$ and
$$ \| u \|_{X^1([0,T] \times \R^3/\Z^3)} \leq M.$$
Let $(u'_0,f',T,1)$ be another set of periodic $H^1$ data, and define the function
$$ F(t) := e^{t\Delta} (u'_0-u_0) + \int_0^t e^{(t-t')\Delta} (f'(t')-f(t'))\ dt'.$$
If the quantity $\|F\|_{X^1([0,T] \times \R^3/\Z^3)}$ is sufficiently small depending on $T$, $M$, then there exists a periodic mild solution $(u',p',u'_0,f',T,1)$ with
$$ \| u-u'\|_{X^1([0,T] \times \R^3/\Z^3)} \lesssim_{T,M} \|F\|_{X^1([0,T] \times \R^3/\Z^3)}.$$
\end{itemize}
\end{theorem}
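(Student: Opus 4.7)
The plan is to base the entire theorem on a Picard iteration for the Duhamel formulation \eqref{duhamel-2} in the space $X^1([0,T]\times \R^3/\Z^3)$, and then deduce the other parts from the quantitative bounds the iteration produces. Mean-zero is propagated by the heat semigroup and by $PB(u,u)$ (which is a divergence), so we may work throughout in the mean-zero subspace $H^1_x(\R^3/\Z^3)_0$; the pressure is recovered from $u$ afterwards through \eqref{pressure-point-simplified} (plus the $\Delta^{-1} \nabla \cdot f$ correction), yielding a genuine mild solution by construction.

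For (ii), set $M := \|u_0\|_{H^1_x} + \|f\|_{L^1_t H^1_x}$ and consider
$$\Phi(u)(t) := e^{t\Delta} u_0 + \int_0^t e^{(t-t')\Delta}\bigl(PB(u,u)(t') + Pf(t')\bigr)\,dt'$$
on the ball $\{u : \|u\|_{X^1} \leq 2 C_0 M\}$. The linear-plus-forcing piece is controlled by \eqref{energy-duh} at $s=1$ (using $L^2$-boundedness of $P$) to give $C_0 M$; the bilinear Duhamel integral is controlled by \eqref{energy-duh2} at $s=1$ combined with the $T^{1/4}$-gain of \eqref{bilinear-2}, producing a contribution of size $O(T^{1/4} M^2)$. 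The smallness hypothesis \eqref{d4} is precisely $T^{1/4} M \lesssim 1$, which makes $\Phi$ a self-map and, by the analogous bilinear bound applied to differences, a contraction. The unique fixed point is the desired mild solution. Higher $X^k$ control is then obtained by induction on $k$: once $u \in X^1$ is known, a Leibniz-type decomposition of $\nabla^k(u \otimes u)$ into low-high and high-low pieces gives factors of the form $\|u\|_{X^1}\|u\|_{X^k}$, and feeding this into \eqref{energy-duh} closes the bootstrap on a possibly shorter interval, which the local existence time then covers by subdivision.

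The remaining parts follow cleanly from these estimates. For (i), the map $e^{t\Delta}u_0 \in C^0_t H^1_x$ by strong continuity of the heat semigroup on $H^1_x$, and the Duhamel integral lies in $C^0_t H^1_x$ because its integrand $PB(u,u) + Pf$ is in $L^1_t L^2_x \cap L^2_t H^1_x$ by \eqref{bilinear-2}. For (iii), the difference $w := u - \tilde u$ of two mild solutions with the same data obeys $w(t) = \int_0^t e^{(t-t')\Delta} PB(w, u + \tilde u)(t')\,dt'$ by the bilinearity of $B$, and \eqref{bilinear-2} applied on a small sub-interval $[0,\tau]$ gives $\|w\|_{X^1([0,\tau])} \leq C\tau^{1/4}(\|u\|_{X^1}+\|\tilde u\|_{X^1})\|w\|_{X^1([0,\tau])}$, forcing $w \equiv 0$ on $[0,\tau]$; iterating in $\tau$-steps covers $[0,T]$. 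For (v), the same computation carrying an additional source $F$ produces $\|w\|_{X^1([0,\tau])} \leq C\|F\|_{X^1} + C\tau^{1/4} M \|w\|_{X^1([0,\tau])}$, and splitting $[0,T]$ into $\sim T_0/\tau$ sub-intervals with $C\tau^{1/4}M < 1/2$ yields the desired Lipschitz bound with a constant depending only on $T_0, M$.

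For (iv), the argument is a bootstrap: applying (ii) at the $H^k$ level using the smooth data produces a mild solution in $X^k$ on a short interval $[0,\tau_k]$, and (iii) identifies this with the given $u$, so $u$ is spatially smooth on $[0,\tau_k]$; time-translating and repeating at each $t_0 \in (0,T]$ covers the whole time interval. Smoothness at $t=0$ is then read off from $u_0 \in C^\infty_x$ and $u \in C^0_t H^k_x$ for every $k$, while time derivatives are obtained by solving \eqref{ns} for $\partial_t u$ and differentiating, using that on the torus $\nabla p$ is smooth in $x$ whenever $u$ is (since $\Delta^{-1}$ maps smooth mean-zero periodic functions to smooth periodic functions). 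The main technical annoyance in the whole program is that the bilinear estimates \eqref{bilinear}, \eqref{bilinear-2} are stated over $\R^3$; however, their proof through Littlewood-Paley decomposition transfers verbatim to $\R^3/\Z^3$ via the discrete Bernstein inequalities, and handling the mean-zero restriction on the frequencies is the only non-cosmetic modification required.
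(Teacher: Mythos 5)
Your proposal is correct and follows essentially the same route as the paper: contraction in $X^1$ via \eqref{energy-duh2} and \eqref{bilinear-2} for existence, subtraction of Duhamel formulas with short-time absorption and iteration for uniqueness and stability, and a Duhamel bootstrap for regularity (your detour in (iv) through higher-$k$ local existence plus uniqueness is equivalent to the paper's direct bootstrap of the given solution, since the higher-$k$ claim in (ii) is itself proved by that bootstrap, and the bilinear estimates are already stated for the torus in the paper's lemma). One small overstatement: for a general $H^1$ mild solution, \eqref{bilinear} only yields $PB(u,u)\in L^4_t L^2_x$, not $L^2_t H^1_x$; but $L^4_t L^2_x\subset L^2_t L^2_x$ on $[0,T]$ is exactly what \eqref{energy-duh2} at $s=1$ requires for the $C^0_t H^1_x$ claim in (i), so nothing breaks.
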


\begin{proof}  
We first prove the strong solution claim (i).  The linear solution
$$ e^{t\Delta} u_0 + \int_0^t e^{(t-t')\Delta} Pf(t')\ dt'$$
is easily verified to lie in $C^0_t H^1_x([0,T] \times \R^3/\Z^3)$, so in view of \eqref{duhamel-2}, it suffices to show that
$$ \int_0^t e^{(t-t')\Delta} P B(u(t'), u(t'))\ dt'$$
also lies in $C^0_t H^1_x([0,T] \times \R^3/\Z^3)$.  But as $u$ is an $H^1$ mild solution, $u$ lies in $X^1([0,T] \times \R^3/\Z^3)$, so by \eqref{bilinear}, $PB(u,u)$ lies in $L^4_t L^2_x([0,T] \times \R^3/\Z^3)$.  The claim (i) then follows easily from \eqref{energy-duh2}.

Now we establish local existence (ii).  Let $\delta := \|u_0\|_{H^1_x(\R^3/\Z^3)} + \| f \|_{L^1_t H^1_x(\R^3/\Z^3)}$, thus by \eqref{d4} we have $\delta^4 T \leq c$.  Using this and \eqref{bilinear-2}, \eqref{energy-duh2} one easily establishes that the nonlinear map $u \mapsto \Phi(u)$ defined by
$$ \Phi(u)(t) := e^{t\Delta} u_0 + \int_0^t e^{(t-t')\Delta} (PB(u(t'),u(t')+Pf(t'))\ dt'$$
is a contraction on the ball
$$ \{ u \in X^1([0,T] \times \R^3/\Z^3): \|u\|_{X^1([0,T] \times \R^3/\Z^3)} \leq C\delta \}$$
if $C$ is large enough.  From the contraction mapping principle we may then find a fixed point of $\Phi$ in this ball, and the claim (ii) follows (the estimates for higher $k$ follow from variants of the above argument and an induction on $k$, and are left to the reader).

Now we establish uniqueness (iii).  Suppose for contradiction that we have distinct solutions $(u,p,u_0,f,T,1)$, $(u',p',u_0,f,T,1)$ for the same data.  Then we have
$$ \| u \|_{X^1([0,T] \times \R^3/\Z^3)}, \|u'\|_{X^1([0,T] \times \R^3/\Z^3)} \leq M.$$
To show uniqueness, it suffices to do so assuming that $T$ is sufficiently small depending on $M$, as the general case then follows by subdividing $[0,T]$ into small enough time intervals and using induction.  Subtracting \eqref{duhamel-2} for $u, u'$ and writing $v := u'-u$, we see that
$$ v(t) = \int_0^t e^{(t-t')\Delta} P ( 2 B(u(t'), v(t')) + B(v(t'),v(t'))\ dt'$$
and thus by \eqref{energy-duh2}
$$ \| v \|_{X^1([0,T] \times \R^3/\Z^3)} \lesssim M T^{1/4} \| v \|_{X^1([0,T] \times \R^3/\Z^3)}.$$
If $T$ is sufficiently small depending on $M$, this forces $\| v \|_{X^1([0,T] \times \R^3/\Z^3)}=0$, giving uniqueness up to time $T$; iterating this argument gives the claim (iii).

Now we establish regularity (iv).  To abbreviate the notation, all norms will be on $[0,T] \times \R^3/\Z^3$. As $u$ is an $H^1$ mild solution, it lies in $X^1$, hence by \eqref{bilinear-2}, $P B(u,u)$ lies in $L^4_t L^2_x$.  Applying \eqref{duhamel-2}, \eqref{energy-duh3}, and the smoothness of $u_0, f$, we conclude that $u \in X^s$ for all $s < 3/2$.  In particular, by Sobolev embedding we see that $u \in L^\infty_t L^{12}_x$, $\nabla u \in L^2_t L^{12}_x \cap L^\infty_t L^{12/5}_x$ and $\nabla^2 u \in L^2_t L^{12/5}_x$, hence $P B(u,u) \in L^2_t H^1_x([0,T] \times \R^3/\Z^3)$.  Returning to \eqref{duhamel-2}, \eqref{energy-duh3}, we now conclude that $u \in X^2([0,T] \times \R^3/\Z^3)$.  One can then repeat these arguments iteratively to conclude that $u \in X^k([0,T] \times \R^3/\Z^3)$ for all $k \geq 1$, and thus $u \in L^\infty_t C^k([0,T] \times \R^3/\Z^3)$ for all $k \geq 0$.  From \eqref{pressure-point} we then have $p \in L^\infty C^k([0,T] \times \R^3/\Z^3)$ for all $k \geq 0$, and then from \eqref{ns} we have $\partial_t u \in L^\infty_t C^k([0,T] \times \R^3/\Z^3)$ for all $k \geq 0$.  One can then obtain bounds on $\partial_t p$ and then on higher time derivatives of $u$ and $t$, giving the desired smoothness, and the claim (iv) follows.

Now we establish stability (v).   It suffices to establish the claim in the short-time case when $T$ is sufficiently small depending only on $M$ (more precisely, we take $M^4 T \leq c$ for some sufficiently small absolute constant $c>0$), as the long-time case then follows by subdividing time and using induction.  The existence of the solution $(u',p',u'_0,f'_0,T,1)$ is then guaranteed by (ii).  Evaluating \eqref{duhamel-2} for $u,u'$ and subtracting, and setting $v := u' - u$, we see that
$$ v(t) = F + \int_{0}^t e^{(t-t')\Delta} P ( 2B(u,v) + B(v,v)) (t')\ dt'$$
for all $t \in [0,T]$.  Applying \eqref{energy-duh2}, \eqref{bilinear-2}, we conclude that
$$
\|v\|_{X^1}
\lesssim \|F\|_{X^1} 
+ T^{1/4} ( \| u \|_{X^1} + \| v \|_{X^1} ) \| v \|_{X^1}$$
where all norms are over $[t_0,t_1] \times \R^3$.
Since $\| u \|_{X^1} + \| v \|_{X^1}$ is finite, we conclude (if $T$ is small enough)
that $\|v\|_{X^1([0,T] \times \R^3/\Z^3)} \lesssim \|F\|_{X^1([0,T] \times \R^3/\Z^3)}$, as the claim follows.
\end{proof}

We may iterate the local well-posedness theory to obtain a dichotomy between existence and blowup.  Define an \emph{incomplete periodic mild $H^1$ solution} $(u,p,u_0,f,T_*^-,1)$ from periodic $H^1$ data $(u_0,f,T_*,1)$ to be fields $u: [0,T_*) \times \R^3/\Z^3 \to \R^3$ and $v: [0,T_*) \times \R^3/\Z^3 \to \R$ such that for any $0 < T < T_*$, the restriction $(u,p,u_0,f,T,1)$ of $(u,p,u_0,f,T_*^-,1)$ to the slab $[0,T] \times \R^3/\Z^3$ is a periodic mild $H^1$ solution.  We similarly define the notion of an incomplete periodic smooth solution.

\begin{corollary}[Maximal Cauchy development]\label{max-cauchy-periodic}  Let $(u_0,f,T,1)$ be periodic $H^1$ data.  Then at least one of the following two statements hold:
\begin{itemize}
\item There exists a periodic $H^1$ mild solution $(u,p,u_0,f,T,1)$ with the given data.
\item There exists a blowup time $0 < T_* < T$ and an incomplete periodic $H^1$ mild solution $(u,p,u_0,f,T_*^-,1)$ up to time $T_*^-$, which blows up in $H^1$ in the sense that
$$ \lim_{t \to T_*^-} \| u(t) \|_{H^1_x(\R^3/\Z^3)} = +\infty.$$
We refer to such solutions as \emph{maximal Cauchy developments}.
\end{itemize}
A similar statement holds with ``$H^1$ data'' and ``$H^1$ mild solution'' replaced by ``smooth data'' and ``smooth solution'' respectively.
\end{corollary}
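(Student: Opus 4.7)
The plan is a standard maximal Cauchy development argument built directly on Theorem \ref{lwp-h1}. First I would define $T_* \in (0,T]$ to be the supremum of all $T' \in (0,T]$ for which a periodic $H^1$ mild solution with data $(u_0,f,T',1)$ exists. Local existence (Theorem \ref{lwp-h1}(ii)) guarantees $T_* > 0$, while the uniqueness statement (iii) lets me glue any two such solutions on overlapping subintervals to produce a single field $(u,p)$ defined on $[0,T_*)$ whose restriction to any compact subinterval $[0,T']$ with $T' < T_*$ is a periodic $H^1$ mild solution. The strong-solution property (i) then gives $u \in C^0_t H^1_x$ on each such subinterval, so $u(t) \in H^1_x(\R^3/\Z^3)$ is well-defined for every $t \in [0,T_*)$.

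If $T_* = T$ and this solution extends to the closed slab $[0,T] \times \R^3/\Z^3$, we are in the first alternative. Otherwise, taking $T_*$ as the candidate blowup time, I need to show $\|u(t)\|_{H^1_x(\R^3/\Z^3)} \to \infty$ as $t \to T_*^-$. Suppose to the contrary that there is a sequence $t_n \to T_*^-$ along which $\|u(t_n)\|_{H^1_x(\R^3/\Z^3)} \leq M$ for some finite $M$. Set $\delta := M + \|f\|_{L^1_t H^1_x([0,T] \times \R^3/\Z^3)}$. Applying Theorem \ref{lwp-h1}(ii) with initial datum $u(t_n)$ at time $t_n$ and the time-translated forcing $f(\cdot + t_n)$, the smallness condition \eqref{d4} is satisfied on an interval of length $\tau \sim \delta^{-4}$ that is \emph{independent of $n$}. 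For $n$ large we have $t_n + \tau > T_*$, and uniqueness (iii) glues the new piece to the existing solution, producing a periodic $H^1$ mild solution on $[0, t_n + \tau]$, contradicting the definition of $T_*$. Hence $\liminf_{t \to T_*^-} \|u(t)\|_{H^1_x} = \infty$, and the same sequential argument rules out any finite subsequential limit, giving the desired limit $+\infty$.

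The main obstacle is exactly the step above: one needs the local-existence time in (ii) to depend only on the $H^1$ norm of the data and on the $L^1_t H^1_x$ norm of the (already fixed) forcing, not on any higher-regularity quantity that might itself blow up along the sequence $t_n$. This subcritical feature is precisely what \eqref{d4} encodes, so the contradiction goes through cleanly. The only minor bookkeeping point is that in the borderline case $T_* = T$ with no solution on the closed slab, we still obtain an incomplete $H^1$ mild solution on $[0,T_*^-)$ with blowup at $T_* = T$; this is naturally subsumed into the second alternative by reading ``$T_* \le T$'' there.

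For the smooth version of the corollary, the same construction applies using the smooth local existence in (ii) and the regularity statement (iv), which upgrades the mild solution produced by contraction to a smooth one whenever the data are smooth. A periodic smooth solution on $[0,T_*)$ that cannot be continued past $T_*$ is in particular a periodic $H^1$ mild solution that cannot be extended, so the $H^1$ blowup criterion established above applies verbatim and yields the smooth analogue of the dichotomy.
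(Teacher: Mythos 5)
Your proposal is correct and follows exactly the route the paper intends: the paper offers no written proof beyond the remark that one ``iterates the local well-posedness theory,'' and your argument---defining $T_*$ as the supremal existence time, gluing via uniqueness, and using the fact that the local existence time in \eqref{d4} depends only on the (subcritical) $H^1$ size of the data and forcing to rule out a bounded subsequence of $\|u(t_n)\|_{H^1_x}$---is precisely that iteration, with the borderline case $T_*=T$ and the smooth upgrade via Theorem \ref{lwp-h1}(iv) handled appropriately.
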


Next we establish a compactness property of the periodic $H^1$ flow.

\begin{proposition}[Compactness]\label{compactness} If $(u^{(n)}_0,f^{(n)},T,1)$ is a sequence of periodic $H^1$ data obeying \eqref{meanzero-1}, \eqref{meanzero-2} which is uniformly bounded in $H^1_x(\R^3/\Z^3)_0 \times L^\infty_t H^1_x([0,T] \times \R^3/\Z^3)_0$ and converges weakly\footnote{Strictly speaking, we should use ``converges in the weak-* sense'' or ``converges in the sense of distributions'' here, in order to avoid the pathological (and irrelevant) elements of the dual space of $L^\infty_t H^1_x$ that can be constructed from the axiom of choice.} to $(u_0,f,T,1)$, and $(u,p,u_0,f,T,1)$ is a periodic $H^1$ mild solution with the indicated data, then for $n$ sufficiently large, there exists periodic $H^1$ mild solutions $(u^{(n)},p^{(n)},u_0^{(n)},f^{(n)},T,1)$ with the indicated data, with $u^{(n)}$ converging weakly in $X^1([0,T] \times \R^3/\Z^3)$ to $u$.  Furthermore, for any $0 < \tau < T$, $u^{(n)}$ converges strongly in $X^1([\tau,T] \times \R^3/\Z^3)$ to $u$.  

If $u^{(n)}_0$ converges strongly in $H^1_x(\R^3/\Z^3)_0$ to $u_0$, then one can set $\tau=0$ in the previous claim.
\end{proposition}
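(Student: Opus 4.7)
My plan is to combine the Lipschitz stability theorem (Theorem~\ref{lwp-h1}(v)) with a smoothing/compactness analysis for the heat semigroup on the torus. Throughout, let
\[
F_n(t) := e^{t\Delta}(u_0^{(n)} - u_0) + \int_0^t e^{(t-t')\Delta}(f^{(n)}(t') - f(t'))\,dt'
\]
denote the linear perturbation, and set $M := \|u\|_{X^1([0,T] \times \R^3/\Z^3)}$. The first preparatory step is to show, by expanding in Fourier series on $\R^3/\Z^3$ and exploiting the factor $e^{-4\pi^2 |k|^2 s}$ of the heat semigroup on the $k$-th mode, that $F_n \to 0$ strongly in $X^1([\tau,T] \times \R^3/\Z^3)$ for every $\tau>0$, and $F_n \to 0$ in $X^1([0,T] \times \R^3/\Z^3)$ in the strong-data case $u_0^{(n)} \to u_0$ in $H^1_x$. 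Weak $H^1$-convergence gives pointwise convergence of each Fourier coefficient, and the uniform $H^1$ bound combined with dominated convergence yields the desired strong convergence once the heat kernel tames the high-frequency tail.

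In the strong-data case the proposition follows at once: for $n$ sufficiently large, $\|F_n\|_{X^1([0,T] \times \R^3/\Z^3)}$ falls below the stability threshold depending on $T$ and $M$, so Theorem~\ref{lwp-h1}(v) produces a periodic $H^1$ mild solution $u^{(n)}$ on $[0,T]$ with $\|u^{(n)} - u\|_{X^1([0,T] \times \R^3/\Z^3)} \lesssim_{T,M} \|F_n\|_{X^1([0,T] \times \R^3/\Z^3)} \to 0$, which settles the $\tau=0$ claim.

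In the merely weak case I first obtain short-time existence. By the uniform $H^1$ bound on $(u_0^{(n)}, f^{(n)})$ and the smallness condition \eqref{d4} in Theorem~\ref{lwp-h1}(ii), there is $\tau_0>0$ (independent of $n$) so that each $u^{(n)}$ exists on $[0,\tau_0]$ with uniformly bounded $X^1([0,\tau_0] \times \R^3/\Z^3)$-norm. From the projected equation \eqref{duhamel-2} (together with \eqref{bilinear} and Sobolev embedding) the time derivative $\partial_t u^{(n)}$ is uniformly bounded in $L^2_t L^2_x$, so the Aubin-Lions lemma furnishes a subsequence converging weakly in $X^1$ and strongly in $L^2_t L^6_x$. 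The strong $L^2_t L^6_x$ convergence handles the quadratic nonlinearity $PB(u^{(n)}, u^{(n)})$, while the linear and forcing terms pass to the limit by Step~1 and the weak convergence of $f^{(n)}$; hence the limit is a periodic $H^1$ mild solution with data $(u_0, f, \tau_0, 1)$, which by Theorem~\ref{lwp-h1}(iii) must equal $u|_{[0,\tau_0]}$. Since every subsequence has a sub-subsequence converging weakly to $u$, the full sequence converges weakly to $u$ in $X^1([0,\tau_0] \times \R^3/\Z^3)$.

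To promote this to existence on $[0,T]$ and strong convergence on $[\tau,T]$, fix $\tau \in (0,\tau_0)$ and evaluate \eqref{duhamel-2} at time $\tau$: the linear term converges strongly in $H^1_x$ by Step~1, the nonlinear Duhamel integral converges strongly in $H^1_x$ by heat smoothing on $[0,\tau-t']$ combined with the strong $L^2_t L^6_x$ convergence of $u^{(n)}$, and the forcing Duhamel integral converges strongly in $H^1_x$ by Step~1. Thus $u^{(n)}(\tau) \to u(\tau)$ strongly in $H^1_x$. Regarding $[\tau,T]$ as a fresh initial-value interval via the time-translation symmetry \eqref{time-translate}, the corresponding linear perturbation
\[
t \mapsto e^{(t-\tau)\Delta}(u^{(n)}(\tau) - u(\tau)) + \int_\tau^t e^{(t-t')\Delta}(f^{(n)}(t') - f(t'))\,dt'
\]
tends to zero in $X^1([\tau,T] \times \R^3/\Z^3)$ by what we have just shown and Step~1. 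A time-translated application of Theorem~\ref{lwp-h1}(v) with constant $M$ then gives, for $n$ large, the existence of $u^{(n)}$ on $[\tau,T]$ together with $\|u^{(n)} - u\|_{X^1([\tau,T] \times \R^3/\Z^3)} \to 0$. Piecing this together with the short-time existence gives $u^{(n)}$ on $[0,T]$ with uniform $X^1$ bound, and weak convergence on $[0,T]$ by uniqueness of weak limits in $X^1$. The main obstacle is precisely the conversion of weak convergence of the data at $t=0$ into strong convergence of $u^{(n)}(\tau)$ in $H^1_x$ at a positive time: this is where the compactness of $e^{\tau\Delta}$ on $H^1_x(\R^3/\Z^3)$ and the Aubin-Lions compactness of the nonlinear part must cooperate.
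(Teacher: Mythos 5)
Your proposal is correct, and in the strong-data case it coincides with the paper's argument (reduce to showing $F_n\to 0$ in $X^1$ and invoke the Lipschitz stability of Theorem \ref{lwp-h1}(v)). In the weak-data case, however, you take a genuinely different route for the key step of converting weak convergence of the data into strong convergence of the solutions at a positive time. The paper works directly with the difference $v^{(n)}=u^{(n)}-u$ in the \emph{weaker} topology $X^0$: weak $H^1_x$ convergence of $u_0^{(n)}$ on the compact torus gives strong $L^2_x$ convergence by Rellich, an $X^0$ estimate on the difference equation then yields $\|v^{(n)}\|_{X^0([0,\tau])}=o(1)$ for $\tau$ small, and a pigeonhole in time produces a slice $\tau^{(n)}$ where $\|v^{(n)}(\tau^{(n)})\|_{H^1_x}=o(1)$, after which stability takes over. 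You instead extract a limit by Aubin--Lions (using the uniform $L^2_tH^2_x$ and $\partial_t u^{(n)}\in L^2_tL^2_x$ bounds), identify it with $u$ by uniqueness, and then read off strong $H^1_x$ convergence of $u^{(n)}(\tau)$ from the smoothing in the Duhamel formula at the fixed time $\tau$. Both work: the paper's version avoids Aubin--Lions and any passage to the limit in the nonlinearity (it only ever estimates the difference equation), while yours avoids the pigeonhole and gives strong convergence at \emph{every} positive time directly. Two details in your write-up deserve to be fleshed out, though neither is a real gap. First, the ``dominated convergence'' over Fourier modes in Step~1 does not literally close (the mode-wise bound $\sup_{t'}|\hat g^{(n)}(t',k)|\le\|g^{(n)}\|_{L^\infty_tL^2_x}$ is not square-summable against the weights); the clean argument is a frequency truncation: the tail $P_{>N}$ contribution is $O(N^{-2})$ in $X^1$ uniformly in $n$ by the heat decay $e^{-cN^2 s}$ and the uniform $L^\infty_tH^1_x$ bound, while the finitely many low modes converge individually. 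Second, when you claim the nonlinear Duhamel integral at $t=\tau$ converges strongly in $H^1_x$, you must split the $t'$-integral at $\tau-\delta$: on $[\tau-\delta,\tau]$ there is no smoothing, but \eqref{energy-duh2}--\eqref{bilinear-2} and the uniform $X^1$ bound make that piece $O(\delta^{1/4})$ uniformly in $n$, and on $[0,\tau-\delta]$ the kernel of $e^{(\tau-t')\Delta}P\nabla$ maps $L^1_x\to H^1_x$ with constant $O_\delta(1)$, so the strong $L^1_tL^1_x$ convergence of $u^{(n)}u^{(n)}$ finishes the job.
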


\begin{proof}  This result is essentially in \cite[Proposition 2.2]{tao-quantitative}, but for the convenience of the reader we give a full proof here. 

To begin with, we assume that $u^{(n)}$ converges strongly in $H^1_x(\R^3/\Z^3)_0$ to $u_0$, and relax this to weak convergence later.
In view of the stability component of Theorem \ref{lwp-h1}, it suffices to show that $F^{(n)}$ converges strongly in $X^1([0,T] \times \R^3/\Z^3)$ to zero, where
$$ F^{(n)}(t) := e^{t\Delta} (u_0^{(n)}-u_0) + \int_0^t e^{(t-t')\Delta} P (f^{(n)}(t')-f(t'))\ dt'.$$
We have $u_0^{(n)}-u_0$ converges strongly in $H^1_x(\R^3/\Z^3)$ to zero, while $f^{(n)} - f$ converges weakly in $L^\infty_t H^1_x([0,T] \times \R^3/\Z^3) \to 0$, and hence strongly in $L^2_t L^2_x([0,T] \times \R^3/\Z^3)$.  The claim then follows from \eqref{energy-duh2}.

Now we only assume that $u^{(n)}$ converges weakly in $H^1_x(\R^3/\Z^3)_0$ to $u_0$.  Let $0 < \tau < T$ be a sufficiently small time, then from local existence (Theorem \ref{lwp-h1}(ii)) we see that $u^{(n)}$ and $u$ are bounded in $X^1([0,\tau] \times \R^3/\Z^3)$ uniformly in $n$ by some finite quantity $M$.  Writing $v^{(n)} := u^{(n)} - u$, then from \eqref{duhamel-2} we have the difference equation
$$ v^{(n)}(t) = F^{(n)}(t) + \int_0^t e^{(t-t')\Delta} P( B( u, v^{(n)} ) + B(u^{(n)},v^{(n)}) )(t')\ dt'.$$
Since $u_0^{(n)}-u_0$ converges weakly in $H^1_x(\R^3/\Z^3)$ to zero, it converges strongly in $L^2_x(\R^3/\Z^3)$ to zero too.  Using \eqref{energy-duh} as before we see that $F^{(n)}$ converges strongly in $X^0([0,\tau] \times \R^3/\Z^3)$ to zero.  From \eqref{energy-duh2} we thus have
$$ \| v^{(n)} \|_{X^0} \lesssim o(1) + \| B(u,v^{(n)}) \|_{L^2_t H^{-1}_x}
+ \| B(u^{(n)},v^{(n)}) \|_{L^2_t H^{-1}_x}$$
where $o(1)$ goes to zero as $n \to \infty$, and all spacetime norms are over $[0,\tau] \times \R^3/\Z^3$.  From the form of $B$ and H\"older's inequality we have
\begin{align*}
\| B(u^{(n)},v^{(n)}) \|_{L^2_t H^{-1}_x}
&\lesssim
\| \bigO(u^{(n)} v^{(n)}) \|_{L^2_t L^2_x}\\
&\lesssim \tau^{1/4} \| u^{(n)} \|_{L^\infty_t L^6_x} \| v^{(n)} \|_{L^\infty_t L^2_x}^{1/2} 
 \| v^{(n)} \|_{L^2_t L^6_x}^{1/2} \\
&\lesssim M \tau^{1/4} \|v^{(n)}\|_{X^0}
\end{align*}
and similarly for $B(u,v^{(n)})$, and thus
$$ \| v^{(n)} \|_{X^0} \lesssim o(1) + M \tau^{1/4} \|v^{(n)}\|_{X^0}.$$
Thus, for $\tau$ small enough, one has
$$ \| v^{(n)} \|_{X^0} = o(1),$$
which among other things gives weak convergence of $u^{(n)}$ to $u$ in $[0,\tau] \times \R^3/\Z^3$.  Also, by the pigeonhole principle, one can find times $\tau^{(n)}$ in $[0,\tau]$ such that
$$ \| v^{(n)}(\tau^{(n)}) \|_{H^1_x(\R^3/\Z^3)} = o(1).$$
Using the stability theory, and recalling that $\tau$ is small, this implies that
$$ \| v^{(n)}(\tau) \|_{H^1_x(\R^3/\Z^3)} = o(1),$$
thus $u^{(n)}(\tau)$ converges strongly to $u(\tau)$.  Now we can use our previous arguments to extend $u^{(n)}$ to all of $[0,T] \times \R^3/\Z^3$ and obtain strong convergence in $X^1([\tau,T] \times \R^3/\Z^3)$ as desired.
\end{proof}

Now we turn to the non-periodic setting.  We have the following analogue of Theorem \ref{lwp-h1}:

\begin{theorem}[Local well-posedness in $H^1$]\label{lwp-h1-r3}  Let $(u_0,f,T)$ be $H^1$ data.
\begin{itemize}
\item[(i)] (Strong solution) If $(u,p,u_0,f,T,1)$ is an $H^1$ mild solution, then
$$ u \in C^0_t H^1_x([0,T] \times \R^3).$$
\item[(ii)] (Local existence and regularity)  If
\begin{equation}\label{D4}
 (\|u_0\|_{H^1_x(\R^3)} + \| f \|_{L^1_t H^1_x(\R^3)})^4 T \leq c
\end{equation}
for a sufficiently small absolute constant $c>0$, then there exists a $H^1$ mild solution $(u,p,u_0,f,T)$ with the indicated data, with
$$ \|u\|_{X^1([0,T] \times \R^3)} \lesssim \|u_0\|_{H^1_x(\R^3)} + \| f \|_{L^1_t H^1_x(\R^3)}$$
and more generally
$$ \| u \|_{X^k([0,T] \times \R^3)} \lesssim_{k,\|u_0\|_{H^k_x(\R^3)},\| f \|_{L^1_t H^k_x(\R^3)},1}$$
for each $k \geq 1$.  In particular, one has local existence whenever $T$ is sufficiently small depending on ${\mathcal H}^1(u_0,f,T)$.  
\item[(iii)] (Uniqueness) There is at most one $H^1$ mild solution $(u,p,u_0,f,T)$ with the indicated data.
\item[(iv)] (Regularity)  If $(u,p,u_0,f,T,1)$ is a $H^1$ mild solution, and $(u_0,f,T)$ is Schwartz, then $u$ and $p$ are smooth; in fact, one has $\partial_t^j u, \partial_t^j p \in L^\infty_t H^k([0,T] \times \R^3)$ for all $j,k \geq 0$.
\item[(v)] (Lipschitz stability)  Let $(u,p,u_0,f,T)$, $(u',p',u'_0,f',T)$ be $H^1$ mild solutions with the bounds $0 < T \leq T_0$ and
$$ \| u \|_{X^1([0,T] \times \R^3)}, \| u' \|_{X^1([0,T] \times \R^3)} \leq M.$$
Define the function
$$ F(t) := e^{t\Delta} (u'_0-u_0) + \int_0^t e^{(t-t')\Delta} (f'(t')-f(t'))\ dt'.$$
If the quantity $\|F\|_{L^2_t L^2_x([0,T] \times \R^3)}$ is sufficiently small depending on $T$, $M$, then
$$ \| u-u'\|_{X^1([0,T] \times \R^3)} \lesssim_{T,M} \|F\|_{L^2_t L^2_x([0,T] \times \R^3)}.$$
\end{itemize}
\end{theorem}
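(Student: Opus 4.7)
The theorem is the non-periodic analogue of Theorem \ref{lwp-h1}, and its proof can follow the same pattern, since the linear and bilinear estimates \eqref{energy-duh}--\eqref{bilinear-2} were stated uniformly for $\Omega = \R^3$ and $\Omega = \R^3/\Z^3$. For (i), I would write $u$ via \eqref{duhamel-2} as a linear heat evolution of $u_0$ plus the Duhamel integral of $Pf$ (which lies in $C^0_t H^1_x$ by standard semigroup theory) plus the bilinear Duhamel integral of $PB(u,u)$; the bilinear estimate \eqref{bilinear} applied to $u \in X^1$ places $PB(u,u) \in L^4_t L^2_x$, and then \eqref{energy-duh3} with $s=1<3/2$ upgrades this last piece to $X^1 \subset C^0_t H^1_x$.

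For (ii), I would run a Picard iteration for
$\Phi(u)(t) := e^{t\Delta} u_0 + \int_0^t e^{(t-t')\Delta}(PB(u,u)+Pf)(t')\,dt'$
on a ball $\{\|u\|_{X^1([0,T]\times\R^3)} \leq C\delta\}$ with $\delta := \|u_0\|_{H^1_x(\R^3)} + \|f\|_{L^1_t H^1_x(\R^3)}$; the estimates \eqref{energy-duh2} and \eqref{bilinear-2}, combined with the smallness hypothesis $\delta^4 T \leq c$, make $\Phi$ a contraction and yield the $X^1$ bound. The higher $X^k$ bounds follow by induction on $k$, bootstrapping \eqref{energy-duh} at level $s = k$ with product-rule and Sobolev estimates on $\nabla(uu)$. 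Part (iii) is obtained by subtracting the Duhamel equations for two solutions with identical data; the difference $v := u'-u$ satisfies $v = \int_0^t e^{(t-t')\Delta}P(B(u',v)+B(v,u))(t')\,dt'$, and \eqref{energy-duh2} combined with \eqref{bilinear-2} forces $v = 0$ on short intervals, with iteration giving uniqueness on all of $[0,T]$.

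For (iv), Schwartz data lies in $H^k_x(\R^3)$ for every $k$, so iterating part (ii) on short sub-intervals (glued by (iii)) produces $u \in L^\infty_t H^k_x$ for every $k$. After using \eqref{forcing} to reduce to $\nabla \cdot f = 0$, the pressure formula \eqref{pressure-point-simplified} combined with the $H^k$-boundedness of the multipliers $\Delta^{-1}\partial_i\partial_j$ gives $p \in L^\infty_t H^k_x$; the equation \eqref{ns} then places $\partial_t u$ in the same spaces. Differentiating \eqref{pressure-point-simplified} in time controls $\partial_t p$ in terms of $u$ and $\partial_t u$, and iterating yields $\partial_t^j u, \partial_t^j p \in L^\infty_t H^k_x([0,T] \times \R^3)$ for all $j,k \geq 0$.

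The most delicate step is (v), where the hypothesis only bounds $F$ in $L^2_t L^2_x$, which is strictly weaker than the $X^1$ norm appearing in the conclusion. Subtracting Duhamel's formulas gives $v = F + \int_0^t e^{(t-t')\Delta} P(B(u',v)+B(v,u))(t')\,dt'$. I would follow the two-stage weak-to-strong bootstrap used in the proof of Proposition \ref{compactness}: first, apply \eqref{energy-duh} at $s=0$ together with the H\"older/Sobolev bound $\|B(u',v)+B(v,u)\|_{L^2_t H^{-1}_x} \lesssim M T^{1/4}\|v\|_{X^0}$ to obtain $\|v\|_{X^0} \lesssim \|F\|_{L^2_t L^2_x}$ on a short interval; second, pigeonhole in time to locate a slice at which $v$ is small in $H^1_x$, and restart the short-time $X^1$ estimate (as in (iii), but now with non-zero forcing term from that slice onward) to propagate $X^1$ smallness to the whole interval. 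This weak-norm-first, then-upgrade strategy is the key new element over the periodic analogue, and is the step I expect to require the most care.
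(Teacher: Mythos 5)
Your treatment of (i)--(iv) coincides with the paper's, which simply repeats the proof of Theorem \ref{lwp-h1} verbatim; the only point the paper flags as needing care is (iv), where one first bootstraps $u \in X^k([0,T]\times\R^3)$ for all $k$ and then feeds this into \eqref{pressure-point} and \eqref{ns} to control $p$, $\partial_t u$, and successively higher time derivatives. Your route through \eqref{pressure-point-simplified} is the same argument.

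The gap is in (v). Your first stage claims $\|v\|_{X^0} \lesssim \|F\|_{L^2_t L^2_x}$ plus nonlinear terms, but running the energy estimate at $s=0$ on the difference equation produces the term $\|u'_0-u_0\|_{L^2_x}$ (equivalently, one needs $\|F\|_{X^0}$), and this is \emph{not} controlled by $\|F\|_{L^2_t L^2_x}$: for $g$ concentrated at frequency $N$ one has $\|e^{t\Delta} g\|_{L^2_t L^2_x([0,T]\times\R^3)} \sim N^{-1}\|g\|_{L^2_x}$, so the $L^2_tL^2_x$ norm of the free evolution loses a full derivative relative to $\|g\|_{L^2_x}$ and two relative to $\|g\|_{H^1_x}$. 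The same example shows that no strategy can close the literal statement: take $f=f'$ and $u'_0 = u_0 + g$ with $g$ divergence-free, $\|g\|_{H^1_x}=\eps$ and frequency $N$ large; then $\|F\|_{L^2_tL^2_x} \sim \eps N^{-2} \to 0$, yet by part (i) one has $\|u-u'\|_{X^1} \geq \|v(0)\|_{H^1_x} = \eps$, so the asserted conclusion fails. The statement should be read with the $X^1([0,T]\times\R^3)$ norm of $F$ in both hypothesis and conclusion, exactly as in the periodic Theorem \ref{lwp-h1}(v); that is what the verbatim repetition of the periodic proof delivers, and it is what every application in the paper actually uses (in each such application either $u_0=u'_0$, so that \eqref{energy-duh2} converts $L^2_tL^2_x$ control of $f'-f$ into $X^1$ control of $F$, or else $u'_0-u_0$ is small in $H^1_x$). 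Your second stage has the same defect in another guise: pigeonholing a good time slice $\tau>0$ and restarting can only produce smallness on $[\tau,T]$, never on $[0,\tau]$, which is precisely why Proposition \ref{compactness} asserts strong convergence only on $[\tau,T]$ when the data converge merely weakly.
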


\begin{proof} This proceeds by repeating the proof of Theorem \ref{lwp-h1} verbatim.  The one item which perhaps requires some care is the regularity item (iv).  The arguments from Theorem \ref{lwp-h1} yield the regularity
$$ u \in X^k([0,T] \times \R^3)$$
for all $k \geq 0$ without difficulty.  In particular, $u \in L^\infty_t H^k_x([0,T] \times \R^3)$ for all $k \geq 0$.  From \eqref{pressure-point} and Sobolev embedding one then has $p \in L^\infty_t H^k_x([0,T] \times \R^3)$ for all $k \geq 0$, and then from \eqref{ns} and more Sobolev embedding one has
$\partial_t u \in L^\infty_t H^k_x([0,T] \times \R^3)$ for all $k \geq 0$.  One can then obtain bounds on $\partial_t p$ and then on higher time derivatives of $u$ and $t$, giving the desired smoothness, and the claim (iv) follows.  (Note that these arguments did not require the full power of the hypothesis that $(u_0, f, T)$ was Schwartz; it would have sufficed to have $u_0 \in H^k_x(\R^3)$ and $f \in C^j_t H^k_x(\R^3)$ for all $j,k \geq 0$.)
\end{proof}

From the regularity component of the above theorem, we immediately conclude that Conjecture \ref{global-h1-spatial} implies Conjecture \ref{global-schwartz}, which is one half of Theorem \ref{main}(iv). 

We will also need a more quantitative version of the regularity statement in Theorem \ref{lwp-h1-r3}.

\begin{lemma}[Quantitative regularity]\label{quant-reg}  Let $(u,p,u_0,f,T)$ be an $H^1$ mild solution obeying \eqref{D4} for a sufficiently small absolute constant $c > 0$, and such that
$$ \|u_0\|_{H^1_x(\R^3)} + \| f \|_{L^1_t H^k_x(\R^3)} \leq M < \infty.$$
Then one has
$$ \|u\|_{L^\infty_t H^k_x([\tau,T] \times \R^3)} \lesssim_{k,\tau,T,M} 1$$
for all natural numbers $k \geq 1$ and all $0 < \tau < T$.
\end{lemma}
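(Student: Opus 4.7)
The plan is to prove the stronger statement that $\|u\|_{X^k([\tau,T]\times\R^3)}\lesssim_{k,\tau,T,M}1$ (for every $0<\tau<T$) by induction on $k$. I strengthen the inductive conclusion from $L^\infty_t H^k_x$ to the full $X^k$ norm because the $L^2_t H^{k+1}_x$ component of $X^k$ is precisely what fuels the pigeonhole step at the next level of the induction. The base case $k=1$ follows immediately from Theorem \ref{lwp-h1-r3}(ii) applied to the original data under the smallness hypothesis \eqref{D4}, which yields $\|u\|_{X^1([0,T]\times\R^3)}\lesssim M$ on the entire interval.

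For the inductive step from $k-1$ to $k$, apply the inductive hypothesis with parameter $\tau/2$ to get $\|u\|_{X^{k-1}([\tau/2,T]\times\R^3)}\lesssim_{k-1,\tau,T,M}1$, which in particular supplies the bound $\|u\|_{L^2_t H^k_x([\tau/2,T])}\lesssim_{k-1,\tau,T,M}1$. Pigeonholing on the subinterval $[\tau/2,3\tau/4]$ produces an auxiliary time $\tau^*\in[\tau/2,3\tau/4]$ with $\|u(\tau^*)\|_{H^k_x(\R^3)}\lesssim_{k,\tau,T,M}\tau^{-1/2}$. This is where the parabolic smoothing effect of Navier-Stokes is harvested: although $u_0$ is only assumed to lie in $H^1$, the equation promotes it to $H^k$ data at some time $\tau^*<\tau$, with quantitative bounds.

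With $H^k$ data now available at $\tau^*$, the inductive step is closed by a direct application of Theorem \ref{lwp-h1-r3}(ii) to the shifted Cauchy problem on $[\tau^*,T]$ with initial datum $u(\tau^*)$. The smallness hypothesis \eqref{D4} for this shifted problem, namely $(\|u(\tau^*)\|_{H^1_x}+\|f\|_{L^1_t H^1_x([\tau^*,T])})^4(T-\tau^*)\le c$, follows from the analogous hypothesis on $[0,T]$ provided the absolute constant $c$ in the lemma is chosen slightly smaller than the one appearing in Theorem \ref{lwp-h1-r3}(ii), since $\|u(\tau^*)\|_{H^1_x}\lesssim \|u\|_{X^1([0,T])}\lesssim M$ from the base case. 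Uniqueness (Theorem \ref{lwp-h1-r3}(iii)) identifies the solution produced by (ii) with the restriction of $u$ to $[\tau^*,T]$, so the higher-regularity $X^k$ bound in (ii) yields $\|u\|_{X^k([\tau^*,T]\times\R^3)}\lesssim_{k,\|u(\tau^*)\|_{H^k_x},M}1\lesssim_{k,\tau,T,M}1$. Since $[\tau,T]\subset[\tau^*,T]$, this completes the induction.

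No individual step is difficult, given the robust local well-posedness theory of Theorem \ref{lwp-h1-r3}. The main technical point of care is the strengthening of the inductive hypothesis to include the $L^2_t H^{k+1}_x$ gain, without which the pigeonhole step that harvests higher regularity would be unavailable; a more direct attempt via a single application of Duhamel's formula and the energy inequality \eqref{energy-duh} would be stymied by a circularity, since a tame Sobolev estimate on the nonlinearity $PB(u,u)$ produces on its right-hand side the very $L^2_t H^{k+1}_x$ norm of $u$ that one is trying to control.
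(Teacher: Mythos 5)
Your proof is correct, but it takes a genuinely different route from the paper's. The paper's argument is a direct Duhamel bootstrap on $[\tau,T]$: the linear part $e^{t\Delta}u_0$ is placed in $L^\infty_t H^k_x([\tau,T]\times\R^3)$ using the smoothing of the heat semigroup for $t\geq\tau$, the nonlinearity $PB(u,u)$ is first placed in $L^4_tL^2_x$ via \eqref{bilinear-2}, and then \eqref{energy-duh3}, Sobolev embedding, and \eqref{energy-duh2} are used to upgrade $u$ to $X^s$ for $s<3/2$, then $PB(u,u)$ to $L^2_tH^1_x$, then $u$ to $X^2$, and so on iteratively --- no pigeonhole in time, no restarting of the Cauchy problem, and no appeal to uniqueness. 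You instead harvest the parabolic smoothing through the $L^2_tH^{k+1}_x$ component of the $X^k$ norm plus a pigeonhole, and then re-run the local theory from the good time $\tau^*$, invoking the higher-$k$ clause of Theorem \ref{lwp-h1-r3}(ii) together with uniqueness (iii); your care over shrinking the absolute constant $c$ in \eqref{D4} for the shifted problem is exactly right. The trade-off is that your route outsources the actual regularity bootstrap to the higher-$k$ estimates of Theorem \ref{lwp-h1-r3}(ii), whose proof the paper only sketches (``left to the reader'') and which is essentially the same iteration the paper carries out explicitly in this lemma; the paper's version is therefore more self-contained, while yours is more modular. One small correction to your closing remark: the paper's direct approach is not actually stymied by circularity, because it uses \eqref{energy-duh2} and \eqref{energy-duh3} --- which gain one and up to three-halves derivatives over the forcing term, respectively --- rather than \eqref{energy-duh}, so the top-order norm of $u$ never appears on the right-hand side.
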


\begin{proof}  We allow all implied constants to depend on $k, T, M$.  From Theorem \ref{lwp-h1} we have
$$ \|u\|_{X^1([0,T] \times \R^3)} \lesssim 1$$
which already gives the $k=1$ case.  Now we turn to the $k \geq 2$ case.  From \eqref{bilinear-2} we have
$$ \| P B(u,u) \|_{L^4_t L^2_x([0,T] \times \R^3)} \lesssim 1$$
while from Fourier analysis one has
$$ \| e^{t\Delta} u_0 \|_{L^\infty_t H^k_x([\tau,T] \times \R^3)} \lesssim_\tau 1.$$
From this and \eqref{duhamel-2}, \eqref{energy-duh} we see that
$$ \|u\|_{X^s([\tau,T] \times \R^3)} \lesssim_{s,\tau} 1$$
for all $s < 3/2$.  From Sobolev embedding we conclude
\begin{align*}
\|u\|_{L^\infty_t L^{12}_x([\tau,T] \times \R^3)} &\lesssim_{\tau} 1,\\
\|\nabla u\|_{L^2_t L^{12}_x([\tau,T] \times \R^3)} &\lesssim_{\tau} 1,\\
\|\nabla u\|_{L^\infty_t L^{12/5}_x([\tau,T] \times \R^3)} &\lesssim_{\tau} 1,\\
\|\nabla^2 u\|_{L^2_t L^{12}_x([\tau,T] \times \R^3)} &\lesssim_{\tau} 1,
\end{align*}
and hence
$$ \| P B(u,u) \|_{L^2_t H^1_x([\tau,T] \times \R^3)} \lesssim 1.$$
Returning to \eqref{duhamel-2}, \eqref{energy-duh3} we now conclude that
$$ \|u\|_{X^2([\tau,T] \times \R^3)} \lesssim_\tau 1$$
which gives the $k = 2$ case.  One can repeat these arguments iteratively to then give the higher $k$ cases.
\end{proof}

We extract a particular consequence of the above lemma:

\begin{proposition}[Almost regularity]\label{almost-smooth} Let $(u,p,u_0,0,T)$ be a homogeneous $H^1$ mild solution obeying \eqref{D4} for a sufficiently small absolute constant $c > 0$.  Then $u, p$ are smooth on $[\tau,T] \times \R^3$ for all $0 < \tau < T$; in fact, all derivatives of $u, p$ lie in $L^\infty_t L^2_x([\tau,T] \times \R^3)$.  If furthermore $u_0$ is smooth, then $(u,p,u_0,0,T)$ is an almost smooth solution.
\end{proposition}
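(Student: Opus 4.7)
The plan is to first upgrade the $H^1$ mild solution to have all spatial derivatives in $L^\infty_t L^2_x$ on $[\tau,T] \times \R^3$ via Lemma \ref{quant-reg} and the pressure formula, then bootstrap to get all space-time derivatives in $L^\infty_t L^2_x$ (yielding smoothness on $(0,T]\times\R^3$ by Sobolev embedding), and finally handle the continuity at $t=0$ required for the almost smooth property.

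First I would apply Lemma \ref{quant-reg} with $f = 0$, which gives $\|u\|_{L^\infty_t H^k_x([\tau,T]\times\R^3)} \lesssim_{k,\tau,T,\|u_0\|_{H^1_x}} 1$ for every $k\geq 1$ and every $\tau\in(0,T)$. By the mild solution definition, the pressure is given by \eqref{pressure-point-simplified}: $p = -\Delta^{-1}\partial_i\partial_j(u_iu_j)$. Since $\Delta^{-1}\partial_i\partial_j$ is a Calder\'on-Zygmund operator bounded on $L^2_x(\R^3)$ and on $H^\ell_x(\R^3)$, and since Sobolev algebra / Hölder estimates give $\|u_iu_j\|_{H^\ell_x(\R^3)} \lesssim_\ell \|u\|_{H^\ell_x(\R^3)} \|u\|_{H^{\max(\ell,2)}_x(\R^3)}$, we obtain $\nabla_x^\ell p \in L^\infty_t L^2_x([\tau,T]\times\R^3)$ for every $\ell\geq 0$.

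Next I would recover time regularity from the equation \eqref{ns} in the form $\partial_t u = \Delta u - (u\cdot\nabla)u - \nabla p$. The previous step shows that every spatial derivative of each term on the right lies in $L^\infty_t L^2_x([\tau,T]\times\R^3)$, so the same is true for $\partial_t u$. Differentiating \eqref{pressure-point-simplified} in time yields $\partial_t p = -2\Delta^{-1}\partial_i\partial_j(u_i\,\partial_t u_j)$, which inherits analogous bounds. Iterating this scheme on the number of time derivatives gives $\partial_t^m \nabla_x^k u, \partial_t^m \nabla_x^k p \in L^\infty_t L^2_x([\tau,T]\times\R^3)$ for all $m,k\geq 0$, and Sobolev embedding then produces the smoothness of $u,p$ on $[\tau,T]\times\R^3$ together with the $L^\infty_t L^2_x$ bounds on all derivatives. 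This establishes the first claim.

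For the almost smooth claim assuming $u_0$ smooth, write $u(t) = e^{t\Delta}u_0 + w(t)$ with $w(t) := \int_0^t e^{(t-s)\Delta}PB(u,u)(s)\,ds$. It remains to verify that $\nabla_x^k u$, $\nabla_x^k p$, and $\partial_t\nabla_x^k u$ extend continuously to $t=0$. For the linear piece, smoothness of $u_0$ makes $\nabla_x^k u_0$ continuous on $\R^3$, and the Gaussian decay of the heat kernel combined with the localization estimate \eqref{kop} yields $\nabla_x^k(e^{t\Delta}u_0)\to\nabla_x^k u_0$ locally uniformly as $t\to 0^+$, even though $\nabla_x^k u_0$ is not assumed to lie in any global $L^p$ space. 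For the nonlinear piece, \eqref{bilinear-2} with $t_0=0$ already gives $\|w(t)\|_{H^1_x}\to 0$; to handle higher $k$, I would split $w = \int_0^{t/2} + \int_{t/2}^t$, using the step-one $L^\infty_t H^k_x$ bounds on $[t/2,T]$ for the second piece and the heat-kernel smoothing $\|\nabla_x^k e^{(t-s)\Delta}\|_{L^2\to L^2}\lesssim (t-s)^{-k/2}$ integrated against the $L^2_x$ bounds on $PB(u,u)$ for the first piece, concluding that $\nabla_x^k w(t)\to 0$ locally uniformly. Hence $\nabla_x^k u \to \nabla_x^k u_0$ locally uniformly. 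Continuity of $\nabla_x^k p$ at $t=0$ then follows from \eqref{pressure-point-simplified}, $L^2$-boundedness of Riesz operators, and the $H^1$-continuity of $u$, while continuity of $\partial_t\nabla_x^k u$ follows by applying $\nabla_x^k$ to \eqref{ns} and using that all the resulting terms are now known to be continuous.

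The main obstacle is in the third paragraph: establishing continuity of arbitrarily high spatial derivatives at $t=0$ when $u_0$ is only assumed smooth and $H^1$, so that $\nabla_x^k u_0$ may fail to lie in any global $L^p$ space and one cannot simply appeal to continuity of a semigroup in a function-space norm. The key is to marry the pointwise continuity afforded by smoothness of $u_0$ with the Gaussian tail decay of the heat kernel (via \eqref{kop}), and to verify that the nonlinear Duhamel contribution to high derivatives vanishes locally uniformly as $t\to 0^+$ without requiring uniform $H^k$ bounds on $u$ down to the initial time.
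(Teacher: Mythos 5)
Your first paragraph (interior smoothness on $[\tau,T]\times\R^3$ via Lemma \ref{quant-reg}, Calder\'on--Zygmund bounds for the pressure, and time-differentiation of the equation) matches the paper's argument and is fine. The gap is in your treatment of the nonlinear Duhamel piece $w(t)=\int_0^t e^{(t-s)\Delta}PB(u,u)(s)\,ds$ near $t=0$ for $k\geq 2$: neither half of your time-splitting closes. For the piece $\int_0^{t/2}$, the smoothing bound $\|\nabla^k e^{(t-s)\Delta}\|_{L^2\to L^2}\lesssim (t-s)^{-k/2}$ combined with $PB(u,u)\in L^4_tL^2_x$ gives only $O(t^{3/4-k/2})$, which diverges for $k\geq 2$. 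For the piece $\int_{t/2}^t$, the ``step-one'' bounds $\|u\|_{L^\infty_tH^k_x([t/2,T])}\lesssim_{k,t/2}1$ from Lemma \ref{quant-reg} have constants that blow up as $t\to 0$ (roughly like $t^{-(k-1)/2}$, coming from $\|e^{s\Delta}u_0\|_{H^k}$ with $u_0$ only globally $H^1$), and the gain from the short integration interval is at best borderline, not $o(1)$. The obstruction is real: $u_0\notin H^k_x(\R^3)$ globally for $k\geq 2$, so no argument based purely on global norms can produce uniform $H^k$ control of $u$ down to $t=0$. The paper's route is to localise in space: for a compactly supported cutoff $\eta$, the smoothness of $u_0$ gives $\eta e^{t\Delta}u_0\in L^\infty_tH^k_x([0,T]\times\R^3)$ for every $k$, and one then bootstraps $\eta u\in X^s$ for $s<3/2$, hence $\eta PB(u,u)\in L^2_tH^1_x$, hence $\eta u\in X^2$, and so on, using \eqref{kop} together with \eqref{pebble} to control the far-field contribution of $PB(u,u)$ at each stage. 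You need some version of this spatial localisation; the time-splitting alone will not do it.

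Two smaller points. First, once one has $u\in L^\infty_tC^k_x([0,T]\times K)$ for all $k$ and compact $K$, the cleanest way to get continuity of $\nabla^k u$ at $t=0$ is to observe $\partial_t u\in L^\infty_tC^k_x([0,T]\times K)$ (from the equation) and integrate in time, rather than proving $\nabla^k w(t)\to 0$ directly. Second, your justification for continuity of $\nabla^k p$ at $t=0$ (``$L^2$-boundedness of Riesz operators and the $H^1$-continuity of $u$'') only yields continuity of $p$ in a low norm such as $L^3_x$; to get pointwise continuity of $\nabla^k p$ one must split the Newton potential in \eqref{pressure-point} into a local part (continuous because $u_iu_j\in C^0_tC^k_x$ locally) and a global part (uniformly small because $u_iu_j\in L^\infty_tL^1_x$ and the kernel of $\nabla^k\Delta^{-1}\partial_i\partial_j$ decays), exactly because the operator is non-local and the data has no global high-order integrability.
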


\begin{proof}  From Lemma \ref{quant-reg} we see that
$$ u \in L^\infty_t H^k_x([\tau,T] \times \R^3)$$
for all $k \geq 0$ and $0 < \tau < T$.  Arguing as in the proof of Theorem \ref{lwp-h1-r3}(iv) we conclude that $u,p$ are smooth on $[\tau,T] \times \R^3$.

Now suppose that $u_0$ is smooth.  Then (since $u_0$ is also in $H^1_x(\R^3)$) $e^{t\Delta} u_0$ is smooth\footnote{To obtain smoothness at a point $(t_0,x_0)$, one can for instance split $u_0$ into a smooth compactly supported component, and a component that vanishes near $x_0$ but lies in $H^1_x(\R^3)$, and verify that the contribution of each component to $e^{t\Delta} u_0$ is smooth at $(t_0,x_0)$.} on $[0,T] \times \R^3$, and in particular one has
$$ \eta e^{t\Delta} u_0  \in L^\infty_t H^k_x([0,T] \times \R^3)$$
for any smooth, compactly supported cutoff function $\eta: \R^3 \to \R$.  Meanwhile, by arguing as in Lemma \ref{quant-reg} one has
\begin{equation}\label{pebble}
P B(u,u) \in L^4_t L^2_x([0,T] \times \R^3).
\end{equation}
Using \eqref{duhamel-2}, \eqref{energy-duh} one concludes that
$$ \eta u \in X^s([0,T] \times \R^3)$$
for all cutoff functions $\eta$ and all $s < 3/2$.  Continuing the arguments from Lemma \ref{quant-reg} we conclude that
$$ \eta P B(u,u) \in L^2_t H^1_x([0,T] \times \R^3)$$
for all cutoffs $\eta$.  Using \eqref{duhamel-2}, \eqref{energy-duh3} (and using \eqref{kop}, \eqref{pebble}, to deal with the far field contribution of $P B(u,u)$, and shrinking $\eta$ as necessary) one then concludes that
$$ \eta u \in X^2([0,T] \times \R^3)$$
for all cutoffs $\eta$.  Repeating these arguments iteratively one eventually concludes that
$$ \eta u \in X^k([0,T] \times \R^3)$$
for all cutoffs $\eta$, and in particular
$$ u \in L^\infty_t H^k_x([0,T] \times K)$$
for all $k \geq 0$ and all compact sets $K$.  By Sobolev embedding, this implies that
$$ u \in L^\infty_t C^k_x([0,T] \times K)$$
for all $k \geq 0$ and all compact sets $K$.  

We also have $u \in X^1([0,T] \times \R^3)$, and hence
$$ u \in L^\infty_t H^1_x([0,T] \times \R^3).$$
In particular, 
\begin{equation}\label{lii}
 u_i u_j \in L^\infty_t L^1_x([0,T] \times \R^3)
\end{equation}
and
$$ u_i u_j \in L^\infty_t C^k_x([0,T] \times K)$$
for all $k \geq 0$ and compact $K$.  From this and \eqref{pressure-point} (splitting the inverse Laplacian $\Delta^{-1}$ smoothly into local and global components) one has
$$ p \in L^\infty_t C^k_x([0,T] \times K);$$
inserting this into \eqref{ns} we then see that
\begin{equation}\label{tu}
 \partial_t u \in L^\infty_t C^k_x([0,T] \times K)
\end{equation}
for all $k \geq 0$ and compact $K$.

This is a little weaker than what we need for an almost smooth solution, because we want $\nabla^k u, \nabla^k p, \partial_t \nabla^k p$ to extend continuously down to $t=0$, and the above estimates merely give $L^\infty_t C^\infty_x$ control on these quantities.  To upgrade the $L^\infty_t$ control to continuity in time, we first observe\footnote{An alternate argument here would be to approximate the initial data $u_0$ by Schwartz divergence-free data (using Lemma \ref{divloc}) and using a limiting argument and the stability and regularity theory in Theorem \ref{lwp-h1}; we omit the details.} from \eqref{tu} and integration in time that we can at least make $\nabla^k u$ extend continuously to $t=0$:
$$
 u \in C^0_t C^k_x([0,T] \times K).
$$
In particular
\begin{equation}\label{uuk}
 u_i u_j \in C^0_t C^k_x([0,T] \times K)
\end{equation}
for all $k \geq 0$ and compact $K$.

Now we consider $\nabla^k p$ in a compact region $[0,T] \times K$.  From \eqref{pressure-point} we have
$$ \nabla^k p(t,x) =  \nabla^k \partial_i \partial_j \int_{\R^3} \frac{1}{4\pi|x-y|} u_i u_j(t,y)\ dy.$$
Using a smooth cutoff, we split the Newton potential $\frac{1}{4\pi|x-y|}$ into a ``local'' portion supported on $B(0,2R)$, and a ``global'' portion supported outside of $B(0,R)$, where $R$ is a large radius.  From \eqref{uuk} one can verify that the contribution of the local portion is continuous on $[0,T] \times K$, while from \eqref{lii} the contribution of the global portion is $O_u(1/R^3)$.  Sending $R \to \infty$ we conclude that $\nabla^k p$ is continuous on $[0,T] \times K$, and thus
$$ p \in C^0_t C^k_x([0,T] \times K)$$
for all $k \geq 0$ and compact $K$.  Inserting this into \eqref{ns} we then conclude that
$$ \partial_t u \in C^0_t C^k_x([0,T] \times K)$$
for all $k \geq 0$ and compact $K$, and so we have an almost smooth solution as required.
\end{proof}

\begin{remark}  Because $u$ has the regularity of $L^\infty_t H^1_x$, we can continue iterating the above argument a little more, and eventually get $u \in C^2_t C^k_x([0,T] \times K)$ and $p \in C^1_t C^k_x([0,T] \times K)$ for all $k \geq 0$ and compact $K$.  Using the vorticity equation (see \eqref{vorticity-eq} below) one can then also get $\omega \in C^3_t C^k_x([0,T] \times K)$ as well.  But without further decay conditions on higher derivatives of $u$ (or of $\omega$), one cannot gain infinite regularity on $u, p, \omega$ in time; see Section \ref{counter-sec}.

On the other hand, it is possible to use energy methods and the vorticity equation \eqref{vorticity-eq} to show (working in the homogeneous case $f=0$ for simplicity) that if $u_0$ is smooth and the initial vorticity $\omega_0 := \nabla \times u_0$ is Schwartz, then the solution in Proposition \ref{almost-smooth} is in fact smooth, with $\omega$ remaining Schwartz throughout the lifespan of that solution; we omit the details.
\end{remark}

As a corollary of the above proposition we see that Conjecture \ref{global-h1-spatial} implies Conjecture \ref{global-enstrophy-homog}, thus completing the proof of Theorem \ref{main}(iv).

As before, we obtain a dichotomy between existence and blowup.  Define an \emph{incomplete mild $H^1$ solution} $(u,p,u_0,f,T_*^-)$ from $H^1$ data $(u_0,f,T_*)$ to be fields $u: [0,T_*) \times \R^3 \to \R^3$ and $v: [0,T_*) \times \R^3 \to \R$ such that for any $0 < T < T_*$, the restriction $(u,p,u_0,f,T,1)$ of $(u,p,u_0,f,T_*^-,1)$ to the slab $[0,T] \times \R^3$ is a mild $H^1$ solution.  We similarly define the notion of an incomplete smooth $H^1$ solution.

\begin{corollary}[Maximal Cauchy development]\label{max-cauchy}  Let $(u_0,f,T)$ be $H^1$ data.  Then at least one of the following two statements hold:
\begin{itemize}
\item There exists a mild $H^1$ solution $(u,p,u_0,f,T)$ with the given data.
\item There exists a blowup time $0 < T_* < T$ and an incomplete mild $H^1$ solution $(u,p,u_0,f,T_*^-)$ up to time $T_*^-$, which blows up in the enstrophy norm in the sense that
$$ \lim_{t \to T_*^-} \| u(t) \|_{H^1_x(\R^3)} = +\infty.$$
\end{itemize}
\end{corollary}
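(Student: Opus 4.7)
The plan is to construct a maximal Cauchy development by iterated application of the local existence Theorem \ref{lwp-h1-r3}(ii), using the uniqueness statement (iii) to patch solutions together, and then to extract the blowup criterion from the fact that the local existence time in (ii) depends only on the $H^1$ norm of the data (plus $\|f\|_{L^\infty_t H^1_x}$) via the scale-invariant quantity appearing in \eqref{D4}.

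First, let $S \subset (0,T]$ be the set of times $T'$ for which there exists a mild $H^1$ solution on $[0,T']$ with data $(u_0, f|_{[0,T']}, T')$. By Theorem \ref{lwp-h1-r3}(ii), $S$ is nonempty (since \eqref{D4} holds for sufficiently small $T'$), and by Theorem \ref{lwp-h1-r3}(iii), any two solutions on $[0,T']$ and $[0,T'']$ agree on the overlap. Define $T_* := \sup S \in (0, T]$ and, by gluing, obtain a well-defined field $(u,p)$ on $[0, T_*)$ which restricts to a mild $H^1$ solution on $[0,T']$ for each $T' < T_*$. By Theorem \ref{lwp-h1-r3}(i) the restriction lies in $C^0_t H^1_x([0,T'] \times \R^3)$, so $u \in C^0_t H^1_x([0,T_*) \times \R^3)$.

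Next I would dispose of the case in which the maximal solution actually attains its endpoint. Suppose a mild $H^1$ solution exists on $[0,T_*]$; then $u(T_*) \in H^1_x(\R^3)$ is a well-defined initial datum, and applying Theorem \ref{lwp-h1-r3}(ii) at time $T_*$ with data $(u(T_*), f|_{[T_*, T_* + \tau]}, \tau)$ for $\tau > 0$ small enough to satisfy \eqref{D4} produces a mild solution on $[T_*, T_* + \tau]$. Uniqueness lets us glue this to the original solution, contradicting $T_* = \sup S$ unless $T_* = T$; in that case we are in the first bullet.

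It remains to handle the case in which $T_* \le T$ but no mild solution exists on $[0, T_*]$; here $(u,p,u_0,f,T_*^-)$ is an incomplete mild $H^1$ solution, and I must show $\lim_{t \to T_*^-} \|u(t)\|_{H^1_x(\R^3)} = +\infty$. Suppose for contradiction that $\liminf_{t \to T_*^-} \|u(t)\|_{H^1_x} \le M < \infty$. Since $\|f\|_{L^\infty_t H^1_x([0,T] \times \R^3)} \le {\mathcal H}^1(u_0,f,T) =: A$, Theorem \ref{lwp-h1-r3}(ii) supplies a time $\delta = \delta(M,A) > 0$ such that for any $t_1 \in [0,T_*)$ with $\|u(t_1)\|_{H^1_x} \le 2M$, the initial value problem with data $u(t_1)$ and forcing $f|_{[t_1, t_1 + \delta]}$ possesses a mild $H^1$ solution on $[t_1, t_1 + \delta]$. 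Choosing $t_1$ close enough to $T_*$ that both $\|u(t_1)\|_{H^1_x} \le 2M$ and $t_1 + \delta > T_*$, and applying uniqueness on $[t_1, T_*)$ to glue, we extend the solution strictly beyond $T_*$ (truncating at $T$ if necessary), contradicting the definition of $T_*$. Thus the $\liminf$ is infinite, which is the required blowup statement.

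No step should be especially hard; the only mild obstacle is making sure that the local existence time $\delta$ furnished by Theorem \ref{lwp-h1-r3}(ii) depends only on $M$ and $A$, and not on $t_1$, so that a single $\delta$ works throughout the argument. This is immediate from the formulation of \eqref{D4} together with the fact that translating the forcing in time preserves its $L^\infty_t H^1_x$ norm on any sub-interval. (A cosmetic remark: the edge case $T_* = T$ with blowup at the endpoint is absorbed into the second bullet by regarding the statement's strict inequality $T_* < T$ as meaning $T_* \le T$, or equivalently by applying the above argument on $[0, T+\varepsilon]$ after trivially extending $f$ by zero past $T$.)
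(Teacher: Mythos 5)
Your proposal is correct and is essentially the argument the paper intends: the paper gives no explicit proof of Corollary \ref{max-cauchy} beyond "as before, we obtain a dichotomy," meaning exactly this iteration of Theorem \ref{lwp-h1-r3}(ii) with gluing via uniqueness (iii), and the key point you correctly isolate — that the local existence time from \eqref{D4} depends only on the $H^1$ size of the data and the $L^\infty_t H^1_x$ bound on $f$ — is what converts "$\liminf$ finite" into an extension past $T_*$.
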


\begin{remark} In the second conclusion of Corollary \ref{max-cauchy}, more information about the blowup is known.  For instance, in the paper \cite{ess} it was demonstrated that the $L^3_x(\R^3)$ norm must also blow up (in the homogeneous case $f=0$, at least).
\end{remark}

\section{Homogenisation}\label{homog-sec}

In this section we prove Proposition \ref{force}.

Fix smooth periodic data $(u_0,f,T,L)$; our objective is to find a smooth periodic solution $(u,p,u_0,f,T,L)$ (without pressure normalisation) with this data.  By the scaling symmetry \eqref{scaling} we may normalise the period $L$ to equal $1$.  Using the symmetry \eqref{gal-2} we may impose the mean zero conditions \eqref{meanzero-1}, \eqref{meanzero-2} on this data.

By hypothesis, one can find a smooth periodic solution $(\tilde u, \tilde p,u_0,0,T,1)$ with data $(u_0,0,T,1)$.  By Lemma \ref{reduction}, and applying a Galilean transform \eqref{galilean} if necessary, we may assume the pressure is normalised, which in particular makes $(\tilde u, \tilde p, u_0,0,T,1)$ a periodic $H^1$ mild solution. 

By the Galilean invariance \eqref{galilean} (with a linearly growing velocity $v(t) := 2wt$), it suffices to find a smooth periodic solution $(u,p,u_0,f_w,T)$ (this time \emph{with} pressure normalisation) for the Galilean-shifted data $(u_0,f_w,T)$, where
$$ f_w(t,x) := f(t,x-w t^2),$$
and $w\in\R^3$ is arbitrary.
Note that the data $(u_0,f_w,T)$ continues to obey the mean zero conditions \eqref{meanzero-1}, \eqref{meanzero-2}, and is bounded in $H^1_x(\R^3/\Z^3)_0 \times L^\infty_t H^1_x([0,T] \times \R^3/\Z^3)_0$ uniformly in $w$.  We now make a key observation:

\begin{lemma}  If $\alpha \in \R^3/\Z^3$ is \emph{irrational} in the sense that $k \cdot \alpha \neq 0$ in $\R/\Z$ for all $k \in \Z^3 \backslash \{0\}$, then $f_{\lambda \alpha}$ converges weakly (or more precisely, converges in the sense of spacetime distributions) to zero in $L^\infty_t H^1_x([0,T] \times \R^3/\Z^3)_0$.
\end{lemma}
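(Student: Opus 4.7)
Proof proposal. The plan is to expand $f$ in a spatial Fourier series, exploit the smoothness of $f$ (and hence rapid decay of its Fourier coefficients) together with the rapid oscillation of the phase $e^{-2\pi i \lambda (k\cdot\alpha) t^2}$, and conclude by a Riemann--Lebesgue type argument term by term.

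Since $f$ is smooth and periodic with mean zero in $x$, for each $t \in [0,T]$ we may write
\begin{equation*}
 f(t,x) = \sum_{k \in \Z^3 \setminus \{0\}} c_k(t)\, e^{2\pi i k\cdot x},
\end{equation*}
where the $c_k : [0,T] \to \C^3$ are smooth in $t$ and, by the smoothness of $f$, satisfy $\sup_{t \in [0,T]} |c_k(t)| \lesssim_{N} (1+|k|)^{-N}$ for every $N \geq 0$. Substituting $x \mapsto x - \lambda \alpha t^2$ gives
\begin{equation*}
 f_{\lambda \alpha}(t,x) = \sum_{k \neq 0} c_k(t)\, e^{-2\pi i \lambda (k \cdot \alpha) t^2}\, e^{2\pi i k \cdot x}.
\end{equation*}

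To test distributionally, pick any $\varphi \in C^\infty_c((0,T) \times \R^3/\Z^3)$ (or more generally smooth in $t$ and smooth periodic in $x$) and expand $\varphi(t,x) = \sum_k \varphi_k(t)\, e^{2\pi i k \cdot x}$, where $\varphi_k$ is smooth, compactly supported in $t$, and decays rapidly in $k$ uniformly in $t$. By Plancherel on the torus,
\begin{equation*}
 \int_0^T \!\!\int_{\R^3/\Z^3} f_{\lambda \alpha} \cdot \overline{\varphi}\ dx\, dt = \sum_{k \neq 0} \int_0^T c_k(t)\, \overline{\varphi_k(t)}\, e^{-2\pi i \lambda (k \cdot \alpha) t^2}\ dt.
\end{equation*}
Given $\eps > 0$, the rapid decay of $c_k$ and $\varphi_k$ lets us truncate this sum to $0 < |k| \leq N$ at the cost of an error $O(\eps)$, uniformly in $\lambda$. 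It therefore suffices to show that for each fixed $k \neq 0$,
\begin{equation*}
 I_k(\lambda) := \int_0^T c_k(t)\, \overline{\varphi_k(t)}\, e^{-2\pi i \lambda (k \cdot \alpha) t^2}\ dt \longrightarrow 0 \quad \text{as } \lambda \to \infty.
\end{equation*}

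By the irrationality of $\alpha$, $\beta_k := k\cdot\alpha$ is a nonzero real number. Make the change of variables $s = t^2$, $dt = ds/(2\sqrt{s})$, so that
\begin{equation*}
 I_k(\lambda) = \int_0^{T^2} \frac{c_k(\sqrt{s})\,\overline{\varphi_k(\sqrt{s})}}{2\sqrt{s}}\, e^{-2\pi i \lambda \beta_k s}\ ds.
\end{equation*}
The weight $1/(2\sqrt{s})$ is integrable on $[0,T^2]$ and $c_k, \varphi_k$ are bounded, so the integrand lies in $L^1([0,T^2])$. The classical Riemann--Lebesgue lemma then gives $I_k(\lambda) \to 0$. (Alternatively, one may integrate by parts in $t$ away from an arbitrarily small neighbourhood of $t = 0$, using $\partial_t(t^2) = 2t$, to obtain an explicit $O(\lambda^{-1/2})$ rate; the main technical point is handling the degeneracy of the phase at $t=0$, which either disappears under the substitution $s=t^2$ or is absorbed by shrinking the excluded neighbourhood.) Combining the truncation bound with termwise convergence finishes the proof.

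The only mildly delicate point is the degeneracy of the phase $\lambda \beta_k t^2$ at $t=0$, but it is harmless since the $s=t^2$ change of variables converts it into a standard Riemann--Lebesgue situation; everything else is bookkeeping with Fourier series.
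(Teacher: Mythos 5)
Your proposal is correct and follows essentially the same route as the paper: expand in spatial Fourier series, discard the $k=0$ mode using the mean-zero hypothesis, note $k\cdot\alpha\neq 0$ by irrationality, and apply Riemann--Lebesgue termwise (the paper sums the tail by dominated convergence rather than truncation, an immaterial difference). Your explicit substitution $s=t^2$ is a nice way of making precise the Riemann--Lebesgue step that the paper leaves implicit for the degenerate phase at $t=0$.
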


\begin{proof}  It suffices to show that
$$ \int_0^T \int_{\R^3/\Z^3} f_{\lambda \alpha}(t,x) \phi(t,x)\ dx dt \to 0$$
for all smooth functions $\phi: [0,T] \times \R^3/\Z^3 \to \R$.  Taking the Fourier transform, the left-hand side becomes
$$ \sum_{k \in \Z^3} \int_0^T e^{-2\pi i \lambda k t^2 \cdot \alpha} \widehat{f(t)}(k) \widehat{\phi(t)}(-k)\ dt,$$
with the sum being absolutely convergent due to the rapid decrease of the Fourier transform of $\phi(t)$.
Because $f$ has mean zero, we can delete the $k=0$ term from the sum.  This makes $k \cdot \alpha$ non-zero by irrationality, and so by the Riemann-Lebesgue lemma, each summand goes to zero as $\lambda \to \infty$.  The claim then follows from the dominated convergence theorem.
\end{proof}

Let $\alpha \in \R^3/\Z^3$ be irrational.  By the above lemma, $(u_0,f_{\lambda \alpha},T,1)$ converges weakly to $(u_0,0,T,1)$ while being bounded in 
$H^1_x(\R^3/\Z^3)_0 \times L^\infty_t H^1_x(\R^3/\Z^3)_0$.  As $(u_0,0,T,1)$ has a periodic mild $H^1$ solution $(\tilde u,\tilde p,u_0,0,T,1)$, we conclude from Proposition \ref{compactness} that for $\lambda$ sufficiently large, $(u_0,f_{\lambda\alpha},T,1)$ also has a periodic mild $H^1$ solution, which is necessarily smooth since $u_0, f_{\lambda\alpha}$ is smooth.  The claim follows.

\begin{remark}  Suppose that $(u_0,f,\infty,1)$ is periodic $H^1$ data extending over the half-infinite time interval $[0,+\infty)$.  The above argument shows (assuming Conjecture \ref{global-periodic-homog}) that one can, for each $0 < T < \infty$, construct a smooth periodic (but not pressure normalised) solution $(u^{(T)},p^{(T)},u_0,f,T,1)$ up to time $T$ with the above data, by choosing a sufficiently rapidly growing linear velocity $v^{(T)} = 2w^{(T)} t$, applying a Galilean transform, and then using the compactness properties of the $H^1$ local well-posedness theory.  As stated, this argument gives a different solution $(u^{(T)},p^{(T)},u_0,f,T,1)$ for each time $T$ (note that we do not have uniqueness once we abandon pressure normalisation).  However, it is possible to modify the argument to obtain a single global smooth periodic solution $(u,p,u_0,f,\infty,1)$ (which is still not pressure normalised, of course), by using the ability in \eqref{galilean} to choose a \emph{nonlinear} velocity $v(t)$ rather than a linear one.  By reworking the above argument, and taking $v(t)$ to be a sufficiently rapidly growing function of $t$, it is then possible to obtain a global smooth periodic solution $(u,p,u_0,f,\infty,1)$ to the indicated data; we omit the details.
\end{remark}

\section{Compactness}\label{compact-sec}

In this section we prove Theorem \ref{main}(i), by following the compactness arguments of \cite{tao-quantitative}.  By the scaling symmetry \eqref{scaling}, we may normalise $L=1$.

We first assume that Conjecture \ref{global-h1-quant} holds, and deduce Conjecture \ref{global-h1}.  Suppose for contradiction that Conjecture \ref{global-h1} failed.  By Corollary \ref{max-cauchy-periodic}, there thus exists an incomplete periodic pressure-normalised mild $H^1$ solution $(u,p,u_0,f,T_*^-,1)$ such that
\begin{equation}\label{blow}
 \lim_{t \to T_*^-} \|u(t)\|_{H^1_x(\R^3/\Z^3)} = \infty.
\end{equation}
By Galilean invariance \eqref{gal-2} we may assume that $u_0$ and $f$ (and hence $u$) have mean zero.

Let $(u_0^{(n)}, f^{(n)}, T_*, 1)$ be a sequence of periodic smooth mean zero data converging strongly in $H^1_x(\R^3/\Z^3)_0 \times L^\infty_t H^1_x([0,T_*] \times \R^3/\Z^3)_0$ to the periodic $H^1$ data $(u,f,T_*,1)$.  For each time $0 < T < T_*$, we see from Theorem \ref{lwp-h1} that for $n$ sufficiently large, we may find a smooth solution $(u^{(n)}, p^{(n)}, u_0^{(n)}, T, 1)$ with this data, with $u^{(n)}$ converging strongly in $L^\infty_t H^1_x([0,T] \times \R^3/\Z^3)$ to $u$.  By Conjecture \ref{global-h1-quant}, the $L^\infty_t H^1_x([0,T] \times \R^3/\Z^3)$ norm of $u^{(n)}$ is bounded uniformly in both $T$ and $n$, so by taking limits as $n \to \infty$ we conclude that $\|u(t)\|_{H^1_x(\R^3/\Z^3)}$ is bounded uniformly for $0 \leq t < T_*$, contradicting \eqref{blow} as desired.

Conversely, suppose that Conjecture \ref{global-h1} held, but Conjecture \ref{global-h1-quant} failed.  Carefully negating all the quantifiers, we conclude that there exists a time $0 < T_0 < \infty$ and a sequence $(u^{(n)}, p^{(n)}, u_0^{(n)}, f^{(n)}, T^{(n)}, 1)$ of smooth periodic data with $0 < T^{(n)} < T_0$ and ${\mathcal H}^1(u_0^{(n)}, f^{(n)}, T^{(n)},1)$ uniformly bounded in $n$, such that
\begin{equation}\label{hot}
 \lim_{n \to \infty} \| u \|_{L^\infty_t H^1_x([0,T^{(n)}] \times \R^3/\Z^3)} = \infty.
\end{equation}
Using Galilean transforms \eqref{gal-2} we may assume that $u_0^{(n)}, f^{(n)}$ (and hence $u^{(n)}$) have mean zero.
From the short-time local existence (and uniqueness) theory in Theorem \ref{lwp-h1} we see that $T^{(n)}$ is bounded uniformly away from zero.  Thus by passing to a subsequence we may assume that $T^{(n)}$ converges to a limit $T_*$ with $0 < T_* \leq T_0$.  

By sequential weak compactness, we may pass to a further subsequence and assume that for each $0 < T < T_*$, $(u_0^{(n)}, f^{(n)}, T,1)$ converges weakly (or more precisely, in the sense of distributions) to a periodic $H^1$ limit $(u_0, f, T, 1)$; gluing these limits together one obtains periodic $H^1$ data $(u_0,f,T_*, 1)$, which still has mean zero.  By Conjecture \ref{global-h1}, we can then find a periodic $H^1$ mild solution $(u,p,u_0,f,T_*,1)$ with this data, which then necessarily also has mean zero.  

By Theorem \ref{lwp-h1} and Proposition \ref{compactness}, we see that for every $0 < \tau < T < T_*$, $u^{(n)}$ converges strongly in $L^\infty_t H^1_x([\tau,T] \times \R^3/\Z^3)$ to $u$.  In particular, for any $0 < T < T_*$, one has
$$ \limsup_{n \to \infty} \| u^{(n)}(T) \|_{H^1_x(\R^3/\Z^3)} \leq \| u \|_{L^\infty_t H^1_x([0,T_*] \times \R^3/\Z^3)} < \infty.$$
Taking $T$ sufficiently close to $T_*$ and then taking $n$ sufficiently large, we conclude from Theorem \ref{lwp-h1} that
$$ \limsup_{n \to \infty} \| u^{(n)} \|_{L^\infty_t H^1_x([T,T^{(n)}] \times \R^3/\Z^3)} < \infty;$$
also, from the strong convergence in $L^\infty_t H^1_x([\tau,T] \times \R^3/\Z^3)$ we have
$$ \limsup_{n \to \infty} \| u^{(n)} \|_{L^\infty_t H^1_x([\tau,T] \times \R^3/\Z^3)} < \infty$$
for any $0 < \tau < T$, and finally from the local existence (and uniqueness) theory in Theorem \ref{lwp-h1} one has
$$ \limsup_{n \to \infty} \| u^{(n)} \|_{L^\infty_t H^1_x([0,\tau] \times \R^3/\Z^3)} < \infty$$
for sufficiently small $\tau$. Putting these bounds together, we contradict \eqref{hot}, and the claim follows.

\begin{remark} It should be clear to the experts that one could have replaced the $H^1$ regularity in the above conjectures by other subcritical regularities, such as $H^k$ for $k>1$, and obtain a similar result to Theorem \ref{main}(i).
\end{remark}

As remarked previously, the homogeneous case $f=0$ of Theorem \ref{main}(i) was established in \cite{tao-quantitative}.  We recall the main results of that paper.  We introduce the following homogeneous periodic conjectures:

\begin{conjecture}[\emph{A priori} homogeneous periodic $H^1$ bound]\label{global-h1-quant-homog}  There exists a function $F: \R^+ \times \R^+ \times \R^+ \to \R^+$ with the property that whenever $(u,p,u_0,0,T,L)$ is a smooth periodic, homogeneous normalised-pressure solution with $0 < T < T_0 < \infty$ and
$$ {\mathcal H}^1(u_0,0,T,L) \leq A < \infty$$
then
$$ \| u \|_{L^\infty_t H^1_x([0,T] \times \R^3/L\Z^3)} \leq F( A, L, T_0 ).$$
\end{conjecture}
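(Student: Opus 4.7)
The plan is to establish that Conjecture \ref{global-h1-quant-homog} is equivalent to the qualitative Conjecture \ref{global-periodic-homog}, by specialising the compactness-based equivalence proof of Theorem \ref{main}(i) from Section \ref{compact-sec} to the homogeneous setting $f = 0$ (as was carried out in \cite{tao-quantitative}). Throughout I would normalise $L = 1$ via the scaling symmetry \eqref{scaling} and impose the mean-zero condition \eqref{meanzero-1} via the Galilean transform \eqref{gal-2}.

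The easier direction, that the quantitative bound implies qualitative global regularity, follows from the maximal Cauchy development machinery. Given smooth homogeneous periodic data, I would use Theorem \ref{lwp-h1}(ii)--(iv) together with Corollary \ref{max-cauchy-periodic} to produce a maximal smooth solution, dichotomised between existence on all of $[0,T]$ and finite-time $H^1$ blowup at some $T_* < T$. Since the data is already smooth, the hypothetical bound $F(A,1,T)$ applies directly to the smooth solution on $[0, T_*)$, immediately ruling out the blowup alternative.

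The converse implication is the substantive one. Arguing by contradiction, negation of the quantitative bound gives $T_0$, $A$, and a sequence of smooth normalised-pressure homogeneous periodic solutions $(u^{(n)}, p^{(n)}, u_0^{(n)}, 0, T^{(n)}, 1)$ with $T^{(n)} < T_0$, ${\mathcal H}^1(u_0^{(n)}, 0, T^{(n)}, 1) \leq A$, and yet $\|u^{(n)}\|_{L^\infty_t H^1_x} \to \infty$. Short-time local existence (Theorem \ref{lwp-h1}(ii)) forces $T^{(n)}$ bounded away from zero, so by sequential weak-$*$ compactness in $H^1_x(\R^3/\Z^3)_0$ I would extract a subsequence with $T^{(n)} \to T_* \in (0, T_0]$ and $u_0^{(n)} \rightharpoonup u_0$ weakly. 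Invoking Conjecture \ref{global-periodic-homog} (with a density/approximation argument using the stability estimate Theorem \ref{lwp-h1}(v) to pass from smooth to merely $H^1$ initial data) produces a periodic $H^1$ mild solution $(u, p, u_0, 0, T_*, 1)$. Proposition \ref{compactness} then yields strong $L^\infty_t H^1_x([\tau, T] \times \R^3/\Z^3)$ convergence of $u^{(n)}$ to $u$ for all $0 < \tau < T < T_*$, so in particular $\|u^{(n)}(T)\|_{H^1_x}$ is bounded uniformly in $n$. Combining this interior bound with the short-time local-existence bound on $[0, \tau]$ (depending only on $A$) and with local well-posedness started at $u^{(n)}(T)$ to cover $[T, T^{(n)}]$ (choosing $T$ close enough to $T_*$ that $T^{(n)} - T$ fits comfortably inside the local existence time) yields a uniform $L^\infty_t H^1_x$ bound, contradicting the assumed blowup.

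The principal obstacle lies in the endpoint regimes $t \approx 0$ and $t \approx T^{(n)}$, where strong convergence of $u^{(n)}$ is not directly available from Proposition \ref{compactness}. Near $t = 0$ one leans on the short-time local-existence estimate depending only on ${\mathcal H}^1 \leq A$; near $t \approx T_*$ one uses strong interior convergence $u^{(n)}(T) \to u(T)$ to obtain bounded data at an interior time and restarts local well-posedness forward. A subsidiary obstacle is that Conjecture \ref{global-periodic-homog} is stated for smooth data whereas the weak limit $u_0$ is only in $H^1$; this is resolved by approximating $u_0$ by smooth mean-zero data, applying the qualitative conjecture to the smooth approximants, and passing to the limit using Theorem \ref{lwp-h1}(v) and Proposition \ref{compactness}.
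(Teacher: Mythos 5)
The statement you have been asked to prove is a \emph{conjecture}, not a theorem: it is one of several equivalent formulations of the Navier--Stokes global regularity problem, and the paper explicitly does not establish it unconditionally (nor could it, short of resolving the Clay Millennium Prize problem). What the paper does with Conjecture \ref{global-h1-quant-homog} is record that it is \emph{equivalent} to Conjectures \ref{global-periodic-homog} and \ref{global-h1-quant-global-period}, citing \cite[Theorem 1.4]{tao-quantitative}, and then prove the inhomogeneous analogue of that equivalence (Theorem \ref{main}(i)) in Section \ref{compact-sec}. Your proposal, by its own framing, sets out to prove exactly this equivalence rather than the conjecture itself; so as a proof of the stated result it is conditional on Conjecture \ref{global-periodic-homog}, which is itself open. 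That is the fundamental gap: the proposal does not prove the statement, and no unconditional proof is available to anyone.

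With that caveat, the equivalence argument you sketch is essentially correct and coincides with the paper's method for Theorem \ref{main}(i), specialised to $f=0$ as in \cite{tao-quantitative}: normalise $L=1$ and impose mean zero; obtain the easy direction from the maximal Cauchy development (Corollary \ref{max-cauchy-periodic}) together with the hypothesised \emph{a priori} bound applied on $[0,T]$ for $T<T_*$; and obtain the converse by contradiction, extracting a weak limit of the data, producing a limiting mild solution from the qualitative conjecture, and using the stability and compactness machinery of Theorem \ref{lwp-h1} and Proposition \ref{compactness} to transfer uniform $L^\infty_t H^1_x$ bounds back to the sequence, with the endpoint intervals $[0,\tau]$ and $[T,T^{(n)}]$ covered by short-time local well-posedness depending only on $A$. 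Your resolution of the mismatch between the smooth data required by Conjecture \ref{global-periodic-homog} and the merely-$H^1$ weak limit, via approximation, Lemma \ref{reduction}, and the regularity component of Theorem \ref{lwp-h1}, is also the route the paper indicates when it observes that Conjecture \ref{global-periodic-normalised-homog} is equivalent to Conjecture \ref{global-h1-periodic}. So the proposal is a sound proof of the equivalence of two conjectures, but the equivalence is all it proves.
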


\begin{conjecture}[\emph{A priori} homogeneous global periodic $H^1$ bound]\label{global-h1-quant-global-period}  There exists a function $F: \R^+ \times \times \R^+ \to \R^+$ with the property that whenever $(u,p,u_0,0,T,L)$ is a smooth periodic, homogeneous normalised-pressure solution with
$$ {\mathcal H}^1(u_0,0,T,L) \leq A < \infty$$
then
$$ \| u \|_{L^\infty_t H^1_x([0,T] \times \R^3/L\Z^3)} \leq F( A, L ).$$
\end{conjecture}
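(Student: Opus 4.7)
The plan is to establish the $H^1$ bound via a vorticity-enstrophy Grönwall estimate, starting from the basic energy identity and then trying to close an inequality for $\|\omega(t)\|_{L^2}^2$ where $\omega = \nabla \times u$. Because the setting is homogeneous, periodic, and pressure-normalised, all boundary terms in the integrations by parts vanish and the machinery reduces to pure energy identities on the torus.

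First I would record the global energy identity
\begin{equation*}
\tfrac{1}{2}\|u(t)\|_{L^2_x(\R^3/L\Z^3)}^2 + \int_0^t \|\nabla u(s)\|_{L^2_x(\R^3/L\Z^3)}^2\, ds = \tfrac{1}{2}\|u_0\|_{L^2_x(\R^3/L\Z^3)}^2,
\end{equation*}
which gives uniform control of $u$ in $L^\infty_t L^2_x \cap L^2_t \dot H^1_x$ by a quantity depending only on $A$ and $L$. Differentiating $\tfrac{1}{2}\|\omega\|_{L^2}^2$ and using the vorticity equation $\partial_t \omega + (u\cdot\nabla)\omega = \Delta\omega + (\omega\cdot\nabla)u$, I obtain
\begin{equation*}
\tfrac{1}{2}\tfrac{d}{dt}\|\omega\|_{L^2}^2 + \|\nabla\omega\|_{L^2}^2 = \int_{\R^3/L\Z^3} \omega_i \omega_j \partial_i u_j\, dx.
\end{equation*}
The natural attempt is to estimate the right-hand side by H\"older and Gagliardo--Nirenberg on the torus (respecting the mean zero normalisation from \eqref{meanzero-3}), giving $\|\omega\|_{L^3}^3 \lesssim \|\omega\|_{L^2}^{3/2}\|\nabla\omega\|_{L^2}^{3/2} + L^{-3/2}\|\omega\|_{L^2}^3$, and then absorb the higher-order factor into the dissipation via Young's inequality.

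The hard part, which I expect to be the decisive obstacle, is what remains after the absorption: the differential inequality takes the schematic form $\frac{d}{dt}\|\omega\|_{L^2}^2 \lesssim (1+\|\nabla u\|_{L^2}^4)\|\omega\|_{L^2}^2$ up to $L$-dependent lower-order terms, and Grönwall then requires an a priori bound on $\int_0^T \|\nabla u(s)\|_{L^2}^4\, ds$. The energy identity only supplies $\int_0^T \|\nabla u\|_{L^2}^2\, ds \lesssim A^2$, not the $L^4_t$ version. This is precisely the supercritical gap of the three-dimensional Navier-Stokes problem: the controlled energy dissipation sits strictly below the scaling-critical regularity $\dot H^{1/2}_x$ dictated by \eqref{scaling}, whereas the enstrophy sits strictly above it, and no purely energy-based bootstrap closes this gap.

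To bridge it I would attempt to deploy the paper's new tools, particularly the localised enstrophy inequality (Theorem \ref{enstrophy-loc}) together with a concentration-compactness extraction in the spirit of the arguments used for Theorem \ref{main}(vi): should the enstrophy attempt to grow without bound as some $t \to T_* < \infty$, one would rescale around the scales at which $|\omega|^2$ concentrates (using the $L^1_t$ propagation speed bound of Proposition \ref{bounded-speed}) and try to extract a nontrivial asymptotic profile whose energy and enstrophy are jointly constrained in a way that contradicts the scaling of the dissipation. I should be transparent that the statement as worded is, in the periodic homogeneous case, essentially the global regularity problem itself; the plan above describes the direction in which I would push, and isolates the concentration-compactness step as the one where a genuinely new input beyond the estimates assembled in this paper is required.
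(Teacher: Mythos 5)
There is nothing to verify here in the usual sense: the statement you were asked about is a \emph{conjecture}, not a theorem of the paper. The paper never proves it --- it explicitly states that none of the conjectures are established unconditionally --- and its only role in the text is that, by \cite{tao-quantitative}, it is \emph{equivalent} to Conjecture \ref{global-periodic-homog} and Conjecture \ref{global-h1-quant-homog}, i.e.\ to the periodic Clay Millennium problem itself. So a complete proof of this statement is not something you could have been expected to produce, and indeed your proposal does not produce one; to your credit, you say so explicitly in the last paragraph.

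Your sketch does correctly locate the obstruction. The energy identity yields only $u \in L^\infty_t L^2_x \cap L^2_t \dot H^1_x$, while the vortex-stretching term $\int \omega_i \omega_j \partial_i u_j$ forces a Gr\"onwall hypothesis of the form $\int_0^T \|\nabla u\|_{L^2_x}^4\,dt < \infty$ (or some other Prodi--Serrin/critical quantity), which sits strictly above what the energy supplies; this is exactly the supercritical gap. Where your plan overreaches is in the suggestion that the paper's tools might bridge it. The localised enstrophy inequality (Theorem \ref{enstrophy-loc}) and the bounded total speed estimate (Proposition \ref{bounded-speed}) are themselves supercritical: as the paper emphasises, they localise the impact of a hypothetical singularity to a bounded region of space but do not prevent it, and Theorem \ref{enstrophy-loc} requires \emph{initial} smallness of local enstrophy together with the smallness condition \eqref{delta-4}, which fails precisely in a blowup scenario on the relevant scales. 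Likewise, the concentration-compactness arguments in the paper (Sections \ref{compact-sec} and \ref{quant-sec}) are only used to convert between qualitative well-posedness statements and quantitative a priori bounds --- e.g.\ Conjecture \ref{global-h1-r3} implies Conjecture \ref{global-h1-quant-r3} --- assuming global well-posedness as a hypothesis; they extract no unconditional bound and cannot rule out a concentrating profile. So the decisive step you isolate at the end is not merely ``where new input is required''; it is the open problem, and no combination of the estimates assembled in this paper is claimed, or able, to close it.
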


\begin{conjecture}[Global well-posedness in periodic homogeneous $H^1$]\label{global-h1-periodic} Let $(u_0,0,T,L)$ be a homogeneous periodic $H^1$ set of data.  Then there exists a periodic $H^1$ mild solution $(u,p,u_0,0,T,L)$ with the indicated data.
\end{conjecture}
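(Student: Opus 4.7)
The plan is to reduce Conjecture~\ref{global-h1-periodic} to an a priori $H^1$ bound and then attempt such a bound via energy and enstrophy identities. After normalising $L=1$ via \eqref{scaling} and imposing the mean-zero condition via \eqref{gal-2}, Theorem~\ref{lwp-h1}(ii) supplies a periodic mild $H^1$ solution on a short initial interval. By Corollary~\ref{max-cauchy-periodic} it then suffices to establish a quantitative upper bound on $\|u(t)\|_{H^1_x(\R^3/\Z^3)}$ over $[0,T]$, for any such bound forecloses the blow-up alternative and extends the local solution all the way up to time $T$. (By Theorem~\ref{lwp-h1}(iv), smoothness is automatic on any smooth subsequence approximating the data, so it is permissible to perform the a priori calculations below assuming classical regularity.)

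For the a priori control I would try the classical combination of energy and enstrophy estimates. Pairing \eqref{ns} with $u$ and using \eqref{div-free} to eliminate the convective and pressure terms gives
\begin{equation*}
\tfrac{1}{2}\|u(t)\|_{L^2_x(\R^3/\Z^3)}^2 + \int_0^t \|\nabla u(s)\|_{L^2_x(\R^3/\Z^3)}^2\,ds \;=\; \tfrac{1}{2}\|u_0\|_{L^2_x(\R^3/\Z^3)}^2,
\end{equation*}
providing unconditional $L^\infty_t L^2_x \cap L^2_t \dot H^1_x$ control of $u$. Taking the curl of \eqref{ns} yields the vorticity equation $\partial_t\omega + (u\cdot\nabla)\omega = \Delta\omega + (\omega\cdot\nabla)u$ for $\omega := \nabla\times u$; pairing with $\omega$ gives
\begin{equation*}
\tfrac{d}{dt}\tfrac{1}{2}\|\omega\|_{L^2_x}^2 + \|\nabla\omega\|_{L^2_x}^2 \;=\; \int_{\R^3/\Z^3} (\omega\cdot\nabla)u\cdot\omega\,dx.
\end{equation*}
The vortex-stretching integral on the right I would bound using H\"older, the Biot--Savart bound $\|\nabla u\|_{L^3}\lesssim\|\omega\|_{L^3}$ (valid in the mean-zero torus sector by Calder\'on--Zygmund theory applied to \eqref{biot}), and the Gagliardo--Nirenberg inequality $\|\omega\|_{L^3}\lesssim\|\omega\|_{L^2}^{1/2}\|\nabla\omega\|_{L^2}^{1/2}$, producing an upper estimate of the form $\lesssim \|\omega\|_{L^2}^{3/2}\|\nabla\omega\|_{L^2}^{3/2}$. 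Young's inequality then absorbs half of the dissipation and reduces matters to the scalar differential inequality $\tfrac{d}{dt}\|\omega\|_{L^2_x}^2 \lesssim \|\omega\|_{L^2_x}^6$.

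The principal obstacle is that this final step does not in fact close for arbitrary data. Integrating gives only $\|\omega(t)\|_{L^2_x}^2 \lesssim \|\omega_0\|_{L^2_x}^2\bigl(1-C t\|\omega_0\|_{L^2_x}^4\bigr)^{-1/2}$, which diverges at $t\sim\|\omega_0\|_{L^2_x}^{-4}$ and merely reproduces the small-data local-existence window already delivered by Theorem~\ref{lwp-h1}(ii) via \eqref{d4}. The root cause is a scaling mismatch with \eqref{scaling}: the coercive quantity $\|u\|_{L^2}^2$ is supercritical (scaling as $\lambda^{-1}$) while the target quantity $\|\omega\|_{L^2}^2$ is subcritical (scaling as $\lambda^{+1}$), so no H\"older--interpolation scheme of this shape can parlay the energy bound into a long-time enstrophy bound. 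Overcoming the gap --- either by identifying a coercive quantity at or above the scale-invariant $\dot H^{1/2}$ or $L^3$ level, or by exhibiting a hidden cancellation in the stretching integrand $(\omega\cdot\nabla)u\cdot\omega$ invisible to H\"older estimates --- is precisely the open barrier of 3D Navier--Stokes regularity, and the localised energy and enstrophy inequalities developed later in this paper, while powerful enough to transfer the problem between the Schwartz, $H^1$, and periodic settings and to localise singularities in space, do not by themselves close this supercritical scaling gap.
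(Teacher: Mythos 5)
You have not proved the statement, and you say so yourself in your final paragraph --- which is the correct conclusion, because Conjecture \ref{global-h1-periodic} is not proved in the paper either. It is one of the open conjectures: by Lemma \ref{reduction} and the local well-posedness and regularity theory of Theorem \ref{lwp-h1}, the paper shows it is \emph{equivalent} to Conjecture \ref{global-periodic-normalised-homog} and hence to Conjecture \ref{global-periodic-homog}, i.e.\ to the homogeneous periodic Clay Millennium problem, and the author states explicitly that none of these conjectures is established unconditionally. So there is no proof in the paper to compare yours against; the only honest assessment is that your proposal is a (correct) reduction plus a (correct) explanation of why the obvious scheme fails, not a proof.

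That said, the parts of your argument that can be checked are sound and consistent with the paper's machinery. The normalisations via \eqref{scaling} and \eqref{gal-2}, the use of Theorem \ref{lwp-h1}(ii) for short-time existence, and the reduction via Corollary \ref{max-cauchy-periodic} to an a priori bound on $\|u(t)\|_{H^1_x}$ are exactly in the spirit of the paper (indeed Theorem \ref{main}(i) formalises the equivalence between the qualitative and quantitative formulations, by a compactness argument rather than by the maximal-development dichotomy you invoke, but the two reductions are interchangeable here). Your energy identity, the vorticity equation, and the resulting differential inequality $\frac{d}{dt}\|\omega\|_{L^2_x}^2 \lesssim \|\omega\|_{L^2_x}^6$ are the standard computations, and your diagnosis of the obstruction --- the conserved energy is supercritical under \eqref{scaling} while the enstrophy is subcritical, so no H\"older/Gagliardo--Nirenberg bookkeeping can convert the former into global control of the latter --- is precisely the recognised barrier. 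The one point worth stressing is that the localised estimates of Theorems \ref{local} and \ref{enstrophy-loc}, as you correctly note, do not close this gap; they are used in the paper only to transfer the problem between settings (periodic, Schwartz, $H^1$, finite energy), never to resolve it. So the gap in your proposal is not a repairable oversight: it is the open problem itself, and no argument of the shape you describe can be completed.
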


\begin{conjecture}[Global regularity for homogeneous periodic data with normalised pressure]\label{global-periodic-normalised-homog}  Let $(u_0,0,T)$ be a smooth periodic set of data.  Then there exists a smooth periodic solution $(u,p,u_0,0,T)$ with the indicated data and with normalised pressure.
\end{conjecture}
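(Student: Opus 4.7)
The plan is to derive Conjecture \ref{global-periodic-normalised-homog} from Conjecture \ref{global-periodic-homog} (one of the official Clay formulations), thereby establishing the two are equivalent; the converse direction is trivial since any smooth periodic solution with normalised pressure is in particular a smooth periodic solution. So assume we are given smooth homogeneous periodic data $(u_0, 0, T)$ and, by Conjecture \ref{global-periodic-homog}, a smooth periodic solution $(u, p, u_0, 0, T)$, whose pressure $p$ is a priori only determined up to the loopholes identified earlier in the paper (pressure-shift and Galilean symmetries).

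The key tool is Lemma \ref{reduction}(ii), which in the homogeneous case $f=0$ yields the decomposition
$$p(t,x) = -\Delta^{-1} \partial_i \partial_j (u_i u_j)(t,x) + x \cdot a(t) + C(t)$$
for smooth functions $a \colon [0,T] \to \R^3$ and $C \colon [0,T] \to \R$. I would then apply the Galilean symmetry \eqref{galilean} with velocity profile $v(t) := -\int_0^t a(s)\,ds$; since $v(0) = 0$ and $f \equiv 0$, the transformed data coincides with the original $(u_0, 0, T)$ (the new initial velocity is $u_0 + v(0) = u_0$, and the new forcing $\tilde f = f(\cdot,\cdot - \int v)$ still vanishes identically), while the $-x \cdot v'(t)$ term in the new pressure exactly cancels the $x \cdot a(t)$ piece. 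A subsequent pressure-shifting transformation \eqref{const} with constant $-C(t)$ kills the remaining $C(t)$ contribution, producing a smooth periodic solution with the same data whose pressure now satisfies \eqref{pressure-point} in the homogeneous form, i.e.\ is normalised.

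The only point requiring care is ensuring that the spatial translation $x \mapsto x - \int_0^t v(s)\,ds$ built into \eqref{galilean} is compatible with the normalised-pressure piece $-\Delta^{-1} \partial_i \partial_j(u_i u_j)$ under the change of variables, so that after the transform this piece is still the normalised pressure associated to the new velocity field $\tilde u$. This follows from the translation invariance of $\Delta^{-1}$ on periodic mean-zero distributions, together with the fact that $\tilde u_i \tilde u_j$ differs from $(u_i u_j)(\cdot,\cdot - \int v)$ only by cross terms linear in the constant (in $x$) shift $v(t)$ and a pure constant, whose second derivative $\partial_i \partial_j$ vanishes. Thus no genuine analytic obstacle arises; all the serious work — namely the Liouville-theorem-based identification of the obstruction to normalisation as an affine function of $x$ — has already been done in Lemma \ref{reduction}(ii), and the remainder is a bookkeeping verification that the chosen symmetries clear the two finite-dimensional degrees of freedom $a(t)$ and $C(t)$ while preserving the homogeneous periodic data.
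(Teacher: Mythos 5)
Your proposal is correct and is essentially the paper's own argument: the paper derives this conjecture from Conjecture \ref{global-periodic-homog} precisely via Lemma \ref{reduction}(ii), using a Galilean transform \eqref{galilean} to remove the $x\cdot a(t)$ term and a pressure shift \eqref{const} to remove $C(t)$, with $v(0)=0$ so the homogeneous data is preserved. The only slip is a sign: with the paper's conventions the cancellation of $x\cdot a(t)$ by $-x\cdot v'(t)$ requires $v(t)=+\int_0^t a(s)\,ds$ rather than $-\int_0^t a(s)\,ds$, a harmless bookkeeping correction.
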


In \cite[Theorem 1.4]{tao-quantitative} it was shown that Conjectures \ref{global-periodic-homog}, \ref{global-h1-quant-homog}, \ref{global-h1-quant-global-period} are equivalent.  As implicitly observed in that paper also, Conjecture \ref{global-periodic-homog} is equivalent to Conjecture \ref{global-periodic-normalised-homog} (this can be seen from Lemma \ref{reduction}), and from the local well-posedness and regularity theory (Theorem \ref{lwp-h1} or \cite[Proposition 2.2]{tao-quantitative}) we also see that Conjecture \ref{global-periodic-normalised-homog} is equivalent to Conjecture \ref{global-h1-periodic}.

\section{Energy localisation}\label{energy-sec}

In this section we establish the energy inequality for the Navier-Stokes equation in the smooth finite energy setting.  This energy inequality is utterly standard (see e.g. \cite{sch}) for weaker notions of solutions, so long as one has regularity of $L^2_t H^1_x$, but (somewhat ironically) requires more care in the smooth finite energy setting, because we do \emph{not} assume \emph{a priori} that smooth finite energy solutions lie in the space $L^2_t H^1_x$.  The methods used here are local in nature, and will also provide an energy  localisation estimate for the Navier-Stokes equation (see Theorem \ref{local}).

We begin with the global energy inequality.

\begin{lemma}[Global energy inequality]\label{energy-est}  Let $(u,p,u_0,f,T)$ be a finite energy almost smooth solution.  Then
\begin{equation}\label{ul2-en}
 \| u \|_{L^\infty_t L^2_x([0,T] \times \R^3)} + \| \nabla u \|_{L^2_t L^2_x([0,T] \times \R^3)} \lesssim E(u_0,f,T)^{1/2}.
 \end{equation}
In particular, $u$ lies in the space $X^1([0,T] \times \R^3)$.
\end{lemma}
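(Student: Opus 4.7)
The plan is to establish a localised energy identity via a smooth spatial cutoff, bound the resulting error terms by interpolation so as to permit a bootstrap absorption, and then extract the sharp bound via a Gronwall-type argument. The hypothesis of finite energy on the solution gives us that $M := \|u\|_{L^\infty_t L^2_x} < \infty$; the task is to upgrade this to global $L^2_t L^2_x$ control on $\nabla u$, which is not part of the hypothesis and which (as the paper stresses) is the crux of the difficulty.

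Let $\eta_R \in C_c^\infty(\R^3)$ be a radial cutoff equal to $1$ on $B(0,R)$, supported in $B(0,2R)$, with $|\nabla^k \eta_R| \lesssim_k R^{-k}$. Almost smoothness makes the following rigorous: multiply \eqref{ns} by $u \eta_R$ and integrate by parts on $[0,t] \times \R^3$. Using $\nabla \cdot u = 0$ and Lemma~\ref{reduction} (the pressure constant $C(t)$ drops because $\int u \cdot \nabla \eta_R = -\int (\nabla\cdot u)\eta_R = 0$), one obtains
\[
\tfrac{1}{2} \int |u(t)|^2 \eta_R + \int_0^t \int |\nabla u|^2 \eta_R = \tfrac{1}{2} \int |u_0|^2 \eta_R + \int_0^t \int f \cdot u \, \eta_R + \mathcal{E}_R(t),
\]
where
\[
\mathcal{E}_R(t) := \int_0^t \int \Bigl[ \tfrac{1}{2} |u|^2 (u \cdot \nabla \eta_R) + \tfrac{1}{2} |u|^2 \Delta \eta_R + p (u \cdot \nabla \eta_R) \Bigr]
\]
is supported on the annulus $A_R = B(0,2R) \setminus B(0,R)$.

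Let $F(R) := \int_0^t \int |\nabla u|^2 \eta_R$, which is finite for each $R$ by smoothness. The Laplacian contribution is trivially $\bigO(TM^2 R^{-2})$. For the cubic term, Gagliardo-Nirenberg on a slightly enlarged annulus $A'_R$ gives $\|u\|_{L^3(A_R)} \lesssim \|u\|_{L^2}^{1/2}(\|\nabla u\|_{L^2(A'_R)} + R^{-1}\|u\|_{L^2})^{1/2}$, and H\"older in time then bounds its contribution by $\lesssim T^{1/4} M^{3/2} R^{-1} F(4R)^{3/4} + TM^3 R^{-5/2}$. For the pressure term, we substitute the normalised pressure formula from Lemma~\ref{reduction} and split the Newton/CZ kernel into a ``near'' piece (supported where $|x-y| \lesssim R$) and a ``far'' piece (where $|x-y| \gtrsim R$, contributing $\bigO(M^2/R^{3/2})$ in $L^2(A_R)$ norm); the near piece is controlled by Calder\'on-Zygmund and $\|u\|_{L^4}^2 \lesssim \|u\|_{L^2}^{1/2}\|\nabla u\|_{L^2}^{3/2}$, producing a bound of the same schematic form. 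Altogether,
\[
|\mathcal{E}_R(t)| \leq C(E, T)\bigl(R^{-2} + R^{-1} F(4R)^{3/4}\bigr),
\]
which substituted back yields the bootstrap inequality $F(R) \leq C(E, T)(1 + R^{-1} F(4R)^{3/4})$. Since $F$ is monotone increasing and finite for each $R$, iterating this inequality propagates an ansatz $F(R) \leq M R^\alpha$ to $\alpha' = \max(0, 3\alpha/4 - 1)$; starting from the trivial polynomial growth permitted by smoothness, this reaches $\alpha = 0$ after finitely many iterations, producing a uniform bound $F(R) \leq C(E, T)$ and hence $\nabla u \in L^2_t L^2_x$.

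With $\nabla u \in L^2_t L^2_x$ in hand, all error terms in $\mathcal{E}_R$ vanish as $R \to \infty$ by integrability and dominated convergence, yielding the clean global energy identity
\[
\tfrac{1}{2}\|u(t)\|_{L^2}^2 + \int_0^t \|\nabla u(s)\|_{L^2}^2 \, ds = \tfrac{1}{2}\|u_0\|_{L^2}^2 + \int_0^t \langle f, u\rangle \, ds.
\]
Applying Cauchy-Schwarz to the forcing integral and taking the supremum over $t$ gives the quadratic inequality $M^2 \leq \|u_0\|_{L^2}^2 + 2\|f\|_{L^1_t L^2_x} M$, whence $M \leq \|u_0\|_{L^2} + 2\|f\|_{L^1_t L^2_x} \lesssim E^{1/2}$; substituting back yields $\int_0^t \|\nabla u\|_{L^2}^2 \lesssim E$, proving \eqref{ul2-en}. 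The main obstacle is the bootstrap step: the cubic and pressure errors on the annulus are only formally $\bigO(R^{-1})$, and one must arrange the interpolation so that the error depends on $F(4R)$ to a power strictly less than $1$ (the exponent $3/4$ above), which is precisely what enables absorption without circularity. The pressure contribution is particularly delicate and requires exploiting the normalised pressure structure together with the decay of the Newton kernel away from the diagonal.
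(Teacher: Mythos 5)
Your overall architecture — a localised energy identity with a spatial cutoff, estimation of the annulus errors, and passage to the limit $R \to \infty$ to obtain first $\nabla u \in L^2_t L^2_x$ and then the sharp bound $\lesssim E^{1/2}$ — matches the paper's, and your endgame (the clean energy identity plus the quadratic inequality $M^2 \leq \|u_0\|_{L^2}^2 + 2\|f\|_{L^1_t L^2_x} M$) is essentially the paper's bootstrap of its own lemma. The problem is the step you yourself flag as the crux: your absorption of the cubic and pressure errors does not close. You bound the transport term by $R^{-1} F(4R)^{3/4}$, i.e.\ by the dissipation on a \emph{strictly larger} ball, and then try to run a downward induction in scale. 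That induction needs an a priori growth bound on $F(R)$ to get started, and there is none: "the trivial polynomial growth permitted by smoothness" does not exist. An almost smooth finite energy solution is only assumed to satisfy $u \in L^\infty_t L^2_x$; smoothness gives finiteness of $F(R) = \int_0^t \int |\nabla u|^2 \eta_R$ for each fixed $R$ but says nothing about its rate of growth — a smooth $L^2$ field can oscillate at frequency $e^{|x|}$, making $F(R)$ grow super-exponentially. Moreover the recursive inequality $F(R) \leq C(1 + R^{-1} F(4R)^{3/4})$ is genuinely consistent with unbounded $F$ (take $F(R) = e^R$: then $R^{-1} e^{3R} \gg e^R$), so no amount of iteration extracts a uniform bound from it. This is precisely the trap the paper is designed to avoid.

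The fix is the paper's choice of weight: one integrates the energy identity against $\eta^4$ rather than $\eta$, so that the boundary term $\nabla(\eta^4) = 4\eta^3 \nabla\eta$ shares the factor $\eta^3 = (\eta^2)^{3/2}$ with the quantity $\|u\eta^2\|_{L^6}^{3/2}$ produced by H\"older. Sobolev then controls $\|u\eta^2\|_{L^6}$ by $\|\eta^2 \nabla u\|_{L^2} + O(A/r)$, which is the \emph{same-weight} dissipation $X_1^{1/2}$ appearing with a negative sign in $\partial_t E_{\eta^4}$; Young's inequality absorbs $\frac{A^{3/2}}{r} X_1^{3/4}$ into $-\frac12 X_1$ pointwise in time at the cost of $A^6/r^4$. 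The pressure term is handled the same way after commuting $\eta^3$ past $\Delta^{-1}\nabla^2$ (the commutator is smoothing and contributes only $A^3/r^{5/2}$). Everything then lives at a single scale, the differential inequality closes with errors $O(A^2/r^2 + A^6/r^4)$ that vanish as $r \to \infty$, and no induction on scales is needed. Two smaller points: your intermediate constants should be $C(M,T)$, not $C(E,T)$ (harmless, since the $M$-dependence is removed in the limit), and your treatment of the pressure omits the $\Delta^{-1}\nabla\cdot f$ component of \eqref{pressure-point-modified}; the paper disposes of it at the outset by using the forcing symmetry \eqref{forcing} to reduce to divergence-free $f$.
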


\begin{proof}  To abbreviate the notation, all spatial norms here will be over $\R^3$.

Using the forcing symmetry \eqref{forcing}, we may set $f$ to be divergence-free, so in particular by Corollary \ref{as-mild} we have 
\begin{equation}\label{napp}
\nabla p(t) = \nabla \tilde p(t)
\end{equation}
for almost all times $t$, where
\begin{equation}\label{pps}
\tilde p = -\Delta^{-1} \partial_i \partial_j (u_i u_j).
\end{equation}
As $(u,p,u_0,f,T)$ is finite energy, we have the \emph{a priori} hypothesis
$$ \|u\|_{L^\infty_t L^2_x([0,T] \times \R^3)} \leq A$$
for some $A<\infty$, though recall that our final bounds are not allowed to depend on this quantity $A$.  Because $u$ is smooth, we see in particular from Fatou's lemma that
\begin{equation}\label{uta}
 \|u(t)\|_{L^2_x} \leq A
\end{equation}
for all $t \in [0,T]$.

Taking the inner product of the Navier-Stokes equation \eqref{ns} with $u$ and rearranging, we obtain the energy density identity
\begin{equation}\label{energy-ident}
\partial_t (\frac{1}{2} |u|^2) + u \cdot \nabla (\frac{1}{2} |u|^2) = \Delta(\frac{1}{2} |u|^2) - |\nabla u|^2 - u \cdot \nabla p + u \cdot f.
\end{equation}
We would like to integrate this identity over all of $\R^3$, but we do not yet have enough decay in space to achieve this, even with the normalised pressure.  Instead, we will localise by integrating the identity against a cutoff $\eta^4$, where $\eta(x) := \chi(\frac{|x|-R}{r})$, and $\chi: \R \to \R^+$ is a fixed smooth function that equals $0$ on $[0,+\infty]$ and $1$ on $[-\infty,-1]$, and $0 < r < R/2$ are parameters to be chosen later.  (The exponent $4$ is convenient for technical reasons, in that $\eta^4$ and $\nabla(\eta^4)$ share a large common factor $\eta^3$, but it should be ignored on a first reading.) Thus we see that $\eta^4$ is supported on the ball $B(0,R)$ and equals $1$ on $B(0,R-r)$, with the derivative bounds
\begin{equation}\label{nablab}
\nabla^j \eta = O( r^{-j} )
\end{equation}
for $j=0,1,2$.  We define the localised energy
\begin{equation}\label{ert}
E_{\eta^4}(t) := \int_{\R^3} \frac{1}{2} |u|^2(t,x) \eta^4(x)\ dx.
\end{equation}
Clearly we have the initial condition
\begin{equation}\label{initial-cond}
E_{\eta^4}(0) \lesssim E(u_0,f).
\end{equation}
Because $\eta^4$ is compactly supported and $u$ is almost smooth, $E_{\eta^4}$ is $C^1_t$, and we may differentiate under the integral sign and integrate by parts without difficulty; using \eqref{napp}, we see for almost every time $t$ that
\begin{equation}
\partial_t E_{\eta^4} = -X_1 + X_2 + X_3 + X_4 + X_5
\end{equation}
where $X_1$ is the dissipation term
\begin{equation}\label{x1-def}
X_1 := \int_{\R^3} |\nabla u|^2 \eta^4\ dx = \| \eta^2 \nabla u \|_{L^2_x}^2,
\end{equation}
$X_2$ is the heat flux term
$$ X_2 := \frac{1}{2} \int_{\R^3} |u|^2 \Delta (\eta^4)\ dx,$$
$X_3$ is the transport term
$$ X_3 := 4 \int_{\R^3} |u|^2 u \cdot \eta^3 \nabla \eta\ dx,$$
$X_4$ is the forcing term
$$ X_4 := \int_{\R^3} u \cdot f \eta^4\ dx,$$
and $X_5$ is the pressure term
$$ X_5 := 4 \int_{\R^3} u \tilde p \eta^3 \nabla \eta\ dx.$$
The dissipation term $X_1$ is non-negative, and will be useful in controlling some of the other terms present here. The heat flux term $X_2$ can be bounded using \eqref{uta} and \eqref{nablab} by 
$$ X_2 \lesssim \frac{A^2}{r^2},$$
so we turn now to the transport term $X_3$.  Using H\"older's inequality and \eqref{nablab}, we may bound
\begin{equation}\label{x4-bound}
 X_3 \lesssim \frac{1}{r} \| u \eta^2 \|_{L^6_x}^{3/2} \| u \|_{L^2_x}^{3/2} 
\end{equation}
and thus by \eqref{uta} and Sobolev embedding
$$ X_3 \lesssim \frac{A^{3/2}}{r} \| \nabla( u \eta^2 ) \|_{L^2_x}^{3/2}.$$
By the Leibniz rule and \eqref{x1-def}, \eqref{uta}, \eqref{nablab} one has
$$ \| \nabla( u \eta^2 ) \|_{L^2_x} \lesssim X_1^{1/2} + \frac{A}{r}$$
and thus
$$ X_3 \lesssim \frac{A^{3/2}}{r} X_1^{3/4} + \frac{A^3}{r^{5/2}}.$$
Now we move on to the forcing term $X_4$.  By Cauchy-Schwarz, we can bound this term by
$$ X_4 \lesssim E_{\eta^4}^{1/2} a(t)$$
where $a(t) := \|f(t)\|_{L^2_x(B(0,R))}$.  Note from \eqref{energy-def} that
\begin{equation}\label{energy-t}
\int_0^T a(t)\ dt \lesssim E(u_0,f,T)^{1/2}.
\end{equation}
Now we turn to the pressure term $X_5$.  From \eqref{pps} we have
$$ X_{5} = \int_{\R^3} \bigO( u (\Delta^{-1} \nabla^2( u u )) \eta^3 \nabla \eta ).$$
We will argue as in the estimation of $X_4$, but we will first need to move the $\eta^3$ weight past the singular integral $\Delta^{-1} \nabla^2$.  We therefore bound $X_5 = X_{5,1} + X_{5,2}$ where
$$ X_{5,1} = \int_{\R^3} \bigO( u (\Delta^{-1} \nabla^2( u u \eta^3 )) \nabla \eta )$$
and
$$ X_{5,2} = \int_{\R^3} \bigO( u [\Delta^{-1} \nabla^2, \eta^3](u u) \nabla \eta ),$$
where $[A,B] := AB-BA$ is the commutator, and $\eta^3$ is interpreted as the multiplication operator $\eta^3: u \mapsto \eta^3 u$.
For $X_{6,1}$, we apply H\"older's inequality and \eqref{nablab} to obtain
$$ X_{5,1} \lesssim \frac{1}{r} \| u \|_{L^2_x} \| \Delta^{-1} \nabla^2( u u \eta^3 ) \|_{L^2_x}.$$
The singular integral $\Delta^{-1} \nabla^2$ is bounded on $L^2$, so it may be discarded; applying H\"older's inequality again we conclude that
$$ X_{5,1} \lesssim \frac{1}{r} \| u \|_{L^2_x}^{3/2} \| u \eta^2 \|_{L^6_x}^{3/2}.$$
This is the same bound \eqref{x4-bound} used to bound $X_3$, and so by repeating the $X_3$ analysis we conclude that
$$ X_{5,1} \lesssim \frac{A^{3/2}}{r} X_1^{3/4} + \frac{A^3}{r^{5/2}}.$$
As for $X_{5,2}$, we observe from direct computation of the integral kernel that when $r=1$, $[\Delta^{-1} \nabla^2,\chi^3]$ is a smoothing operator of infinite order (cf. \cite{kato}), and in particular
$$ \| [\Delta^{-1} \nabla^2,\eta^3] f \|_{L^2_x} \lesssim \|f\|_{L^1_x}$$
in the $r=1$ case.  In the general case, a rescaling argument then gives
$$ \| [\Delta^{-1} \nabla^2,\eta^3] f \|_{L^2_x} \lesssim \frac{1}{r^{3/2}} \|f\|_{L^1_x}.$$
Applying H\"older's inequality and \eqref{uta} we conclude that
$$ X_{5,2} \lesssim \frac{A^3}{r^{5/2}}.$$
Putting all the estimates together, we conclude that
$$ \partial_t E_{\eta^4} \leq -X_1 + O\left( \frac{A^2}{r^2} + \frac{A^{3/2}}{r} X_1^{3/4} + \frac{A^3}{r^{5/2}} +  E_{\eta^4}^{1/2} a(t) \right).$$
By Young's inequality we have
$$ -\frac{1}{2} X_1 + O\left( \frac{A^{3/2}}{r} X_1^{3/4} \right) \lesssim \frac{A^6}{r^4}$$
and 
$$ \frac{A^3}{r^{5/2}} \lesssim \frac{A^2}{r^2} + \frac{A^6}{r^4}$$
and so we obtain
\begin{equation}\label{pater} 
\partial_t E_{\eta^4} + X_1 \lesssim \frac{A^2}{r^2} + \frac{A^6}{r^4} + E_{\eta^4}^{1/2} a(t)
\end{equation}
and hence for almost every time $t$
$$ \partial_t (E_{\eta^4} + E(u_0,f,T))^{1/2} \lesssim E(u_0,f,T)^{-1/2} \left(\frac{A^2}{r^2} + \frac{A^6}{r^4}\right) + a(t).$$
By the fundamental theorem of calculus, \eqref{energy-t} and \eqref{initial-cond}, we conclude that
$$ E_{\eta^4}(t)^{1/2} \lesssim E(u_0,f,T)^{1/2} + E(u_0,f,T)^{-1/2} \left( \frac{A^2}{r^2} + \frac{A^6}{r^4} \right) T$$
for all $t \in [0,T]$ and all sufficiently large $R$; sending $r, R \to \infty$ and using the monotone convergence theorem we conclude that
$$ \| u \|_{L^\infty_t L^2_x([0,T] \times \R^3)} \lesssim E(u_0,f,T)^{1/2}.$$
In particular we have
$$ E_{\eta^4}(t) \lesssim E(u_0,f,T)$$
for all $r,R$; inserting this back into \eqref{pater} and integrating we obtain that
$$ \int_0^T X_1(t)\ dt \lesssim \left(\frac{A^2}{r^2} + \frac{A^6}{r^4}\right) T + E(u_0,f,T).$$
Sending $r,R \to \infty$ and using monotone convergence again, we conclude that
$$ \| \nabla u \|_{L^2_t L^2_x([0,T] \times \R^3)} \lesssim E(u_0,f,T)^{1/2}$$
and Lemma \ref{energy-est} follows.
\end{proof}

We can bootstrap the proof of Lemma \ref{energy-est} as follows.  \emph{A posteriori}, we see that we may take $A \lesssim E(u_0,f,T)^{1/2}$.  If we return to \eqref{pater}, we may then obtain
$$ \partial_t (E_{\eta^4} + e)^{1/2} \lesssim e^{-1/2} \left( \frac{E(u_0,f,T)}{r^2} + \frac{E(u_0,f,T)^3}{r^4} \right) + a(t).$$ 
where $e > 0$ is an arbitrary parameter which we will optimise later.  From the fundamental theorem of calculus we then have
$$ E_{\eta^4}^{1/2} \lesssim E_{\eta^4}(0)^{1/2} + e^{1/2} + e^{-1/2} \left( \frac{E(u_0,f,T)}{r^2} + \frac{E(u_0,f,T)^3}{r^4} \right) T + \|f\|_{L^1_t L^2_x},$$
where the $L^1_t L^2_x$ norm is over $[0,T] \times B(0,R)$;
optimising in $e$, we conclude that
$$ E_{\eta^4}^{1/2} \lesssim E_{\eta^4}(0)^{1/2} + \left(\frac{E(u_0,f,T)}{r^2} + \frac{E(u_0,f,T)^3}{r^4}\right)^{1/2} T^{1/2} + \|f\|_{L^1_t L^2_x}.$$
Inserting this back into \eqref{pater} and integrating, we also conclude that
$$ \int_0^T X_1(t)\ dt \lesssim
\left(E_{\eta^4}(0)^{1/2} + \left(\frac{E(u_0,f,T)}{r^2} + \frac{E(u_0,f,T)^3}{r^4}\right)^{1/2} T^{1/2} + \|f\|_{L^1_t L^2_x}\right)^2.$$

Applying spatial translation invariance \eqref{space-translate} to move the origin from $0$ to an arbitrary point $x_0$, we conclude an energy localisation result:

\begin{theorem}[Local energy estimate]\label{local} Let $(u,p,u_0,f,T)$ be a finite energy almost smooth solution with $f$ divergence-free.  Then for any $x_0 \in \R^3$ and any $0 < r < R/2$, one has
\begin{equation}\label{local-energy}
\begin{split}
\| u \|_{L^\infty_t L^2_x([0,T] \times B(x_0,R-r))} &+ \| \nabla u \|_{L^2_t L^2_x([0,T] \times B(x_0,R-r))}  \\
&\lesssim \| u_0 \|_{L^2_x(B(x_0,R))} + \|f\|_{L^1_t L^2_x([0,T] \times B(x_0,R))} \\
&\quad + \frac{E(u_0,f,T)^{1/2} T^{1/2}}{r} + \frac{E(u_0,f,T)^{3/2} T^{1/2}}{r^2}.
\end{split}
\end{equation}
\end{theorem}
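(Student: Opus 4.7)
The plan is to repeat the localised energy argument from the proof of Lemma \ref{energy-est}, but now tracking the right-hand side as a truly local quantity (with respect to the ball $B(x_0,R)$) rather than using a crude global \emph{a priori} bound. First I would apply the spatial translation symmetry \eqref{space-translate} to normalise $x_0 = 0$. I then reuse the cutoff $\eta(x) = \chi(\tfrac{|x|-R}{r})$, so that $\eta^4$ is supported in $B(0,R)$, equals $1$ on $B(0,R-r)$, and satisfies \eqref{nablab}. The key preliminary input is Lemma \ref{energy-est} itself: it provides the global bound $\|u\|_{L^\infty_t L^2_x([0,T]\times\R^3)} \lesssim E(u_0,f,T)^{1/2}$, which plays the role of the parameter $A$ in the earlier argument—but now with an explicit quantitative value.

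Next I would set up the localised energy $E_{\eta^4}(t)$ as in \eqref{ert} and obtain the identity
\begin{equation*}
\partial_t E_{\eta^4} = -X_1 + X_2 + X_3 + X_4 + X_5
\end{equation*}
for the dissipation, heat flux, transport, forcing and pressure terms exactly as in the proof of Lemma \ref{energy-est}. Each of $X_2,X_3,X_5$ is bounded verbatim as there, using the new value $A \lesssim E(u_0,f,T)^{1/2}$ in place of the unspecified global $L^2$-bound; the commutator decomposition $X_5 = X_{5,1} + X_{5,2}$ and the smoothing estimate for $[\Delta^{-1}\nabla^2, \eta^3]$ carry over unchanged. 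This yields
\begin{equation*}
\partial_t E_{\eta^4} + X_1 \;\lesssim\; \frac{E(u_0,f,T)}{r^2} + \frac{E(u_0,f,T)^3}{r^4} + E_{\eta^4}^{1/2}\, a(t),
\end{equation*}
where now $a(t) := \|f(t)\|_{L^2_x(B(0,R))}$, since the forcing term $X_4$ only sees $f$ against $\eta^4$, which is supported in $B(0,R)$.

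Finally I would convert this differential inequality into the desired bound by the same Young-inequality trick used after the proof of Lemma \ref{energy-est}: write $\partial_t(E_{\eta^4}+e)^{1/2} \lesssim e^{-1/2}(E/r^2 + E^3/r^4) + a(t)$ for an auxiliary parameter $e>0$, integrate from $0$ to $t$, use the local initial bound $E_{\eta^4}(0) \leq \tfrac12\|u_0\|_{L^2_x(B(0,R))}^2$ and $\int_0^T a(t)\,dt \leq \|f\|_{L^1_t L^2_x([0,T]\times B(0,R))}$, then optimise in $e$. This produces
\begin{equation*}
E_{\eta^4}^{1/2} \lesssim \|u_0\|_{L^2_x(B(0,R))} + \|f\|_{L^1_t L^2_x([0,T]\times B(0,R))} + \Bigl(\tfrac{E}{r^2}+\tfrac{E^3}{r^4}\Bigr)^{1/2} T^{1/2}.
\end{equation*}
Inserting this back into the differential inequality and integrating once more gives the same bound (squared) for $\int_0^T X_1$. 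Since $\eta^4 \equiv 1$ on $B(0,R-r)$, the quantities $\|u\|_{L^\infty_t L^2_x([0,T]\times B(0,R-r))}^2$ and $\|\nabla u\|_{L^2_t L^2_x([0,T]\times B(0,R-r))}^2$ are controlled by $\sup_t E_{\eta^4}(t)$ and $\int_0^T X_1$ respectively, and translating back from $0$ to $x_0$ yields \eqref{local-energy}.

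The only real subtlety, as in Lemma \ref{energy-est}, is the pressure term $X_5$: even though we are localising in space, the pressure is a non-local function of $u$ through \eqref{pps}, so naively it would be controlled only in terms of the global energy. The fix is precisely the commutator splitting $[\Delta^{-1}\nabla^2, \eta^3]$, which gains sufficient smoothing (decay $r^{-3/2}$ in kernel) to dominate the global contribution by the same $E/r^2 + E^3/r^4$ error, and this is what forces the $1/r^2$ tail in the statement.
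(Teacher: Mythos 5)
Your proposal is correct and follows essentially the same route as the paper: the paper obtains Theorem \ref{local} precisely by bootstrapping the proof of Lemma \ref{energy-est} with the \emph{a posteriori} value $A \lesssim E(u_0,f,T)^{1/2}$, returning to the differential inequality \eqref{pater}, introducing the auxiliary parameter $e$, integrating, optimising in $e$, reinserting to bound $\int_0^T X_1$, and translating the origin to $x_0$. No gaps.
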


\begin{remark} One can verify that the estimate \eqref{local-energy} is dimensionally consistent.  Indeed, if $L$ denotes a length scale, then $r, R, E(u_0,f)$ have the units of $L$, $T$ has the units of $L^2$, $u$ has the units of $L^{-1}$, and all terms in \eqref{local-energy} have the scaling of $L^{1/2}$.  Note also that the global energy estimate \eqref{energy-est} can be viewed as the limiting case of \eqref{local-energy} when one sends $r, R$ to infinity.
\end{remark}

\begin{remark}\label{inversion}   A minor modification of the proof of Theorem \ref{local} allows one to replace the ball $B(x_0,R)$ by an annulus
$$ B(x_0,R') \backslash B(x_0,R)$$
for some $0 < R < R'$ with $r < (R'-R)/2, R/2$, with smaller ball $B(x_0,R-r)$ being replaced by the smaller annulus
$$ B(x_0,R'-r) \backslash B(x_0,R+r).$$
The proof is essentially the same, except that the cutoff $\eta$ has to be adapted to the two indicated annuli rather than to the two indicated balls; we omit the details.  Sending $R' \to \infty$ using the monotone convergence theorem, we conclude in particular an external local energy estimate of the form
\begin{equation}\label{local-energy-external}
\begin{split}
\| u \|_{L^\infty_t L^2_x([0,T] \times (\R^3 \backslash B(x_0,R+r)))} &+ \| \nabla u \|_{L^2_t L^2_x([0,T] \times (\R^3 \backslash B(x_0,R+r)))}  \\
&\lesssim \| u_0 \|_{L^2_x(\R^3 \backslash B(x_0,R))} + \|f\|_{L^1_t L^2_x([0,T] \times (\R^3 \backslash B(x_0,R)))} \\
&\quad + \frac{E(u_0,f,T)^{1/2} T^{1/2}}{r} + \frac{E(u_0,f,T)^{3/2} T^{1/2}}{r^2}
\end{split}
\end{equation}
whenever $0 < r < R/2$.
\end{remark}

\begin{remark} The hypothesis that $f$ is divergence-free can easily be removed using the symmetry \eqref{forcing}, but then $f$ needs to be replaced by $Pf$ on the right-hand side of \eqref{local-energy}.
\end{remark}

\begin{remark} Theorem \ref{local} can be extended without difficulty to the periodic setting, with the energy $E(u_0,f,T)$ being replaced by the periodic energy
$$ E_L(u_0,f,T) := \frac{1}{2} ( \| u_0 \|_{L^2_x( \R^3/L\Z^3 )} + \|f\|_{L^1_t L^2_x([0,T] \times \R^3/L\Z^3)} )^2,$$
as long as the radius $R$ of the ball is significantly smaller than the period $L$ of the solution e.g. $R < L/100$.  The reason for this is that the analysis used to prove Theorem \ref{local} takes place almost entirely inside the ball $B(x_0,R)$, and so there is almost no distinction between the finite energy and the periodic cases.  The only place where there is any ``leakage'' outside of $B(x_0,R)$ is in the estimation of the term $X_{5,2}$, which involves the non-local commutator $[\Delta^{-1} \nabla^2, \eta^3]$.  However, in the regime $R < L/100$ one easily verifies that the commutator essentially obeys the same sort of kernel bounds in the periodic setting as it does in the non-periodic setting, and so the argument goes through as before.  We omit the details.
\end{remark}

\begin{remark}
Theorem \ref{local} asserts, roughly speaking, that if the energy of the data is small in a large ball, then the energy will remain small in a slightly smaller ball for future times $T$; similarly, \eqref{local-energy-external} asserts that if the energy of the data is small outside a ball, then the energy will remain small outside a slightly larger ball for future times $T$.  Unfortunately, this estimate is not of major use for the purposes of establishing Theorem \ref{main}, because energy is a supercritical quantity for the Navier-Stokes equation, and so smallness of energy (local or global) is not a particularly powerful conclusion.  To achieve this goal, we will need a variant of Theorem \ref{local} in which the energy $\frac{1}{2} \int |u|^2$ is replaced by the \emph{enstrophy} $\frac{1}{2} \int |\omega|^2$, which is subcritical and thus able to control the regularity of solutions effectively.
\end{remark}

\begin{remark} It should be possible to extend Theorem \ref{local} to certain classes of weak solutions, such as mild solutions or Leray-Hopf solutions, perhaps after assuming some additional regularity on the solution $u$.  We will not pursue these matters here.
\end{remark}

\section{Bounded total speed}\label{speed-sec}

Let $(u,p,u_0,f,T)$ be an almost smooth finite energy solution.
Applying the Leray projection $P$ to \eqref{ns} (and using Corollary \ref{as-mild}), we see that
\begin{equation}\label{ns-project}
 \partial_t u = \Delta u + P B(u,u) + P f
\end{equation}
for almost all times $t$, where $B(u,v) = \bigO( \nabla(uv) )$ was defined in \eqref{buj}.
As all expressions here are tempered distributions, we thus have the Duhamel formula \eqref{duhamel-2}, which we rewrite here as
\begin{equation}\label{duhamel-form}
u(t) = e^{t\Delta} u_0 + \int_0^t e^{(t-t')\Delta} (P \bigO( \nabla( u u ) ) + P f)(t')\ dt'.
\end{equation}

One can then insert the \emph{a priori} bounds from Lemma \ref{energy-est} into \eqref{duhamel-form} to obtain further \emph{a priori} bounds on $u$ in terms of the energy $E(u_0,f,T)$ (although, given that \eqref{ul2-en} was supercritical with respect to scaling, any further bounds obtained by this scheme must be similarly supercritical).

Many such bounds of this type already exist in the literature.  For instance\footnote{These bounds are usually localised in both time and space, or are restricted to the periodic setting, and some bounds were only established in the model case $f=0$; some of these bounds also apply to weaker notions of solution than classical solutions.  For the purposes of this exposition we will not detail these technicalities.}, 

\begin{itemize}
\item One can bound the vorticity $\omega := \nabla \times u$ in $L^\infty_t L^1_x$ norm \cite{constantin}, \cite{qian};
\item One can bound $\nabla^2 u$ in $L^{4/3,\infty}_{t,x}$ \cite{constantin}, \cite{lyons};
\item More generally, for any $\alpha \geq 1$, one can bound $\nabla^\alpha u$ in $L^{\frac{4}{\alpha+1},\infty}_t L^{\frac{4}{\alpha+1},\infty}_x$ \cite{vasseur}, \cite{choi};
\item For any $k \geq 0$, one can bound $t^{k} \partial_t^k u$ in $L^2_{t,x}$ \cite{chae};
\item One can bound $\nabla u$ in $L^{1/2}_t L^\infty_x$ \cite{foias};
\item For any $r \geq 0$ and $k \geq 1$, one can bound $D_t^r \nabla_x^s u$ in $L^{\frac{2}{4r+2k-1}}_t L^2_x$ \cite{foias}, \cite{doering}, \cite{duff};
\item For any $1 \leq m \leq \infty$, one can bound $\omega$ in $L^{\frac{2m}{4m-3}}_t L^{2m}_x$ \cite{gibbon};
\item One can bound moments of wave-number like quantities \cite{doering-gibbon}, \cite{chess}.
\end{itemize}

In this section we present another \emph{a priori} bound which will be absolutely crucial for our localisation arguments, and which (somewhat surprisingly) does not appear to be previously in the literature:

\begin{proposition}[Bounded total speed]\label{bounded-speed}  Let $(u,p,u_0,f,T)$ be a finite energy almost smooth solution.  Then we have
\begin{equation}\label{l1x}
 \|u\|_{L^1_t L^\infty_x( [0,T] \times \R^3 )} \lesssim E(u_0,f,T)^{1/2} T^{1/4} + E(u_0,f,T).
\end{equation}
\end{proposition}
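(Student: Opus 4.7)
The natural starting point is Duhamel's formula \eqref{duhamel-form}, which decomposes
\[
u(t)\;=\;e^{t\Delta}u_0\;+\;\int_0^t e^{(t-t')\Delta}Pf(t')\,dt'\;+\;\int_0^t e^{(t-t')\Delta}PB(u,u)(t')\,dt',
\]
and I will estimate the $L^1_tL^\infty_x$ norm of each piece separately. The linear and forcing pieces will account for the $E^{1/2}T^{1/4}$ contribution to \eqref{l1x}, while the nonlinear Duhamel integral must be bounded by $E$ alone, independently of $T$.

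For the linear piece, the dispersive bound \eqref{dispersive} with $(p,q)=(2,\infty)$ gives $\|e^{t\Delta}u_0\|_{L^\infty_x}\lesssim t^{-3/4}\|u_0\|_{L^2_x}$; integrating $t^{-3/4}$ over $[0,T]$ yields $T^{1/4}\|u_0\|_{L^2_x}\lesssim T^{1/4}E^{1/2}$. For the forcing contribution, the same dispersive bound applied pointwise in $t'$ (using the $L^2_x$-boundedness of $P$) together with Fubini yields $\lesssim T^{1/4}\|f\|_{L^1_tL^2_x}\lesssim T^{1/4}E^{1/2}$.

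For the nonlinear term $v(t):=\int_0^t e^{(t-t')\Delta}PB(u,u)(t')\,dt'$, a quick scaling analysis under $u\mapsto\lambda u(\lambda^2 t,\lambda x)$ shows that the only admissible critical bound on this piece is $E$ itself (combinations such as $ET^{1/4}$ or $ET^{3/2}$ have the wrong scaling), so I must be careful to avoid any $T$-growth. The plan is to Littlewood-Paley decompose $u\otimes u$ and combine two global bilinear bounds coming from Lemma \ref{energy-est}: the energy bound $\|u\otimes u\|_{L^\infty_tL^1_x}\leq\|u\|_{L^\infty_tL^2_x}^2\lesssim E$, and the mixed bound $\|u\otimes u\|_{L^2_tL^{3/2}_x}\leq\|u\|_{L^\infty_tL^2_x}\|u\|_{L^2_tL^6_x}\lesssim E$ (the latter via Sobolev embedding $\dot H^1_x(\R^3)\hookrightarrow L^6_x$ applied to the dissipation estimate). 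The low-frequency part of the Duhamel kernel $e^{(t-t')\Delta}P\nabla$ is handled via the pointwise bound $\|e^{s\Delta}P\nabla F\|_{L^\infty_x}\lesssim s^{-2}\|F\|_{L^1_x}$ together with the $L^1_x$ bound on $u\otimes u$; the high-frequency part is handled via the $L^2_tL^{3/2}_x$ bound combined with Bernstein's inequality to pay for the integrability otherwise lost at the endpoint $t'=t$. Summing over dyadic frequencies using a Schur test (or Cauchy--Schwarz) yields the desired bound $\|v\|_{L^1_tL^\infty_x}\lesssim E$.

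\textbf{Main obstacle.} The key difficulty is that $L^1_tL^\infty_x$ lies precisely at the scaling-invariant exponent for the heat-Duhamel map applied to $P\nabla(u\otimes u)$, so no single exponent choice in the one-parameter family $\|e^{s\Delta}P\nabla F\|_{L^\infty_x}\lesssim s^{-1/2-3/(2p)}\|F\|_{L^p_x}$ produces both an $s$-integrable kernel and an $F$-norm that one can control: at $p=1$ the kernel has a non-integrable $(t-t')^{-2}$ singularity whose truncation introduces unwanted $T$-dependence, while at $p>1$ the required subcritical spatial integrability of $u\otimes u$ is unavailable from the energy bound alone. The whole argument hinges on exploiting the fact that the $L^\infty_tL^2_x$ energy bound and the $L^2_tL^2_x$ dissipation bound act at different dyadic frequency scales, so that the would-be $T$-dependent contributions from the short-time and long-time regimes of the Duhamel integral cancel against each other and leave only the $T$-independent constant $E$.
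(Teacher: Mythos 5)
Your treatment of the linear and forcing terms is exactly the paper's (dispersive estimate \eqref{dispersive} with $(p,q)=(2,\infty)$, then Minkowski/Young in time), and your scaling analysis correctly identifies that the nonlinear Duhamel term must be bounded by $E$ alone. The gap is in the nonlinear estimate itself. Both bilinear bounds you propose, $\|u\otimes u\|_{L^\infty_t L^1_x}\lesssim E$ and $\|u\otimes u\|_{L^2_t L^{3/2}_x}\lesssim E$, place the \emph{product} in a single mixed norm with time exponent $q=\infty$ or $q=2$. To produce an $L^1_t L^\infty_x$ bound on $\int_0^t e^{(t-t')\Delta}P\nabla(u\otimes u)(t')\,dt'$ from a source in $L^q_t X$, Young's inequality in time forces you to convert $L^q_t$ to $L^1_t$ by H\"older, which costs $T$ in the first case and $T^{1/2}$ in the second; and if you track the frequency-localised kernel bounds, the per-frequency contributions come out as $O(N^2 T E)$ and $O(N T^{1/2} E)$ respectively — both growing in $N$ and both $T$-dependent, with no crossover. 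There is no cancellation between the short-time and long-time regimes of the Duhamel integral: these are all upper bounds on absolute values, and the $T$-factors arise from the time-H\"older step, not from either regime of $t-t'$, so no choice of splitting threshold removes them. The sum over dyadic $N$ therefore diverges no matter how the two bounds are combined.

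The missing idea is to resolve the frequencies of the \emph{two factors separately} rather than of the product. After using \eqref{energy-decay} to reduce to $\sum_N \int_0^T N^{-1}\|P_N\bigO(uu)(t')\|_{L^\infty_x}\,dt'$, the paper applies the Littlewood--Paley trichotomy to $P_N(u_{N_1}u_{N_2})$ and uses Bernstein to bound each term by $N\,c(N,N_1,N_2)\,\|\nabla u_{N_1}\|_{L^2_x}\|\nabla u_{N_2}\|_{L^2_x}$ with an off-diagonal decaying weight $c(N,N_1,N_2)$ (namely $(N_2/N_1)^{1/2}$ in the low-high case and $(N/N_1)^2$ in the high-high case). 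Because \emph{each} factor now carries an $L^2_t L^2_x$ dissipation norm $a_{N_i}:=\|\nabla u_{N_i}\|_{L^2_t L^2_x}$, Cauchy--Schwarz in time yields the $L^1_t$ norm of the product with no factor of $T$, and Bessel's inequality $\sum_N a_N^2\lesssim E$ together with Schur's test gives the frequency summability. In other words, the only route to a $T$-independent $L^1_t$ bound is a product of two $L^2_t$ quantities, each controlled by \eqref{nosy}; your two global bounds on $u\otimes u$ discard exactly this structure. (Your $L^2_t L^{3/2}_x$ bound is not wasted information, but it cannot substitute for the factor-wise decomposition.)
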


We observe that the estimate \eqref{l1x} is dimensionally consistent with respect to the scaling \eqref{scaling}.  Indeed, if $L$ denotes a length scale, then $T$ scales like $L^2$, $u$ scales like $L^{-1}$, and $E_0$ scales like $L$, so both sides of \eqref{l1x} have the scaling of $L$.

Before we prove this proposition rigorously, let us first analyse the equation \eqref{ns-project} heuristically, using Littlewood-Paley projections, to get some feel of what kind of \emph{a priori} estimates one can hope to establish purely from \eqref{ns-project} and \eqref{ul2-en}.  For simplicity we shall assume $f=0$ for the sake of exposition.  We consider a high-frequency component $u_N := P_N u$ of the velocity field $u$ for some $N \gg 1$.  Applying $P_N$ to \eqref{ns-project}, and using the ellipticity of $\Delta$ to adopt the heuristic\footnote{One can informally justify this heuristic by inspecting the symbols of the Fourier multipliers appearing in these expressions.} $P_N \Delta \sim -N^2 P_N$ and $P_N P \nabla \sim N P_N$, we arrive at the heuristic equation
$$ \partial_t u_N = - N^2 u_N + \bigO( N P_N( u^2 ) ).$$
Let us cheat even further and pretend that $P_N(u^2)$ is analogous to $u_N u_N$ (in practice, there will be more terms than this, but let us assume this oversimplification for the sake of discussion).  Then we have
$$ \partial_t u_N = - N^2 u_N + \bigO( N u_N^2 ).$$
Heuristically, this suggests that the high-frequency component $u_N$ should quickly damp itself out into nothingness if $|u_N| \ll N$, but can exhibit nonlinear behaviour when $|u_N| \gg N$.  Thus, as a heuristic, one can pretend that $u_N$ has magnitude $\gg N$ on the regions where it is non-negligible.

This heuristic, coupled with the energy bound \eqref{ul2-en}, already can be used to informally justify many of the known \emph{a priori} bounds on Navier-Stokes solutions.  In particular, projecting \eqref{ul2-en} to the $u_N$ component, one expects that
\begin{equation}\label{un-2}
 \| u_N \|_{L^2_t L^2_x} \lesssim N^{-1}
\end{equation}
(dropping the dependencies of constants on parameters such as $E_0$, and being vague about the spacetime region on which the norms are being evaluated), which by Bernstein's inequality implies that
$$ \| u_N \|_{L^2_t L^\infty_x} \lesssim N^{1/2}.$$
However, with the heuristic that $|u_N| \gg N$ on the support of $u_N$, we expect that
$$ \| u_N \|_{L^1_t L^\infty_x} \lesssim \frac{1}{N} \| u_N \|_{L^2_t L^\infty_x}^2 \lesssim 1;$$
summing in $N$ (and ignoring the logarithmic divergence that results, which can in principle be recovered by using Bessel's inequality to improve upon
\eqref{un-2}), we obtain a non-rigorous derivation of Proposition \ref{bounded-speed}.

We now turn to the formal proof of Proposition \ref{bounded-speed}.  All spacetime norms are understood to be over the region $[0,T] \times \R^3$ (and all spatial norms over $\R^3$) unless otherwise indicated.  We abbreviate $E_0 := E(u_0,f,T)$.  From \eqref{ul2-en} and \eqref{energy-def} we have the bounds
\begin{align}
 \| u \|_{L^\infty_t L^2_x} &\lesssim E_0^{1/2} \label{u-infty-2}\\
 \| \nabla u \|_{L^2_t L^2_x} &\lesssim E_0^{1/2} \label{nosy}\\
 \|u_0\|_{L^2_x} + \|f\|_{L^1_t L^2_x} &\lesssim E_0^{1/2}.\label{Data}
\end{align}

We expand out $u$ using \eqref{duhamel-form}.  For the free term $e^{t\Delta} u_0$, one has by \eqref{dispersive}
$$ \|e^{t\Delta} u_0\|_{L^\infty_x} \lesssim t^{-3/4} \|u_0\|_{L^2_x}$$
for $t \in [0,T]$, so this contribution to \eqref{l1x} is acceptable by \eqref{Data}.  In a similar spirit, we have
$$ \| e^{(t-t')\Delta} Pf(t') \|_{L^\infty_x} \lesssim (t-t')^{-3/4} \|Pf(t')\|_{L^2_x} \lesssim (t-t')^{-3/4} \|f(t')\|_{L^2_x}$$
and so this contribution is also acceptable by the Minkowski and Young inequalities and \eqref{Data}.

It remains to show that
$$ \| \int_0^t e^{(t-t')\Delta} \bigO( P \nabla(u u)(t'))\ dt' \|_{L^1_t L^\infty_x} \lesssim E_0.$$
By Littlewood-Paley decomposition, the triangle inequality, and Minkowski's inequality, we can bound the left-hand side by
$$ \lesssim \sum_{N} \int_0^T \int_0^t \| P_N e^{(t-t')\Delta} \bigO( P \nabla(u u)(t')) \|_{L^\infty_x}\ dt' dt.$$
Using \eqref{energy-decay}, and bounding the first order operator $P \nabla$ by $N$ on the range of $P_N$, we may bound this by
$$ \lesssim \sum_{N} \int_0^T \int_0^t \exp(-c (t-t') N^2) N \| P_N \bigO(u u)(t') \|_{L^\infty_x}\ dt' dt$$
for some $c>0$; interchanging integrals and evaluating the $t$ integral, this becomes
\begin{equation}\label{nose}
 \lesssim \sum_{N} \int_0^T N^{-1} \| P_N \bigO(u u)(t') \|_{L^\infty_x}\ dt'.
\end{equation}
We now apply the Littlewood-Paley trichotomy (see Section \ref{notation-sec}) and symmetry to write
$$ P_N \bigO(u u) = \sum_{N_1 \sim N} \sum_{N_2 \lesssim N} P_N \bigO( u_{N_1} u_{N_2} ) + \sum_{N_1 \gtrsim N} \sum_{N_2 \sim N_1} P_N \bigO( u_{N_1} u_{N_2} )$$
where $u_N := P_N u$.  For $N_1,N_2$ in the first sum, we use Bernstein's inequality to estimate
\begin{align*}
\| P_N \bigO( u_{N_1} u_{N_2} ) \|_{L^\infty_x} &\lesssim
\| u_{N_1} \|_{L^\infty_x}  \| u_{N_2} \|_{L^\infty_x} \\
&\lesssim N_1^{3/2} \| u_{N_1} \|_{L^2_x} N_2^{3/2} \| u_{N_2} \|_{L^2_x}  \\
&\lesssim N (N_2/N_1)^{1/2} \| \nabla u_{N_1} \|_{L^2_x} \| \nabla u_{N_2} \|_{L^2_x}.
\end{align*}
For $N_1,N_2$ in the second sum, we use Bernstein's inequality in a slightly different way to estimate
\begin{align*}
\| P_N \bigO( u_{N_1} u_{N_2} ) \|_{L^\infty_x} &\lesssim
N^{3} \| \bigO( u_{N_1} u_{N_2} ) \|_{L^1_x} \\
&\lesssim N^3 \| u_{N_1} \|_{L^2_x}  \| u_{N_2} \|_{L^2_x} \\
&\lesssim N (N/N_1)^2 \| \nabla u_{N_1} \|_{L^2_x} \| \nabla u_{N_2} \|_{L^2_x}.
\end{align*}
Applying these bounds, we can estimate \eqref{nose} by
\begin{align*}
&\lesssim \sum_{N} \sum_{N_1 \sim N} \sum_{N_2 \lesssim N} (N_2/N_1)^{1/2} \int_0^T \| \nabla u_{N_1}(t') \|_{L^2_x} \| \nabla u_{N_2}(t') \|_{L^2_x}\ dt' \\
&\quad + \sum_{N} \sum_{N_1 \gtrsim N} \sum_{N_2 \sim N_1} (N/N_1)^2 \int_0^T \| \nabla u_{N_1}(t') \|_{L^2_x} \| \nabla u_{N_2}(t') \|_{L^2_x}\ dt'.
\end{align*}
Performing the $N$ summation first, then using Cauchy-Schwarz, one can bound this by
$$ \lesssim \sum_{N_1 \gtrsim 1} \sum_{N_2 \lesssim N_1} (N_2/N_1)^{1/2} a_{N_1} a_{N_2} + \sum_{N_1 \gtrsim 1} \sum_{N_2 \sim N_1} a_{N_1} a_{N_2},$$
where
$$ a_N := \| \nabla u_N \|_{L^2_t L^2_x}.$$
But from \eqref{nosy} and Bessel's inequality (or the Plancherel theorem) one has
$$ \sum_N a_N^2 \lesssim E_0$$
and the claim \eqref{l1x} then follows from Schur's test (or Young's inequality).

\begin{remark}  An inspection of the above argument reveals that the $L^\infty_x$ norm in \eqref{l1x} can be strengthened to a Besov norm $(\dot B^{0,\infty}_1)_x$, defined by
$$ \|u\|_{(\dot B^{0,\infty}_1)_x} := \sum_{N} \|P_N u \|_{L^\infty_x}.$$
\end{remark}

\begin{remark}  An inspection of the proof of Proposition \ref{bounded-speed} reveals that the time-dependent factor $T^{1/4}$ on the right-hand side of Proposition \ref{bounded-speed} was only necessary in order to bound the linear components
$$ e^{t\Delta} u_0 + \int_0^t e^{(t-t')\Delta} (P f)(t')\ dt'$$
of the Duhamel formula \eqref{duhamel-form}.  If one had some other means to bound these components in $L^1_t L^\infty_x$ by a bound independent of $T$ (for instance, if one had some further control on the decay of $u_0$ and $f$, such as $L^1_x$ and $L^1_t L^1_x$ bounds), then this would lead to a similarly time-independent bound in Proposition \ref{bounded-speed}, which could be useful for analysis of the long-time asymptotics of Navier-Stokes solutions (which is not our primary concern here).
\end{remark}

\begin{remark} It is worth comparing the (supercritical) control given by Proposition \ref{bounded-speed} with the well-known (critical) Prodi-Serrin-Ladyzhenskaya regularity condition \cite{prodi, serrin, lady, fabes, struwe}, a special case of which (roughly speaking) asserts that smooth solutions to the Navier-Stokes system can be continued as long as $u$ is bounded in $L^2_t L^\infty_x$, and the equally well known (and also critical) regularity condition of Beale, Kato, and Majda \cite{bkm}, which asserts that smooth solutions can be continued as long as the \emph{vorticity}
\begin{equation}\label{vorticity-def}
\omega := \nabla \times u
\end{equation}
stays bounded in $L^1_t L^\infty_x$.
\end{remark}

\begin{remark} As pointed out by the anonymous referee, one can also obtain $L^1_t L^\infty_x$ bounds on the velocity field $u$ by a Gagliardo-Nirenberg type interpolation between the $L^{1/2}_t L^\infty_x$ bound on $\nabla u$ from \cite{foias} with the $L^2_t L^6_x$ bound on $u$ arising from the energy inequality and Sobolev embedding.
\end{remark}

Although we will not need it in this paper, Proposition \ref{bounded-speed} when combined with the Picard well-posedness theorem for ODE yields the following immediate corollary, which may be of use in future applications:

\begin{corollary}[Existence of material coordinates]  Let $(u,p,u_0,f,T)$ be a finite energy smooth solution.  Then there exists a unique smooth map $\Phi: [0,T] \times \R^3 \to \R^3$ such that
$$ \Phi(0,x) = x$$
for all $x \in \R^3$, and
$$ \partial_t \Phi(t,x) = u( \Phi(t,x) )$$
for all $(t,x) \in [0,T] \times \R^3$, and furthermore $\Phi(t): \R^3 \to \R^3$ is a diffeomorphism for all $t \in [0,T]$.  Finally, one has
$$ |\Phi(t,x) - x| \lesssim E(u_0,f,T)^{1/2} T^{1/4} + E(u_0,f,T)$$
for all $(t,x) \in [0,T] \times \R^3$.
\end{corollary}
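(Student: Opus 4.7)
The plan is to construct $\Phi$ as the flow map of the non-autonomous ODE
$$\dot\Phi(t,x) = u(t,\Phi(t,x)),\qquad \Phi(0,x)=x,$$
and use Proposition \ref{bounded-speed} as the sole nontrivial input: every other step reduces to classical ODE theory.

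First I would invoke Picard--Lindel\"of. Since $u$ is smooth and, in particular, locally Lipschitz in $x$ uniformly on any compact subset of $[0,T]\times\R^3$, for each initial point $x\in\R^3$ the ODE has a unique maximal solution $t\mapsto\Phi(t,x)$ defined on some interval $[0,t_*(x))\subseteq[0,T]$. Standard smooth dependence theorems give joint smoothness of $\Phi$ in $(t,x)$ on the open set where it is defined.

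Next I would upgrade local existence to existence on the full interval $[0,T]$. For any $t$ in the maximal interval we have the trivial bound
$$|\Phi(t,x)-x|\ \le\ \int_0^t |u(s,\Phi(s,x))|\,ds\ \le\ \|u\|_{L^1_t L^\infty_x([0,T]\times\R^3)},$$
and Proposition \ref{bounded-speed} bounds the right-hand side by $O(E(u_0,f,T)^{1/2}T^{1/4}+E(u_0,f,T))$. In particular, the trajectory stays in a fixed bounded set independent of $t$, so it cannot escape to infinity in finite time, and the standard continuation criterion forces $t_*(x)=T$. This simultaneously establishes the quantitative bound $|\Phi(t,x)-x|\lesssim E(u_0,f,T)^{1/2}T^{1/4}+E(u_0,f,T)$ claimed in the statement.

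Finally, to promote $\Phi(t,\cdot)$ to a diffeomorphism of $\R^3$ for each $t\in[0,T]$, I would introduce, for each fixed $t_0\in[0,T]$, the backward flow $\Psi$ solving $\partial_s\Psi(s,y)=-u(t_0-s,\Psi(s,y))$ with $\Psi(0,y)=y$ on $s\in[0,t_0]$. The same total-speed bound shows $\Psi$ exists and is smooth on all of $[0,t_0]\times\R^3$, and uniqueness of ODE solutions (applied to the concatenated trajectory) identifies $\Psi(t_0,\cdot)$ as a smooth two-sided inverse of $\Phi(t_0,\cdot)$. Uniqueness of $\Phi$ itself follows directly from the Picard uniqueness part of the argument. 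The only substantive obstacle in this plan is the $L^1_t L^\infty_x$ control supplied by Proposition \ref{bounded-speed}; everything else is routine ODE theory.
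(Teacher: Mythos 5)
Your proposal is correct and follows exactly the route the paper intends: the paper presents this corollary as an immediate consequence of Proposition \ref{bounded-speed} combined with the Picard well-posedness theorem for ODE, which is precisely your argument (local Picard--Lindel\"of existence and uniqueness, the $L^1_t L^\infty_x$ total-speed bound to keep trajectories in a fixed ball and hence extend to all of $[0,T]$ while yielding the displacement estimate, and the backward flow to produce the inverse). No gaps.
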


\begin{remark}\label{lota}  One can extend the results in this section to the periodic case, as long as one assumes normalised pressure and imposes the additional condition $T \leq L^2$, which roughly speaking ensures that the periodic heat kernel behaves enough like its non-periodic counterpart that estimates such as \eqref{dispersive} are maintained; we omit the details.  (Without normalised pressure, the Galilean invariance \eqref{galilean} shows that one cannot hope to bound the $L^1_t L^\infty_x$ norm of $u$ by the initial data, and even energy estimates do not work any more.)    When the inequality $T \leq L^2$ fails, one can still obtain estimates (but with weaker bounds) by using the crude observation that a solution which is periodic with period $L$, is also periodic with period $kL$ for any positive integer $k$, and choosing $k$ to be the first integer so that $T \leq (kL)^2$.
\end{remark}

\section{Enstrophy localisation}

The purpose of this section is to establish a subcritical analogue of Theorem \ref{local}, in which the energy $\frac{1}{2} \int |u|^2$ is replaced by the enstrophy $\frac{1}{2} \int |\omega|^2$.  Because the latter quantity is not conserved, we will need a smallness condition on the initial local enstrophy; however, the initial \emph{global} enstrophy is allowed to be arbitrarily large (or even infinite).

\begin{theorem}[Enstrophy localisation]\label{enstrophy-loc} Let $(u,p,u_0,f,T)$ be a finite energy almost smooth solution.  Let $B(x_0,R)$ be a ball such that
\begin{equation}\label{eeta}
\| \omega_0 \|_{L^2_x(B(x_0,R))} + \| \nabla \times f \|_{L^1_t L^2_x([0,T] \times B(x_0,R))} \leq \delta
\end{equation}
for some $\delta > 0$, where $\omega_0 := \nabla \times u_0$ is the initial vorticity.  Assume the smallness condition
\begin{equation}\label{delta-4} 
\delta^4 T + \delta^5 E(u_0,f,T)^{1/2} T \leq c
\end{equation}
for some sufficiently small absolute constant $c > 0$ (independent of all parameters).  Let $0 < r < R/2$ be a quantity such that
\begin{equation}\label{r-large}
 r > C (E(u_0,f,T) + E(u_0,f,T)^{1/2} T^{1/4} + \delta^{-2})
\end{equation}
for some sufficiently large absolute constant $C$ (again independent of all parameters).  Then
$$
\| \omega \|_{L^\infty_x L^2_x([0,T] \times B(x_0,R-r))} + \| \nabla \omega \|_{L^2_t L^2_x([0,T] \times B(x_0,R-r))} 
\lesssim \delta.$$
\end{theorem}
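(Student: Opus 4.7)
The plan is to run a localised enstrophy argument paralleling the proof of Theorem \ref{local}, but starting from the vorticity equation
$$ \partial_t \omega + (u \cdot \nabla)\omega = \Delta \omega + (\omega \cdot \nabla) u + \nabla \times f $$
obtained by taking the curl of \eqref{ns}. Choose a smooth cutoff $\eta$ equal to $1$ on $B(x_0,R-r)$ and supported in $B(x_0,R)$, with $|\nabla^j \eta| \lesssim r^{-j}$ for $j=0,1,2$. Testing the vorticity equation against $\omega \eta^4$ and integrating by parts (using $\nabla \cdot u = 0$) yields an identity of the form
$$ \tfrac{1}{2} \partial_t \!\!\int\! |\omega|^2 \eta^4 + \!\!\int\! |\nabla \omega|^2 \eta^4 = \tfrac{1}{2}\!\!\int\! |\omega|^2 \Delta(\eta^4) + \tfrac{1}{2}\!\!\int\! |\omega|^2 u \cdot \nabla(\eta^4) + \!\!\int\! (\omega \cdot \nabla) u \cdot \omega\, \eta^4 + \!\!\int\! \omega \cdot (\nabla \times f)\, \eta^4, $$
and, after accounting for the standard Leibniz error, the dissipation on the left controls $\|\nabla(\omega \eta^2)\|_{L^2_x}^2$ up to a favourable constant.

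Three of the four right-hand terms are lower-order errors handled by the a priori ingredients already in hand. The heat-flux commutator, after integration in $t$, is bounded by $O(E(u_0,f,T)/r^2)$ using $\|\nabla u\|_{L^2_t L^2_x}^2 \lesssim E(u_0,f,T)$ from Lemma \ref{energy-est}; by AM--GM, the lower bound $r \gtrsim E(u_0,f,T) + \delta^{-2}$ implies $r^2 \gtrsim E(u_0,f,T)/\delta^2$, so this contribution is $O(\delta^2)$. The transport term is bounded by $r^{-1} \|u\|_{L^1_t L^\infty_x} \sup_t \|\omega \eta^2\|_{L^2_x}^2$, whose first factor is controlled by Proposition \ref{bounded-speed} and absorbed by the remaining slack in \eqref{r-large}. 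The forcing term is handled by Cauchy--Schwarz and \eqref{eeta}.

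The principal obstacle is the vortex-stretching term, a subcritical nonlinearity which I propose to control via a Biot--Savart decomposition of the velocity. Pick a slightly enlarged cutoff $\tilde\eta$ equal to $1$ on the support of $\eta$ and supported in a mild enlargement of $B(x_0,R)$; via \eqref{biot}, split $u = u_{\mathrm{in}} + u_{\mathrm{out}}$, where $u_{\mathrm{in}}$ is the Biot--Savart potential of $\omega \tilde\eta$. For the $u_{\mathrm{in}}$ contribution, Calder\'on--Zygmund gives $\|\nabla u_{\mathrm{in}}\|_{L^3_x} \lesssim \|\omega \tilde\eta\|_{L^3_x}$; combining H\"older with the Gagliardo--Nirenberg inequality $\|\omega \tilde\eta\|_{L^3_x} \lesssim \|\omega \tilde\eta\|_{L^2_x}^{1/2}\|\nabla(\omega \tilde\eta)\|_{L^2_x}^{1/2}$ and then applying Young to absorb a small multiple of $\|\nabla(\omega \tilde\eta)\|_{L^2_x}^2$ into the dissipation leaves a residual controlled by $\|\omega \tilde\eta\|_{L^2_x}^6$, which integrates to $O(\delta^6 T)$ under the bootstrap, i.e.\ $O(\delta^2)$ by the first half of \eqref{delta-4}. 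For the $u_{\mathrm{out}}$ contribution, the Biot--Savart kernel is smooth on the support of $\eta$ (the support of $1-\tilde\eta$ being separated by $\gtrsim r$), yielding a pointwise bound on $|\nabla u_{\mathrm{out}}|$ by a negative power of $r$ times $\|\omega(t)\|_{L^2_x}$; combining with the global control $\|\omega\|_{L^2_t L^2_x} \lesssim E(u_0,f,T)^{1/2}$ and the bootstrap hypothesis turns this contribution into something of size $\lesssim T^{1/2} r^{-3/2} E(u_0,f,T)^{1/2} \delta^2$, which is $O(\delta^2)$ after invoking \eqref{delta-4} and \eqref{r-large} together.

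The proof is then closed by a continuity/bootstrap argument. Take the a priori hypothesis $\sup_{t \in [0,T]} \|\omega \tilde\eta(t)\|_{L^2_x} \le C_* \delta$ for a moderate absolute constant $C_*$, insert it into the estimates above, and use the smallness assumptions to conclude $\sup_t \|\omega \tilde\eta(t)\|_{L^2_x} \le \tfrac{3}{2}\delta$, strictly improving the hypothesis once $C_* \ge 2$; the set of times for which the bootstrap holds is then open and closed in $[0,T]$ and contains $t=0$ by \eqref{eeta}, hence equals $[0,T]$. The desired $L^2_t L^2_x$ bound on $\nabla \omega$ over $B(x_0,R-r)$ then drops out by integrating the resulting differential inequality in time and using $\eta \equiv 1$ there. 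The hard part of this programme is the vortex-stretching term: it is this subcritical quantity that is ultimately responsible for potential loss of regularity, and its management requires both the Biot--Savart splitting into a local Calder\'on--Zygmund piece and a non-local smooth piece, and the full strength of the smallness assumptions \eqref{delta-4}, \eqref{r-large}.
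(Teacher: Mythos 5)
Your outline reproduces the correct skeleton (localised enstrophy identity, absorption into the dissipation, continuity argument), but two of the terms you dismiss as routine are precisely where the argument breaks with a static smooth cutoff, and the paper's proof is organised around fixing exactly these.

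First, the transport term. You claim it is bounded by $r^{-1}\|u\|_{L^1_t L^\infty_x}\sup_t \|\omega\eta^2\|_{L^2_x}^2$, but $\int |\omega|^2\, u\cdot\nabla(\eta^4) = 4\int |\omega|^2 (u\cdot\nabla\eta)\,\eta^3$ involves $|\omega|^2$ weighted by $\eta^3|\nabla\eta|$ on the transition annulus, where $\eta^3 \gg \eta^4$; this is \emph{not} dominated by the bootstrapped quantity $W=\int|\omega|^2\eta^4$. No static cutoff can satisfy $|\nabla\eta|\lesssim\eta$ near the edge of its support, and the alternative of integrating by parts ($-2\int \omega\cdot(u\cdot\nabla)\omega\,\eta^4 \le \varepsilon Y_1 + C\varepsilon^{-1}\|u\|_{L^\infty_x}^2 W$) forces a Gronwall factor $\exp(\int_0^T\|u\|_{L^\infty_x}^2)$, i.e.\ the critical Prodi--Serrin quantity $\|u\|_{L^2_tL^\infty_x}$, which is not controlled by the energy. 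This is why the paper uses a \emph{time-shrinking} Lipschitz cutoff supported in $B(0,R'(t))$ with $R'(t) = R' - c^{-1}\int_0^t\|u(s)\|_{L^\infty_x}\,ds$: the shrinkage produces a recession term $Y_2\ge 0$ with a favourable sign satisfying $\int|\omega|^2|\nabla\eta| \lesssim c\,\|u(t)\|_{L^\infty_x}^{-1} Y_2$, so the transport term is absorbed pointwise in time using only the \emph{first} power of $\|u(t)\|_{L^\infty_x}$; Proposition \ref{bounded-speed} (the $L^1_tL^\infty_x$ bound) then guarantees the domain does not shrink to nothing. (The price is that the Lipschitz cutoff has distributional Laplacian concentrated on spheres, whence the pigeonhole over $R'$ to control the heat flux.)

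Second, the vortex-stretching term. Your Biot--Savart splitting at the single scale $R$ requires $\tilde\eta\equiv 1$ on a neighbourhood of $\operatorname{supp}\eta$ for the outer kernel to be smooth where you need it, but then $\|\omega\tilde\eta\|_{L^2_x}$ and $\|\nabla(\omega\tilde\eta)\|_{L^2_x}$ live on a region strictly larger than $\operatorname{supp}\eta$, where neither the dissipation $\int|\nabla\omega|^2\eta^4$ nor the bootstrapped quantity $W$ gives any control; the continuity argument therefore cannot close, since improving $\int|\omega|^2\eta^4$ does not improve $\sup_t\|\omega\tilde\eta\|_{L^2_x}$. Shrinking $\tilde\eta$ instead reintroduces the near-diagonal singularity of the outer piece on the transition annulus. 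The paper resolves this leakage with a Whitney decomposition of $B(0,R'(t))$ into balls $B_i$ of radius $\sim\min(\operatorname{dist}(x_i,\partial\Omega), c^{0.1}\delta^{-2})$, on each of which the Lipschitz weight is essentially constant ($\eta\sim c^{-0.1}\delta^2 r_i$, which is the coercivity that a smooth cutoff destroys); the local Biot--Savart law on $2B_i$ has a harmonic remainder controlled by the mean value property and the global energy (this is the term $Y_{6,2}$, and it is where the second condition $\delta^5 E^{1/2}T\le c$ in \eqref{delta-4} is actually consumed), while the remaining averages $w_i$ over Whitney balls are compared across scales by a Poincar\'e chaining argument. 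Your proposal does not contain a substitute for either device, so as written both the transport and the nonlinear estimates fail.
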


\begin{remark} Once again, this theorem is dimensionally consistent (and so one could use \eqref{scaling} to normalise one of the non-dimensionless parameters above to equal $1$ if desired).  Indeed, if $L$ is a unit of length, then $u$ has the units of $L^{-1}$, $\omega$ has the units of $L^{-2}$, $E(u_0,f,T), r, R$ have the units of $L$, $T$ has the units of $L^2$, and $\delta$ has the units of $L^{-1/2}$ (so in particular $\delta^4 T$ and $\delta^5 E(u_0,f,T)^{1/2} T$ are dimensionless).
\end{remark}

\begin{remark}  The smallness of $\delta^4 T$ also comes up, not coincidentally, as a condition in the local well-posedness theory for the Navier-Stokes at the level of $H^1$; see \eqref{D4}.  The smallness of $\delta^5 E(u_0,f,T)^{1/2} T$ is a more artificial condition, and it is possible that a more careful argument would eliminate it, but we will not need to do so for our applications.  For future reference, it will be important to note the fact that $\delta$ is permitted to be large in the above theorem, so long as the time $T$ is small.
\end{remark}

\begin{remark} A variant to Theorem \ref{enstrophy-loc} can also be deduced from the result\footnote{We thank the anonymous referee for this observation.} in \cite[Theorem D]{ckn}.  Here, instead of assuming a small $L^2$ condition on the enstrophy, one needs to assume smallness of quantities such as $\int_{\R^3} \frac{|u_0(x)|^2}{|x-x_0|}\ dx$ for all sufficiently large $x_0$, and then regularity results are obtained outside of a sufficiently large ball in spacetime.
\end{remark}

We now prove the theorem.  Let $(u,p,u_0,f,T)$, $B(x_0,R)$, $\delta, r$ be as in the theorem.  We may use spatial translation symmetry \eqref{space-translate} to normalise $x_0=0$.  We assume $c >0$ is a sufficiently small absolute constant, and then assume $C > 0$ is a sufficiently large constant (depending on $c$).  We abbreviate $E_0 := E(u_0,f,T)$.

In principle, this is a subcritical problem, because the local enstrophy $\frac{1}{2} \int_{B(x_0,R)} |\omega|^2$ (or regularised versions thereof) is subcritical with respect to scaling \eqref{scaling}.  As such, standard energy methods should in principle suffice to keep the enstrophy small for small times (using the smallness condition \eqref{delta-4}, of course).  The main difficulty is that the local enstrophy is not fully \emph{coercive}: it controls $\omega$ (and, to a lesser extent, $u$) inside $B(x_0,R)$, but not outside $B(x_0,R)$; while we do have some global control of the solution thanks to the energy estimate (Lemma \ref{energy-est}), this is supercritical and thus needs to be used sparingly.  We will therefore expend a fair amount of effort to prevent our estimates from ``leaking'' outside $B(x_0,R)$; in particular, one has to avoid the use of non-local singular integrals (such as the Leray projection or the Biot-Savart law) and work instead with more local techniques such as integration by parts.  This will inevitably lead to some factors that blow up as one approaches the boundary of $B(x_0,R)$ (actually, for technical reasons, we will be using a slightly smaller ball $B(x_0,R'(t))$ as our domain).  It turns out, however, that thanks to a moderate amount of harmonic analysis, these boundary factors can (barely) be controlled if one chooses exactly the right type of weight function to define the local enstrophy (it has to be Lipschitz continuous, but no better).

We turn to the details.  We will need an auxiliary initial radius $R' = R'(0)$ in the interval $[R-r/4,R]$
which we will choose later (by a pigeonholing argument).  Given this $R'$, we then define a time-dependent radius function
$$ R'(t) := R' - \frac{1}{c} \int_0^t \|u(s)\|_{L^\infty_x(\R^3)}\ ds.$$
From Proposition \ref{bounded-speed} one has
$$ R'(t) \geq R' - O_c( E_0 + E_0^{1/2} T^{1/4} )$$
and thus (by \eqref{r-large}) one has
$$ R'(t) \geq R - r/2$$
if the constant $C$ in \eqref{r-large} is sufficiently large depending on $c$.  The reason we introduce this rapidly shrinking radius is that we intend to ``outrun'' all difficulties caused by the transport component of the Navier-Stokes equation when we deploy the energy method.  Note that the bounded total speed property (Proposition \ref{bounded-speed}) prevents us from running the radius down to zero when we do this.

We introduce a time-varying Lipschitz continuous cutoff function
$$ \eta(t,x) = \min( \max(0, c^{-0.1} \delta^2(R'(t)-|x|)), 1).$$
This function is supported on the ball $B(0, R'(t))$ and equals one on $B(0,R'(t)-c^{0.1} \delta^{-2})$, and is radially decreasing; in particular, from \eqref{r-large}, we see that $\eta$ is supported on $B(0,R)$ and equals $1$ on $B(0,R-r)$ if $C$ is large enough.  As $t$ increases, this cutoff shrinks at speed $\frac{1}{c} \|u(t)\|_{L^\infty_x(\R^3)}$, leading to the useful pointwise estimate
\begin{equation}\label{tax}
 \partial_t \eta(t,x) \leq - \frac{1}{c} \| u(t) \|_{L^\infty_x(\R^3)} |\nabla_x \eta(t,x)|
\end{equation}
which we will use later in this argument to control transport-like terms in the energy estimate (or more precisely, the enstrophy estimate).

\begin{remark}
It will be important that $\eta$ is Lipschitz continuous but no better; Lipschitz is the minimal regularity for which one can still control the heat flux term (see $Y_3$ below), but is also the maximal regularity for which there is enough coercivity to control the nonlinear term (see $Y_6$ below).  The argument is in fact remarkably delicate, necessitating a careful application of harmonic analysis techniques (and in particular, a Whitney decomposition of the ball).
\end{remark}

We introduce the localised enstrophy
\begin{equation}\label{wdef}
 W(t) := \frac{1}{2} \int_{\R^3} |\omega(t,x)|^2 \eta(t,x)\ dx.
\end{equation}
From the hypothesis \eqref{eeta} one has the initial condition
\begin{equation}\label{w-init}
W(0) \lesssim \delta^2
\end{equation}
and to obtain the proposition, it will suffice to show that
\begin{equation}\label{claim}
 W(t) \lesssim_c \delta^2
\end{equation}
for all $t \in [0,T]$.

As $u$ is almost smooth, $W$ is $C^1_t$.  As in Section \ref{energy-sec}, we will compute the derivative $\partial_t W$.  We first take the curl of \eqref{ns} to obtain the well-known \emph{vorticity equation}
\begin{equation}\label{vorticity-eq}
\partial_t \omega + (u \cdot \nabla) \omega = \Delta \omega + \bigO( \omega \nabla u ) + \nabla \times f.
\end{equation}
This leads to the \emph{enstrophy equation}
$$
\partial_t \frac{1}{2} |\omega|^2 + (u \cdot \nabla) \frac{1}{2} |\omega|^2 = \Delta(\frac{1}{2} |\omega|^2) - |\nabla \omega|^2 + \bigO( \omega \omega \nabla u ) + \omega \cdot (\nabla \times f ).$$
All terms in this equation are smooth.  Integrating this equation against the Lipschitz, compactly supported $\eta$ and integrating by parts as in Section \ref{energy-sec} (interpreting derivatives of $\eta$ in a distributional sense), we conclude that
\begin{equation}\label{yyy}
\partial_t W = -Y_1 - Y_2 + Y_3 + Y_4 + Y_5 + Y_6
\end{equation}
where $Y_1$ is the dissipation term
$$
Y_1 := \int_{\R^3} |\nabla \omega|^2 \eta,
$$
$Y_2$ is the recession term
$$ 
Y_2 := -\frac{1}{2} \int_{\R^3} |\omega|^2 \partial_t \eta,
$$
$Y_3$ is the heat flux term
$$
Y_3 := \frac{1}{2} \int_{\R^3} |\omega|^2 \Delta \eta,$$
$Y_4$ is the transport term
$$ 
Y_4 := \frac{1}{2} \int_{\R^3} |\omega|^2 u \cdot \nabla \eta,$$
$Y_5$ is the forcing term
$$ 
Y_5 := \int_{\R^3} \omega \cdot (\nabla \times f ) \eta,
$$
and $Y_6$ is the nonlinear term
$$ 
Y_6 := \int_{\R^3} \bigO( \omega \omega \nabla u ) \eta.
$$
The term $Y_1$ is non-negative, and will be needed to control some of the other terms.  The term $Y_2$ is also non-negative; by \eqref{tax} we see that
\begin{equation}\label{tax-cut}
\int_{\R^3} |\omega|^2 |\nabla \eta| \lesssim c \| u(t) \|_{L^\infty_x(\R^3)} Y_2.
\end{equation}
We skip the heat flux term $Y_3$ for now and use \eqref{tax-cut} to bound the transport term $Y_4$ by
\begin{equation}\label{diaper}
|Y_4| \lesssim c Y_2.
\end{equation}
Now we turn to the forcing term $Y_5$.  By Cauchy-Schwarz and \eqref{wdef} we have
$$ |Y_5| \lesssim W^{1/2} a(t)$$
where 
$$ a(t) := \| \nabla \times f \|_{L^2_x(B(0,R))}.$$
Note from \eqref{eeta} that
\begin{equation}\label{dt-bash}
\int_0^T a(t)\ dt \lesssim \delta.
\end{equation}

We return now to the heat flux term $Y_3$.  Computing the distributional Laplacian\footnote{Alternatively, if one wishes to avoid distributions, one can regularise $\eta$ by a small epsilon parameter to become smooth, compute the Laplacian of the regularised term, and take limits as epsilon goes to zero.  One can also rescale either $R$ or $\delta$ (but not both) to equal $1$ to simplify the computations.} of $\eta$ in polar coordinates, we see that
$$ Y_3 \lesssim b(t)$$
where $b(t) = b_{R'}(t)$ is the quantity
$$ b(t) := c^{-0.1} \delta^{2} R^2 \int_{S^2} |\omega( t, R'(t) \alpha )|^2\ d\alpha + c^{-0.2} \delta^{4} \int_{R'(t)-c^{0.1} \delta^{-2} \leq |x| \leq R'(t)} |\omega(t,x)|^2\ dx
$$
and $d\alpha$ is surface measure on the unit sphere $S^2$.  (Note that while $\Delta \eta$ also has a component on the sphere $|x| = R'(t) - c^{0.1} \delta^{-2}$, this component is negative and thus can be discarded.)

To control $b(t)$, we take advantage of the freedom to choose $R'$.  From Fubini's theorem and a change of variables, we see that
$$ \int_{R-r/4}^R \int_0^T b_{R'}(t)\ dt dR'
\lesssim c^{-0.1} \delta^2 \int_0^T \int_{\R^3} |\omega(t,x)|^2\ dx.$$
From Lemma \ref{energy-est}, the right-hand side is $O( \delta^2 E_0 / c^{0.1} )$.  Thus, by the pigeonhole principle, we may select a radius $R'$ such that
$$ \int_0^T b(t)\ dt \lesssim \frac{\delta^2 E_0}{c^{0.1} r},$$
and in particular by \eqref{r-large}
\begin{equation}\label{b-bound}
 \int_0^T b(t)\ dt \lesssim \delta^2
\end{equation}
if $C$ is large enough.

Henceforth we fix $R'$ so that \eqref{b-bound} holds.  We now turn to the most difficult term, namely the nonlinear term $Y_6$.  Morally speaking, the $\nabla u$ term in $Y_6$ has the ``same strength'' as $\omega$, and so $Y_6$ is heuristically as strong as
$$ \int_{\R^3} \bigO( \omega^3 ) \eta.$$
A standard Whitney decomposition of the support of $\eta$, followed by rescaled versions of the Sobolev inequality, bounds this latter expression by
$$ O( (\int_{\R^3} |\omega|^2 \eta)^{1/2} (\int_{\R^3} |\nabla \omega|^2 \eta) ).$$
If we could similarly bound $Y_6$ by this expression by an analogous argument, this would greatly simplify the argument below.  Unfortunately, the relationship between $\nabla u$ and $\omega$ is rather delicate (especially when working relative to the weight $\eta$), and we have to perform a much more involved analysis (though still ultimately one which is inspired by the above argument).

We turn to the details. We fix $t$ and work in the domain
$$ \Omega := B(0, R'(t)).$$
We apply a Whitney-type decomposition, covering $\Omega$ by a boundedly overlapping collection of balls $B_i = B(x_i,r_i)$ with radius
$$ r_i := \frac{1}{100} \min( \operatorname{dist}( x_i, \partial \Omega ), c^{0.1}/\delta^2 ).$$
In particular, we have 
\begin{equation}\label{whitney}
\eta \sim c^{-0.1} \delta^2 r_i
\end{equation}
on $B(x_i,10r_i)$.  We can then bound
$$ |Y_6| \lesssim c^{-0.1} \delta^2 \sum_i r_i \int_{B_i} |\omega|^2 |\nabla u|.$$
The first step is to convert $\nabla u$ into an expression that only involves $\omega$ (modulo lower order terms), while staying inside the domain $\Omega$.  To do this, we first observe from the divergence-free nature of $u$ that
$$ \Delta u = \nabla \times \nabla \times u = \nabla \times \omega.$$
Let $\psi_i$ be a smooth cutoff to the ball $3B_i := B(x_i,3r_i)$ that equals $1$ on $2B_i := B(x_i,2r_i)$.  On $2B_i$, we thus have the local Biot-Savart law
$$ u = \bigO( \Delta^{-1} \nabla (\psi_i \omega) ) + v$$
where $v$ is harmonic on $2B_i$.  In particular, from Sobolev embedding one has
$$ \|v\|_{L^2_x(2B_i)} \lesssim \| \psi_i \omega \|_{L^{6/5}_x(\R^3)} + \|u\|_{L^2_x(2B_i)}.$$
From H\"older's inequality one has
$$ \| \psi_i \omega \|_{L^{6/5}_x(\R^3)} \lesssim r_i \| \omega \|_{L^2_x(2B_i)} $$
while from the mean value principle for harmonic functions one has
$$ \|\nabla v\|_{L^\infty_x(B_i)} \lesssim r_i^{-5/2} \|v\|_{L^2_x(2B_i)}.$$
We conclude that
$$ \|\nabla v\|_{L^\infty_x(B_i)} \lesssim r_i^{-3/2} \| \omega \|_{L^2_x(2B_i)} + r_i^{-5/2} \|u\|_{L^2_x(2B_i)}$$
and we thus have the pointwise estimate
$$ |\nabla u| \lesssim |\nabla \Delta^{-1} \nabla (\psi_i \omega)| + r_i^{-3/2} \| \omega \|_{L^2_x(2B_i)} + r_i^{-5/2} \|u\|_{L^2_x(2B_i)}$$
on $B_i$.  We can thus bound $|Y_6| \leq Y_{6,1} + Y_{6,2}$, where
\begin{equation}\label{y6-a}
Y_{6,1} \lesssim c^{-0.1} \delta^2 \sum_i r_i \int_{B_i} |\omega|^2 F_i
\end{equation}
and
$$ F_i := |\nabla \Delta^{-1} \nabla (\psi_i \omega)| + r_i^{-3/2} \| \omega \|_{L^2_x(2B_i)} $$
and
$$ Y_{6,2} \lesssim c^{-0.1} \delta^2 \sum_i r_i^{-3/2} \|u\|_{L^2_x(2B_i)} \int_{B_i} |\omega|^2.$$
Let us first deal with $Y_{6,2}$, which is the only term that is not locally controlled by the vorticity alone.  If the ball $B_i$ is contained in the annnular region 
$$ \{x \in \Omega: |x| \geq R'(t) - c^{0.1} \delta^{-2} \},$$
which is the region where $\eta$ is not constant, then we use H\"older to bound
$$ r^{-3/2} \|u\|_{L^2_x(2B_i)} \lesssim \|u\|_{L^\infty_x(\R^3)}$$
and observe that $c^{-0.1} \delta^2 = |\nabla \eta|$ on $B_i$.  Thus, by \eqref{tax-cut}, the contribution of this term to $Y_{6,2}$ is $O( c^{0.9} Y_2 )$.  If instead the ball $B_i$ intersects the ball $B(0,R'(t) - c^{0.1}\delta^{-2})$, then $r_i \sim c^{0.1}\delta^{-2}$ and $\eta \sim 1$ on $B_i$, and we use Lemma \ref{energy-est} to bound $r_i^{-3/2} \|u\|_{L^2_x(2B_i)} \lesssim c^{-0.15} \delta^3 E_0^{1/2}$, and then by \eqref{wdef}, \eqref{delta-4} the contribution of this case is $O(c^{-0.25} \delta^5 E_0^{1/2} W) = O( c^{0.75} W / T)$, thus
$$ Y_{6,2} \lesssim c^{0.9} Y_2 + c^{0.75} W / T.$$

Now we turn to $Y_{6,1}$.  From Plancherel's theorem we have
$$ \|\nabla \Delta^{-1} \nabla (\psi_i \omega)\|_{L^2_x(\R^3)} \lesssim \| \psi_i \omega \|_{L^2_x(\R^3)} \lesssim \|\omega \|_{L^2_x(2B_i)}$$
and thus
$$ \|F_i\|_{L^2_x(B_i)} \lesssim \|\omega \|_{L^2_x(2B_i)}.$$
From H\"older's inequality we thus have
$$ Y_{6,1} \lesssim c^{-0.1} \delta^2 \sum_i r_i^{3/2} \| \omega \|_{L^6_x(B_i)}^2 \|\omega \|_{L^2_x(2B_i)}.$$
To deal with this, we let $w_i$ denote the averages
$$ w_i := \left(\frac{1}{|3B_i|} \int_{3B_i} |\omega|^2\right)^{1/2},$$
then
$$ \|\omega \|_{L^2_x(2B_i)} \lesssim r_i^{3/2} w_i.$$
Also, from the Sobolev inequality one has
\begin{align*}
\| \omega \|_{L^6_x(B_i)} &\lesssim  \| \omega \psi_i \|_{L^6_x(\R^3)} \\
&\lesssim \| \nabla (\omega \psi_i) \|_{L^2_x(\R^3)} \\
&\lesssim \| \nabla \omega \|_{L^2_x(3B_i)} + r_i^{-1} \| \omega \|_{L^2_x(3B_i)} \\
 &\lesssim \| \nabla \omega \|_{L^2_x(3B_i)} + r_i^{1/2} w_i
\end{align*}
and thus
\begin{equation}\label{y61}
 Y_{6,1} \lesssim c^{-0.1} \delta^2 \sum_i r_i^3 w_i \| \nabla \omega \|_{L^2_x(3B_i)}^2 + c^{-0.1} \delta^2 \sum_i r_i^4 w_i^3.
\end{equation}
To deal with the first term of \eqref{y61}, observe from \eqref{wdef} and \eqref{whitney} that
\begin{equation}\label{w-bong}
 \sum_i r_i^4 w_i^2 \lesssim c^{0.1} \delta^{-2} W
\end{equation}
and in particular
\begin{equation}\label{w-bang}
w_i \lesssim c^{0.05} \delta^{-1} W^{1/2} r_i^{-2}
\end{equation}
for all $i$. We may thus bound
$$ c^{-0.1} \delta^2 \sum_i r_i^3 w_i \| \nabla \omega \|_{L^2_x(3B_i)}^2 \lesssim c^{-0.05} \delta W^{1/2} \sum_i r_i \| \nabla \omega \|_{L^2_x(3B_i)}^2,$$
which by \eqref{whitney} and the bounded overlap of the $B_i$ is
$$ \lesssim c^{0.05} \delta^{-1} W^{1/2} \int_\Omega |\nabla \omega|^2 \eta \lesssim c^{0.05} \delta^{-1} W^{1/2} Y_1.$$
The second term of \eqref{y61}, $c^{-0.1} \delta^2 \sum_i r_i^4 w_i^3$, is trickier to handle.  Call a ball ``large'' if its radius is at least $10^{-4} c^{-0.1} \delta^{-2}$ (say), and ``small'' otherwise. To deal with the small balls we use the Poincar\'e inequality.  From this inequality, we see in particular that
$$ \left|(\frac{1}{|3B_i|} \int_{3B_i} |\omega|^2)^{1/2} - (\frac{1}{|3B_j|} \int_{3B_j} |\omega|^2)^{1/2}\right| \lesssim (r_i^{-1} \int_{10B_i} |\nabla \omega|^2)^{1/2}$$
whenever $B_i, B_j$ intersect.  (Indeed, the Poincar\'e inequality implies that both terms in the left-hand side are within $O((r_i^{-1} \int_{10B_i} |\nabla \omega|^2)^{1/2})$ of $|\frac{1}{|10B_i|} \int_{10B_i} \omega|$.)  In other words, we have
\begin{equation}\label{wij}
 |w_i - w_j| \lesssim r_i^{-1/2} (\int_{10B_i} |\nabla \omega|^2)^{1/2}
\end{equation}
whenever $B_i, B_j$ intersect.

Now for any small ball $B_i$, we may assign a ``parent'' ball $B_{p(i)}$ which touches the ball but has radius at least $1.001$ (say) as large as that of $B_i$.  We may iterate this until we reach a large ball $B_{a(i)}$, and write
$$ w_i \leq w_{a(i)} +  \sum_{k \geq 0} |w_{p^k(i)} - w_{p^{k+1}(i)}|$$
where the sum is over all $k$ for which $p^{k+1}(i)$ is well-defined; note that this inequality also holds for large balls if we set $a(i)=i$.  Taking cubes and using H\"older's inequality, we obtain
$$ w_i^3 \lesssim w_{a(i)}^3 +  \sum_{k \geq 0} (1+k)^{10} |w_{p^k(i)} - w_{p^{k+1}(i)}|^3$$
and so we can bound $c^{-0.1} \delta^2 \sum_i r_i^4 w_i^3$ by
$$ \lesssim c^{-0.1} \delta^2 \sum_i r_i^4 w_{a(i)}^3 + c^{-0.1} \delta^2 \sum_{k \geq 0} (1+k)^{10} \sum_i r_i^4 |w_{p^k(i)} - w_{p^{k+1}(i)}|^3.$$
If one fixes a large ball $B_j$, one easily checks that $\sum_{i: a(i)=j} r_i^4 \lesssim r_j^4$, and thus
$$ c^{-0.1} \delta^2 \sum_i r_i^4 w_{a(i)}^3 \lesssim c^{-0.1} \delta^2 \sum_{j: r_j > 10^{-4} c^{0.1} \delta^{-2}} r_j^4 w_j^3;$$
applying \eqref{w-bang} and \eqref{w-bong} we thus have
$$ c^{-0.1} \delta^2 \sum_i r_i^4 w_{a(i)}^3 \lesssim c^{-0.25} \delta^5 W^{1/2} \sum_{j} r_j^4 w_j^2 \lesssim c^{-0.15} \delta^3 W^{3/2}.$$
Similarly, if one fixes a small ball $B_j$, one verifies that
$$ \sum_{k \geq 0} (1+k)^{10} \sum_{i: p^k(i) = j} r_i^4 \lesssim r_j^4$$
and thus
$$ c^{-0.1} \delta^2 \sum_{k \geq 0} (1+k)^{10} \sum_i r_i^4 |w_{p^k(i)} - w_{p^{k+1}(i)}|^3 \lesssim c^{-0.1} \delta^2 \sum_{j: r_j \leq 10^{-4} c^{0.1} \delta^{-2}} r_j^4 |w_j - w_{p(j)}|^3.$$
From \eqref{w-bang} (once) and \eqref{wij} (twice) one has
$$ |w_j - w_{p(j)}|^3 \lesssim c^{0.05} \delta^{-1} W^{1/2} r_j^{-3} \int_{10B_j} |\nabla \omega|^2$$
and so we may bound the preceding expression by
$$ \lesssim c^{-0.05} \delta W^{1/2} \sum_j r_j \int_{10B_j} |\nabla \omega|^2$$
which by \eqref{whitney} and the bounded overlap of the $B_j$ can be bounded by
$$ \lesssim c^{0.05} \delta^{-1} W^{1/2} \int_\Omega |\nabla \omega|^2 \eta \lesssim c^{0.05} \delta^{-1} W^{1/2} Y_1.$$
Putting the $Y_{6,1}$ bounds together, we conclude that
$$ Y_{6,1} \lesssim c^{-0.15} \delta^3 W^{3/2} + c^{0.05} \delta^{-1} W^{1/2} Y_1;$$
collecting the bounds for $Y_1, \ldots, Y_6$ we thus have
$$ \partial_t W \leq -Y_1 + O(c^{0.05} \delta^{-1} W^{1/2} Y_1 + c^{-0.15} \delta^3 W^{3/2} + c^{0.75} W/T + a(t) W^{1/2} + b(t) ).$$
To solve this differential inequality we use the continuity method.  Suppose that $0 \leq T' \leq T$ is a time for which
\begin{equation}\label{woot}
 \sup_{t \in [0,T']} W(t) \leq c^{-0.01} \delta^2.
 \end{equation}
Then, if $c$ is small enough, we can absorb the $O(c^{0.05} \delta^{-1} W^{1/2} Y_1)$ term by the $-Y_1$ term, and can also use this bound and \eqref{delta-4} to obtain
$$ c^{-0.15} \delta^3 W^{3/2} \lesssim c^{-0.155} \delta^4 W \lesssim c^{0.75} W/T$$
and
$$ a(t) W^{1/2} \lesssim c^{-0.005} \delta a(t).$$ 
We thus have
$$ \partial_t W \lesssim c^{0.75} W/T + c^{-0.005} \delta a(t) + b(t).$$
From Gronwall's inequality and \eqref{w-init}, \eqref{dt-bash}, \eqref{b-bound} we thus have
$$ \sup_{t \in [0,T']} W(t) \lesssim c^{-0.005} \delta^2.$$
For $c$ a small enough absolute constant, this is (slightly) better than the hypothesis \eqref{woot}, and so from the continuity method (and \eqref{w-init}) we conclude that
$$ \sup_{t \in [0,T]} W(t) \lesssim c^{-0.005} \delta^2.$$
and the claim \eqref{claim} follows.  The proof of Theorem \ref{enstrophy-loc} is now complete.

\begin{remark}\label{inversion-again}  As with Remark \ref{inversion}, we may adapt the proof of Theorem \ref{enstrophy-loc} to an annulus, replacing the ball $B(x_0,R)$ with an annulus $B(x_0,R') \backslash B(x_0,R)$ for some $0 <R < R'$ with $0 < r < R/2, (R'-R)/2$, and replacing the smaller ball $B(x_0,R-r)$ with the smaller annulus $B(x_0,R'-r) \backslash B(x_0,R+r)$.  To do this, one has to replace the cutoff $\eta$ (which was shrinking inside the ball $B(x_0,R)$ towards $B(x_0,R-r)$) with a slightly more complicated cutoff (which is shrinking inside the annulus $B(x_0,R') \backslash B(x_0,R)$ towards the smaller annulus $B(x_0,R'-r) \backslash B(x_0,R+r)$).  However, aside from this detail, the proof method is essentially identical and is omitted.  Sending $R'$ to infinity and using the monotone convergence theorem, we may in fact replace the annulus $B(x_0,R') \backslash B(x_0,R)$ with the exterior region $\R^3 \backslash B(x_0,R)$, and the annulus $B(x_0,R'-r) \backslash B(x_0,R+r)$ with $\R^3 \backslash B(x_0,R+r)$.
\end{remark}

Theorem \ref{enstrophy-loc} asserts, roughly speaking, that if the $H^1_x$ norm of the data is small on a ball, then for a quantitative amount of later time, the $H^1_x$ norm of the solution remains small on a slightly smaller ball.  As the $H^1$ norm is subcritical, we expect this sort of result to persist to higher regularities, in the spirit of \cite{serrin-0}.  It is therefore unsurprising that this is indeed the case:

\begin{proposition}[Higher regularity]\label{higher}  Let $(u,p,u_0,f,T)$ be a finite energy almost smooth solution with $T \leq T_*$.  Let $B(x_0,R)$, $\eta$, $\delta$, $r$ obey the conditions \eqref{eeta}, \eqref{delta-4}, \eqref{r-large} from Theorem \ref{enstrophy-loc}.  Then for any compact subset $K$ in the interior of $B(x_0,R-r)$ and any $k \geq 1$, one can bound
$$ \| \nabla^k u \|_{L^\infty_t L^2_x( [0,T] \times K )} +
\| \nabla^{k+1} u \|_{L^2_t L^2_x( [0,T] \times K )}
 \lesssim_{k,K,E(u_0,f,T),\delta,T_*,R,A_k} 1$$
where
$$ A_k := \sum_{j=0}^{k} \| \nabla^j u_0 \|_{L^2_x(B(x_0,R))} + \| \nabla^j f \|_{L^\infty_t L^2_x([0,T] \times B(x_0,R))}.$$
In particular, one has
$$ \|u\|_{X^k([0,T] \times K)}  \lesssim_{k,K,E(u_0,f,T),\delta,T_*,R,A_k} 1.$$
\end{proposition}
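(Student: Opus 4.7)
The plan is to proceed by induction on $k$, using a nested family of smooth cutoff functions to shrink the spatial domain slightly at each stage, and working primarily with the vorticity equation \eqref{vorticity-eq} so as to avoid the nonlocal pressure $p$ entirely. The base case $k=1$ follows from Theorem \ref{enstrophy-loc} combined with the pointwise identity $-\Delta u = \nabla \times \omega$ (valid by incompressibility): taking a smooth cutoff $\chi$ supported in $B(x_0,R-r)$ and equal to $1$ on a neighborhood of $K$, we rewrite this as $-\Delta(\chi u) = \chi(\nabla \times \omega) + \bigO(\nabla \chi \cdot \nabla u) + \bigO((\Delta\chi) u)$, and standard elliptic regularity (in the form of the Calder\'on--Zygmund bound $\|\nabla^2(\chi u)\|_{L^2_x} \lesssim \|\Delta(\chi u)\|_{L^2_x}$) converts the $L^\infty_t L^2_x \cap L^2_t H^1_x$ control of $\omega$ supplied by Theorem \ref{enstrophy-loc} (on a slightly larger set than $K$) into $L^\infty_t H^1_x \cap L^2_t H^2_x$ control of $u$ on $K$; the lower-order terms in $u$ on the right are absorbed via Lemma \ref{energy-est}.

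For the inductive step from $k$ to $k+1$, choose a slightly larger compact set $K'$ with $K$ in its interior and $K'$ still contained in the interior of $B(x_0,R-r)$, apply the inductive hypothesis on $K'$, and introduce a further smooth cutoff $\chi$ supported in $K'$ with $\chi=1$ on a neighborhood of $K$. Differentiating the vorticity equation $k-1$ times, multiplying by $\chi^2 \nabla^k \omega$, and integrating by parts in the standard way yields a localised enstrophy-type inequality of the schematic form
\[
\frac{d}{dt} \int \chi^2 \tfrac{1}{2} |\nabla^k \omega|^2\,dx + \int \chi^2 |\nabla^{k+1}\omega|^2\,dx \lesssim (\text{commutator}) + (\text{nonlinear}) + (\text{forcing}).
\]
The commutator errors arise from derivatives landing on $\chi$, are supported inside $K'$, and only involve derivatives of $\omega$ of order at most $k$, so are controlled by the inductive hypothesis. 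The forcing contribution is bounded directly in terms of $A_{k+1}$ by Cauchy--Schwarz. The nonlinear contributions $\nabla^{k-1}((u\cdot\nabla)\omega)$ and $\nabla^{k-1}((\omega\cdot\nabla)u)$ expand via Leibniz into sums of products whose factors (other than at most one distinguished top-order factor) can be placed in $L^\infty_x$ via Sobolev embedding $H^2_x \hookrightarrow L^\infty_x$ applied to the inductive $X^j$ bounds on $K'$.

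The main obstacle will be the top-order piece of the Leibniz expansion, where all derivatives concentrate on $\nabla^k\omega$ (or equivalently on $\nabla^{k+1} u$). For such terms, the plan is to absorb a small constant multiple of $\int \chi^2 |\nabla^{k+1}\omega|^2\,dx$ into the dissipation on the left-hand side, exploiting the $L^\infty_x$ control on $u$ and $\nabla u$ supplied by the inductive hypothesis and Sobolev embedding in three spatial dimensions (which is straightforward once $k\geq 2$); the case $k=1$ is either handled by bare-hands harmonic-analysis arguments in the spirit of the proof of Theorem \ref{enstrophy-loc}, or sidestepped by first bootstrapping from $X^1$ to $X^s$ for some $s<3/2$ by applying \eqref{energy-duh3} to the localised mild form of the vorticity equation (as in the proof of Proposition \ref{almost-smooth}). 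Gronwall's inequality, together with the initial bound $\|\nabla^k \omega(0)\|_{L^2_x(K')} \lesssim A_{k+1}$ from the definition of $A_{k+1}$ and the divergence-free identity $\omega_0 = \nabla \times u_0$, then yields $L^\infty_t L^2_x \cap L^2_t H^1_x$ control of $\chi \nabla^k \omega$; a final application of elliptic regularity to $-\Delta u = \nabla\times\omega$ on a slightly smaller cutoff converts this vorticity control into the desired $X^{k+1}$ control of $u$ on $K$, closing the induction.
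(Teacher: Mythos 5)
Your proposal is correct in outline but takes a genuinely different route from the paper for the key step. The paper handles $k \geq 2$ by iterating the Duhamel formula on the \emph{truncated velocity equation} \eqref{tea}: it places the right-hand side in $L^4_t L^2_x$ (via the bilinear estimate \eqref{bilinear}, splitting $uu = \tilde \eta u u + (1-\tilde\eta) u u$ to tame the non-local Leray projection), applies the smoothing estimate \eqref{energy-duh3} to gain half a derivative up to $X^{3/2-\sigma}$, and then upgrades the nonlinearity to $L^2_t H^1_x$ and applies \eqref{energy-duh2} to reach $X^2$ — exactly the two-iteration scheme of Lemma \ref{quant-reg}, localised. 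You instead run a direct energy method on derivatives of the vorticity equation \eqref{vorticity-eq} with nested cutoffs. Your route has the genuine advantage of being completely local (no Leray projector, no near/far splitting of the pressure kernel), and your base case and final elliptic conversion $-\Delta u = \nabla\times\omega$ are sound (this is the same local Biot--Savart device the paper uses inside the proof of Theorem \ref{enstrophy-loc}). The price is paid exactly where you say it is: in the first inductive step the trilinear terms $\int \chi^2 \bigO(\nabla u\, (\nabla\omega)^2)$ and $\int \chi^2 \bigO(\omega\, \nabla\omega\, \nabla^2 u)$, and the cutoff commutators such as $\int \chi (\nabla\chi\cdot u)|\nabla\omega|^2$, are not controlled by $\|u\|_{L^\infty}$ or $\|\nabla u\|_{L^\infty}$ (which are unavailable at that stage); one must interpolate, e.g.\ bounding the coefficient by $\|\nabla u\|_{L^3}^2 \lesssim \|\nabla u\|_{L^2}\|\nabla u\|_{H^1}$, which is $L^1_t$ precisely because the $k=1$ step supplies $L^2_t H^1_x$ control of $\nabla u$ on the larger set, and then absorb the top-order factor into the dissipation $\int\chi^2|\nabla^{k+1}\omega|^2$ by Young, with the cutoff terms controlled via the $L^2_t L^2_x$ bound on $\nabla\omega$ over the larger compact set from the previous step. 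This closes, but it is exactly the ``bare-hands'' work you defer; your proposed fallback of bootstrapping through \eqref{energy-duh3} is in fact the paper's actual argument. Two small points to fix: to estimate $\nabla^k\omega$ you should differentiate the vorticity equation $k$ times (not $k-1$) before pairing with $\chi^2\nabla^k\omega$; and when you absorb $\|\chi\nabla^{k+2}u\|_{L^2}$ into the dissipation $\|\chi\nabla^{k+1}\omega\|_{L^2}$ you need the localised elliptic estimate once more, whose commutator errors are again controlled by the previous step.
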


\begin{proof}  We allow all implied constants to depend on $k,K,E(u_0,f,T),\delta,T_*,R,A_k$.  
We introduce a compact set
$$ K \subset K_1 \subset K_2 \subset K_3 \subset K_4 \subset K_5 \subset B(x_0,R-r)$$
which each set lying in the interior if the next set.  Let $\eta$ be a smooth function supported on $K_2$ that equals $1$ on $K_1$; we allow implied constants to depend on $\eta$.  

We begin with the $k=1$ case.  From Theorem \ref{enstrophy-loc} one already has
$$ \| \omega \|_{L^\infty_t L^2_x( [0,T] \times K_1 )} +
\| \nabla \omega \|_{L^2_t L^2_x( [0,T] \times K_1 )} \lesssim 1.$$
To pass from $\omega$ to $u$, we use integration by parts.  Since $\omega = \nabla \times u$ and $u$ is divergence-free, a standard integration by parts shows that
$$
\frac{1}{2} \int_{\R^3} |\omega|^2 \eta = \int_{\R^3} |\nabla u|^2 \eta + \int_{\R^3} \bigO( |u|^2 \nabla^2 \eta ).$$
By Lemma \ref{energy-est}, the error term is $O(1)$, and so we have
$$ \int_{K} |\nabla u|^2 \lesssim 1.$$
Similarly, by replacing $\omega$ and $u$ by their derivatives we also see that
$$
\frac{1}{2} \int_{\R^3} |\nabla \omega|^2 \eta = \int_{\R^3} |\nabla^2 u|^2 \eta + \int_{\R^3} \bigO( |\nabla u|^2 \nabla^2 \eta ).$$
By Lemma \ref{energy-est}, the error term is $O(1)$ after integration in time, and so we also have
$$ \int_0^T \int_{K} |\nabla^2 u|^2\ dx dt \lesssim 1$$
as desired.

We now turn to the $k=2$ case.  This is the most difficult, as we currently only control regularities that are half a derivative better than the critical regularity (which would place $u$ in $H^{1/2}_x$), and wish to boost this to three halves of a derivative above critical; this requires at least two iterations of the Duhamel formula.  The arguments will be analogous to the regularity arguments in Theorem \ref{lwp-h1} or Lemma \ref{quant-reg}.
By \eqref{ns-project} we see that $u\eta$ obeys the truncated equation
\begin{equation}\label{tea}
 \partial_t (\eta u) - \Delta (\eta u) = \eta \bigO( P \nabla( u u ) ) + \eta P f + \bigO( \nabla u \nabla \eta ) + \bigO( u \nabla^2 \eta )
\end{equation}
for almost all $t$.  Meanwhile, from the $k=1$ case and Lemma \ref{energy-est} we already have the estimates
\begin{equation}\label{start} 
\| u \|_{L^\infty_t L^2_x([0,T] \times \R^3)} + \| \nabla u \|_{L^\infty_t L^2_x([0,T] \times K_4)} + \| \nabla^2 u \|_{L^2_t L^2_x([0,T] \times K_4)} \lesssim 1
\end{equation}
and from the definition of $A_2$ we have
\begin{equation}\label{noble}
\| \nabla^j u_0 \|_{L^2_x(B(x_0,R))} + \| \nabla^j f \|_{L^\infty_t L^2_x([0,T] \times B(x_0,R))} \lesssim 1
\end{equation}
for $j=0,1,2$.

We claim that all terms on the right hand side of \eqref{tea} have an $L^4_t L^2_x([0,T] \times \R^3$ norm of $O(1)$.  The only difficult term here is $\eta P \bigO( \nabla( u u ) )$; the other three terms on the right-hand side are easily estimated in $L^4_t L^2_x$ (and even in $L^2_t L^2_x$) using \eqref{start} and \eqref{noble}.  We now estimate
$$ \| \eta \bigO( P \nabla(u u) ) \|_{L^4_t L^2_x([0,T] \times \R^3)}.$$
We split $u u = \tilde \eta u u + (1-\tilde \eta) u u$, where $\tilde \eta$ is a smooth cutoff supported on $K_4$ that equals $1$ on $K_3$.  For the contribution of the nonlocal portion $(1-\tilde \eta)$, one can use the smoothness of the kernel of the operator $P$ away from the origin to bound this contribution by $\lesssim \| \bigO( u u ) \|_{L^4_t L^1_x([0,T] \times \R^3)}$, which is acceptable by \eqref{start}; for future reference we note that this argument bounds this contribution on $L^2_t L^2_x$ norm as well as in $L^4_t L^2_x$ norm.  For the local portion $\tilde \eta u u$, we discard the $\eta$ and $P$ projections and bound this by
$$ \lesssim \|\bigO( \nabla( \tilde \eta u u ) ) \|_{L^4_t L^2_x([0,T] \times \R^3)}.$$
But this is acceptable by \eqref{bilinear}.

We have now placed the right-hand side of \eqref{tea} in $L^4_t L^2_x([0,T] \times \R^3)$ with norm $O(1)$.  Meanwhile, from \eqref{noble} the initial data $u_0 \eta$ is in $H^2_x(\R^3)$ with norm $O(1)$.  Applying the energy estimate \eqref{energy-duh3}, we conclude that 
$$ \| u \eta \|_{L^\infty_t H^{3/2-\sigma}_x([0,T] \times \R^3)} + \| u \eta \|_{L^2_t H^{5/2-\sigma}_x([0,T] \times \R^3)} \lesssim_\sigma 1$$
for any $\sigma>0$.  A similar argument (shifting the compact sets) also gives
$$ \| u \eta' \|_{L^\infty_t H^{3/2-\sigma}_x([0,T] \times \R^3)} + \| u \eta' \|_{L^2_t H^{5/2-\sigma}_x([0,T] \times \R^3)} \lesssim_\sigma 1$$
where $\eta'$ is a smooth function supported on $K_5$ that equals $1$ on $K_4$.  In particular, by Sobolev embedding, on $[0,T] \times K_4$, $u$ is in $L^\infty_t L^{12}_x$, $\nabla u$ is in $L^2_t L^{12}_x \cap L^\infty_t L^{12/5}_x$, and $\nabla^2 u$ is in $L^2_t L^{12/5}_x$, which together with \eqref{start} and the H\"older inequality now allows one to conclude that $\bigO( \nabla( \tilde \eta u u ) )$ has an $L^2_t H^1_x([0,T] \times \R^3)$ of $O(1)$.  Repeating the previous arguments, we now conclude that the right-hand side of \eqref{tea} lies in $L^2_t H^1_x([0,T] \times \R^3)$ with norm $O(1)$, and hence by \eqref{energy-duh2}
$$
\| \eta u \|_{L^\infty_t H^2_x([0,T] \times \R^3)} + \| \eta u \|_{L^2_t H^3_x([0,T] \times \R^3)}
$$
which gives the $k=2$ case.

The higher $k$ cases are proven by similar arguments, but are easier as we now have enough regularity to place $u$ in $L^\infty_t L^\infty_x([0,T] \times K_5)$ with norm $O(1)$; we leave the details to the reader.  (For instance, to establish the $k=3$ case, one can verify using the estimates already obtained from the $k=2$ case that the right-hand side of \eqref{tea} has an $L^2_t H^1_x([0,T] \times \R^3)$ norm of $O(1)$.
\end{proof}

\begin{remark} As in Remark \ref{lota}, one can extend the results here to the periodic setting so long as one has $T \leq L^2$ and $R \leq L$; we omit the details.
\end{remark}

For our application to constructing Leray-Hopf weak solutions, we will need a generalisation of Theorem \ref{enstrophy-loc} to the case when one has hyperdissipation.  More precisely, we introduce a small hyperdissipation parameter $\eps > 0$, and consider solutions $(u^{(\eps)},p^{(\eps)}, u_0, f, T)$ to the regularised Navier-Stokes equation, which are defined precisely as with the usual concept of a Navier-Stokes solution, but with \eqref{ns} replaced by the regularised variant
\begin{equation}\label{ns-hyper}
 \partial_t u^{(\eps)} + (u^{(\eps)} \cdot \nabla) u^{(\eps)} = \Delta u^{(\eps)} - \eps \Delta^2 u^{(\eps)} - \nabla p^{(\eps)} + f.
\end{equation}
With hyperdissipation, the global regularity problem becomes much easier (the energy is now subcritical rather than supercritical), and indeed it is not difficult to use energy methods (see e.g. \cite{lions}) to show the existence of a unique almost smooth finite energy solution to this regularised equation $(u^{(\eps)},p^{(\eps)}, u_0, f, T)$ from any given smooth finite energy data $(u_0,f,T)$.  The energy estimate in Lemma \ref{energy-est} remains true in this case (uniformly in $\eps$), and one easily verifies that one obtains an additional estimate
\begin{equation}\label{hyperdis}
 \eps \int_0^T \int_{\R^3} |\nabla^2 u(t,x)|^2\ dt dx \lesssim E(u_0,f,T)
\end{equation}
in this hyperdissipative setting.  One can also verify (with a some tedious effort) that Proposition \ref{bounded-speed} also holds in this hyperdissipative setting as long as $\eps$ is sufficiently small, basically because the hyperdissipative heat operators $e^{t(\Delta-\eps\Delta^2)}$ obey essentially the same estimates \eqref{dispersive}, \eqref{energy-decay} as $e^{t\Delta}$ if $0 \leq t \leq T$ and $\eps$ is sufficiently small depending on $T$; we omit the details.

One can define the vorticity $\omega^{(\eps)} := \nabla \times u^{(\eps)}$ of a regularised solution as before.  This vorticity obeys an equation almost identical to \eqref{vorticity-eq}, but with an additional hyperdissipative term $- \eps \nabla^2 \omega^{(\eps)}$ on the right-hand side.  One can then repeat the proof of Theorem \ref{enstrophy-loc} with this additional term.  Integrating by parts a large number of times, one obtains a similar decomposition to \eqref{yyy} for the derivative of the localised enstrophy, but with the addition of a negative term $-\eps \int_{\R^3} |\nabla^2 \omega|^2 \eta$ on the right-hand side, plus some boundary terms which are bounded by $\tilde b(t)$, where
\begin{align*}
\tilde b(t) &:= \sum_{r = R'(t), R'(t)-c^{0.1} \delta^{-2}} \eps c^{-0.1} \delta^{2} R^2 \int_{S^2} |\nabla \omega( t, r \alpha )|^2\ d\alpha \\
&\quad + \eps c^{-0.2} \delta^{4} \int_{R'(t)-c^{0.1} \delta^{-2} \leq |x| \leq R'(t)} |\nabla \omega(t,x)|^2\ dx
\end{align*}
is a hyperdissipative analogue of $b(t)$.
By using the same averaging argument used to bound $\int_0^T b(t)\ dt$ for typical $R'$, one can also simultaneously obtain a comparable bound for $\int_0^T \tilde b(t)\ dt$ (taking advantage of the additional estimate \eqref{hyperdis}).  The rest of the argument in Theorem \ref{enstrophy-loc} works with essentially no changes; we omit the details.  The proof of Proposition \ref{higher} is also essentially identical, after one notes that energy estimates such as \eqref{energy-duh2} continue to hold in the hyperdissipative setting.  Summarising, we we obtain

\begin{proposition}\label{hyperdis-prop}  Theorem \ref{enstrophy-loc} and Proposition \ref{higher} continue to hold in the presence of hyperdissipation, uniformly in the limit $\eps \to 0$.
\end{proposition}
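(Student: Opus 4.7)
The plan is to run the proofs of Theorem \ref{enstrophy-loc} and Proposition \ref{higher} essentially verbatim, tracking carefully the new contributions coming from the hyperdissipative term $-\eps \Delta^2 u^{(\eps)}$, and verifying that every constant and every auxiliary estimate can be chosen uniformly in $\eps$. As a preliminary step, I would record the standing facts about the regularised equation \eqref{ns-hyper}: existence and uniqueness of an almost smooth finite energy solution for any smooth finite energy data (this is the classical subcritical theory, since with hyperdissipation the basic energy is subcritical), the uniform energy estimate of Lemma \ref{energy-est} plus the extra coercive bound \eqref{hyperdis}, and the uniform version of Proposition \ref{bounded-speed}. The last point is the only nontrivial input and uses that for $\eps$ small depending on $T$ the hyperdissipative heat semigroup $e^{t(\Delta-\eps\Delta^2)}$ satisfies the dispersive estimate \eqref{dispersive} and the frequency-localised bound \eqref{energy-decay} with constants independent of $\eps$; this follows by checking the symbol and reproducing the Littlewood--Paley trichotomy argument given in Section \ref{speed-sec}.

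For the enstrophy localisation, I would take the same Lipschitz weight $\eta$ and the same localised enstrophy $W(t)$ defined in \eqref{wdef}, and take the curl of \eqref{ns-hyper} to obtain the regularised vorticity equation, which is \eqref{vorticity-eq} with the extra right-hand side term $-\eps\Delta^2\omega$. Pairing with $\omega\eta$ gives the new contribution
\begin{equation*}
-\eps\int_{\R^3} \omega\cdot \Delta^2\omega\,\eta\,dx
= -\eps\int_{\R^3} |\nabla^2\omega|^2\,\eta\,dx + (\text{commutator with } \eta),
\end{equation*}
where the commutator is produced by distributing two derivatives onto $\omega$ and two onto $\eta$ via integration by parts. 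The resulting $\nabla\eta$-terms are pointwise bounded and are absorbed by Young's inequality into the good term $-\eps\int|\nabla^2\omega|^2\eta$ (exactly as with the dissipation term $Y_1$), while the $\nabla^2\eta$-terms, coming from the distributional Laplacian of $\eta$, are supported on the two spheres $|x|=R'(t)$ and $|x|=R'(t)-c^{0.1}\delta^{-2}$ and on the annulus between them, producing precisely the hyperdissipative boundary quantity $\tilde b(t)$ stated in the remark.

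The crucial point is then to show that for a good choice of $R'$ one has $\int_0^T b(t)\,dt + \int_0^T \tilde b(t)\,dt \lesssim \delta^2$. This is the same pigeonhole-in-$R'$ argument used in the proof of \eqref{b-bound}: Fubini and a change of variables over $R' \in [R-r/4,R]$ convert the spherical and annular integrals defining $b,\tilde b$ into weighted integrals of $|\omega|^2$ and $\eps|\nabla\omega|^2$ over the full spatial domain, and the latter are controlled by Lemma \ref{energy-est} and the hyperdissipative coercivity bound \eqref{hyperdis}, respectively. This is the place where \eqref{hyperdis} is essential and where one sees most clearly that the bound is uniform in $\eps$: the factor of $\eps$ sitting outside $\tilde b$ exactly cancels the factor $\eps^{-1}$ one would otherwise pay for using \eqref{hyperdis}. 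Once $R'$ is fixed, all of the other terms $Y_2,\ldots,Y_6$ are identical to those in the original proof and are treated by the same Whitney decomposition, local Biot--Savart, and continuity method, yielding the claimed $W\lesssim\delta^2$ uniformly in $\eps$.

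I expect the main obstacle to be purely bookkeeping: carrying the $\eps$-dependence through every integration by parts and checking that the distributional identities used for the Lipschitz cutoff $\eta$ remain valid when four spatial derivatives are present. In particular, the $\nabla^2\eta$ pieces of the commutator produce surface integrals of $\omega\cdot\nabla^2\omega$, which at first glance look worse than the $|\nabla\omega|^2$ surface integrals appearing in $\tilde b$; however, integrating by parts tangentially along each level sphere (using that $\eta$ is a radial function of $|x|$ on each annular component of its support) converts them back to integrals of $|\nabla\omega|^2$, up to curvature terms of size $O(R^{-1})$ that are harmless given the hypothesis \eqref{r-large}. With this in hand, the adaptation of Proposition \ref{higher} is routine: the energy estimates \eqref{energy-duh}--\eqref{energy-duh3} and the bilinear estimate \eqref{bilinear-2} all hold uniformly in $\eps$ for the hyperdissipative Duhamel formula (since the extra factor $e^{-\eps t|\xi|^4}$ only helps in the heat kernel bounds), and one simply repeats the iteration in Proposition \ref{higher} on the localised equation satisfied by $\eta u^{(\eps)}$, now with an extra $-\eps\Delta^2(\eta u^{(\eps)})$ that is moved to the right-hand side and controlled by the already-established local $L^2_t H^2_x$ bound on $u^{(\eps)}$ coming from the hyperdissipative enstrophy estimate.
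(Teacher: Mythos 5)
Your proposal follows essentially the same route as the paper's (sketched) argument: uniform energy bound plus the extra coercive estimate \eqref{hyperdis} and a uniform version of Proposition \ref{bounded-speed}, then the curl of \eqref{ns-hyper} paired against $\omega\eta$ to produce the good term $-\eps\int|\nabla^2\omega|^2\eta$ together with boundary terms bounded by $\tilde b(t)$, with the same pigeonhole in $R'$ controlling $\int_0^T b+\tilde b\,dt$ via \eqref{hyperdis}; this is exactly the paper's proof. The one detail to correct is in your last sentence: treating $-\eps\Delta^2(\eta u^{(\eps)})$ as a forcing term ``controlled by the local $L^2_tH^2_x$ bound'' cannot work, since that term is fourth order while $L^2_tH^2_x$ supplies only two derivatives; you should instead keep the hyperdissipation in the linear propagator $e^{t(\Delta-\eps\Delta^2)}$ (as your preceding sentence already suggests, and as the paper does), so that only the commutator $\eps[\Delta^2,\eta]u^{(\eps)}$ remains on the right-hand side, and that commutator is controlled using the local bound $\eps^{1/2}\nabla^2\omega^{(\eps)}\in L^2_tL^2_x$ furnished by the good hyperdissipative term in the localised enstrophy inequality.
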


\section{Consequences of enstrophy localisation}

We now give a number of applications of the enstrophy localisation result, Theorem \ref{enstrophy-loc}.  Many of these applications resemble existing results in the literature, but with weaker decay hypotheses on the initial data and solution (in particular, we will usually only assumes either finite energy or finite $H^1$ norm); the main point is that the localisation afforded by Theorem \ref{enstrophy-loc} can significantly reduce the need to assume any stronger decay hypotheses.

We begin with the observation that finite energy smooth solutions automatically have bounded enstrophy if the initial data has bounded enstrophy:

\begin{corollary}[Bounded enstrophy]\label{ens-bound}  Let $(u,p,u_0,f,T)$ be an almost smooth, finite energy solution, such that the initial data $(u_0,f,T)$ has finite $H^1$ norm. Then $u \in X^1([0,T] \times \R^3)$; in particular, $(u,p,u_0,f,T)$ is an $H^1$ solution.
\end{corollary}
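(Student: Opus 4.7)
My plan is to upgrade $u$ from $X^0$ (which is provided by Lemma~\ref{energy-est}) to $X^1$ by controlling the vorticity $\omega = \nabla \times u$ at spatial infinity using the exterior variant of Theorem~\ref{enstrophy-loc} supplied by Remark~\ref{inversion-again}, while handling the remaining compact region via the almost-smoothness hypothesis. Since $u(t)$ is divergence-free and lies in $L^2_x(\R^3)$ for every $t \in [0,T]$ (by Lemma~\ref{energy-est}), a Plancherel computation gives the identities
$$\|\nabla u(t)\|_{L^2_x(\R^3)} = \|\omega(t)\|_{L^2_x(\R^3)}, \qquad \|\nabla^2 u(t)\|_{L^2_x(\R^3)} = \|\nabla \omega(t)\|_{L^2_x(\R^3)},$$
so it suffices to establish the global bounds $\omega \in L^\infty_t L^2_x([0,T] \times \R^3)$ and $\nabla \omega \in L^2_t L^2_x([0,T] \times \R^3)$.

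I would first fix $\delta > 0$, depending only on $E := E(u_0,f,T)$ and $T$, small enough that the smallness condition \eqref{delta-4} holds, and then set $r := C(E + E^{1/2} T^{1/4} + \delta^{-2})$ in accordance with \eqref{r-large}. Next I would choose $R > 2r$ large enough that
$$\|\omega_0\|_{L^2_x(\R^3 \setminus B(0,R))} + \|\nabla \times f\|_{L^1_t L^2_x([0,T] \times (\R^3 \setminus B(0,R)))} \leq \delta.$$
Such an $R$ exists: the first summand tends to $0$ as $R \to \infty$ by absolute continuity of the $L^2$ integral (since $\omega_0 \in L^2_x(\R^3)$), and the second tends to $0$ by dominated convergence on the finite interval $[0,T]$, the integrand being pointwise dominated by the $t$-integrable quantity $\|f\|_{L^\infty_t H^1_x} < \infty$ and tending pointwise to $0$ as $R \to \infty$. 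Applying Remark~\ref{inversion-again} (the exterior version of Theorem~\ref{enstrophy-loc}) would then yield
$$\|\omega\|_{L^\infty_t L^2_x([0,T] \times (\R^3 \setminus B(0, R+r)))} + \|\nabla \omega\|_{L^2_t L^2_x([0,T] \times (\R^3 \setminus B(0, R+r)))} \lesssim \delta.$$

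On the complementary compact region $[0,T] \times \overline{B(0, R+r)}$, the almost-smoothness of $u$ guarantees that each $\nabla_x^k u$ is continuous on $[0,T] \times \R^3$; in particular $\omega$ and $\nabla \omega$ are continuous and hence uniformly bounded on this compact set, which trivially gives the interior analogs of the above estimates. Summing the interior and exterior contributions produces the desired global $L^\infty_t L^2_x \cap L^2_t H^1_x$ bounds on $\omega$, and the Plancherel identities then yield $u \in X^1([0,T] \times \R^3)$. The only delicate point in the plan is the parameter selection: the constraint $r > C\delta^{-2}$ from \eqref{r-large} forces $R > 2r$ to be large as soon as $\delta$ is small; but because $\delta$ can be fixed purely in terms of $E$ and $T$ before $R$ is chosen, we retain the freedom to take $R$ as large as needed to ensure that the data is small in the exterior.
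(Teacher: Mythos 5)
Your proposal is correct and follows essentially the same route as the paper's proof: smallness of the exterior enstrophy of the data via monotone/dominated convergence, the inverted (exterior) form of Theorem~\ref{enstrophy-loc} from Remark~\ref{inversion-again}, almost-smoothness on the complementary compact region, and the Fourier-side identification of $\|\nabla u\|_{L^2_x}$ with $\|\omega\|_{L^2_x}$ for divergence-free $L^2_x$ fields. The only cosmetic difference is that you make the Plancherel identities and the parameter ordering ($\delta$ before $r$ before $R$) explicit, which the paper leaves implicit.
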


\begin{proof}  Let $\delta > 0$ be small enough (depending on $E(u_0,f,T), T$) that the condition \eqref{delta-4} holds.  As $(u_0,f,T)$ has finite $H^1$ norm, we have
$$
\| \omega_0 \|_{L^2_x(\R^3)} + \| \nabla \times f \|_{L^1_t L^2_x([0,T] \times \R^3)} < \infty.$$
By the monotone convergence theorem, we thus have for $R$ sufficiently large that
$$
\| \omega_0 \|_{L^2_x(\R^3 \backslash B(0,R))} + \| \nabla \times f \|_{L^1_t L^2_x([0,T] \times (\R^3 \backslash B(0,R)))} \leq \delta.$$
Applying Theorem \ref{enstrophy-loc} (inverted as in Remark \ref{inversion-again}), we conclude that
$$
\| \omega \|_{L^\infty_x L^2_x([0,T] \times (\R^3 \backslash B(0,R+r)))} + \| \nabla \omega \|_{L^2_t L^2_x([0,T] \times (\R^3 \backslash B(0,R+r)))} 
\lesssim \delta$$
for some finite radius $r$, if $R$ is sufficiently large; in particular, $\omega$ lies in $L^\infty_t L^2_x \cap L^2_t H^1_x$ in the exterior region $[0,T] \times (\R^3 \backslash B(0,R+r))$.  On the other hand, as $u$ is almost smooth, $\omega$ also lies in $L^\infty_t L^2_x \cap L^2_t H^1_x$ in the interior region $[0,T] \times B(0,R+r+1)$ (say).  Gluing these two bounds together, we conclude that
$$ \omega \in L^\infty_t L^2_x \cap L^2_t H^1_x( [0,T] \times \R^3 );$$
meanwhile, from Lemma \ref{energy-est} one has
$$ u \in L^\infty_t L^2_x \cap L^2_t H^1_x( [0,T] \times \R^3 ).$$
Since $u$ is divergence-free and $\omega = \nabla \times u$, the claim then follows from Fourier analysis.
\end{proof}

\begin{remark}  From Corollary \ref{max-cauchy} we know that smooth solutions to the Navier-Stokes solutions can be continued in time as long as the $H^1$ norm remains bounded.  However, Corollary \ref{ens-bound} certainly does not allow one to solve the global regularity problem for Navier-Stokes, because the proof heavily relies on the solution $u$ being \emph{complete} rather than \emph{incomplete}, thus it is (almost) smooth all the way up to the final time $T$, and not just smooth on $[0,T)$.  Instead, what Corollary \ref{ens-bound} does is to show that the solution from $H^1$ data is well-behaved when one is sufficiently close to spatial infinity; in particular, it does not prevent turbulent behaviour in bounded regions of spacetime.
\end{remark}

\begin{remark} If $(u,p,u_0,0,T)$ is an almost smooth homogeneous finite energy solution, then by Lemma \ref{energy-est} we see that $u(t) \in H^1_x(\R^3)$ for almost every time $t \in [0,T]$.  Applying the time translation symmetry \eqref{time-translate} for a small time shift $t_0$, we can then convert the finite energy data to $H^1$ data, and then by Corollary \ref{ens-bound}, we conclude that in fact $u(t) \in H^1_x(\R^3)$ for \emph{all} non-zero times $t \in (0,T]$, and furthermore that $u(t)$ is bounded in $H^1_x$ as soon as $t$ is bounded away from zero.
\end{remark}

Since $H^1$ almost smooth solutions with normalised pressure are automatically $H^1$ mild solutions, for which uniqueness was established in Theorem \ref{lwp-h1-r3}, we thus have uniqueness in the almost smooth finite energy category from smooth $H^1$ data:

\begin{corollary}[Unconditional uniqueness]\label{unconditional}  Let $(u_0,f,T)$ be smooth $H^1$ data.  Then there is at most one almost smooth finite energy solution $(u,p,u_0,f,T)$ with this data and with normalised pressure.
\end{corollary}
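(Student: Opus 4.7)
The plan is to reduce the uniqueness question in the almost smooth finite energy category to the already-established uniqueness of mild $H^1$ solutions (Theorem \ref{lwp-h1-r3}(iii)). Suppose $(u,p,u_0,f,T)$ and $(u',p',u_0,f,T)$ are two almost smooth finite energy solutions with normalised pressure sharing the same smooth $H^1$ data. Our task is to identify each solution with a mild $H^1$ solution, and then invoke the local well-posedness theory.

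First I would upgrade both solutions from the finite energy category to the $H^1$ category. By hypothesis the data $(u_0, f, T)$ has finite $H^1$ norm, so Corollary \ref{ens-bound} applies directly to each of the two solutions, yielding $u, u' \in X^1([0,T] \times \R^3)$. In particular, each $(u,p,u_0,f,T)$ and $(u',p',u_0,f,T)$ is an almost smooth $H^1$ solution.

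Next I would convert from almost smooth $H^1$ solutions to $H^1$ mild solutions via Corollary \ref{as-mild}. That corollary tells us that for an almost smooth $H^1$ solution, the pressure must agree (for almost every time) with the normalised pressure $\tilde p = -\Delta^{-1}\partial_i\partial_j(u_iu_j) + \Delta^{-1}\nabla\cdot f$ up to an additive function of time alone; since we are given that $p$ is already normalised in the sense of \eqref{pressure-point}, we have $p = \tilde p$ identically, so $(u,p,u_0,f,T)$ is in fact a mild $H^1$ solution (and likewise for $(u',p',u_0,f,T)$).

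Having placed both solutions in the mild $H^1$ category with identical data, the uniqueness assertion of Theorem \ref{lwp-h1-r3}(iii) immediately yields $u = u'$, and since both pressures are given by the same normalisation formula applied to the same velocity field, we also conclude $p = p'$. The only real subtlety in this chain is the pressure-matching step: an almost smooth finite energy solution \emph{a priori} gives us only the pressure gradient $\nabla p$ via the PDE, and Corollary \ref{as-mild} leaves open a possible time-dependent additive constant, but the hypothesis of normalised pressure is precisely what kills this ambiguity and lets us close the argument.
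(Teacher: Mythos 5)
Your proposal is correct and follows exactly the paper's route: Corollary \ref{ens-bound} upgrades the almost smooth finite energy solutions to $X^1$ (hence $H^1$ solutions), Corollary \ref{as-mild} together with the normalised pressure hypothesis identifies them with mild $H^1$ solutions, and Theorem \ref{lwp-h1-r3}(iii) then gives uniqueness. No gaps.
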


This result resembles the standard ``weak-strong uniqueness'' results in the literature, such as those in \cite{prodi}, \cite{serrin}, \cite{germain}, \cite{germain-2}.  The main novelty here is the lack of decay hypotheses beyond the finite energy hypothesis; note that the almost smoothness of the solution gives plenty of integrability on compact regions of space, but does not imply any global integrability in space.

\begin{remark}\label{uniq}  We conjecture that one still retains uniqueness even if the data $(u_0,f,T_*)$ is merely smooth and finite energy, rather than smooth and $H^1$.  Note from Lemma \ref{energy-est} that $u(t)$ has finite $H^1_x(\R^3)$ norm for almost every time $t$, which in principle allows one to enforce uniqueness after any given positive time (in the homogeneous case $f=0$, at least), but it is not clear to the author how to prevent instantaneous failure of uniqueness at the initial time $t=0$ with only a smooth finite energy hypothesis on the initial data.  It may however be possible to adapt the ``weak-strong'' uniqueness results of Germain \cite{germain, germain-2} to this category, perhaps in combination with the local $H^1$ control given by Theorem \ref{enstrophy-loc}.
\end{remark}

We now use the enstrophy localisation result to study solutions as they approach a (potential) blowup time $T_*$.  

\begin{proposition}[Uniform smoothness outside a ball]\label{uni-ball}  Let $(u,p,u_0,f,T_*^-)$ be an incomplete almost smooth $H^1$ solution with normalised pressure for all times $0 < T < T_*$.  Then there exists a ball $B(0,R)$ such that
\begin{equation}\label{upf}
 u, p, f, \partial_t u \in L^\infty_t C^k_x([0,T_*) \times K)
\end{equation}
for all $k \geq 0$ and all compact subsets $K$ of $\R^3 \backslash B(0,R)$.  
\end{proposition}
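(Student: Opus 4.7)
The plan is to apply the exterior form of the enstrophy localisation estimate (Theorem \ref{enstrophy-loc} via Remark \ref{inversion-again}) to convert the $H^1$ smallness of the data at spatial infinity into a uniform-in-time $H^1$ smallness of the solution outside a large ball throughout the lifespan $[0,T_*)$, then bootstrap to $C^k_x$ control via Proposition \ref{higher}, and finally transfer this control to $p$ and $\partial_t u$ using the pressure formula \eqref{pressure-point} and the Navier-Stokes equation \eqref{ns}.

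First I would fix the geometry. Since $(u_0,f,T_*)$ is $H^1$ data, $\omega_0 \in L^2_x(\R^3)$ and $\nabla\times f \in L^\infty_t L^2_x([0,T_*]\times\R^3)$; in particular, $E(u_0,f,T_*)<\infty$. Choose $\delta>0$ small enough that the smallness hypothesis \eqref{delta-4} of Theorem \ref{enstrophy-loc} holds with $T=T_*$, which is possible because $T_*$ and $E(u_0,f,T_*)$ are finite. By dominated convergence (with the $t$-uniform majorant $\|\nabla\times f\|_{L^\infty_t L^2_x}$ on the finite time interval $[0,T_*]$), pick $R_0$ large enough that
\[
\|\omega_0\|_{L^2_x(\R^3\setminus B(0,R_0))} + \|\nabla\times f\|_{L^1_t L^2_x([0,T_*]\times(\R^3\setminus B(0,R_0)))} \leq \delta,
\]
and let $r$ satisfy \eqref{r-large}; set $R := R_0+r$. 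For each $T\in(0,T_*)$, apply Theorem \ref{enstrophy-loc} in its exterior-annular form (Remark \ref{inversion-again}) to the restriction of the solution to $[0,T]$: since the hypotheses are satisfied uniformly in $T<T_*$, taking the supremum over $T$ yields
\[
\|\omega\|_{L^\infty_t L^2_x([0,T_*)\times(\R^3\setminus B(0,R)))} + \|\nabla\omega\|_{L^2_t L^2_x([0,T_*)\times(\R^3\setminus B(0,R)))} \lesssim \delta.
\]

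Next, for any compact $K\subset \R^3\setminus B(0,R)$ (sandwiched inside a slightly larger compact exterior set to leave room for cutoffs), I would invoke the exterior analogue of Proposition \ref{higher}. Its proof is entirely local, using a nested sequence of compact sets and smooth cutoffs, so it transfers to the exterior setting without modification; the resulting $X^k$ bounds depend on $k,K,R,T_*,\delta,E(u_0,f,T_*)$ and the local $H^k$ norms of the data, all of which are finite. Sobolev embedding then yields $u\in L^\infty_t C^k_x([0,T_*)\times K)$ for every $k\geq 0$. The pressure is controlled by repeating the local/global splitting from the proof of Proposition \ref{almost-smooth}: using \eqref{pressure-point}, split the Newton potential kernel $\tfrac{1}{4\pi|x-y|}$ into a local piece supported near the diagonal (whose input values of $u_iu_j$ and $f$ near $K$ are $C^k_x$-controlled by the previous step together with the smoothness of $f$) and a far-field piece (which is smooth away from the diagonal and is estimated using the global $L^\infty_t L^2_x$ bounds on $u$ from Lemma \ref{energy-est} and on $f$ from the $H^1$ hypothesis). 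This produces $p\in L^\infty_t C^k_x([0,T_*)\times K)$. The bound on $f$ is immediate from the smoothness of $f$ on the compact region $[0,T_*]\times K$, and rearranging \eqref{ns} as $\partial_t u = \Delta u - (u\cdot\nabla)u - \nabla p + f$ transfers the $L^\infty_t C^k_x$ control to $\partial_t u$.

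The main obstacle is primarily bookkeeping: one must verify that every implicit constant arising in Theorem \ref{enstrophy-loc} and Proposition \ref{higher} depends on the time parameter only through an upper bound that remains finite as $T\to T_*^-$, so that the fixed-$T$ estimates truly combine into a uniform bound on $[0,T_*)$. Inspection of \eqref{delta-4}, \eqref{r-large} and the statement of Proposition \ref{higher} confirms this. A secondary technical point is that Remark \ref{inversion-again} is stated only for the enstrophy inequality, so one must separately verify that the cutoff-based regularity bootstrap of Proposition \ref{higher} adapts cleanly to exterior annular regions; since this bootstrap is purely local in nature this is routine, but it is the one place where the proof leaves the territory explicitly covered by the quoted statements.
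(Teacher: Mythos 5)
Your proposal is correct and follows essentially the same route as the paper: exterior enstrophy localisation (Theorem \ref{enstrophy-loc} via Remark \ref{inversion-again}, as in Corollary \ref{ens-bound}) to get uniform $X^1$ control outside a ball, then Proposition \ref{higher} and Sobolev embedding for $u$, the splitting of the Newton potential in \eqref{pressure-point} for $p$, and \eqref{ns} for $\partial_t u$. The one issue you flag (an exterior analogue of Proposition \ref{higher}) can in fact be sidestepped by applying the ball version to balls $B(x_0,\rho)$ contained in the exterior region $\R^3\setminus B(0,R_0)$, where the hypothesis \eqref{eeta} already holds.
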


We remark that similar results were obtained in \cite{ckn} assuming additional spatial decay hypotheses on the data at infinity, and in particular that $\int_{\R^3} |u_0(0,x)|^2 |x|\ dx < \infty$.  The main novelty in this proposition is that one only assumes square-integrability of $u_0$ and its first derivatives, without any further decay assumption.

\begin{proof}  From the argument in the proof of Corollary \ref{ens-bound} (noting that the bounds are uniform for all times $T$ in a compact set), one can already find a ball $B(0,R_0)$ for which
$$ u \in X^1([0,T_*) \times (\R^3 \backslash B(0,R_0))).$$
Using Proposition \ref{higher}, we then conclude the existence of a larger ball $B(0,R)$ such that
$$ u \in X^k([0,T_*) \times K)$$
for all $k \geq 1$ and all compact subsets $K$ of $\R^3 \backslash B(0,R)$.  From this, Sobolev embedding, and \eqref{pressure-point} (using the smoothness of the kernel of $\nabla^k \Delta^{-1}$ away from the origin) we obtain \eqref{upf} for $u,p,f$ as desired.  If one then applies \eqref{ns} and solves for $\partial_t u$ one obtains the bound for $\partial_t u$ also.
\end{proof}

\begin{remark} From \eqref{upf} one can \emph{continuously} extend $u$ up to the portion $\{T_*\} \times (\R^3 \backslash B(0,R))$ of the boundary (cf. the partial regularity theory in \cite{ckn}).  
However, we were unable to demonstrate that $u$ could be extended \emph{smoothly} up to the boundary (or even that $\partial_t u$ is continuous in time at the boundary).  The problem is due to the non-local effects of pressure; the solution $u$ could be blowing up at time $T_*$ in the interior of $B(0,R)$, leading (via \eqref{pressure-point}) to time oscillations of the pressure in $K$ (which cannot be directly damped out by the smoothness of the $\Delta^{-1}$ kernel, which only attenuates \emph{spatial} oscillations) which by \eqref{ns} could lead to time oscillations of the solution $u$ in $K$.   Indeed, as Proposition \ref{counter} shows, these time oscillations can have a non-trivial effect on the regularity of the solution.
\end{remark}

\begin{remark}\label{uni-rem} For future reference, we observe that Proposition \ref{uni-ball} did not require the full spacetime smoothness on $f$; it would suffice to have $f \in L^\infty_t C^k_x([0,T_*) \times K)$ for all $k \geq 0$ and compact $K$ in order to obtain the conclusion \eqref{upf}.  This is because at no stage in the argument was it necessary to differentiate $f$ in time.
\end{remark}

In a similar spirit, we may construct Leray-Hopf weak solutions that are spatially smooth outside of a ball for any fixed time $T$.  More precisely, define a \emph{Leray-Hopf weak solution} $(u,p,u_0,f,T)$ to smooth finite energy data $(u_0,f,T)$ to be a distributional solution $u \in X^0([0,T] \times \R^3)$ to \eqref{ns} (after expressing this equation in divergence form) which is continuous in time in the weak topology of $L^2_x(\R^3)$, and which obeys the energy inequality
\begin{equation}\label{tim}
 \frac{1}{2} \|u(t)\|_{L^2_x(\R^3)}^2 + \int_0^t \| \nabla u(t) \|_{L^2_x(\R^3)}^2\ dx \leq E(u_0,f,T).
\end{equation}
The existence of such solutions was famously demonstrated by Leray \cite{leray} for arbitrary finite energy data $(u_0,f,T)$; the singularities of these solutions were analysed in a vast number of papers, which are too numerous to cite here, but we will point out in particular the seminal work of Caffarelli, Kohn, and Nirenberg \cite{ckn}.  

Our main regularity result for Leray-Hopf solutions is as follows.

\begin{proposition}[Existence of partially smooth Leray-Hopf weak solutions]\label{partial}  Let $(u_0,f,T)$ be smooth $H^1$ data.  Then there exists
a Leray-Hopf weak solution $(u,p,u_0,f,T)$ to the given data and a ball $B(0,R)$ such that $u$ is spatially smooth in $[0,T] \times (\R^3 \backslash B(0,R))$ (i.e. for each $t \in [0,T]$, $u(t)$ is smooth outside of $B(0,R)$).
\end{proposition}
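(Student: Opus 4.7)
\textbf{Proof plan for Proposition \ref{partial}.}

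The plan is to construct the Leray-Hopf weak solution via hyperdissipative approximation, and then extract spatial smoothness outside a ball using the localized enstrophy estimate from Proposition \ref{hyperdis-prop}. First, for each $\eps > 0$, let $(u^{(\eps)}, p^{(\eps)}, u_0, f, T)$ denote the (unique) almost smooth finite energy solution to the hyperdissipative system \eqref{ns-hyper} with data $(u_0,f,T)$, whose existence follows from the standard subcritical energy theory for hyperdissipative Navier-Stokes. By the (hyperdissipative version of the) global energy inequality in Lemma \ref{energy-est}, the family $\{u^{(\eps)}\}$ is bounded in $X^0([0,T] \times \R^3)$ uniformly in $\eps$.

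Next I would localize the enstrophy outside a large ball. Choose $\delta > 0$ small enough that the smallness condition \eqref{delta-4} holds (with $E_0 = E(u_0,f,T)$ fixed). Since $\omega_0 \in L^2_x(\R^3)$ and $\nabla \times f \in L^1_t L^2_x([0,T] \times \R^3)$, the monotone convergence theorem gives an $R_0$ with
\[
\| \omega_0 \|_{L^2_x(\R^3 \backslash B(0,R_0))} + \| \nabla \times f \|_{L^1_t L^2_x([0,T] \times (\R^3 \backslash B(0,R_0)))} \leq \delta.
\]
Applying Proposition \ref{hyperdis-prop} in its exterior form (Remark \ref{inversion-again}) to each $u^{(\eps)}$, and then the higher regularity assertion of Proposition \ref{hyperdis-prop} (extended from Proposition \ref{higher}), yields a radius $R \geq R_0$ and bounds
\[
\|u^{(\eps)}\|_{X^k([0,T] \times K)} \lesssim_{k,K} 1
\]
uniformly in $\eps$, for every $k \geq 0$ and every compact $K \subset \R^3 \backslash B(0,R)$. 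These bounds are genuinely uniform because all constants in Theorem \ref{enstrophy-loc} and Proposition \ref{higher} depend only on $E_0$, $\delta$, $T$, the geometry of $B(0,R)$, and the $H^k$ norms of the data on $\R^3 \backslash B(0,R_0)$.

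Now I would pass to a weak limit. Using the $X^0$ bound, Aubin--Lions compactness on balls (with $\partial_t u^{(\eps)}$ controlled in $L^2_t H^{-N}_x$ for some $N$ via \eqref{ns-hyper} and the uniform energy bound, noting $\eps \Delta^2 u^{(\eps)} \to 0$ in the sense of distributions by \eqref{hyperdis}), and a diagonal argument, one extracts a subsequence $u^{(\eps_n)}$ converging to some $u$ weakly-$*$ in $L^\infty_t L^2_x$, weakly in $L^2_t H^1_x$, and strongly in $L^2_t L^2_x$ on compact spatial sets. Inside $\R^3 \backslash B(0,R)$, the uniform $X^k$ bounds together with Arzel\`a--Ascoli upgrade this to $C^\infty_{\text{loc}}$ convergence in space, so the limit $u$ is spatially smooth on $[0,T] \times (\R^3 \backslash B(0,R))$. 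The strong $L^2_t L^2_{x,\text{loc}}$ convergence suffices to pass to the limit in the nonlinear term $(u^{(\eps_n)} \cdot \nabla) u^{(\eps_n)}$ in the sense of distributions; the hyperdissipation term vanishes in the limit by \eqref{hyperdis}; and one recovers the pressure $p$ from \eqref{pressure-point} (or as the weak limit of $p^{(\eps_n)}$). Weak-$*$ lower semicontinuity of the norm, together with the hyperdissipative energy identity for each $u^{(\eps_n)}$, yields the Leray-Hopf energy inequality \eqref{tim}; weak continuity in time in $L^2_x$ is standard for limits of this type.

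The main obstacle I anticipate is checking that the approximate solutions genuinely satisfy the hypotheses of the hyperdissipative enstrophy localization uniformly in $\eps$, in particular verifying that Proposition \ref{bounded-speed} transfers uniformly to the hyperdissipative setting (so that the shrinking cutoff $\eta$ in the proof of Theorem \ref{enstrophy-loc} still outruns transport), and that the extra boundary term $\tilde b(t)$ can be controlled via the additional estimate \eqref{hyperdis} and the pigeonhole in $R'$. Once this uniformity is secured, everything else is a routine compactness and lower-semicontinuity argument.
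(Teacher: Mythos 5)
Your proposal follows exactly the same route as the paper's (sketched) proof: hyperdissipative regularisation, uniform-in-$\eps$ exterior enstrophy localisation and higher regularity via Proposition \ref{hyperdis-prop}, and then extraction of a weak limit which is a Leray-Hopf solution spatially smooth outside a fixed ball. The additional detail you supply on the Aubin--Lions compactness and lower semicontinuity steps is precisely what the paper subsumes under ``standard arguments,'' and your identification of the uniformity of the hyperdissipative enstrophy estimate as the main point to check matches the paper's emphasis.
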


Again, similar results were obtained in \cite{ckn} under stronger decay hypotheses on the initial data.  We also remark that weak solutions which were only locally of finite energy, from data of uniformly locally finite energy, were constructed in \cite{lem2}; the ability to localise the weak solution construction in this fashion is similar in spirit to the results in the above proposition.

\begin{proof} (Sketch) We use a standard hyperdissipation\footnote{It may also be possible to use other regularisation methods here, such as velocity regularisation, to construct the Leray-Hopf weak solution; however, due to the delicate nature of the proof of the localised enstrophy estimate (Theorem \ref{enstrophy-loc}), we were not able to verify that this estimate remained true in the velocity-regularised setting, uniformly in the regularisation parameter, due to the less favourable vorticity equation in this setting.} regularisation argument.  Let $\eps > 0$ be a small parameter, and consider the almost smooth finite-energy solution $(u^{(\eps)},p^{(\eps)}, u_0, f, T)$ to the regularised Navier-Stokes system \eqref{ns-hyper}, which can be shown to exist by energy methods.  By Proposition \ref{hyperdis-prop}, we can extend Theorem \ref{enstrophy-loc} and Proposition \ref{higher} (and thence Proposition \ref{uni-ball}), to these regularised solutions $u^{(\eps)}$, with bounds that are uniform in $\eps$ as $\eps \to 0$. As a consequence, we can find a ball $B(0,R)$ \emph{independent of $\eps$} such that for every compact set $K$ outside of $B(0,R)$ and every $k \geq 0$, $\nabla^k u^{(\eps)}$ lies in $L^\infty_t L^\infty_x([0,T_*] \times K)$ uniformly in $N$.  If we then extract a weak limit point $u$ of the $u^{(\eps)}$, then one by standard arguments one verifies that $u$ is a Leray-Hopf weak solution which is spatially smooth outside of $B(0,R)$.  
\end{proof}

\begin{remark} As before, we are unable to demonstrate regularity of $u$ in time due to potential non-local effects caused by the pressure, which could in principle cause singularities inside $B(0,R)$ to create time singularities outside of $B(0,R)$. 
\end{remark}

\begin{remark} Uniqueness of Leray-Hopf solutions remains a major unsolved problem, for which we have nothing new to contribute; in particular, we do not assert that \emph{all} Leray-Hopf solutions from smooth data obey the conclusions of Proposition \ref{partial}.  However, if $(u_0,f,\infty)$ is globally defined smooth $H^1$ data, the above argument gives a single global Leray-Hopf weak solution $(u,p,u_0,f,\infty)$ with the property that, for each finite time $T<\infty$, there exists a radius $R_T <\infty$ such that $u$ is smooth in $[0,T] \times (\R^3 \backslash B(0,R))$.  If we restrict to the case $f=0$, then from \eqref{tim} we see that $\|\nabla u(t) \|_{L^2_x(\R^3)}$ must become arbitrarily small along some sequence of times $t = t_n$ going to infinity.  If $\|\nabla u(t) \|_{L^2_x(\R^3)}$ is small enough depending on $E(u_0,0,\infty)$, then standard perturbation theory arguments (see e.g. \cite{kato-lp}) allow one to obtain a smooth, bounded enstrophy solution from the data $u(t)$ on $(t,+\infty)$, which by the uniqueness theory of Serrin \cite{serrin} must match the Leray-Hopf weak solution $u$ on $(t,+\infty)$.  As such, we conclude in the homogeneous smooth $H^1$ case that one can construct a global Leray-Hopf weak solution which is spatially smooth outside of a compact subset of spacetime $[0,+\infty) \times \R^3$.  Again, we emphasise that this global weak solution need not be unique.
\end{remark}

\section{Smooth $H^1$ solutions}\label{enstrophy-sec}

The purpose of this section is to establish Theorem \ref{main}(iii).  To do this, we will need the ability to localise smooth divergence-free vector fields, as follows.

\begin{lemma}[Localisation of divergence-free vector fields]\label{divloc}  Let $T>0$, $0 < R_1 < R_2 < R_3 < R_4$, and let $u: [0,T) \times (B(0,R_4) \backslash B(0,R_1)) \to \R^3$ be spatially smooth and divergence-free, and such that
$$ u, \partial_t u \in L^\infty_t C^k_x( [0,T) \times (B(0,R_4) \backslash B(0,R_1)) )$$
for all $k \geq 0$ and
\begin{equation}\label{stokes}
 \int_{|x|=r} u(t,x) \cdot n\ d\alpha(x) = 0
 \end{equation}
for all $R_1 < r < R_4$ and $t \in [0,T)$, where $n$ is the outward normal and $d\alpha$ is surface measure.  Then there exists a spatially smooth and divergence-free vector field $\tilde u: [0,T) \times (B(0,R_4) \backslash B(0,R_1)) \to \R^3$ which agrees with $u$ on $[0,T) \times (B(0,R_2) \backslash B(0,R_1))$, but vanishes on $[0,T) \times (B(0,R_4) \backslash B(0,R_3))$.  Furthermore, we have 
$$ \tilde u, \partial_t u \in L^\infty_t C^k_x( [0,T) \times (B(0,R_4) \backslash B(0,R_1)) )$$
for all $k \geq 0$.  

Finally, if we have 
$$ 1 \leq 2R_2 \leq R_3 \lesssim R_2$$
then we have the more quantitative bound
\begin{equation}\label{quant}
 \| \tilde u \|_{L^\infty_t H^k( [0,T) \times (B(0,R_4) \backslash B(0,R_1)) )} \lesssim_k \| u \|_{L^\infty_t H^{k+1}( [0,T) \times (B(0,R_4) \backslash B(0,R_1)) )}
 \end{equation}
for any $k$.  (This latter property will come in handy in the next section.)
\end{lemma}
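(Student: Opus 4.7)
The natural idea is to perform the cutoff at the level of a vector potential for $u$, so that divergence-freeness is preserved automatically rather than having to be repaired afterwards.

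\emph{Step 1 (vector potential).} I first claim the existence of a smooth vector field $\psi$ on $[0,T) \times (B(0,R_4) \setminus B(0,R_1))$ with $u = \nabla \times \psi$, and with $\psi$ and $\partial_t \psi$ enjoying the same $L^\infty_t C^k_x$ bounds as $u$ and $\partial_t u$ (for every $k$). The annulus is homotopy-equivalent to $S^2$, so $H^2 \neq 0$, and the obstruction to the existence of such a $\psi$ is exactly the spherical flux $\int_{|x|=r} u \cdot n \, d\alpha$; by divergence-freeness this flux is independent of $r$, and by hypothesis \eqref{stokes} it vanishes. A concrete construction is to cover the annulus by finitely many open balls $B_\alpha \subset B(0,R_4) \setminus B(0,R_1)$, define $\psi_\alpha(x) := \int_0^1 s\, u(x_\alpha + s(x-x_\alpha)) \times (x-x_\alpha)\, ds$ on each (so that $\nabla \times \psi_\alpha = u$ there by the Poincar\'e lemma), and glue via a partition of unity plus gradient corrections $\nabla \phi_{\alpha\beta}$ on the overlaps; the cocycle obstruction to choosing these corrections consistently is precisely the vanishing flux, so the patching succeeds. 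Because the whole construction is linear in $u$ and commutes with $\partial_t$, the time regularity of $\psi$ matches that of $u$.

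\emph{Step 2 (cut off the potential).} Choose a smooth radial cutoff $\chi: \R^3 \to [0,1]$ equal to $1$ on $B(0,R_2)$ and supported in $B(0,R_3)$, and set
$$ \tilde u := \nabla \times (\chi \psi). $$
Then $\tilde u$ is manifestly smooth (wherever $\psi$ is) and divergence-free (as a curl). Expanding by the product rule,
$$ \tilde u = \chi (\nabla \times \psi) + \nabla \chi \times \psi = \chi u + \nabla \chi \times \psi, $$
one sees $\tilde u = u$ on $B(0,R_2) \setminus B(0,R_1)$ (where $\chi \equiv 1$, $\nabla\chi \equiv 0$) and $\tilde u = 0$ on $B(0,R_4) \setminus B(0,R_3)$ (where $\chi \equiv 0$, $\nabla\chi \equiv 0$), as required. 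The claimed regularity of $\tilde u$ and $\partial_t \tilde u$ is inherited from the corresponding property of $\psi$ and $\partial_t \psi$.

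\emph{Step 3 (quantitative bound).} Under the additional assumption $1 \leq 2R_2 \leq R_3 \lesssim R_2$, one may arrange $\chi$ so that $\|\nabla^j \chi\|_{L^\infty} \lesssim_j 1$ uniformly in $R_2, R_3$ (since $R_3 - R_2 \gtrsim R_2 \gtrsim 1$). The Poincar\'e-type integral formula does not lose derivatives: a direct check gives $\|\psi\|_{H^{k+1}} \lesssim_k \|u\|_{H^{k+1}}$ on the annulus. Then
$$ \|\tilde u\|_{H^k} \leq \|\nabla(\chi\psi)\|_{H^k} \lesssim_k \|\chi\psi\|_{H^{k+1}} \lesssim_k \|\psi\|_{H^{k+1}} \lesssim_k \|u\|_{H^{k+1}}, $$
which is \eqref{quant}.

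\emph{Main obstacle.} The delicate step is producing the global vector potential $\psi$ in Step~1 with regularity matching that of $u$; this is where the hypothesis \eqref{stokes} must be used essentially (to resolve the $H^2$ obstruction) and where care is needed so that the patching/gluing does not cost derivatives. Once $\psi$ is in hand, the localization via $\nabla \times (\chi\psi)$ and the verification of all required properties are essentially routine calculus.
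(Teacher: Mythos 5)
Your qualitative construction (Steps 1--2) is correct, but it takes a genuinely different route from the paper. The paper works in polar coordinates: it splits $u$ into a radial component $u_r$ and an angular component $u_\alpha$, cuts off $u_r$ by a radial cutoff $\eta(r)$, and then reconstructs a compatible angular component via the inverse Laplace--Beltrami operator on $S^2$, i.e.\ $\tilde u_\alpha = r\Delta_\alpha^{-1}\nabla_\alpha\partial_r\tilde u_r + \eta v$; the flux condition \eqref{stokes} enters through the requirement that $\partial_r u_r(r)$ have mean zero on each sphere, so that $\Delta_\alpha^{-1}$ is applicable. You instead resolve the same cohomological obstruction once and for all by producing a vector potential $\psi$ with $u=\nabla\times\psi$ (the flux being exactly the $H^2_{\mathrm{dR}}$ class of the annulus) and then set $\tilde u=\nabla\times(\chi\psi)$, so that divergence-freeness is automatic. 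Both are legitimate; the paper's version is more explicit and makes the dependence on the radii transparent, while yours is conceptually cleaner but pushes all the work into the construction of $\psi$ with controlled norms. Your Step 1 sketch (local Poincar\'e lemma plus \v{C}ech patching, with the vanishing flux killing the 2-cocycle obstruction, and linearity in $u$ giving the time regularity) is standard and acceptable as a sketch.

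There is, however, a genuine problem with Step 3 as written. The bound \eqref{quant} must hold with a constant depending only on $k$, uniformly in the radii (this is essential in Section \ref{energy-sol-sec}, where the lemma is applied on annuli of radius $R\to\infty$ with a bound that must stay $\lesssim\eps$). But your intermediate claim $\|\psi\|_{H^{k+1}}\lesssim_k\|u\|_{H^{k+1}}$ cannot hold uniformly in the radii at the level of the $L^2$ piece: a potential inverts one derivative, so $\|\psi\|_{L^2}$ necessarily carries a factor of the length scale $R_2$ (e.g.\ by Stokes' theorem, a field with circulation $\sim R_2^2$ around circles of radius $\sim R_2$ forces $\|\psi\|_{L^2(A)}\gtrsim R_2\|u\|_{L^2(A)}$; equivalently, rescaling to a unit annulus shows $\|\nabla^m\psi\|_{L^2}\lesssim\|u\|_{H^{m-1}}$ only for $m\geq 1$, with $\|\psi\|_{L^2}\lesssim R_2\|u\|_{L^2}$). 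Combined with your choice of $\chi$ having $\|\nabla^j\chi\|_{L^\infty}\lesssim_j 1$, the term $\nabla\chi\times\psi$ is then only bounded by $R_2\|u\|_{L^2}$, which is not \eqref{quant}. The fix is to exploit the hypothesis $R_3-R_2\gtrsim R_2$ the other way: take $\chi$ adapted to the scale $R_2$, so $\|\nabla^j\chi\|_{L^\infty}\lesssim_j R_2^{-j}$, and take the cover in Step 1 to consist of $O(1)$ balls of radius $\sim R_2$ (or rescale to $R_2\sim 1$ before patching); then in each term $\nabla^j\chi\cdot\nabla^{m-j}\psi$ of the Leibniz expansion the powers of $R_2$ cancel exactly, and \eqref{quant} follows. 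Without this adjustment the argument does not prove the lemma as stated.
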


Note that the hypothesis \eqref{stokes} is necessary, as can be seen from Stokes' theorem.  Lemmas of this type first appear in the work of Bogovskii \cite{bog}.

\begin{proof}  One could obtain this lemma as a consequence of the machinery of compactly supported divergence-free wavelets \cite{lemarie}, but for the convenience of the reader we give a self-contained proof here.

Let $X$ denote the vector space of all divergence-free smooth functions $u: B(0,R_4) \backslash B(0,R_1) \to \R^3$ obeying the mean zero condition 
\begin{equation}\label{meanzero}
\int_{|x|=r} u(x) \cdot n\ d\alpha(x) = 0
\end{equation}
for all $R_1 < r < R_4$, and such that $\|u\|_{C^k( (0,R_4) \backslash B(0,R_1) )} < \infty$ for all $k$.  It will suffice to construct a linear transformation $P: X \to X$ that is bounded\footnote{One can reduce this loss of regularity by working in more robust spaces than the classical $C^k$ spaces, such as Sobolev spaces $H^s$ or H\"older spaces $C^{k,\alpha}$, but we will not need to do so here.} from $C^{k+2}$ to $C^k$, i.e.
$$ \|Pu\|_{C^k( (0,R_4) \backslash B(0,R_1) )} \lesssim_{R_1,R_2,R_3,R_4,k} \|u\|_{C^{k+2}( (0,R_4) \backslash B(0,R_1) )}$$
for all $k \geq 0$, and such that $Pu$ equals $u$ on $B(0,R_2)\backslash B(0,R_1)$ and vanishes on $B(0,R_4) \backslash B(0,R_3)$, as one can then simply define $\tilde u(t) := P \tilde u(t)$ for each $t \in [0,T)$.

We now construct $P$.  We work in polar coordinates $x = r \alpha$ with $R_1 \leq r \leq R_4$ and $\alpha \in S^2$ (thus avoiding the coordinate singularity at the origin), and decompose $u(r,\alpha)$ as the sum of a radial vector field $u_r(r,\alpha) \alpha$ for some scalar field $u_r$, and an angular vector field $u_\alpha(r,\alpha)$ which is orthogonal to $\alpha$; thus, for fixed $r$, $u_\alpha(r)$ can be viewed as a smooth vector field on the unit sphere $S^2$ (i.e. a smooth section of the tangent bundle of $S^2$).  The divergence-free condition on $u$ in these coordinates then reads
\begin{equation}\label{ror}
 \partial_r u_r(r) + \frac{1}{r} \nabla_\alpha \cdot u_\alpha(r) = 0
\end{equation}
while the mean zero condition \eqref{meanzero} reads
$$ \int_{S^2} u_r(r,\alpha)\ d\alpha = 0.$$
Note that either of these conditions implies that $\partial_r u_r(r)$ has mean zero on $S^2$ for each $r$.  From \eqref{ror} and Hodge theory we see that
$$ u_\alpha(r) = r \Delta_\alpha^{-1} \nabla_\alpha \partial_r u_r(r) + v(r)$$
where $\Delta_\alpha^{-1}$ inverts the Laplace-Beltrami operator $\Delta_\alpha$ on smooth mean zero functions on $S^2$, and $v(r)$ is a smooth divergence-free vector field on $S^2$ that varies smoothly with $r$.

Let $\eta: [R_1,R_4] \to \R^+$ be a smooth function that equals $1$ on $[R_1,R_2]$ and vanishes on $[R_3,R_4]$.  We define
$$ \tilde u_r := \eta(r) u_r$$
and
$$ \tilde u_\alpha(r) = r \Delta_\alpha^{-1} \nabla_\alpha \partial_r \tilde u_r(r) + \eta(r) v(r)$$
and
$$ Tu := \tilde u := \tilde u_r \alpha + \tilde u_\alpha.$$
One then easily verifies that $\tilde u$ is smooth, divergence-free, obeys \eqref{meanzero}, depends linearly on $u$, equals $u$ on $B(0,R_2) \backslash B(0,R_1)$, and vanishes on $B(0,R_4) \backslash B(0,R_4)$.  It is also not difficult (using the fundamental solution of $\Delta_\alpha^{-1}$) to see that $T$ maps $C^{k+2}$ to $C^k$ (with some room to spare).  The claim follows.

Finally, we prove \eqref{quant}.  It suffices to show that
$$ 
 \| Tu \|_{H^k( B(0,R_3) \backslash B(0,R_2) )} \lesssim_k 1$$
whenever $k \geq 0$, and $u \in X$ is such that
$$ \| u \|_{H^{k+2}( B(0,R_4) \backslash B(0,R_1) )} \lesssim 1.$$

Henceforth all spatial norms will be on $B(0,R_3) \backslash B(0,R_2)$, and all implied constants may depend on $k$.  As $u$ has an $H^{k+1}$ norm of $O(1)$, $u_r$ and hence $\tilde u_r$ has an $H^{k+1}$ norm of $O(1)$ also.  As for $\tilde u_\alpha$, we observe from the Leibniz rule that
$$ \tilde u_\alpha = \eta u_\alpha + (r \partial_r \eta(r)) \Delta_\alpha^{-1} \nabla_\alpha u_r(r).$$
As $u$ has an $H^{k+1}$ norm of $O(1)$, we have $r^{-i} \nabla_\alpha^i \partial_r^j u_\alpha$ has an $L^2$ norm of $O(1)$ whenever $i+j \leq k+1$, which (using elliptic regularity in the angular variable) implies that $r^{-i} \nabla_\alpha^i \partial_r^j \tilde u_\alpha$ has an $L^2$ norm of $O(1)$ whenever $i+j \leq k$.  This gives $\tilde u = \tilde u_r + \tilde u_\alpha$ an $H^k$ norm of $O(1)$ as claimed.
\end{proof}

We can now establish Theorem \ref{main}(iii):

\begin{theorem}\label{enstrophy-thm} Suppose Conjecture \ref{global-h1} is true.  Then Conjecture \ref{global-h1-spatial} is true.
\end{theorem}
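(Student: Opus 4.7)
We proceed by contradiction: given spatially smooth $H^1$ data $(u_0,f,T)$ on $\R^3$, we combine the local well-posedness theory with Conjecture~\ref{global-h1} on the torus to produce a global mild $H^1$ solution. By Theorem~\ref{lwp-h1-r3} and Corollary~\ref{max-cauchy}, the maximal mild $H^1$ solution $(u,p,u_0,f,T_*^-)$ exists, and we wish to rule out $T_* < T$. If $T_* < T$, then $\|u(t)\|_{H^1_x(\R^3)} \to \infty$ as $t \to T_*^-$; by Proposition~\ref{almost-smooth} and Corollary~\ref{ens-bound}, $u$ is spatially smooth and $H^1$-valued on $[0,T_*)$. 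The first step is to localise the blow-up using Proposition~\ref{uni-ball} (invoked via Remark~\ref{uni-rem}, since $f$ is only spatially smooth): there is a ball $B(0,R)$ outside of which $u$, $\partial_t u$, and all their spatial derivatives are uniformly bounded on $[0,T_*)$.

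The second step is to transplant the problem to a torus of large period $L \gg R$. Apply Lemma~\ref{divloc} in the annulus $B(0,L/2)\setminus B(0,L/8)$ to produce a smooth divergence-free vector field $u_0^{(L)}$ that equals $u_0$ on $B(0,L/8)$ and vanishes on $\R^3\setminus B(0,L/4)$; the zero-flux hypothesis~\eqref{stokes} is automatic because $u_0$ is divergence-free and square-integrable. Apply Lemma~\ref{divloc} pointwise in $t$ to $f(t,\cdot)$ to produce $f^{(L)}$ in the same fashion. Identifying $u_0^{(L)}$ and $f^{(L)}$ with periodic data on $\R^3/L\Z^3$, Conjecture~\ref{global-h1} yields a periodic $H^1$ mild solution $u^{(L)}$ on $[0,T]$, which is smooth by Theorem~\ref{lwp-h1}(iv).

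The heart of the argument is a localised-uniqueness comparison between $u$ and $u^{(L)}$: since the two solutions share the same data on $B(0,L/8) \supset B(0,R)$, they should coincide on a slightly smaller ball $B(0,R')$ with $R < R' < L/8$ for all $t \in [0,T_*)$. Formally, $v := u - u^{(L)}$ vanishes initially on $B(0,L/8)$ and satisfies a forced linearised Navier--Stokes-type system with coefficients depending on $u$ and $u^{(L)}$; applying a localised energy/enstrophy estimate to $v$ in the spirit of Theorems~\ref{local} and~\ref{enstrophy-loc}, and closing via Gronwall, yields $v \equiv 0$ on $B(0,R') \times [0,T_*-\eta]$ for every $\eta > 0$. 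This is the main obstacle: one cannot close the estimate using the global $X^1$ norm of $u$, which blows up at $T_*$, and must instead rely on the local $X^1$ control of $u$ outside $B(0,R)$ furnished by Proposition~\ref{uni-ball} (together with $H^1$-valuedness from Corollary~\ref{ens-bound}), combined with the full smoothness of $u^{(L)}$ inherited from the periodic theory. Letting $\eta \to 0$ gives the desired identification on $B(0,R') \times [0,T_*)$.

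Finally, we glue. The smooth periodic solution $u^{(L)}|_{B(0,R')}$ continuously extends $u$ past $T_*$ inside $B(0,R')$, while Proposition~\ref{uni-ball} provides a smooth extension outside $B(0,R)$. These extensions agree on the overlap $B(0,R') \setminus B(0,R)$ by Step 3, and one verifies that the resulting field is a mild $H^1$ solution on $[0,T_*+\tau] \times \R^3$ for some $\tau > 0$, contradicting maximality of $T_*$. Hence $T_* \geq T$, a mild $H^1$ solution exists on all of $[0,T] \times \R^3$, and Conjecture~\ref{global-h1-spatial} follows.
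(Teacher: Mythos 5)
Your first two steps (reduction to ruling out enstrophy blowup at $T_*$, and localisation of the blowup to a ball via Proposition \ref{uni-ball}) match the paper. But the ``heart of the argument'' --- the localised-uniqueness comparison between $u$ and the periodic solution $u^{(L)}$ built from truncated \emph{data} --- is a genuine gap, for two reasons. First, the Navier--Stokes equation has no finite speed of propagation: the pressure is determined nonlocally by \eqref{pressure-point}, so even though $u$ and $u^{(L)}$ share data on $B(0,L/8)$, the difference $v = u - u^{(L)}$ is forced on $B(0,R')$ by $\nabla(p - p^{(L)})$, which is nonzero there because the two solutions differ (indeed, one is periodic and one is not) outside $B(0,L/8)$; a Gronwall argument on a local norm of $v$ cannot absorb this source, and in fact $v$ will generically be nonzero on $B(0,R')$ instantly. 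Second, even granting some approximate version, the difference equation on $B(0,R') \supset B(0,R)$ contains the terms $v\cdot\nabla u$ and $u\cdot\nabla v$ \emph{inside} $B(0,R)$, which is exactly the region where $\nabla u$ is not controlled as $t \to T_*^-$; the local $X^1$ control you invoke from Proposition \ref{uni-ball} lives only \emph{outside} $B(0,R)$, so the estimate cannot close uniformly in $\eta$, and without that uniformity you learn nothing about $u$ near the putative singular time.

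The paper avoids any comparison by truncating the \emph{solution} rather than the data: using the uniform smoothness of $u, \partial_t u, p$ on the annulus $B(0,5R)\setminus B(0,2R)$ (Proposition \ref{uni-ball}), it applies Lemma \ref{divloc} there to produce $\tilde u$ equal to $u$ on $B(0,3R)$ and vanishing outside $B(0,4R)$, and then \emph{defines} a new forcing term $\tilde f := \partial_t\tilde u + (\tilde u\cdot\nabla)\tilde u - \Delta\tilde u + \nabla(p\eta)$. The truncation error is entirely supported in the annulus where everything is uniformly smooth, so $\tilde f \in L^\infty_t H^1_x$, and $(\tilde u, p\eta, \tilde u(0), \tilde f, T_*^-)$ is, after embedding in a torus of period $L > 10R$, an incomplete periodic $H^1$ mild solution that \emph{coincides with $u$ on $B(0,R)$ by construction}. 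Conjecture \ref{global-h1} (with uniqueness from Theorem \ref{lwp-h1}) then bounds $\tilde u$, hence $u$ on $B(0,R)$, in $L^\infty_t H^1_x$ up to $T_*$, contradicting the blowup. Note this is precisely why the inhomogeneous, $L^\infty_t H^1_x$-forced version of the periodic conjecture is the hypothesis: the forcing term is needed to absorb the truncation. A smaller issue with your write-up: Lemma \ref{divloc} applies to divergence-free fields, so it cannot be applied to $f(t,\cdot)$ as stated (though $f$ can simply be cut off, since it need not be divergence-free).
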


\begin{proof}  
In view of Corollary \ref{max-cauchy}, it suffices to show that if $(u,p,u_0,f,T_*^-)$ is an incomplete $H^1$ mild solution up to time $T_*$, with $u_0, f$ spatially smooth in the sense of Conjecture \ref{global-h1-spatial}, then $u$ does not blow up in enstrophy norm, thus
$$ \limsup_{t \to T_*^-} \|u(t)\|_{H^1_x(\R^3)} < \infty.$$

Let $R > 0$ be a sufficiently large radius.  By arguing as in Corollary \ref{ens-bound}, we have
$$ u \in L^\infty_t H^1_x( \R^3 \backslash B(0,R) ) $$
and thus the blowup must be localised in space:
\begin{equation}\label{beebo}
 \limsup_{t \to T_*^-} \|u(t)\|_{H^1_x(B(0,R))} < \infty.
\end{equation}

By Proposition \ref{uni-ball} and Remark \ref{uni-rem} (and increasing $R$ if necessary) we also have
\begin{equation}\label{uphut}
 u, p, f, \partial_t u \in L^\infty_t C^k_x([0,T_*) \times (B(0,5R) \backslash B(0,2R)))
\end{equation}
for all $k \geq 0$.  From Stokes' theorem and the divergence-free nature of $u$, we also have
$$ \int_{|x|=r} u(t,x) \cdot n\ d\alpha(x) = 0$$
for all $r>0$ and $t \in [0,T)$.  Applying Lemma \ref{divloc}, we can then find a spatially smooth divergence-free vector field $\tilde u: [0,T) \times (B(0,5R) \backslash B(0,2R)) \to \R^3$ which agrees with $u$ on $B(0,3R) \backslash B(0,2R)$ and vanishes outside of $B(0,4R)$, with
\begin{equation}\label{leo}
 \tilde u, \partial_t \tilde u \in L^\infty_t C^k_x( B(0,5R) \backslash B(0,2R) )
\end{equation}
for all $k \geq 0$.  We then extend $\tilde u$ by zero outside of $B(0,5R)$ and by $u$ inside of $B(0,2R)$, then $\tilde u$ is now smooth on all of $[0,T) \times \R^3$.

Let $\eta$ be a smooth function supported on $B(0,5R)$ that equals $1$ on $B(0,4R)$.
We define a new forcing term $\tilde f: [0,T) \times \R^3 \to \R$ by the formula
\begin{equation}\label{faf}
\tilde f := \partial_t \tilde u + (\tilde u \cdot \nabla) \tilde u - \Delta \tilde u + \nabla (p\eta),
\end{equation}
then $\tilde f$ is spatially smooth, supported on $B(0,5R)$ and agrees with $f$ on $B(0,3R)$.  From this and \eqref{leo}, \eqref{uphut} we easily verify that
$$ \tilde f \in L^\infty_t H^1_x([0,T_*) \times \R^3).$$

Note from taking divergences in \eqref{faf} and using the compact support of $p\eta$, $\tilde u$, $\tilde f$ that
$$ p\eta = - \Delta^{-1} ((\tilde u \cdot \nabla) \tilde u) + \Delta^{-1} \nabla \cdot \tilde f.$$
Thus, $(\tilde u, p\eta, \tilde u(0), \tilde f, T_*^-)$ is an incomplete $H^1$ pressure-normalised (and hence mild) solution with all components supported in $B(0,5R)$.  If we then choose a period $L$ larger than $10R$, then we may embed $B(0,5R)$ inside $\R^3/L\Z^3$ and obtain an incomplete periodic smooth solution 
$(\iota(\tilde u), \iota(p\eta), \iota(\tilde u(0)), \iota(\tilde f), T_*^-,L)$, where we use $\iota(f)$ to denote the extension by zero of a function $f$ supported in $B(0,5R)$, after embedding the latter in $\R^3/L\Z^3$.  By construction we then have
$$ \iota(\tilde f) \in L^\infty_t H^1_x([0,T_*) \times \R^3/L\Z^3).$$
As $\{T_*\}$ has measure zero, we may arbitrarily extend $\tilde f$ to $[0,T_*] \times \R^3/L\Z^3$ while staying in $L^\infty_t H^1_x$.  Applying either Conjecture \ref{global-h1} (and the uniqueness component to Theorem \ref{lwp-h1}) or Conjecture \ref{global-h1-quant}, we conclude that
$$ \iota(\tilde u) \in L^\infty_t H^1_x([0,T_*) \times \R^3/L\Z^3)$$
which implies (since $u$ and $\tilde u$ agree on $B(0,R)$) that
$$ u \in L^\infty_t H^1_x([0,T_*) \times B(0,R))$$
which contradicts \eqref{beebo}.  The claim follows.
\end{proof}

Observe that if we omit the embedding of $B(0,5R)$ in $\R^3/L\Z^3$ in the above argument, we can also deduce Conjecture \ref{global-h1-spatial} from Conjecture \ref{global-schwartz-spatial}.  Since Conjecture \ref{global-h1-spatial} clearly implies Conjecture \ref{global-schwartz-spatial} as a special case, we obtain Theorem \ref{main}(iii).

\begin{remark}\label{ref}  The referee has pointed out a variant of the above argument using the partial regularity theory of Caffarelli, Kohn, and Nirenberg \cite{ckn}, which allows one to partially reverse the above implications, and in particular deduce Conjecture \ref{global-periodic-normalised} from Conjecture \ref{global-h1-spatial}.  We sketch the argument as follows.  Assume Conjecture \ref{global-h1-spatial}, and assume for contradiction that Conjecture \ref{global-periodic-normalised} fails, thus there is a periodic solution with smooth inhomogeneous data which first develops singularities at some finite time $T$, and in particular at some location $(T,x_0)$.  We may extend the solution beyond this time as a weak solution.  Applying a periodic version of the theory in \cite{ckn}, we see that the set of singularities has zero one-dimensional parabolic measure, which among other things implies that the set of radii $r>0$ such that the solution is singular at $(T,x)$ for some $x$ with $|x-x_0|=r$ has measure zero.  Because of this, one can find radii $r_2 > r_1 > 0$ such that the solution is smooth in the annular region $\{ (t,x): 0 \leq t \leq T; r_1 \leq |x-x_0| \leq r_2 \}$.  By smoothly truncating the solution $u$ to this annulus as in the proof of Theorem \ref{enstrophy-thm}, one can then create a non-periodic $H^1$ mild solution to the inhomogeneous Navier-Stokes equation with spatially smooth data which develops a singularity at $(T,x_0)$ while remaining smooth up to time $T$, contradicting Conjecture \ref{global-h1-spatial} (when combined with standard uniqueness and regularity results, such as those in Theorem \ref{lwp-h1-r3}).  
\end{remark}

\section{Smooth finite energy solutions}\label{energy-sol-sec}

In this section we establish Theorem \ref{main}(v).  It is trivial that Conjecture \ref{global-energy-homog} implies Conjecture \ref{global-enstrophy-homog}, so it suffices to establish

\begin{theorem}\label{energy-thm}   Suppose that Conjecture \ref{global-enstrophy-homog} is true.  Then Conjecture \ref{global-energy-homog} is true.
\end{theorem}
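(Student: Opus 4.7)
The plan is to reduce Conjecture~\ref{global-energy-homog} to the hypothesized Conjecture~\ref{global-enstrophy-homog} by an approximation-and-compactness argument: approximate $u_0$ by smooth $H^1$ data, apply Conjecture~\ref{global-enstrophy-homog} to obtain solutions, and pass to a limit using the local enstrophy estimate Theorem~\ref{enstrophy-loc}. Concretely, using Lemma~\ref{divloc} I would construct, for each $n$, a smooth divergence-free field $u_0^{(n)}$ that agrees with $u_0$ on $B(0,n)$ and vanishes outside $B(0,2n)$; this $u_0^{(n)}$ is compactly supported and smooth, hence in $H^1$, and Conjecture~\ref{global-enstrophy-homog} produces an almost smooth finite energy solution $(u^{(n)}, p^{(n)}, u_0^{(n)}, 0, T)$ on $[0,T]$.

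The task is then to extract a subsequence of the $u^{(n)}$ converging strongly enough to produce an almost smooth finite energy solution with the original data $u_0$. Lemma~\ref{energy-est} gives uniform $X^0([0,T] \times \R^3)$ control, since $E(u_0^{(n)},0,T) \to E(u_0,0,T)$. For uniform higher regularity on a compact set $K \subset [0,T] \times \R^3$, I would fix a large ball $B(0,R)$ with $K \subset [0,T] \times B(0,R/2)$; for $n \geq R$, $u_0^{(n)}$ agrees with $u_0$ on $B(0,R)$, so the local initial enstrophy of $u_0^{(n)}$ is uniformly bounded by the finite quantity $\delta_R := \|\omega_0\|_{L^2_x(B(0,R))}$ (finite by the smoothness of $u_0$). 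Applying Theorem~\ref{enstrophy-loc} on a short subinterval whose length is dictated by the smallness condition \eqref{delta-4}, iterating on successive subintervals to cover $[0,T]$ (each iteration stripping off a boundary-layer width as in \eqref{r-large} and restarting with comparable enstrophy bound), and combining with Proposition~\ref{higher}, one obtains uniform $X^k$ bounds on $K$ for every $k$.

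With uniform local smooth bounds in hand, Arzel\`a--Ascoli and a diagonal argument extract a subsequence converging in $C^\infty_{\mathrm{loc}}((0,T] \times \R^3)$, and with enough regularity down to $t=0$ (inherited from the almost-smoothness of the $u^{(n)}$) to produce a limit $u$ which solves the Navier--Stokes equation classically on $(0,T] \times \R^3$, has finite energy, assumes $u_0$ as initial data in the $L^2$ sense, and extends to an almost smooth finite energy solution by arguments parallel to Proposition~\ref{almost-smooth} (using the Duhamel formula, the local smoothness of $u_0$, and a local/global splitting of the Newton-potential representation of the pressure to establish continuity of $\nabla_x^k u$ and $\partial_t \nabla_x^k u$ down to $t=0$). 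The main obstacle is arranging the iteration of Theorem~\ref{enstrophy-loc}: the cumulative radius loss across the finitely many steps needed to cover $[0,T]$ grows with $\delta_R$, so one must take $R$ large enough relative to both $K$ and $\delta_R$. This balancing is manageable because, once $\delta_R$ is moderately large, the boundary-layer width from \eqref{r-large} depends only on the fixed energy $E(u_0,0,T)$ and not on $\delta_R$, so for any prescribed target set $K$ one can choose $R$ sufficiently large (relative to the energy and to $K$) for the iteration to close.
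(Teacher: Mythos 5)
Your setup (truncating $u_0$ to compactly supported smooth $H^1$ data via Lemma \ref{divloc}, invoking Conjecture \ref{global-enstrophy-homog} for each approximant, and controlling the limit by local estimates) matches the spirit of the paper's proof, which uses the frequency truncation $u_0^{(n)} = P_{\leq N_n} u_0$ instead. But the central step of your argument --- iterating Theorem \ref{enstrophy-loc} over successive subintervals to obtain uniform $X^1$ control on $[0,T] \times K$ --- does not close, and this is not a technicality. The smallness condition \eqref{delta-4} forces each time step to have length $\lesssim \min(\delta^{-4}, \delta^{-5}E^{-1/2})$ where $\delta$ is the local enstrophy at the start of that step, and the conclusion of the theorem only returns the enstrophy at the end of the step up to an absolute constant $C_0 > 1$. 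After $k$ steps the enstrophy bound is $C_0^k \delta_R$ and the admissible step length is $\lesssim C_0^{-4k}\delta_R^{-4}$, so the total time reachable is a convergent geometric series $O(\delta_R^{-4})$, not $T$. Since $\delta_R = \|\omega_0\|_{L^2_x(B(0,R))}$ grows with $R$ (and must be large, as $K$ forces $R$ large), the iteration covers only a short initial time interval. More fundamentally, if such an iteration worked it would yield an unconditional local regularity theorem from locally-$H^1$ data, without ever using the hypothesis at positive times --- which is essentially the regularity problem itself. The correct use of Theorem \ref{enstrophy-loc} on compact sets is a \emph{single} application for the short time permitted by \eqref{delta-4} (this is how the paper handles continuity of $\nabla^k u$ and $\nabla^k p$ down to $t=0$); with small $\delta$ and long times it is useful only in the far field, as in Corollary \ref{ens-bound} and Proposition \ref{uni-ball}.

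What your proposal is missing is the mechanism by which the qualitative hypothesis is converted into \emph{uniform} bounds for times bounded away from zero. The paper does this by pigeonholing the uniform energy inequality \eqref{ul2-en} to find times $\tau^{(n)} \in [0,\tau_0]$ with $\|u^{(n)}(\tau^{(n)})\|_{H^1_x} \lesssim \tau_0^{-1}$ uniformly in $n$, upgrading to uniform $H^{10}_x$ bounds at a nearby time $\tau'$ via Lemma \ref{quant-reg}, extracting a weak limit $u_0'$ of the $u^{(n)}(\tau')$, applying Conjecture \ref{global-enstrophy-homog} to \emph{that} limiting data to get a solution $u'$ on $[\tau',T]$, and then running a localized Lipschitz-stability argument (truncating with Lemma \ref{divloc}, absorbing the truncation error into a small forcing term, and invoking Theorem \ref{lwp-h1-r3}(v)) to show $u^{(n)}(\cdot+\tau')$ converges to $u'$ locally strongly in $X^1$. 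Without some version of this step --- in which the hypothesis is applied a second time, to data at a positive time where uniform subcritical control is available --- your limit is only a Leray--Hopf weak solution and you cannot conclude smoothness on $(0,T] \times \R^3$.
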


\begin{proof}  Let $(u_0,0,T)$ be smooth homogeneous finite energy data.  Our task is to obtain an almost smooth finite energy solution $(u,p,u_0,0,T)$ with this data.  We allow all implied constants to depend on $u_0$.

We use a regularisation argument.  Let $N_n$ be a sequence of frequencies going to infinity, and set $u_0^{(n)} := P_{\leq N_n} u_0$, then $u_0^{(n)}$ converges to $u_0$ strongly in $L^2_x(\R^3)$, and $(u_0^{(n)}, 0, T)$ is smooth $H^1$ data for each $n$.  Thus, by hypothesis, we may find a sequence of almost smooth finite energy solutions $(u^{(n)},p^{(n)},u_0^{(n)},0,T)$ with this data.  

One could try invoking weak compactness right now to extract a solution, but as is well known, one only obtains a Leray-Hopf weak solution by doing so, which need not be smooth.  So we will first work to establish some additional regularity on the sequence (after passing to a subsequence as necessary) before extracting a weakly convergent limit.

Since the $(u_0^{(n)},0,T)$ are uniformly bounded in energy, we see from Lemma \ref{energy-est} that 
\begin{equation}\label{energy-awk}
\| u^{(n)} \|_{X^0([0,T] \times \R^3)} \lesssim 1.
\end{equation}
Now let $0 < \tau_0 < T/2$ be a small time.  From \eqref{energy-awk} and the pigeonhole principle, we may find a sequence of times $\tau^{(n)} \in [0,\tau_0]$ such that
$$ \| u^{(n)}(\tau^{(n)}) \|_{H^1_x(\R^3)} \lesssim \tau_0^{-1}.$$
Passing to a subsequence, we may assume that $\tau^{(n)}$ converges to a limit $\tau \in [0,\tau_0]$.  If we then take $\tau' \in [\tau,2\tau_0]$ sufficiently close to $\tau$, we may apply Lemma \ref{quant-reg} and conclude that
$$ \| u^{(n)}(\tau') \|_{H^{10}_x(\R^3)} \lesssim_{\tau,\tau',\tau_0} 1$$
(say) for all sufficiently large $n$.  Passing to a further subsequence, we may then assume that $u^{(n)}(\tau')$ converges weakly in $H^{10}_x(\R^3)$ (and thus locally strongly in $H^9_x$) to a limit $u'_0 \in H^{10}_x(\R^3)$.  By hypothesis, we may thus find an almost smooth $H^1$ solution $(u', p', u'_0, 0, T-\tau')$ with this data.

Meanwhile, by time translation symmetry \eqref{time-translate}, $(u^{(n)}(\cdot+\tau'), p^{(n)}(\cdot+\tau'), u^{(n)}(\tau'), 0, T-\tau')$ is also a sequence of almost smooth $H^1$ solutions.  Since $u^{(n)}(\tau')$ converges locally strongly in $H^9_x(\R^3)$ to $u'_0$, we would like to conclude that $u^{(n)}(t+\tau')$ also converges locally strongly to $u(t)$ in $H^1_x(\R^3)$, uniformly in $t \in [0,T-\tau']$.  This does not quite follow from the standard local well-posedness theory in Theorem \ref{lwp-h1-r3}, because this theory requires strong convergence in the \emph{global} $H^1_x(\R^3)$ norm.  However, we may take advantage of the local enstrophy estimates to spatially localise the local well-posedness theory, as follows.

Let $\eps > 0$ be a small quantity (depending on the solution $u' = (u', p', u'_0, 0, T-\tau')$) to be chosen later, let $R > 0$ be a sufficiently large radius (depending on $\eps$ and $(u', p', u'_0, 0, T-\tau')$) to be chosen later.  Since $u'_0$ is in $H^{10}_x(\R^3)$, we see from monotone convergence that
\begin{equation}\label{upo}
\| u'_0 \|_{H^{10}_x(\R^3 \backslash B(0,R))} \lesssim \eps
\end{equation}
if $R$ is sufficiently large depending on $\eps$.  Since the $u^{(n)}(\tau')$ converge locally strongly in $H^1_x(\R^3)$ to $u'_0$, we conclude that
$$ \| u^{(n)}(\tau') \|_{H^{10}_x(B(0,10R) \backslash B(0,R))} \lesssim \eps$$
if $n$ is sufficiently large depending on $R,\eps$.  Applying Theorem \ref{enstrophy-loc}, we conclude (if $R$ is large enough depending $u'_0$ and $T-\tau'$) that
$$ \| u^{(n)}(\cdot+\tau') \|_{X^1([0,T-\tau'] \times (B(0,9R) \backslash B(0,2R)))} \lesssim \eps$$
for $n$ sufficiently large depending on $R,\eps$.  Using Duhamel's formula (and Corollary \ref{as-mild}) repeatedly as in the proof of Proposition \ref{higher}, we may in fact conclude that
\begin{equation}\label{waffle}
\| \partial_t^i u^{(n)}(\cdot+\tau') \|_{L^\infty_t H^6_x([0,T-\tau'] \times (B(0,8R) \backslash B(0,3R)))} \lesssim_{u', T} \eps
\end{equation}
(say) for $i=0,1$, taking $R$ large enough depending on $u',T,\eps$ to ensure that the contributions to the Duhamel formula coming outside $B(0,9R)$ or inside $B(0,2R)$ are negligible, and taking $n$ sufficiently large as always.  

We let $\tilde p^{(n)}$ be the normalised pressure, defined by \eqref{pressure-point}; by Corollary \ref{as-mild}, $\tilde p^{(n)}(t)$ and $p^{(n)}(t)$ differ by a constant $C(t)$ for almost every $t$.  Using \eqref{pressure-point}, \eqref{waffle} and Lemma \ref{energy-est}, we see that
$$ \| \tilde p^{(n)} \|_{L^\infty_t H^2_x([0,T-\tau'] \times (B(0,7R) \backslash B(0,4R)))} \lesssim_{u',T} \eps.$$
if $R$ is large enough depending on $u',T,\eps$.

Applying Lemma \ref{divloc}, we may find divergence-free smooth vector fields $\tilde u^{(n)}: [\tau',T] \times \R^3 \to \R^3$ which agree with $u^{(n)}$ on $[\tau',T] \times B(0,5R)$ but vanish outside of $[\tau',T] \times B(0,6R)$, with
\begin{equation}\label{bingo}
 \| \partial_t^i \tilde u^{(n)}(\cdot+\tau') \|_{L^\infty_t H^5_x([0,T-\tau'] \times (B(0,8R) \backslash B(0,3R)))} \lesssim_{u', T} \eps
 \end{equation}
(say) for $n$ sufficiently large and $i=0,1$.  

Let $\eta$ be a smooth function that equals $1$ on $B(0,6R)$ and is supported on $B(0,7R)$, and obeys the usual derivative bounds in between.
We then consider the smooth solutions 
\begin{equation}\label{a1}
(\tilde u^{(n)}(\cdot +\tau'),\eta \tilde p^{(n)}(\cdot+\tau'), \tilde u^{(n)}(\tau'), \tilde f^{(n)}, T-\tau')
\end{equation}
where
$$ \tilde f^{(n)} := (\partial_t \tilde u^{(n)} + \tilde u^{(n)} \cdot \nabla \tilde u^{(n)} - \Delta \tilde u^{(n)} + \nabla(\eta p^{(n)}))(\cdot+\tau')$$
By construction, $\tilde f'$ and $\tilde f^{(n)}$ are smooth and supported on $[0,T-\tau'] \times (B(0,7R) \backslash B(0,5R))$, and the \eqref{a1} are smooth, compactly supported solutions.  From the preceding bounds on $\tilde u^{(n)}, \tilde p^{(n)}$ we see that
$$ \| \tilde f^{(n)} \|_{L^\infty_t H^1_x([0,T-\tau] \times \R^3)} \lesssim_{u',T} \eps$$
for $n$ sufficiently large.  

Also, using \eqref{upo}, \eqref{bingo} we have
$$ \| \tilde u^{(n)}(\tau') - u'_0 \|_{H^1_x(\R^3)} \lesssim_{u',T} \eps$$
for $n$ sufficiently large.  If $\eps$ is sufficiently small, we conclude from the local $H^1$ well-posedness theory (Theorem \ref{lwp-h1-r3}) that
$$ \| \tilde u^{(n)}(\cdot+\tau') - u' \|_{X^1([0,T-\tau'] \times \R^3)} \lesssim_{u',T} \eps$$
and in particular
$$ \| u^{(n)}(\cdot+\tau') - u' \|_{X^1([0,T-\tau'] \times B(0,R) )} \lesssim_{u',T} \eps$$
for $n$ large enough.  Sending $\eps$ to zero (and $R$ to infinity) we conclude that $u^{(n)}(\cdot+\tau')$ converges weakly to $u'$.  In particular, we see that any weak limit of the $u^{(n)}$ is smooth on $[\tau',T] \times \R^3$ (and furthermore, the weak limit is unique in this spacetime region).

The above analysis was for a single choice of $\tau$.  Choosing $\tau$ to be a sequence of times going to zero (and repeatedly taking subsequences of the $u^{(n)}$ and diagonalising as necessary) we may thus arrive at a subsequence $u^{(n)}$ with the property that there is a unique weak limit $u$ of the $u^{(n)}$, which is smooth on $(0,T] \times \R^3$.  If we then set $p$ by \eqref{pressure-point}, we see on taking distributional limits that $(u,p,u_0,0,T)$ is a Leray-Hopf weak solution to the initial data $(u_0,0,T)$.

To finish the argument, we need to show that $(u,p,u_0,0,T)$ is almost smooth at $(0,x_0)$ for every $x_0 \in \R^3$.  Fix $x_0$, and let $R > 0$ be a large radius.  As $u_0$ is smooth, $\| u_0 \|_{H^1(B(x_0,5R))}$ is finite, and hence $\| u_0^{(n)} \|_{H^1(B(x_0,5R))}$ is uniformly bounded.  Applying Theorem \ref{enstrophy-loc} (recalling that the $u^{(n)}$ have uniformly bounded energy), we conclude (for $R$ large enough) that there exists $0 < \tau < T$ such that $\| u^{(n)} \|_{X^1([0,\tau] \times B(x_0,4R))}$ is uniformly bounded in $n$.  Using Duhamel's formula as in Proposition \ref{uni-ball}, and noting that $u^{(n)}$ is uniformly smooth on $B(x_0,4R)$, we conclude that $\| u^{(n)} \|_{L^\infty_t C^k((0,\tau] \times B(x_0,3R))}$ is uniformly bounded for all $k \geq 0$.  Taking weak limits, we conclude that
$$ u \in L^\infty_t C^k((0,\tau] \times B(x_0,3R))$$
for all $k \geq 0$.  From this and \eqref{pressure-point} (and Lemma \ref{energy-est}), we also see that
$$ p \in L^\infty_t C^k((0,\tau] \times B(x_0,2R))$$
for all $k \geq 0$.  Using \eqref{ns}, we conclude that
$$ \partial_t u \in L^\infty_t C^k((0,\tau] \times B(x_0,2R))$$
for all $k \geq 0$.  A similar argument also shows that
$$ \partial_t u^{(n)} \in L^\infty_t C^k((0,\tau] \times B(x_0,2R))$$
uniformly in $n$.  From this, we see that the $\nabla_x^k u^{(n)}$ are uniformly Lipschitz in a neighbourhood of $(0,x_0)$.  Since $\nabla_x^k u^{(n)}$ converges weakly to the smooth function $\nabla_x^k u$ in $(0,T] \times \R^3$, and also converges strongly at time zero in $H^1_x(\R^3)$ to the smooth function $\nabla_x^k u_0$, we conclude that $\nabla_x^k u$ can be extended in a locally Lipschitz continuous manner from $(0,T] \times \R^3$ to $[0,T] \times \R^3$ in such a way that it agrees with $\nabla_x^k u_0$ at time zero.  

Now we consider derivatives $\nabla^k p$ of the pressure near $(0,x_0)$.  Let $\eps > 0$ be arbitrary.  Then by the monotone convergence theorem, we see that if $R' > 0$ is a sufficiently large radius, then
$$ \| u_0 \|_{L^2_x(\R^3 \backslash B(x_0,R'))} \leq \eps$$
and thus
$$ \| u_0^{(n)} \|_{L^2_x(\R^3 \backslash B(x_0,R'))} \lesssim \eps$$
for $n$ large enough.

By Theorem \ref{local}, we conclude that if $R'$ is large enough, there exists a time $0 < \tau < T$ such that
$$ \| u^{(n)} \|_{L^\infty_t L^2_x([0,\tau] \times (\R^3 \backslash B(x_0,2R')))} \lesssim \eps$$
and hence on taking weak limits
$$ \| u \|_{L^\infty_t L^2_x([0,\tau] \times (\R^3 \backslash B(x_0,2R')))} \lesssim \eps.$$
On the other hand, as $\nabla^k u$ is continuous at $t=0$, $u(t)$ converges in $C^k(B(x_0,2R'))$ to $u_0$ as $t \to 0$ for any $k \geq 0$.  From 
this and \eqref{pressure-point} (and the decay of derivatives of the kernel of $\Delta^{-1}$ away from the origin) we see that
$$ \limsup_{(t,x) \to (0,x_0); t>0} |\nabla^k p(t,x) - \nabla^k p_0(x_0)| \lesssim_k \eps$$
for any $k \geq 0$, where $p_0$ is defined from $u_0$ using \eqref{pressure-point}.  Sending $\eps \to 0$ and $R' \to \infty$ we conclude that $\nabla^k p$ extends continuously to $\nabla^k p_0(x_0)$ at $(0,x_0)$, and thus extends continuously to $\nabla^k p_0$ on all of the initial slice $\{0\} \times \R^3$.  By \eqref{ns} we conclude that $\partial_t \nabla^k u$ also extends continuously to the initial slice, with the Navier-Stokes equation \eqref{ns} being obeyed both for times $t>0$ and times $t=0$.  We have thus constructed an almost smooth finite energy solution $(u,p,u_0,0,T)$ as desired.
\end{proof}

\begin{remark}   We emphasise that Theorem \ref{energy-thm} only establishes \emph{existence} of a smooth finite energy solution (assuming Conjecture \ref{global-enstrophy-homog}), and not uniqueness; see Remark \ref{uniq}.  However, it is not difficult to see from the argument that one can at least ensure that the solution constructed is independent of the choice of time $T$, and can thus be extended to a single global smooth finite energy solution.  (Alternatively, from Lemma \ref{energy-est} we see that the enstrophy of the solution will become arbitrarily small for a sequence of times going to infinity, so for a sufficiently large time one can in fact construct a global smooth solution by standard perturbation theory techniques.)
\end{remark}

\begin{remark}  One can modify the above argument to also establish Conjecture \ref{global-energy-homog} with a non-zero Schwartz forcing term $f$, provided of course that one also assumes Conjecture \ref{global-enstrophy-homog} can be extended to the same class of $f$.  We have not however investigated the weakest class of forcing terms $f$ for which the argument works, though certainly finite energy seems insufficient.
\end{remark}

\section{Quantitative $H^1$ bounds}\label{quant-sec}

In this section we prove Theorem \ref{main}(vi).  We begin with some easy implications. Firstly, it is trivial that Conjecture \ref{global-h1-quant-global} implies Conjecture \ref{global-h1-quant-r3}, and from the local well-posedness and regularity theory in Theorem \ref{lwp-h1-r3} (or Corollary \ref{max-cauchy}) we see that Conjecture \ref{global-h1-quant-r3} implies Conjecture \ref{global-h1-r3}, which in turn implies Conjecture \ref{global-enstrophy-homog} (thanks to Proposition \ref{almost-smooth}).

Next, we observe from Theorem \ref{lwp-h1} and Lemma \ref{quant-reg} that given any $H^1$ data $(u_0,0,T)$, there exists a time $0 < \tau < T$ such that one has an $H^1$ mild solution $(u,p,u_0,0,\tau)$ with $u(\tau)$ smooth.  If Conjecture \ref{global-enstrophy-homog} holds, then one can then continue the solution in an almost smooth finite energy manner (and hence in an almost smooth $H^1$ manner, thanks to Corollary \ref{ens-bound}) from $\tau$ up to $T$.  Normalising the pressure of this latter solution using Lemma \ref{reduction} and gluing the two solutions together, we obtain an $H^1$ mild solution up to time $T$.  From this we see that Conjecture \ref{global-enstrophy-homog} implies Conjecture \ref{global-h1-r3}.

Now we show that Conjecture \ref{global-h1-quant-r3} implies Conjecture \ref{global-h1-quant-global}.  Suppose that one has homogeneous $H^1$ data $(u_0,0,T)$ with
$$ \|u_0\|_{H^1_x(\R^3)} \leq A < \infty.$$
By Conjecture \ref{global-h1-quant-r3} (which implies Conjecture \ref{global-h1-r3}) we may obtain a mild $H^1$ solution $(u,p,u_0,0,T)$, which is smooth for positive times.   Our objective is to show that
$$ \| u \|_{L^\infty_t H^1_x([0,T] \times \R^3)} \lesssim_A 1.$$

Let $\eps > 0$ be a quantity depending on $A$ to be chosen later.  We may assume that $T$ is sufficiently large depending on $\eps,A$, otherwise the claim will follow immediately from Conjecture \ref{global-h1-quant-r3}.  Using Lemma \ref{energy-est} and the pigeonhole principle, we may then find a time $0 < T_1 < T$ with $T_1 \lesssim_A 1$ such that
$$ \| \nabla u(T_1) \|_{L^2_x(\R^3)} \leq \eps.$$
Meanwhile, from energy estimates one has
$$ \| u(T_1) \|_{L^2_x(\R^3)} \lesssim_A 1.$$
On $[T_1,T]$, we split $u = u_1 + v$, where $u_1$ is the linear solution $u_1(t) := e^{(t-T_1)} u(T_1)$ and $v := u-u_1$.  From \eqref{energy-duh} one thus has
$$ \| u_1 \|_{X^0} \lesssim_A 1$$
and
$$ \| \nabla u_1 \|_{X^0} \lesssim \eps.$$

From \eqref{duhamel-2}, \eqref{energy-duh2} one has
$$ \|v\|_{X^1([T_1,T] \times \R^3)} \lesssim \| \bigO( u_1 \nabla u_1 + u_1 \nabla v + v \nabla u_1 + v \nabla v ) \|_{L^2_t L^2_x([T_1,T] \times \R^3)}.$$
We now estimate various contributions to the right-hand side.  We begin with the nonlinear term $\bigO( v \nabla v)$.  By H\"older (and dropping the domain $[T_1,T] \times \R^3$ for brevity) followed by Lemma \ref{energy-est} we have
\begin{align*}
\| \bigO( v \nabla v) \|_{L^2_t L^2_x} &\lesssim
\| \nabla v \|_{L^2_t L^6_x}^{1/2} \|\nabla v\|_{L^\infty_t L^2_x}^{1/2} \| v \|_{L^\infty_t L^6_x}^{1/2} \| v \|_{L^2_t L^6_x}^{1/2} \\
&\lesssim \|v\|_{X^1}^{3/2} \|v\|_{X_0^{1/2}}\\
&\lesssim_A \|v\|_{X^1}^{3/2}.
\end{align*}
A similar argument gives
\begin{align*}
\| \bigO( v \nabla u_1) \|_{L^2_t L^2_x} &\lesssim \| \nabla u_1\|_{X^0} \|v\|_{X^1}^{1/2} \|v\|_{X_0}^{1/2} \\
&\lesssim \eps \|v\|_{X^1} 
\end{align*}
and
\begin{align*}
\| \bigO( u_1 \nabla u_1) \|_{L^2_t L^2_x} &\lesssim \| \nabla u_1\|_{X^0} \|\nabla u_1\|_{X^0}^{1/2} \|u_1\|_{X_0}^{1/2} \|\\
&\lesssim_A \eps^{3/2}  
\end{align*}
and
\begin{align*}
\| \bigO( u_1 \nabla v) \|_{L^2_t L^2_x} &\lesssim \| \nabla v\|_{X^0} \|\nabla u_1\|_{X^0}^{1/2} \|u_1\|_{X_0}^{1/2} \\
&\lesssim_A \eps^{1/2} \|v\|_{X^1}
\end{align*}
and thus
$$ \|v\|_{X^1} \lesssim_A  \eps^{3/2}   + \eps^{1/2} \|v\|_{X^1} + \|v\|_{X^1}^{3/2}.$$
If $\eps$ is small enough depending on $A$, a continuity argument in the $T$ variable then gives
$$ \|v\|_{X^1} \lesssim_A  \eps^{3/2} $$
and thus
$$ \|u\|_{X^1([T_1,T])} \lesssim_A 1.$$
Using this and the triangle inequality, we conclude that Conjecture \ref{global-h1-quant-r3} implies Conjecture \ref{global-h1-quant-global}.

We now turn to the most difficult implication:

\begin{proposition}[Concentration compactness]\label{compact} Suppose that Conjecture \ref{global-h1-r3} is true.  Then Conjecture \ref{global-h1-quant-r3} is true.
\end{proposition}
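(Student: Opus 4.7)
The plan is to follow a concentration-compactness argument in the spirit of Bahouri--G\'erard, Kenig--Merle, and Gallagher, adapting the periodic compactness argument of Section \ref{compact-sec}. The key new ingredient compared to the periodic setting is that a bounded sequence in $H^1_x(\R^3)$ need not be precompact, even after passing to a subsequence, because mass can escape to spatial infinity; a profile decomposition will keep track of this.

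Suppose for contradiction that Conjecture \ref{global-h1-r3} holds but Conjecture \ref{global-h1-quant-r3} fails. Negating quantifiers, there exist $A, T_0 > 0$ and a sequence of smooth $H^1$ solutions $(u^{(n)}, p^{(n)}, u_0^{(n)}, 0, T^{(n)})$ with $T^{(n)} \in (0, T_0)$, $\|u_0^{(n)}\|_{H^1_x(\R^3)} \leq A$, and $\|u^{(n)}\|_{L^\infty_t H^1_x([0,T^{(n)}] \times \R^3)} \to \infty$. Local well-posedness (Theorem \ref{lwp-h1-r3}) gives $T^{(n)}$ bounded away from zero, so after subsequence $T^{(n)} \to T_* \in (0,T_0]$. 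I would then apply the standard $H^1(\R^3)$ profile decomposition, adapted to divergence-free vector fields (using that the Leray projection $P$ commutes with translations): after passing to a further subsequence there exist divergence-free profiles $\phi^{(j)} \in H^1(\R^3)$ and translation parameters $x_n^{(j)} \in \R^3$ with $|x_n^{(j)} - x_n^{(k)}| \to \infty$ for $j \neq k$, such that for every $J \geq 1$,
\[
u_0^{(n)} = \sum_{j=1}^J \phi^{(j)}(\cdot - x_n^{(j)}) + w_n^{(J)},
\]
with Pythagorean decoupling $\|u_0^{(n)}\|_{H^1}^2 = \sum_{j=1}^J \|\phi^{(j)}\|_{H^1}^2 + \|w_n^{(J)}\|_{H^1}^2 + o_{n \to \infty}(1)$, and the remainder smallness $\lim_{J \to \infty} \limsup_{n \to \infty} \|e^{t\Delta} w_n^{(J)}\|_{L^4_t L^6_x([0,T_0] \times \R^3)} = 0$.

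For each $j$, Conjecture \ref{global-h1-r3} applied to the homogeneous data $\phi^{(j)}$ produces an $H^1$ mild solution $U^{(j)}$ on $[0,T_0]$, which by Theorem \ref{lwp-h1-r3}(i) lies in $X^1([0,T_0] \times \R^3)$ with some (qualitative) bound $M_j < \infty$. I would assemble the approximate solution
\[
\tilde u^{(n,J)}(t,x) := \sum_{j=1}^J U^{(j)}(t, x - x_n^{(j)}) + e^{t\Delta} w_n^{(J)}(x).
\]
Using the spatial separation $|x_n^{(j)} - x_n^{(k)}| \to \infty$, the quadratic cross-interactions between distinct $U^{(j)}$'s decouple in $L^2_t L^2_x$ as $n \to \infty$, and the interaction of each $U^{(j)}$ with the linear remainder $e^{t\Delta} w_n^{(J)}$ is controlled by interpolating the $L^4_t L^6_x$ smallness of the latter against the $X^1$-boundedness of $U^{(j)}$ (using Sobolev embedding and the bilinear estimate \eqref{bilinear}). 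Together these show that the Duhamel defect $(\partial_t - \Delta) \tilde u^{(n,J)} - PB(\tilde u^{(n,J)}, \tilde u^{(n,J)})$ is small in $L^2_t L^2_x$ as $n, J \to \infty$. The Lipschitz stability estimate of Theorem \ref{lwp-h1-r3}(v) then yields $\|u^{(n)} - \tilde u^{(n,J)}\|_{X^1([0,T^{(n)}] \times \R^3)} \to 0$, and in particular $\|u^{(n)}\|_{L^\infty_t H^1_x}$ is bounded uniformly in $n$, contradicting our assumption.

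The hard part will be the last step: the stability estimate of Theorem \ref{lwp-h1-r3}(v) requires the defect to be small relative to a size $M$ that controls the reference solution, but the reference $\tilde u^{(n,J)}$ may have large cumulative $X^1$ norm $\sum_j M_j$. The resolution is to partition $[0,T_0]$ into finitely many subintervals (the number depending on $\sum_j M_j$) on which each nonlinear profile has small $X^1$ norm, and iterate the stability estimate across these subintervals; both the decoupling of profiles and the remainder smallness persist uniformly on $[0,T_0]$. A secondary subtlety is propagating the orthogonality $\|a(\cdot) b(\cdot - x_n)\|_{L^2_x} \to 0$ (for $H^1$-bounded $a, b$ and $|x_n| \to \infty$) through the Duhamel integrals defining $U^{(j)}(\cdot - x_n^{(j)})$, which requires using that the heat semigroup and the Leray projection preserve translations and that $U^{(j)}$ has sufficient space-time integrability beyond $X^1$, via Sobolev embedding. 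An optional preliminary reduction to a minimizing $A$ (the smallest value for which blowup of the $H^1$ norm can occur in the sense above) can be used to guarantee the existence of at most one "critical" profile, simplifying the decoupling step, though it is not strictly necessary for the contradiction.
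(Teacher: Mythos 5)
Your proposal is correct in overall strategy and would work, but it takes a noticeably heavier route than the paper, essentially reproducing Gallagher's critical-regularity argument in this subcritical setting. The paper's proof begins with a reduction you omit: using the local theory (Theorem \ref{lwp-h1-r3} and Lemma \ref{quant-reg}) and time translation, it first replaces the hypothesis $\|u_0\|_{H^1_x} \leq A$ by $\|u_0\|_{H^{100}_x} \leq A$, exploiting parabolic smoothing over a short initial time interval depending only on $A$. This regularity upgrade is what makes everything downstream elementary: the profile extraction in $H^{100}_x$ terminates after $O_{A,\eps}(1)$ steps with a remainder that is small in $L^\infty_x$ (each extracted profile costs a fixed amount of $H^{100}$ norm); interpolating that $L^\infty_x$ smallness against the $H^{100}$ bound makes $e^{t\Delta} r^{(n)}_0$ small in $X^1$, so the remainder's \emph{full nonlinear} evolution exists and is small by the small-data theory; and the approximate solution is then a finite sum of translated profiles plus a genuinely small nonlinear solution, so the defect consists only of cross terms that vanish by spatial divergence of the translation parameters, and one application of the stability lemma closes the argument. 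Your version instead keeps the data at $H^1$, which forces you to handle (i) infinitely many profiles with only square-summable $H^1$ norms, requiring small-data theory for all but finitely many profiles plus almost-orthogonality of the nonlinear evolutions to bound $\|\tilde u^{(n,J)}\|_{X^1}$ uniformly; (ii) remainder smallness in $L^4_t L^6_x$, which sits at the Sobolev endpoint where the translation-only elliptic profile decomposition does \emph{not} give smallness of $w_n^{(J)}$ itself — you must genuinely use the parabolic smoothing of $e^{t\Delta}$ (interpolating a non-endpoint $L^p_x$ smallness against a decaying higher-$L^q_x$ bound) to recover it; and (iii) the time-partitioning iteration of the stability estimate. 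All of these are standard and can be carried out, so there is no gap, but each is a nontrivial verification that the paper's $H^{100}$ reduction renders unnecessary. If you wanted to keep your route, the endpoint issue in (ii) is the one place you should spell out carefully, since the smallness there comes from the heat flow and not from the profile decomposition alone.
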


We now prove this proposition.  The methods are essentially those of \cite{gallagher} (which are in turn based in \cite{bahouri}, \cite{gerard}), which treated the (more difficult) critical analogue of this implication; indeed, one can view Proposition \ref{compact} as a subcritical analogue of the critical result \cite[Corollary 1]{gallagher}.  For the convenience of the reader, though, we give a self-contained proof here, which does not need the full power of the machinery in the previously cited papers because we are now working in a subcritical regularity $H^1$ rather than a critical regularity such as $\dot H^{1/2}$, and as such one does not need to consider the role of the scaling symmetry \eqref{scaling}.

We first make the remark that to prove Conjecture \ref{global-h1-quant-r3}, it suffices to do so with the condition
\begin{equation}\label{ua}
 \|u_0\|_{H^1_x(\R^3)} \leq A 
\end{equation}
replaced by (say)
\begin{equation}\label{ub}
\|u_0\|_{H^{100}_x(\R^3)} \leq A 
\end{equation}
To see this, observe that if we take data $u_0$ in $H^1_x(\R^3)$, then from Theorem \ref{lwp-h1-r3} and Lemma \ref{quant-reg} there exists a time $T_1 > 0$ depending only on $A$ such that 
$$ \| u \|_{L^\infty_t H^1_x([0,\min(T,T_1)] \times \R^3)} \lesssim_A 1,$$
and such that 
$$ \|u(T_1) \|_{H^{100}_x(\R^3)} \lesssim_A 1$$
if $T > T_1$.  From this and time translation symmetry \eqref{time-translate} we see that we can deduce the $H^1_x(\R^3)$ version of Conjecture \ref{global-h1-quant-r3} from the $H^{100}_x(\R^3)$ version.

Now suppose for contradiction that the $H^{100}_x(\R^3)$ version of Conjecture \ref{global-h1-quant-r3} failed.  Carefully negating the quantifiers, we can find a sequence $(u^{(n)}, p^{(n)}, u_0^{(n)}, 0, T^{(n)})$ of smooth homogeneous $H^1$ solutions, with $T^{(n)}$ uniformly bounded, and $u_0^{(n)}$ uniformly bounded in $H^{100}_x(\R^3)$, such that
\begin{equation}\label{unt}
 \lim_{n \to \infty} \| u^{(n)} \|_{L^\infty_t H^1_x([0,T^{(n)}] \times \R^3)} = \infty.
\end{equation}
By Lemma \ref{reduction} we may assume that these solutions have normalised pressure.  

If we were working on a compact domain, such as $\R^3/\Z^3$, we could now extract a subsequence of the $u_0^{(n)}$ that converged strongly in a lower regularity space, such as $H^{99}_x(\R^3/\Z^3)$.  But our domain $\R^3$ is non-compact, and in particular has the action of a non-compact symmetry group, namely the translation group $\tau_{x_0} u(x) := u(x-x_0)$.  However, as is well known, we have a substitute for compactness in this setting, namely \emph{concentration compactness}.  Specifically:

\begin{proposition}[Profile decomposition]  Let $u^{(n)}_0 \in H^{100}_x(\R^3)$ be a sequence with
$$ \limsup_{n \to \infty} \|u^{(n)}_0\|_{H^{100}_x(\R^3)} \leq A,$$
and let $\eps > 0$.  Then, after passing to a subsequence, then there exists a decomposition
$$ u_0^{(n)} = \sum_{j=1}^J \tau_{x_j^{(n)}} w_{j,0} + r^{(n)}_0,$$
where $|J| \lesssim_{A,\eps} 1$, $w_{1,0},\ldots,w_{J,0} \in H^{100}_x(\R^3)$, $x_j^{(n)} \in \R^3$, and the remainder $r^{(n)}_0$ obeys the estimates
$$ \limsup_{n \to \infty} \|r^{(n)}_0\|_{H^{100}_x(\R^3)} \leq A$$
and 
\begin{equation}\label{rino}
 \limsup_{n \to \infty} \| r^{(n)}_0 \|_{L^\infty_x(\R^3)} \leq \eps.
\end{equation}
Furthermore, for any $1 \leq j < j' \leq J$, one has
\begin{equation}\label{diverge}
 |x_j^{(n)} - x_{j'}^{(n)}| \to \infty,
 \end{equation}
and for any $1 \leq j \leq J$, the sequence $\tau_{-x_j^{(n)}} r^{(n)}_0$ converges weakly in $H^{100}_x(\R^3)$ to zero.

Finally, if the $u^{(n)}_0$ are divergence-free, then the $w_{j,0}$ and $r^{(n)}_0$ are also divergence-free.
\end{proposition}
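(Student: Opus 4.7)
The plan is to prove the profile decomposition by an iterative bubble-extraction procedure, using the Hilbert space structure of $H^{100}_x(\R^3)$ to control how many bubbles can be extracted before the $L^\infty$ remainder becomes small. The key bookkeeping quantity is the squared $H^{100}$ norm, which splits additively under weak convergence thanks to Pythagoras. The scheme is: at each stage, either the $L^\infty$ norm of the current remainder is already $\leq \eps$ (and we stop), or we can extract a concentration point at which the Sobolev embedding $H^{100} \hookrightarrow L^\infty$ forces the weak limit to carry a definite amount of $H^{100}$ energy.

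Concretely, set $r^{(n)}_0 := u_0^{(n)}$ initially. At stage $j$, if $\limsup_n \|r^{(n)}_{j-1}\|_{L^\infty_x} > \eps$, pick (after passing to a subsequence) points $x_j^{(n)}$ with $|r^{(n)}_{j-1}(x_j^{(n)})| \geq \eps/2$. The translates $\tau_{-x_j^{(n)}} r^{(n)}_{j-1}$ are bounded in $H^{100}_x(\R^3)$, so after a further subsequence they converge weakly to some $w_{j,0} \in H^{100}_x(\R^3)$; by Rellich compactness this convergence is strong in $C^0$ on compact sets, so $|w_{j,0}(0)| \geq \eps/2$, and in particular $\|w_{j,0}\|_{H^{100}_x(\R^3)} \gtrsim \eps$ via the Sobolev embedding $\|w\|_{L^\infty_x(\R^3)} \lesssim \|w\|_{H^{100}_x(\R^3)}$. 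Define $r^{(n)}_j := r^{(n)}_{j-1} - \tau_{x_j^{(n)}} w_{j,0}$. Because $\tau_{-x_j^{(n)}} r^{(n)}_j = \tau_{-x_j^{(n)}} r^{(n)}_{j-1} - w_{j,0} \rightharpoonup 0$ in $H^{100}_x(\R^3)$, Pythagoras in the Hilbert space $H^{100}_x(\R^3)$ yields
\begin{equation*}
\|r^{(n)}_{j-1}\|_{H^{100}_x(\R^3)}^2 = \|w_{j,0}\|_{H^{100}_x(\R^3)}^2 + \|r^{(n)}_j\|_{H^{100}_x(\R^3)}^2 + o(1).
\end{equation*}
Summing telescopically, and recalling that each accepted bubble contributes $\gtrsim \eps^2$, the iteration must terminate after $J \lesssim A^2/\eps^2$ steps, at which point the remainder satisfies \eqref{rino}. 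Unwinding the recursion gives the claimed decomposition $u_0^{(n)} = \sum_{j=1}^J \tau_{x_j^{(n)}} w_{j,0} + r_0^{(n)}$.

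The main obstacle, and the step I expect to require the most care, is verifying the asymptotic orthogonality \eqref{diverge} of the translation parameters. Suppose $1 \leq j < j' \leq J$ and, for contradiction, that $|x_j^{(n)} - x_{j'}^{(n)}|$ remains bounded along a subsequence; passing to a further subsequence we may assume $x_{j'}^{(n)} - x_j^{(n)} \to y$ for some $y \in \R^3$. Since $\tau_{-x_j^{(n)}} r^{(n)}_j \rightharpoonup 0$ in $H^{100}_x(\R^3)$ by construction, and $r_{j'-1}^{(n)} = r_j^{(n)} - \sum_{j < k < j'} \tau_{x_k^{(n)}} w_{k,0}$, translating by $-x_{j'}^{(n)}$ and using the continuity of translation on $H^{100}_x(\R^3)$ together with an induction hypothesis on previously established divergences $|x_k^{(n)} - x_{j'}^{(n)}| \to \infty$ for $j < k < j'$ (so that $\tau_{-x_{j'}^{(n)}} \tau_{x_k^{(n)}} w_{k,0} \rightharpoonup 0$), we conclude $\tau_{-x_{j'}^{(n)}} r_{j'-1}^{(n)} \rightharpoonup 0$, contradicting $|w_{j',0}(0)| \geq \eps/2$. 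Thus \eqref{diverge} holds, and by the same argument each $\tau_{-x_j^{(n)}} r_0^{(n)}$ converges weakly to zero.

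Finally, the divergence-free property is preserved throughout: the operator $\nabla \cdot$ is continuous from $H^{100}_x(\R^3)$ to $H^{99}_x(\R^3)$, so weak limits of divergence-free sequences are divergence-free; hence each $w_{j,0}$ is divergence-free, and the remainders $r^{(n)}_j$ inherit this property by subtraction. Combining all these pieces yields the proposition.
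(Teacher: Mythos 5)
Your proposal is correct and follows essentially the same route as the paper's own (sketched) proof from \cite{gerard}: extract concentration points where the remainder exceeds $\eps/2$ in $L^\infty$, pass to weak limits of the translates, and use the cosine rule/Pythagoras in the Hilbert space $H^{100}_x(\R^3)$ to force termination after $O_{A,\eps}(1)$ steps. You additionally spell out the asymptotic orthogonality of the translation parameters and the preservation of the divergence-free condition, which the paper leaves implicit.
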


\begin{proof}  See e.g. \cite{gerard}.  We sketch the (standard) proof as follows.  If $\|u^{(n)}_0\|_{L^\infty_x(\R^3)} \leq \eps$ for all sufficiently large $n$ then there is nothing to prove (just take $J=0$ and $r^{(n)}_0 := u^{(n)}_0$).  Otherwise, after passing to a subsequence, we can find a sequence $x_1^{(n)} \in \R^3$ such that $|u^{(n)}_0(x_1^{(n)})| \geq \eps/2$ (say).  The sequence $\tau_{-x_1^{(n)}} u^{(n)}_0$ is then bounded in $H^{100}_x(\R^3)$ and bounded away from zero at the origin; by passing to a further subsequence, we may assume that it converges weakly in $H^{100}_x(\R^3)$ to a limit $w_1$, which then has an $H^{100}_x(\R^3)$ norm of $\gtrsim_{A,\eps} 1$ and is asymptotically orthogonal in the Hilbert space $H^{100}_x(\R^3)$ to $\tau_{-x_1^{(n)}} u^{(n)}_0$.  We can then decompose
$$ u^{(n)}_0 = \tau_{x_1^{(n)}} w_{1,0} + u^{(n),1}_0,$$
and from an application of the cosine rule in the Hilbert space $H^{100}_x(\R^3)$ one can verify that
$$ \limsup_{n \to \infty} \| u^{(n),1}_0\|_{H^{100}_x(\R^3)}^2 \leq A^2 - c$$
for some $c > 0$ depending only on $\eps, A$.  We can then iterate this procedure $O_{J,\eps}(1)$ times to obtain the desired decomposition.
\end{proof}

We apply this proposition with a value of $\eps > 0$ depending on $A, T$ to be chosen later.  The $w_{j,0}$ lie in $H^{100}_x(\R^3)$, and thus by the assumption that Conjecture \ref{global-h1-r3} is true, we can find mild $H^1$ solutions $(w_j, p_j, w_{j,0}, 0, T)$ with this data.  By Theorem \ref{lwp-h1} we have
$$ \|w_j\|_{X^{100}} < \infty$$
for each $1 \leq j \leq J$, and to abbreviate the notation we adopt the convention that the spacetime domain is understood to be $[0,T] \times \R^3$.

Next, we consider the remainder term $r^{(n)}_0$.  From \eqref{energy-duh} one has
$$ \|e^{t\Delta} r^{(n)}_0 \|_{X^{100}} \lesssim A$$
while from \eqref{rino} one has
$$ \|e^{t\Delta} r^{(n)}_0 \|_{L^\infty_t L^\infty_x} \lesssim \eps$$
for $n$ sufficiently large.  Interpolating between the two, we soon conclude that
$$ \|e^{t\Delta} r^{(n)}_0 \|_{X^1} \lesssim_{A,T} \eps^c$$
for some absolute constant $c>0$.  If we take $\eps$ sufficiently small depending on $A, T$, we can use stability of the zero solution (see Theorem \ref{lwp-h1}; one could also have used here the results from \cite{chemin}) to conclude the existence of a mild $H^1$ solution $(r^{(n)}, p_*^{(n)}, r^{(n)}_0, 0, T)$ with this data, with the estimates
\begin{equation}\label{rn-small}
\| r^{(n)} \|_{X^1} \lesssim_{A,T} \eps^c;
\end{equation}
from Theorem \ref{lwp-h1} we then also have
$$ \| r^{(n)} \|_{X^{100}} \lesssim_{A,T} 1.$$
We now form the solution
$$ (\tilde u^{(n)}, \tilde p^{(n)}, u^{(n)}_0, \tilde f^{(n)}, T)$$
where the velocity field $\tilde u^{(n)}$ is given by
$$ \tilde u^{(n)} := \sum_{j=1}^J \tau_{x_j^{(n)}} w_{j} + r^{(n)},$$
the pressure field $\tilde p^{(n)}$ is given by \eqref{pressure-point}, and the forcing term $\tilde f^{(n)}$ is given by the formula
$$ \tilde f^{(n)} := \partial_t \tilde u^{(n)} - \Delta \tilde u^{(n)} - P B( \tilde u^{(n)}, \tilde u^{(n)} ).$$
This is clearly a mild $H^1$ solution, with
$$ \| \tilde u^{(n)} \|_{X^{100}} \lesssim_{A,T,\eps} 1.$$
We now estimate $\tilde f^{(n)}$.  From \eqref{ns-project} for the solutions $\tau_{x_j^{(n)}} w_{j} + r^{(n)}$, we have an expansion of $\tilde f^{(n)}$ purely involving nonlinear interaction terms:
\begin{align*}
\tilde f^{(n)} &= \sum_{1 \leq j < j' \leq J} P \bigO( \nabla ( \tau_{x_j^{(n)}} w_{j}, \tau_{x_{j'}^{(n)}} w_{j'} ) ) \\
&\quad + \sum_{1 \leq j < J} P \bigO( \nabla ( \tau_{x_j^{(n)}} w_{j}, r^{(n)} ) ).
\end{align*}
In particular, from the triangle inequality and translation invariance we have
\begin{align*}
\| \tilde f^{(n)} \|_{L^2_t L^2_x} &\lesssim \sum_{1 \leq j < j' \leq J} \| \bigO( \nabla ( w_{j}, \tau_{x_{j'}^{(n)} - x_j^{(n)}} w_{j'} ) ) \|_{L^2_t L^2_x} \\
&\quad + \sum_{1 \leq j < J} \| \bigO( \nabla ( w_{j}, \tau_{-x_j^{(n)}} r^{(n)} ) ) \|_{L^2_t L^2_x}.
\end{align*}
But by \eqref{diverge} and Sobolev embedding, $\tau_{x_{j'}^{(n)} - x_j^{(n)}} w_{j'}$ and $\tau_{-x_j^{(n)}} r^{(n)}$ are bounded in $L^\infty_t L^\infty_x$ and converge locally uniformly to zero, and so we conclude that
$$ \lim_{n \to \infty} \| \tilde f^{(n)} \|_{L^2_t L^2_x} = 0.$$
From this and the stability theory in Theorem \ref{lwp-h1-r3}, we conclude that for $n$ large enough, there is an $H^1$ mild solution $(u^{(n)}, p^{(n)}, u_0^{(n)}, 0, T)$ with
$$ \lim_{n \to \infty} \| \tilde u^{(n)} - u^{(n)} \|_{X^1} = 0,$$
and in particular
$$ \limsup_{n \to \infty} \| u^{(n)} \|_{L^\infty_t H^1_x([0,T] \times \R^3)} < \infty.$$
By the uniqueness theory in Theorem \ref{lwp-h1-r3}, this solution must agree with the original solutions $(u^{(n)}, p^{(n)}, u_0^{(n)}, 0, T^{(n)})$ on $[0,T^{(n)}] \times \R^3$; but then we contradict \eqref{unt}.  Proposition \ref{compact} follows.

\section{Non-existence of smooth solutions}\label{counter-sec}

In this section we establish Theorem \ref{counter}.  Informally, the reason for the irregularity is as follows.  Assuming normalised pressure, one concludes from \eqref{pressure-point} that
$$ p = \bigO( \Delta^{-1} \nabla^2( u u ) ).$$
If one then differentiates this twice in time, using \eqref{ns} to convert time derivatives of $u$ into $\Delta u$ plus lower order terms, integration by parts to redistribute derivatives, we eventually obtain (formally, at least) a formula of the form
$$ \partial_t^2 p = \bigO( \Delta^{-1} \nabla^2( \Delta u \Delta u ) ) + \hbox{l.o.t.}.$$
But if $u_0$ is merely assumed to be smooth and in $H^1$, then $\Delta u$ can grow arbitrarily fast at infinity at time $t=0$, and this should cause $p$ to fail to be $C^2_t$ at time zero.

We turn to the details.  To eliminate the normalised pressure assumption, we will work with $\nabla p$ instead of $p$, thus we will seek to establish bad behaviour for $\nabla \partial_t^2 p$ at time $t=0$.  For technical reasons it is convenient to work in the weak topology in space.  The key quantitative step is the following:

\begin{proposition}[Quantitative failure of regularity]\label{quant-fail}  Let $u_0: \R^3 \to \R^3$ be smooth, divergence-free, and compactly supported, and let $\psi: \R^3 \to \R$ be smooth, compactly supported, and have total mass $\int_{\R^3} \psi = 1$.  Let $R, M, \eps > 0$.  Then there exists a smooth divergence-free compactly supported function $u_1$ which vanishes on $B(0,R)$ with
$$ \|u_1\|_{H^1_x(\R^3)} \lesssim \eps$$
and such that if $(u,p,u_0+u_1,0,T)$ is a mild $H^1$ (and hence smooth, by Proposition \ref{almost-smooth}) solution with data $(u_0+u_1,0,T)$, then
\begin{equation}\label{abs}
\left|\int_{\R^3} \nabla \partial_t^2  p(0,x) \psi(x)\ dx\right| > M.
\end{equation}
\end{proposition}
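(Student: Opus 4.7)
The plan is to compute $\nabla\partial_t^2 p(0,x)$ explicitly in terms of the initial data $v := u_0 + u_1$, identify a leading quadratic-in-$\Delta v$ contribution when tested against $\psi$, and then choose $u_1$ as a high-frequency perturbation whose Laplacian is pointwise large while its $H^1$ norm remains small.

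First, I would derive the formula. Since $u_0+u_1$ is smooth, divergence-free, and compactly supported, Theorem~\ref{lwp-h1-r3} and Proposition~\ref{almost-smooth} produce a mild $H^1$ solution that is classically smooth on $[0,T]\times\R^3$ for some $T>0$. By Corollary~\ref{as-mild}, $\nabla p(t)=-\nabla\Delta^{-1}\partial_i\partial_j(u_iu_j)(t)$, so differentiating twice in time gives
\[
\nabla\partial_t^2 p = -2\,\nabla\Delta^{-1}\partial_i\partial_j\bigl[(\partial_t u_i)(\partial_t u_j)+u_i\,\partial_t^2 u_j\bigr].
\]
Using \eqref{ns} to write $\partial_t u(0)=\Delta v+B(v)$ with $B$ involving only $v$, $\nabla v$, and $\nabla p_0$, and $\partial_t^2 u(0)=\Delta^2 v+Q(v)$ with $Q$ involving at most one Laplacian per factor, then pairing with $\psi$ and integrating by parts yields
\[
\int\nabla\partial_t^2 p(0,x)\,\psi(x)\,dx = 2\int\!\bigl[\Delta v_i\,\Delta v_j+v_i\,\Delta^2 v_j\bigr](y)\,L_{ij}(y)\,dy+\mathcal E(v),
\]
where $L_{ij}(y):=\partial_i\partial_j\nabla\Delta^{-1}\psi(y)$ is smooth, decays like $|y|^{-4}$ at infinity, and $\mathcal E(v)$ collects the remaining contributions. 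Any additive time-dependent constant in the pressure (as permitted by Lemma~\ref{reduction}) is killed by $\nabla$.

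Next, I would construct
\[
u_1(x):=\nabla\times\bigl(\alpha\,N^{-3}\phi(x)\cos(Nx_1)\,e_2\bigr),
\]
where $\phi$ is a fixed smooth nonnegative bump supported in a small ball centred at a far-field point $y^\ast$ with $|y^\ast|\gg R$ (so $u_1\equiv 0$ on $B(0,R)$), $N$ is a large frequency, and $\alpha\sim\eps N$. A direct computation gives $\|u_1\|_{H^1_x}\lesssim\alpha/N\sim\eps$, while the leading parts of $u_1,\Delta u_1,\Delta^2 u_1$ in the $e_3$ direction are $u_{1,3}\approx-\alpha N^{-2}\phi\sin(Nx_1)$, $\Delta u_{1,3}\approx\alpha\phi\sin(Nx_1)$, and $\Delta^2 u_{1,3}\approx-\alpha N^2\phi\sin(Nx_1)$, with all other components smaller by a factor of $N$. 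Consequently both $(\Delta u_{1,3})^2$ and $u_{1,3}\Delta^2 u_{1,3}$ equal $\alpha^2\phi^2\sin^2(Nx_1)$ at leading order (the two minus signs cancel), so the dominant $(i,j)=(3,3)$ part of the main integral is
\[
2\int\!\bigl[(\Delta u_{1,3})^2+u_{1,3}\Delta^2 u_{1,3}\bigr]L_{33}\,dy=2\alpha^2\int\phi^2\,L_{33}\,dy+O(\alpha^2/N^\infty),
\]
the oscillatory remainder being killed by non-stationary phase on $\cos(2Ny_1)$. Choosing $y^\ast$ on the $y_3$-axis and using $\Delta^{-1}\psi(y)\sim -1/(4\pi|y|)$ in the far field (since $\int\psi=1$), the third component of $L_{33}$ satisfies $\partial_3^3\Delta^{-1}\psi(y^\ast)\sim 3/(2\pi|y^\ast|^4)\neq 0$, so this leading term has size $\gtrsim\eps^2 N^2$.

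Finally, I would verify that $\mathcal E(v)$ and all subleading $(i,j)$ slots are dominated. Pure-$u_0$ terms are $O(1)$. Cross terms pairing a smooth $u_0$-derived factor with a single oscillatory $\Delta u_1$ or $\Delta^2 u_1$ factor are $O(\alpha/N^k)$ for every $k\ge 0$ by non-stationary phase against the smooth amplitude. Pure-$u_1$ terms in $\mathcal E(v)$ involve at most one Laplacian per factor and give pointwise bounds of order $\alpha^2/N^2=\eps^2$ at worst. All of these are dominated by $\eps^2 N^2$ for $N$ large enough depending on $\eps,M,u_0,\phi,\psi$, yielding \eqref{abs}. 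The main obstacle is the bookkeeping and sign check in the preceding step: verifying that the two contributions $\int\Delta v_i\Delta v_j L_{ij}$ and $\int v_i\Delta^2 v_j L_{ij}$ reinforce rather than cancel, and confirming that no hidden term in $\mathcal E(v)$ carries a non-oscillatory factor of size $\alpha N^k$ with $k>0$ that could overwhelm or cancel the main term.
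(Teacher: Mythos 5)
Your proposal is correct and follows essentially the same route as the paper: both reduce matters to showing that the quadratic form $\int (\partial_i\partial_j\nabla\Delta^{-1}\psi)\,\Delta v_i\,\Delta v_j$ dominates every other contribution to $\int \nabla\partial_t^2 p(0)\,\psi$, and both realise this with a far-field, high-frequency, divergence-free wave packet whose $H^1_x$ norm is small while its $H^2_x$ norm is large, using $\int\psi=1$ to see that $\nabla^3\Delta^{-1}\psi$ is nonvanishing in the far field. The two worries you flag do resolve favourably: integrating by parts twice gives $\int v_i\,\Delta^2 v_j\,L_{ij}=\int \Delta v_i\,\Delta v_j\,L_{ij}$ up to terms carrying at most one factor of $\Delta v$, so your two leading pieces reinforce, and the worst remaining terms are of size $O(\eps^2 N)$ or oscillatory against a smooth amplitude, well below the main term $\sim\eps^2N^2$ (the paper sidesteps this bookkeeping by performing the analogous integrations by parts symbolically between the two time differentiations, so that its error terms $X_1,\dots,X_5$ are manifestly $O((1+\|u_1\|_{H^2_x})^{3/2})$ against a main term $\gtrsim \|u_1\|_{H^2_x}^2$).
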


Let us assume this proposition for now and conclude Theorem \ref{counter}.  
We will use an argument reminiscent of that used to establish the Baire category theorem or the uniform boundedness principle.  Let $\psi: \R^3 \to \R$ be a fixed smooth, compactly supported function with total mass $1$.
We will need a rapidly decreasing sequence
$$ \eps^{(1)} > \eps^{(2)} > \ldots > 0$$
of small quantities to be chosen later, with each $\eps^{(n)}$ sufficiently small depending on the previous $\eps^{(1)},\ldots,\eps^{(n-1)}$.  Applying Proposition \ref{quant-fail} recursively starting with $u_0=0$, one can then find a sequence of smooth, divergence-free, and compactly supported functions $u_1^{(n)}$ for $n=1,2,\ldots$ such that
$$ \|u_1^{(n)} \|_{H^1_x(\R^3)} \lesssim \eps^{(n)}$$
with $u_1^{(n)}$ vanishing on $B(0,1/\eps^{(n)})$, such that if $(u^{(n)},p^{(n)}, u_0^{(n)}, 0, T^{(n)})$ is a mild $H^1$ (and hence smooth) solution with data
$$ u_0^{(n)} := u_1^{(1)} + \ldots + u_1^{(n)},$$
then
\begin{equation}\label{alo}
 |\int_{\R^3} \nabla \partial_t^2  p^{(n)}(0,x) \psi(x)\ dx| > 1/\eps^{(n)}.
\end{equation}
Furthermore, each $u_1^{(n)}$ depends only on $\eps^{(1)},\ldots,\eps^{(n)}$, and in particular is independent of $\eps^{(n+1)}$.

By the triangle inequality (and assuming the $\eps^{(n)}$ decay fast enough), the data $u_0^{(n)}$ is strongly convergent in $H^1_x(\R^3)$ to a limit $u_0 = \sum_{n=1}^\infty u^{(n)}_1 \in H^1_x(\R^3)$, with
$$ \| u_0 - u^{(n)}_0 \|_{H^1_x(\R^3)} \lesssim \eps^{(n+1)}.$$
If we make each $\eps^{(n+1)}$ sufficienly small depending on $u^{(n)}_0$, and hence on $\eps^{(1)},\ldots,\eps^{(n)}$, then the $u^{(n)}_1$ will have disjoint supports; as each $u^{(n)}_1$ is smooth and divergence-free, this implies that $u_0 = \sum_{n=1}^\infty u^{(n)}_1$ is also smooth and divergence-free.

Applying Theorem \ref{lwp-h1}, we may then take the times $T^{(n)} = 1$ (if the $\eps^{(n)}$ are small enough), and $(u^{(n)},p^{(n)},u_0^{(n)},0,1)$ will converge to a mild $H^1$ solution $(u,p,u_0,0,1)$ in the sense that $u^{(n)}$ converges strongly in $X^1([0,1] \times \R^3)$ to $u$.  Indeed, from the Lipschitz stability property we see (if the $\eps^{(n)}$ decay fast enough) that
$$ \| u - u^{(n)} \|_{X^1([0,1] \times \R^3)} \lesssim \eps^{(n+1)}$$
Also, $u, u^{(n)}$ are bounded in $X^1([0,1] \times \R^3)$ by $O(1)$.  
Using \eqref{pressure-point} and Sobolev embedding, this implies that
$$ \| p - p^{(n)} \|_{L^\infty_t L^3_x([0,1] \times \R^3)} \lesssim \eps^{(n+1)},$$
and so if one sets
$$ F^{(n)}(t) := \int_{\R^3} \nabla p^{(n)}(t,x) \psi(x)\ dx$$
and
$$ F(t) := \int_{\R^3} \nabla p(t,x) \psi(x)\ dx$$
then from integration by parts we have
\begin{equation}\label{linf}
 \| F - F^{(n)} \|_{L^\infty_t([0,1])} \lesssim \eps^{(n+1)}.
 \end{equation}
Meanwhile, each $F^{(n)}$ is smooth, and $F$ continuous, from Proposition \ref{almost-smooth}, and from \eqref{alo} one has
$$ |\partial_t^2 F^{(n)}(0)| \geq 1/\eps^{(n)}.$$
In particular, if $\eps^{(n+1)}$ is sufficiently small depending on $F^{(n)}$ (which in turn depends on $\eps^{(1)},\ldots,\eps^{(n)}$), one has from Taylor's theorem with remainder that
$$ \frac{|F^{(n)}( 2 (\eps^{(n+1)})^{0.1} ) - 2 F^{(n)}( (\eps^{(n+1)})^{0.1} ) + F^{(n)}( 0 )|}{(\eps^{(n+1)})^{0.2}} \gtrsim 1 / \eps^{(n)}.$$
Applying \eqref{linf}, we conclude that
$$ \frac{|F( 2 (\eps^{(n+1)})^{0.1} ) - 2 F( (\eps^{(n+1)})^{0.1} ) + F( 0 )|}{(\eps^{(n+1)})^{0.2}} \gtrsim 1 / \eps^{(n)}$$
if $\eps^{(n+1)}$ is sufficiently small depending on $\eps^{(1)},\ldots,\eps^{(n)}$.  In particular,
$$ \limsup_{h \to 0^+} \frac{|F(2h) -2F(h) + F(0)|}{h^2} = +\infty$$
which by Taylor's theorem with remainder implies that $F$ is not smooth at $0$.

We claim that the data $u_0$ gives the desired counterexample to Theorem \ref{counter}.  Indeed, suppose for contradiction that there was a smooth solution $(\tilde u, \tilde p, u_0, 0, T)$ for some $T>0$.  By shrinking $T$ we may assume $T \leq 1$.  By Lemma \ref{reduction} we see that $\tilde p(t)$ has normalised pressure up to a constant for almost every $t$, and thus after adjusting $\tilde p(t)$ by that constant, $(\tilde u, \tilde p, u_0, 0, T)$ is a mild $H^1$ solution.  Using the uniqueness property in Theorem \ref{lwp-h1}, we conclude that $u = \tilde u$, and $p(t)$ and $\tilde p(t)$ differ by a constant for almost every $t$, and hence (by continuity of both $p$ and $\tilde p$) for every $t$.  In particular, $\nabla p = \nabla \tilde p$, and so
$$ F(t) = \int_{\R^3} \nabla \tilde p(t,x) \psi(x)\ dx.$$
But as $\tilde p$ is smooth on $[0,T] \times \R^3$, $F$ is smooth at $0$, a contradiction.

\begin{remark} The above argument showed that $\nabla p$ failed to be smooth at $t=0$; by using \eqref{ns} we conclude that the velocity field $u$ must then also be non-smooth at $t=0$ (though the velocity $u$ has one more degree of time regularity than the pressure $p$).  Thus the failure of regularity is not just an artefact of pressure normalisation.  Using the vorticity equation \eqref{vorticity-eq} one can then show a similar failure of time regularity for the vorticity, although again one gains an additional degree of time differentiability over the velocity $u$.  

The irregularities in time stem from the unbounded growth of high derivatives of the initial data.  If one assumes that \emph{all} spatial derivatives of $u_0$ are in $L^2_x(\R^3)$, i.e. that $u_0 \in H^\infty(\R^3)$ then one can prove iteratively\footnote{We thank Richard Melrose for this observation.} that all time derivatives of $u$ and $p$ at time zero are bounded, and also have first spatial derivatives in $H^\infty(\R^3)$ (basically because the first derivative of the kernel of the Leray projection is integrable at infinity).  In particular, $u$ and $p$ now remain smooth at time $0$.
\end{remark}

It remains to establish Proposition \ref{quant-fail}.  Fix $u_0,\psi,R,M,\eps$, and let $u_1$ be a smooth divergence-free compactly supported function $u_1$ vanishing on $B(0,R)$ with $H^1_x(\R^3)$ norm $O(\eps)$ to be chosen later.  Let $(u,p,u_0+u_1,0,T)$ be a mild $H^1$ solution with this given data.  By Theorem \ref{lwp-h1}, this is a smooth solution, with all derivatives of $u, p$ lying in $L^\infty_t L^2_x$.  From Lemma \ref{reduction} we thus have
\begin{equation}\label{above}
 \nabla p = -\nabla \Delta^{-1} \partial_i \partial_j(u_i u_j)
 \end{equation}
for almost all times $t$.  But both sides are smooth in $[0,T] \times \R^3$, so this formula is valid for all times $t$ (and in particular at $t=0$).  In particular, we may apply a Leray projection $P$ to \eqref{ns} and conclude that
\begin{equation}\label{boo}
 \partial_t u = \Delta u + P B( u, u ).
\end{equation}

We differentiate \eqref{above} once in time to obtain
$$ \nabla \partial_t p = -2\nabla \Delta^{-1} \partial_i \partial_j(u_i \partial_t u_j).$$
Expanding out $\partial_t u_j$ using \eqref{ns}, we obtain
$$ \nabla \partial_t p = -2\nabla \Delta^{-1} \partial_i \partial_j(u_i \Delta u_j) + \bigO( \Delta^{-1} \nabla^3( u P B(u,u) ) ).$$
Writing
$$ \partial_i \partial_j( u_i \Delta u_j ) = -2 \partial_i \partial_j( (\partial_k u_i) (\partial_k u_j) ) + \bigO( \nabla^4( u u ) )$$
we thus have
$$ \nabla \partial_t p = 2\nabla \Delta^{-1} \partial_i \partial_j(\partial_k u_i \partial_k u_j) + \bigO( \Delta^{-1} \nabla^5( u u ) )+ \bigO( \Delta^{-1} \nabla^3( u P B(u,u) ) ).$$
We differentiate this in time again and use \eqref{boo} to obtain
\begin{align*}
\nabla \partial_t^2 p &= 4\nabla \Delta^{-1} \partial_i \partial_j(\partial_k u_i \partial_k \Delta u_j)  \\
&\quad + \bigO( \Delta^{-1} \nabla^3( (\nabla u) \nabla PB(u,u) ) ) \\
&\quad + \bigO( \Delta^{-1} \nabla^5( u \partial_t u ) \\
&\quad + \bigO( \Delta^{-1} \nabla^3( (\partial_t u) P B(u,u) ) ) \\
&\quad + \bigO( \Delta^{-1} \nabla^3( u P B(u,\partial_t u) ) ).
\end{align*}
We can write $\partial_k u_i \partial_k \Delta u_j = -(\Delta u_i) (\Delta u_j) + \bigO( \nabla( \nabla u \Delta u ) )$, so that
\begin{align*}
\nabla \partial_t^2 p &= -4\nabla \Delta^{-1} \partial_i \partial_j(\Delta u_i \Delta u_j)  \\
&\quad + \bigO( \Delta^{-1} \nabla^4( \nabla u \Delta u ) \\
&\quad + \bigO( \Delta^{-1} \nabla^3( (\nabla u) \nabla PB(u,u) ) ) \\
&\quad + \bigO( \Delta^{-1} \nabla^5( u \partial_t u ) \\
&\quad + \bigO( \Delta^{-1} \nabla^3( (\partial_t u) P B(u,u) ) ) \\
&\quad + \bigO( \Delta^{-1} \nabla^3( u P B(u,\partial_t u) ) ).
\end{align*}
Integrating this against $\psi$, we may thus expand 
$$
\int_{\R^3} \nabla \partial_t^2  p(0,x) \psi(x)\ dx = 4 X_0 + \sum_{i=1}^5 \bigO( X_i )$$
where
\begin{align*}
X_0 &:= \int_{\R^3} (\partial_i \partial_j \nabla \Delta^{-1} \psi) \Delta u_i \Delta u_j \\
X_1 &:= \int_{\R^3} (\nabla^4 \Delta^{-1} \psi) \nabla u \Delta u\\
X_2 &:= \int_{\R^3} (\nabla^3 \Delta^{-1} \psi) \nabla u \nabla PB(u,u)\\
X_3 &:= \int_{\R^3} (\nabla^5 \Delta^{-1} \psi) u \partial_t u\\
X_4 &:= \int_{\R^3} (\nabla^3 \Delta^{-1} \psi) (\partial_t u) PB(u,u)\\
X_5 &:= \int_{\R^3} (\nabla^3 \Delta^{-1} \psi) u PB(u,\partial_t u),
\end{align*}
with all expressions being evaluated at time $0$.

From \eqref{boo} and Sobolev embedding one has
$$ \| \partial_t u(0) \|_{L^2_x(\R^3)} \lesssim_{u_0} 1 + \|u_1\|_{H^2_x(\R^3)}.$$
Meanwhile, if $\eps$ is small enough, we see that
$$ \| u(0) \|_{H^1_x(\R^3)} \lesssim_{u_0} 1$$
and thus from the Gagliardo-Nirenberg inequality
$$ \|u(0)\|_{L^\infty_x(\R^3)} \lesssim_{u_0} (1 + \|u_1\|_{H^2_x(\R^3)})^{1/2}.$$
From many applications of the Sobolev and H\"older inequalities (and, in the case of $X_5$, an integration by parts to move the derivative off of $\partial_t u$), we conclude that
$$ |X_i| \lesssim_{u_0,\psi} (1 + \|u_1\|_{H^2_x(\R^3)})^{3/2}$$
for $i=1,2,3,4,5$.  In a similar spirit, one has
$$ X_0 = \int_{\R^3} (\partial_i \partial_j \nabla \Delta^{-1} \psi) \Delta u_{1,i} \Delta u_{1,j} + O_{u_0,\psi}(1 + \|u_1\|_{H^2_x(\R^3)}).$$

To demonstrate \eqref{abs}, it thus suffices to exhibit a sequence $u_1^{(n)}: \R^3 \to \R^3$ of smooth divergence-free compactly supported vector fields supported outside of $B(0,R)$ such that
$$ |\int_{\R^3} (\partial_i \partial_j \nabla \Delta^{-1} \psi) \Delta u_{1,i}^{(n)} \Delta u_{1,j}^{(n)} | \gtrsim_{R,\psi} \|u_1^{(n)}\|_{H^2_x(\R^3)}^2,$$
with
$$\|u_1^{(n)}\|_{H^1_x(\R^3)} \to 0$$
and
$$\|u_1^{(n)}\|_{H^2_x(\R^3)} \to \infty.$$

We construct $u_1^{(n)}$ explicitly as the ``wave packet''
$$ u_1^{(n)}(x) := n^{-5/2} \nabla \times \Psi^{(n)}( x_0 )$$
where $e_1,e_2,e_3$ is the standard basis, $x_0 \in \R^3$ is a point (independent of $n$) outside of $B(0,R+1)$ to be chosen later, and
$$ \Psi^{(n)}(x) = \chi(x) \sin(n \xi \cdot x) \eta$$
where $\xi \in \R^3$ is a non-zero frequency (independent of $n$) to be chosen later, $\eta \in \R^3$ is a non-zero direction, and $\chi: \R^3 \to \R$ is a smooth bump function supported on $B(0,1)$ to be chosen later.  Note from construction that $u_1^{(n)}$ is smooth, divergence-free, and supported on $B(x_0,1)$, and thus vanishing on $B(0,R)$ for $R_0 > R+1$.  One can compute that
$$ \| u_1^{(n)} \|_{H^1_x(\R^3)} \ll_\chi n^{-1/2}$$
and
$$ \| u_1^{(n)} \|_{H^2_x(\R^3)} \gg_\chi n^{1/2}$$
as long as $\chi$ is not identically zero.  To conclude the theorem, it thus suffices to show that
$$ |\int_{\R^3} (\partial_i \partial_j \nabla \Delta^{-1} \psi) \Delta u_{1,i}^{(n)} \Delta u_{1,j}^{(n)} | \gg_{R_0,\psi,\chi} n$$
if $R_0$ and $n$ are large enough.   

Observe that 
$$ u_1^{(n)}(x) := n^{-3/2} \sin(n \xi \cdot (x-x_0)) \chi(x-x_0) (\xi \times \eta) + O(n^{-5/2})$$
and similarly
$$ \Delta u_1^{(n)}(x) := - n^{1/2} |\xi|^2 \sin(n \xi \cdot (x-x_0)) \chi(x-x_0) (\xi \times \eta) + O(n^{-1/2})$$
and so by choosing $\chi$ appropriately, and using the Riemann-Lebesgue lemma, it suffices to find $x_0, \xi, \eta \in \R^3$ such that
$$ (\partial_i \partial_j \nabla \Delta^{-1} \psi) (\xi \times \eta)_i (\xi \times \eta)_j ( x_0 ) \neq 0.$$
But as $\psi$ has mean one, we see that $\nabla^3 \Delta^{-1} \psi(x_0)$ is not identically zero for $x_0$ large enough, and the claim follows.

\end{document}